\documentclass[11pt,reqno]{amsart}
\usepackage[utf8]{inputenc}

\usepackage{graphicx}
\usepackage{amsmath}
\usepackage{amsthm}
\usepackage{thmtools}
\usepackage{amssymb}
\usepackage{mathrsfs}
\usepackage{siunitx}
\usepackage{tabularx}
\usepackage{booktabs}
\usepackage{bbm}
\usepackage{tikz-cd}
\usetikzlibrary{calc}
\usepackage{quiver}
\usepackage{adjustbox}
\usepackage{enumitem}
\usepackage{stmaryrd}
\usepackage{float}
\usepackage[plainpages=false]{hyperref}
\hypersetup{hypertexnames=false}
\usepackage[capitalize]{cleveref}
\makeatletter
\newenvironment{innerproof}[1][\proofname]
  {\par\normalfont \topsep6\p@ \@plus6\p@\relax
  \trivlist
  \item[\hskip\labelsep\itshape#1\@addpunct{.}]\ignorespaces}
  {\endtrivlist\@endpefalse}
\makeatother
\counterwithin{equation}{section}

\theoremstyle{definition}
\newtheorem{Definition}{Definition}[section]

\newtheorem{Remark}[Definition]{Remark}
\newtheorem{CatalandRemark}[Definition]{Cataland remark}
\newtheorem{Example}[Definition]{Example}
\newtheorem{Warning}[Definition]{Warning}

\theoremstyle{plain}
\newtheorem{Theorem}[Definition]{Theorem}
\newtheorem{Lemma}[Definition]{Lemma}
\newtheorem{Proposition}[Definition]{Proposition}
\newtheorem{Corollary}[Definition]{Corollary}
\newtheorem{Problem}[Definition]{Problem}
\newtheorem{Conjecture}[Definition]{Conjecture}
\newtheorem*{Claim}{Claim}

\DeclareMathOperator{\Hom}{Hom}
\DeclareMathOperator{\Ext}{Ext}

\DeclareMathOperator{\Coker}{Coker}

\DeclareMathOperator{\Image}{Im}
\DeclareMathOperator{\modulecat}{mod}
\DeclareMathOperator{\rank}{rank}

\DeclareMathOperator{\globaldim}{gl.dim}

\DeclareMathOperator{\pdim}{pdim}

\DeclareMathOperator{\addcat}{add}
\DeclareMathOperator{\Gen}{Gen}
\DeclareMathOperator{\Cogen}{Cogen}

\DeclareMathOperator{\tors}{tors}
\DeclareMathOperator{\krew}{krew}
\DeclareMathOperator{\Fund}{Fund}
\DeclareMathOperator{\interval}{int}
\DeclareMathOperator{\proj}{proj}

\DeclareMathOperator{\extproj}{extproj}
\DeclareMathOperator{\extinj}{extinj}
\DeclareMathOperator{\wide}{wide}
\DeclareMathOperator{\rad}{rad}

\newcommand{\identity}{\textnormal{id}}

\newcommand{\bN}{\mathbb{N}}

\newcommand{\bS}{\mathbb{S}}

\newcommand{\bZ}{\mathbb{Z}}

\title{Cambrian lattices are fractionally Calabi-Yau via 2-cluster combinatorics}
\author{Markus Kleinau}
\address{Mathematical Institute of the University of Bonn, Endenicher Allee 60, 53115 Bonn, Germany}
\email{mkleinau@math.uni-bonn.de}
\thanks{Supported by the Deutsche Forschungsgemeinschaft
(DFG, German Research Foundation) under Germany's Excellence Strategy - GZ 2047/1, Projekt-ID
390685813\\}

\begin{document}

\begin{abstract}
    Reading constructed a Cambrian lattice $C_\Gamma$ for each oriented finite type Coxeter diagram $\Gamma$. We show that the derived category of representations of $C_\Gamma$ is fractionally Calabi-Yau for any $\Gamma$, confirming a conjecture of Chapoton. This extends a result of Rognerud for Cambrian lattices of type $A$ with linear orientation, better known as Tamari lattices. If $\Gamma$ is crystallographic, then $C_\Gamma$ is given by the lattice of torsion classes of any hereditary algebra $\Lambda$ of type $\Gamma$. In this case we introduce and study a class of intervals in $C_\Gamma$ whose combinatorics matches the combinatorics of $2$-cluster tilting objects in the 2-cluster category of $\Lambda$. This allows us to compute the Calabi-Yau dimension of $C_\Gamma$.

\end{abstract}
\keywords{Cambrian lattice, fractionally Calabi-Yau, higher cluster category, poset representation, torsion class}
\subjclass{16G20, 05E10, 16E35, 16S90}

\maketitle

\setcounter{tocdepth}{1}
\tableofcontents

\section{Introduction}
    \subsection{Fractionally Calabi-Yau lattices}
        A \emph{Serre functor} on a category $\mathcal C$ is an autoequivalence $\bS$ that admits a natural isomorphism $\Hom_\mathcal C (X,Y) \cong \Hom_\mathcal C(Y, \bS X)^*$. If a Serre functor exists then it is unique up to natural isomorphism. A triangulated category is called $d$\emph{-Calabi-Yau} for $d\in \bZ$ if the $d$-fold shift $[d]$ is a Serre functor. Examples are given by the derived category of coherent sheaves on a Calabi-Yau variety and cluster categories of Dynkin quivers. More generally a category is called \emph{fractionally Calabi-Yau of dimension} $d/c$ if $\bS^c \cong [d]$. Examples include the derived categories of Dynkin quivers and some components of semiorthogonal decompositions of Fano varieties \cite{AlgGeo}. A recent conjecture of Chapoton in \cite{Chapoton} predicts that the Fukaya-Seidel categories of certain quasi-homogeneous singularities are equivalent to derived categories of lattices. This should imply that the latter are fractionally Calabi-Yau. Motivated by this the fractionally Calabi-Yau property has been shown for Tamari lattices by Rognerud in \cite{RognerudTamari} and for the lattices of order ideals of a product of two chains by Gottesman in \cite{Tal}. In both cases the Calabi-Yau dimension matches the one predicted by Chapoton and in the latter the equivalence to a Fukaya-Seidel category has been shown by combining results of Gottesman \cite{Tal} and Di Dedda \cite{Di_Dedda}. In this article we show that the derived categories of Cambrian lattices are fractionally Calabi-Yau, further corroborating the conjecture of Chapoton.\\
        Let $L$ be a lattice and $k$ a field. A \emph{representation of} $L$ is given by assigning a $k$-vector space $M_a$ to each element $a\in L$ and a linear map $M_{a < b}\colon M_a\rightarrow M_b$ to each relation $a < b$ in $L$ such that $M_{b < c}M_{a < b} = M_{a < c}$. A number of connections between the combinatorics of $L$ and the homological algebra of representations of $L$ have recently been discovered, see for example \cite{AntichainResolution} and \cite{KMT}. We call a lattice $L$ \emph{fractionally Calabi-Yau} if $\mathcal D^b(kL)$, the derived category of the category of representations of $L$, is fractionally Calabi-Yau.\\
        Given an interval $I = [a\leq b]$ in $L$ the \emph{interval module} $M_I$ is given by setting $M_{I,x} = k$ for $x\in I$ and $0$ else. The maps are $\identity_k$ where possible. The interval representations are often well behaved homologically. One approach to show that a lattice is fractionally Calabi-Yau is to exhibit a subset of the interval modules which the Serre functor permutes up to shift. If this subset is big enough then one can apply results of Rognerud \cite[Theorem 1.2]{RognerudTamari} and Gottesman \cite[Theorem B]{Tal} to conclude that the lattice is fractionally Calabi-Yau. Often one can construct a combinatorial model for this subset of interval modules and the permutation on them given by the Serre functor. Computing the Calabi-Yau dimension then becomes a purely combinatorial problem. This approach has been successfully applied to the Tamari Lattices by Rognerud \cite{RognerudTamari} and to the lattice of order ideals of a product of two chains by Gottesman \cite{Tal}. In this article we apply it to the Cambrian lattices. The set of representations we use are given by a new class of intervals we call mutable intervals and the Serre functor on this set can be modeled using the combinatorics of 2-cluster tilting objects in 2-cluster categories.
    \subsection{Cambrian lattices}
        Reading \cite{ReadingCambrian} introduced a Cambrian lattice for each oriented finite type Coxeter diagram $\Gamma$. They provide orientations on the 1-skeleton of the type $\Gamma$-associahedron and hence the mutation graph of type $\Gamma$-cluster algebras. We refer to \cite{CambrianSurvey} for an overview. The Cambrian lattices of type $A$ with linear orientation are exactly the Tamari lattices. For crystallographic $\Gamma$ the Cambrian lattice of type $\Gamma$ is the lattice of torsion classes of any hereditary algebra of type $\Gamma$ as show by Ingalls and Thomas in \cite{IngallsThomas}. This is the perspective we will use in this paper. Let $\Lambda$ be a finite dimensional connected hereditary representation finite algebra. A full additive subcategory $\mathcal T$ of $\modulecat \Lambda$ is a \emph{torsion class} if it is closed under extensions and quotient objects. The torsion classes of $\modulecat \Lambda$ form a lattice under inclusion denoted by $\tors \Lambda$. A second important class of subcategories are the \emph{wide subcategories}. That is the full additive subcategories  $W$ closed under extensions, kernels and cokernels. In the language of Coxeter groups they correspond to noncrossing partitions. The paper \cite{IngallsThomas} established a bijection between the wide subcategories of $\modulecat \Lambda$ and the torsion classes in $\modulecat \Lambda$ given by sending a wide subcategory $W$ to the torsion class $\Gen W$ it generates. We call an interval $[\mathcal T \leq \mathcal T']$ in $\tors \Lambda$ \emph{mutable} if it is of the form $[\Gen W \leq \Gen W']$ for two wide subcategories $W\subseteq W'$. For example all intervals of the form $[0\leq \mathcal T], [\mathcal T\leq \mathcal T]$ or $[\mathcal T \leq \modulecat \Lambda]$ are mutable. For Tamari lattices, that is the lattices of torsion classes of linear type $A$ algebras, the mutable intervals agree with Rognerud's notion of exceptional intervals. This is the desired class of intervals that the Serre functor permutes up to shift.
        \begin{Proposition}
             Let $\Lambda$ be a finite dimensional connected hereditary representation finite algebra and let $I$ be a mutable interval in $\tors(\Lambda)$. Then there exists another mutable interval $\bS I$ and an integer $K_I$ such that $\bS M_I \cong M_{\bS I}[K_I]$.
        \end{Proposition}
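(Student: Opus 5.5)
We compute the Serre functor on $M_I$ for $I=[\Gen W\le \Gen W']$ directly, using that in $\mathcal D^b(kL)$ we have $\bS = \nu$, the derived Nakayama functor, so $\bS M \cong \mathbf{R}\Hom(M, kL)^*$ computed via projective resolutions. The plan has three steps: compute a projective resolution of $M_I$ in terms of the lattice combinatorics, apply the Nakayama functor to obtain a complex of injectives, and recognise that complex as an injective resolution of a shifted interval module $M_{\bS I}[K_I]$ with $\bS I$ again mutable.

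\textbf{Step 1: projective resolution.} For the lattice $L=\tors\Lambda$, the interval module $M_{[a\le b]}$ is the cokernel of $P_{c}\to P_a$ summed over the upper covers $c$ of $a$ that are not below $b$, and one resolves further using a crosscut/antichain resolution in the style of \cite{AntichainResolution}. For $a=\Gen W$, the upper covers of $a$ in $\tors\Lambda$ correspond to the simple objects of the wide subcategory ``$W^{\perp}$-side'' (brick labels of covers), i.e.\ to the simples of the wide subcategory $\mathcal W_a$ attached to the interval $[a,\modulecat\Lambda]$. Joins of subsets of these covers are again of the form $\Gen(\,\cdot\,)$ for wide subcategories, and the joins lying outside $[a,b]$ are controlled by which of these simples lie in $W'$. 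We expect a Koszul-type resolution
\[
0\to \bigoplus_{|S|=r} P_{\bigvee S}\to\cdots\to \bigoplus_{|S|=1} P_{\bigvee S}\to P_a\to M_I\to 0,
\]
indexed by subsets $S$ of the simples of $\mathcal W_a$ not in $W'$ (relative to $W$), exactly as for Rognerud's exceptional intervals. Here we use that the join of covers in this semidistributive lattice is again $\Gen$ of a wide subcategory generated by $W$ and those bricks.

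\textbf{Step 2: applying $\nu$.} Applying $\nu$ turns $P_x$ into $I_x$, giving a complex of injectives indexed by the same cube of subsets. Dually, a coresolution of an interval module $M_{[c\le d]}$ is a cube indexed by lower covers of $d$, i.e.\ by meets $\bigwedge$ of lower covers. The key point is therefore to show that the top vertex $\bigvee S_{\max}$ of our cube, call it $d$, has its lower covers in the cube precisely as $\bigvee(S_{\max}\setminus\{s\})$, and that the bottom $a$ arises as the appropriate meet. Concretely, we would set $\bS I=[c\le d]$ with $d=\Gen(W\ast \langle S_{\max}\rangle)$ and $c$ determined by $a$ and the cube, and $K_I=|S_{\max}|$ up to sign convention. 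Mutability of $\bS I$ should follow since both endpoints are joins of $\Gen$ of wide subcategories built from $W$, $W'$ and simples of perpendicular categories; we would verify that the relevant subcategories are wide by using the Ingalls--Thomas bijection and the compatibility of $\Gen$ with joins of wide subcategories in hereditary representation-finite algebras.

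\textbf{Main obstacle.} The hardest step is the lattice-theoretic identification in Steps 1--2: proving that the cube is exact, i.e.\ that the relevant sets of joins form a Boolean lattice whose only nonvanishing homology is at the ends. This needs a precise description of joins of covers of $\Gen W$ inside $[\Gen W,\Gen W']$ and its complement. We would attack it via brick labelling and the wide interval $[\Gen W,\Gen W']$ $\cong$ $\tors(\text{wide subcategory})$ by $\tau$-tilting reduction (Jasso), reducing to the case $I=[0\le \Gen W']$ in a smaller hereditary category. There the exactness becomes a statement about $\Gen$ of subsets of simples of $W'$ and the perpendicular category, which can be checked using Ext-vanishing between simples of a wide subcategory.
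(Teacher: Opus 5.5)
Your plan follows Rognerud's strategy for Tamari lattices: present $M_I$ as an antichain module, take the Koszul-type antichain resolution, apply $\nu$, and recognise the coresolution of a dual antichain module. The recognition step needs the antichain to be boolean, and for general mutable intervals it is not. So the argument fails exactly at the step you call the main obstacle.

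\textbf{Counterexample.} Take $\Lambda=kQ$ with $Q\colon 1\leftarrow 2\rightarrow 3$, and let $M_{12},M_{23},M_{123}$ be the indecomposables with dimension vectors $(1,1,0)$, $(0,1,1)$, $(1,1,1)$. Then $\tors\Lambda$ has $14$ elements. The interval $I=[0\le\Gen S_2]$ is mutable (every $[0\le\mathcal T]$ is), and $M_I$ is the indecomposable injective $k\tors\Lambda$-module at $\Gen S_2$.
\begin{enumerate}
\item Your Step 1 uses the wrong relations. The relations of $M_{[a\le b]}$ are indexed by $C=\min([a,\modulecat\Lambda]\setminus[a,b])$, not by the covers of $a$. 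Here $C=\{\Gen S_1,\Gen S_3,\Gen M_{12},\Gen M_{23}\}$, and the last two do not cover $0$. The cokernel of $P_{\Gen S_1}\oplus P_{\Gen S_3}\to P_0$ that your Step 1 prescribes is $M_{[0\le\Gen M_{123}]}$, not $M_I$.
\item Even the correct $C$ is not boolean. The $16$-element lattice $\mathcal P(C)$ cannot embed into $\tors\Lambda$; explicitly, $\Gen S_1\vee\Gen M_{23}=\Gen S_1\vee\Gen M_{12}\vee\Gen M_{23}$. The antichain resolution still exists, but its image under $\nu$ is no longer an antichain coresolution. Its cohomology at a vertex $z$ is governed by the simplicial complex $\{C'\subseteq C\mid z\nleq\bigvee C'\}$, and your proposal gives no way to control it.
\item The reduction you suggest does not help. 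Mutable intervals need not be wide: in the same example $[0\le\Gen(M_{12}\oplus M_{23})]$ is mutable but has a single atom. The antichain also lies outside $[a,b]$ in any case, and the counterexample is already of the reduced form $[0\le\Gen W']$.
\end{enumerate}
The paper notes that every Cambrian lattice other than the Tamari and type $I$ ones has mutable intervals with non-boolean antichains. Separately, your proposal leaves $\bS I$ and $K_I$ unspecified and the mutability of $\bS I$ unproved.

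\textbf{What the paper does instead.} It uses the boolean computation only for one-point intervals $[\mathcal T\le\mathcal T]$, where $C$ is the set of covers of $\mathcal T$. In the wide interval $[\mathcal T\le\bigvee C]\cong\tors\Lambda'$ these covers become the torsion classes generated by the simple $\Lambda'$-modules, so booleanness is clear. The explicit formula $\bS I=[\Gen T^I_{free}\le\mathcal T\vee\Gen W^I_{free}]$ then identifies the result as $M_{\bS I}[\rank W^I_{free}]$.

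Proper intervals are handled by induction on size. A simple object $X$ of $\mathfrak a(\mathcal F)\cap\mathfrak a(\mathcal T')$ gives an interval mutation $(B,I,A)$, hence a short exact sequence $0\to M_B\to M_I\to M_A\to 0$.

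The idea missing from your proposal is that $\bS$ sends interval mutations to interval mutations up to a cyclic rotation. The paper proves this through its bijection between mutable intervals and 2-cluster tilting objects in $\mathcal C_2(\Lambda)$. That bijection matches $\bS$ with the shift $[1]$, and interval mutations with the three completions of an almost 2-cluster tilting object. Consequently $\bS M_I$ and $M_{\bS I}[K_I]$ both complete non-split triangles with the same outer terms $M_{\bS B}[K_B]$ and $M_{\bS A}[K_A]$. Since the relevant $\Ext^1$ is one-dimensional, the two objects are isomorphic.
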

        A theorem of Rognerud allows us to glue these objectwise isomorphisms into an equivalence of functors and hence show that $\tors \Lambda$ is fractionally Calabi-Yau. This is our first main result:
        \begin{Theorem}\label{fcy_introduction_tors}
            Let $\Lambda$ be a finite dimensional connected hereditary representation finite algebra. Let $h$ be the Coxeter number of $\Lambda$ and $N$ the number of indecomposable representations of $\Lambda$.
            Then $\tors \Lambda$ is $(2N,2h+2)$-fractionally Calabi-Yau.\\
            Further if $\Lambda$ is of Dynkin type $A_1$, $B_n$, $C_n$, $D_n$ ($n$ even), $E_7$, $E_8$ or $G_2$ then $\tors \Lambda$ is $(N,h+1)$-fractionally Calabi-Yau.
        \end{Theorem}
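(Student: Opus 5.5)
The plan is to use the Proposition above: the Serre functor $\bS$ on $\mathcal D^b(k\tors\Lambda)$ sends each mutable interval module $M_I$ to a shifted mutable interval module $M_{\bS I}[K_I]$. This defines a permutation $I\mapsto \bS I$ of the finite set of mutable intervals, together with integer shifts $K_I$. To invoke Rognerud's gluing theorem \cite[Theorem 1.2]{RognerudTamari}, two things are needed. First, the mutable interval modules must generate $\mathcal D^b(k\tors\Lambda)$; this holds because the intervals $[\mathcal T\leq\mathcal T]$ are mutable, so all simple modules lie in the set. Second, we need $\bS^c M_I\cong M_I[d]$ for all $I$, with the same pair $(d,c)$, which moreover satisfies the compatibility conditions in that theorem (for instance on morphisms between these objects, which is presumably why the theorem is stated for a well-behaved family). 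The theorem then upgrades the objectwise isomorphisms to a natural isomorphism $\bS^c\cong[d]$. So the real content is combinatorial: determine the orbits of $I\mapsto\bS I$ and the sums of the shifts $K_I$ along each orbit.

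To do this, I would model mutable intervals by 2-cluster combinatorics. A mutable interval $[\Gen W\leq\Gen W']$ with $W\subseteq W'$ should correspond bijectively to a 2-cluster tilting object (or a suitable $2$-noncrossing partition datum) in the 2-cluster category $\mathcal D^b(\Lambda)/\tau^{-1}[2]$. The first step is to write down this bijection explicitly, presumably by sending $W$ and the relative part $W'\cap W^{\perp}$ to summands in two consecutive fundamental domains. The second step is to compute $\bS M_I$ explicitly, using a projective resolution of $M_I$ by the antichain resolution \cite{AntichainResolution} and applying the Nakayama functor. The third step is to identify the resulting operation $I\mapsto \bS I$ with a known autoequivalence of the 2-cluster category, presumably $\tau$ or a "rotation" $\tau^{-1}[1]$-type map. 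I would also track $K_I$ as a statistic that counts how many indecomposables cross the fundamental domain boundary.

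The order of this autoequivalence on 2-cluster tilting objects is controlled by the relations $\tau^{-h}=[2]$ in $\mathcal D^b(\Lambda)$ and $\tau^{-1}=[-2]$ in the 2-cluster category. Combining these, the rotation has order dividing $2h+2$, and in general exactly $h+1$ up to the action of the duality/diagram automorphism induced by $[1]$. For the accumulated shift I would sum $K_I$ over a full period. Each step contributes the number of indecomposables leaving one region, and over $2h+2$ steps every one of the $N$ indecomposables is counted twice, giving $d=2N$. When $-1$ lies in the Weyl group, i.e. in types $A_1,B_n,C_n,D_{n}$ ($n$ even), $E_7,E_8,G_2$, the shift $[1]$ acts on $\mathcal D^b(\Lambda)$ as $\tau^{-h/2}$ without twisting by a diagram automorphism. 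The half-period $h+1$ then already returns every mutable interval to itself, with shift $N$.

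I expect the main obstacle to be the second and third steps: computing $\bS M_I$ for a general mutable interval and showing that it is again an interval module, with the combinatorial description matching the 2-cluster rotation. I would first treat the case $[0\leq\mathcal T]$, which is projective-like, and $[\mathcal T\leq\modulecat\Lambda]$, which is injective-like, where the Nakayama functor is easy to compute. I would then reduce the general case to these by using the wide subcategory $W'$ and the equivalence of $\Gen W'$-relative lattices with $\tors$ of smaller hereditary algebras, arguing inductively on rank.
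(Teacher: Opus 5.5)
Your outer frame matches the paper. The injectives $M_{[0,\mathcal T]}$ are mutable interval modules, and Rognerud's theorem needs nothing more than $\bS^{c}M\cong M[d]$ for the indecomposable injectives $M$; no generation or morphism compatibility is required. The numbers $(2N,2h+2)$ come from the shift on 2-cluster tilting objects. The gap is in the step that carries all the content: you have no working way to compute $\bS M_I$. Without it you cannot identify $I\mapsto\bS I$ with a rotation or determine $K_I$.

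Applying the Nakayama functor to the antichain resolution of $M_I=M^C_{\mathcal T}$, with $C=\min([\mathcal T,\modulecat\Lambda]\setminus I)$, yields a shifted interval module only when $C$ is boolean over $\mathcal T$. Every Cambrian lattice other than the Tamari lattices and type $I$ has mutable intervals whose antichain is not boolean. For these, the image complex is not an antichain coresolution, and its cohomology cannot be read off.

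Your fallback does not repair this. Only the projectives are easy ($\bS P_a=I_a$; in the paper's conventions these are the $M_{[\mathcal T,\modulecat\Lambda]}$), while $\bS$ of the injectives $M_{[0,\mathcal T]}$ is exactly the unknown. An induction on rank via intervals identified with $\tors$ of smaller hereditary algebras also cannot work, because the Serre functor of $k\tors\Lambda$ is not compatible with passing to an interval. Already in type $A_1$, where $\tors\Lambda$ is the chain $0<\modulecat\Lambda$, one has $\bS M_{[0,0]}\cong M_{[\modulecat\Lambda,\modulecat\Lambda]}[1]$, whereas the Serre functor of the one-point interval fixes $M_{[0,0]}$.

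The missing idea is an induction on the size of the interval through short exact sequences. For a proper mutable interval $I=[(\mathcal T,\mathcal F),(\mathcal T',\mathcal F')]$, choose a simple object $X$ of $\mathfrak a(\mathcal F)\cap\mathfrak a(\mathcal T')$. The torsion pairs of $I$ in which $X$ is torsion, resp.\ torsion-free, form mutable intervals $B$, $A$ with $I=A\sqcup B$, giving $0\to M_B\to M_I\to M_A\to 0$.

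One then needs two facts about the bijection $T_{free}\oplus T_{tors}[1]\oplus T_{supp}[2]\mapsto[\Gen T_{tors},{}^{\perp_0}T_{free}]$. First, the triples $(B,I,A)$ are exactly the images of the three completions of an almost 2-cluster tilting object, in their cyclic order. Second, the bijection intertwines $[1]$ with an explicit permutation $\bS$ of mutable intervals; note it is $[1]$, not $\tau$, which equals $[2]$ in $\mathcal C_2(\Lambda)$. Since $[1]$ preserves almost 2-cluster tilting objects and the cyclic order, either $(\bS I,\bS A,\bS B)$ or $(\bS A,\bS B,\bS I)$ is again an interval mutation. Correspondingly, $k_J=\rank W^J_{free}=|T^J_{free}|$ satisfies $k_I=k_A=k_B+1$, resp.\ $k_I=k_B=k_A-1$.

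By induction, applying $\bS$ to the triangle of $(B,I,A)$ gives a non-split triangle $M_{\bS B}[k_B]\to\bS M_I\to M_{\bS A}[k_A]\to M_{\bS B}[k_B+1]$. Rotating the triangle of the new interval mutation gives a non-split triangle with the same outer terms and middle term $M_{\bS I}[k_I]$. Since $\Ext^1(M_A,M_B)$ is one-dimensional, $\bS M_I\cong M_{\bS I}[k_I]$.

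The antichain resolution is used only in the base case of one-point intervals. There the antichain of covers is boolean, because $[\mathcal T,\bigvee C]$ is a wide interval isomorphic to $\tors$ of a hereditary algebra with atoms $\Gen S_i$. Even there, a separate combinatorial argument is needed to identify the resulting dual antichain module with $M_{\bS I}$. With $k_I=|T^I_{free}|$, the orbit sums are $\sum_{i=1}^{2h+2}|T[i]\cap\modulecat\Lambda|=2N$ (resp.\ $N$ over $h+1$ steps), read off from Happel's description.
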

        In particular all crystallographic Cambrian lattices are fractionally Calabi-Yau. By considering the non-crystallographic types separately, in part by computer, we can extend \cref{fcy_introduction_tors} to all Cambrian lattices. 
        \begin{Theorem}\label{fcy_introduction_Cambrian}
            Let $L$ be an irreducible Cambrian lattice of Coxeter type $\Gamma$. Let $h$ be the Coxeter number of $\Gamma$ and $N$ the number of hyperplanes of the type $\Gamma$ reflection group.
            Then $L$ is $(2N,2h+2)$-fractionally Calabi-Yau.\\
            Further if $\Gamma$ is of Coxeter type  $A_1$, $B_n$, $C_n$, $D_n$ ($n$ even), $E_7$, $E_8$, $H_3$, $H_4$ or $I(m)$ ($m$ even) then $L$ is $(N,h+1)$-fractionally Calabi-Yau.
        \end{Theorem}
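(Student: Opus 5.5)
We reduce \cref{fcy_introduction_Cambrian} to \cref{fcy_introduction_tors} plus a separate treatment of the non-crystallographic types.

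\textbf{Reduction to the Coxeter type.} First we observe that the derived category $\mathcal D^b(kL)$ of a Cambrian lattice depends only on the Coxeter type $\Gamma$ and not on the orientation. Reading's Cambrian lattices for different orientations of the same diagram are related by sequences of ``source-to-sink'' type changes, and for the crystallographic types they are realised as $\tors \Lambda$ for hereditary algebras $\Lambda$ of different orientations. Rather than rely on a derived equivalence between them, it is cleaner to apply the results directly: \cref{fcy_introduction_tors} already covers every orientation of every crystallographic diagram, because by Ingalls–Thomas each oriented crystallographic $\Gamma$ has $C_\Gamma\cong \tors\Lambda$ for $\Lambda$ hereditary of that orientation. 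In that case $h$ and $N$ agree: $N$ is the number of positive roots, which is the number of reflecting hyperplanes. Hence the statement holds for $A_n,B_n,C_n,D_n,E_{6,7,8},F_4,G_2$, and for the listed types with the sharper $(N,h+1)$ bound. (Note $I(6)=G_2$ and $I(4)=B_2$, and $I(3)=A_2$ falls under the general case.)

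\textbf{Non-crystallographic types.} It remains to treat $H_3$, $H_4$ and $I(m)$ for $m=5$ and $m\ge 7$. For $H_3$ and $H_4$ there are finitely many orientations, and the Cambrian lattices are explicit finite lattices (with $|C_{H_4}|$ equal to the Catalan number $17{,}160$ or similar). We verify the claim by computer: compute the Coxeter transformation of the incidence algebra $kL$, i.e.\ $-C^{-1}C^T$ for the Cartan matrix $C$, and check that its $(h+1)$-th power equals $(-1)^N$. To upgrade this to an actual isomorphism of functors, we check that the Serre functor permutes a suitable generating family of interval modules up to shift. Then, exactly as in the proof of \cref{fcy_introduction_tors}, we invoke Rognerud's gluing theorem \cite[Theorem 1.2]{RognerudTamari}. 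For this family we take the analogue of the mutable intervals, $[\Gen W\le \Gen W']$, with noncrossing partitions $W\subseteq W'$ replacing wide subcategories via Reading's bijection between $C_\Gamma$ and noncrossing partitions.

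\textbf{Dihedral types.} For $I(m)$ we argue uniformly and by hand. Every Cambrian lattice of type $I(m)$ is the same poset, a ``hexagon-like'' lattice with a bottom, a top, and two chains of lengths $1$ and $m-1$ between them, independent of orientation up to duality. We compute the Serre functor explicitly on the interval modules of this small lattice, tracking the orbits of the $2m+2$ or so mutable intervals. This shows $\bS^{m+2}\cong[2m]$ in general, and $\bS^{(m+2)/2}\cong[m]$ when $m$ is even, matching $h=m$ and $N=m$.

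The main obstacle is the $H_4$ case. There the lattice is large, and the computer verification must produce a genuine functor isomorphism, not just a numerical identity. The plan is to certify it through the combinatorial permutation of interval modules together with Rognerud's theorem, rather than by computing the Serre functor wholesale.
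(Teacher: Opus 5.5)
Your overall structure matches the paper's. The crystallographic types are \cref{fcy_introduction_tors}, with $N$ the number of positive roots, i.e.\ of reflecting hyperplanes. $H_3$ and $H_4$ are checked by computer, and $I(m)$ is handled separately.

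The genuine gap is the dihedral step. For $I(m)$ one has $h=m$ and $N=m$, so the statement requires $\bS^{2m+2}\cong[2m]$, and $\bS^{m+1}\cong[m]$ for $m$ even. You instead assert that your computation, which you do not actually carry out, ``shows'' $\bS^{m+2}\cong[2m]$ and $\bS^{(m+2)/2}\cong[m]$. These are not the required identities, and they are false. The fractional Calabi--Yau dimension is well defined: if $\bS^{c}\cong[d]$ and $\bS^{c'}\cong[d']$, then $[dc'-d'c]\cong\identity$ on a nonzero bounded derived category, so $d/c=d'/c'$. For $m=3$ the lattice is the pentagon, the $A_2$ Tamari lattice. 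It satisfies $\bS^{8}\cong[6]$ by Rognerud, which is incompatible with $\bS^{5}\cong[6]$. You seem to have conflated the number $m+2$ of lattice elements with the period of $\bS$. As written, this step proves nothing for the dihedral types.

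The paper avoids a hand computation here. By \cite[Remark 5.10]{Yildirim}, the Cambrian lattice of type $I(m)$ is derived equivalent to a path algebra of type $D_{m+2}$, whose Coxeter number is $h'=2(m+2)-2=2m+2$. The classical Dynkin identities $\bS^{h'}\cong[h'-2]$, and $\bS^{h'/2}\cong[(h'-2)/2]$ for $D_n$ with $n$ even, then give exactly $(2m,2m+2)$ and, for $m$ even, $(m,m+1)$. A direct computation of the Serre orbits of the $m+2$ injectives, combined with Rognerud's gluing theorem, would also work, but it has to land on these numbers.

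There are also some smaller slips:
\begin{itemize}
\item $|C_{H_4}|$ is the $H_4$ Catalan number $280$ (and $|C_{H_3}|=32$), not about $17{,}160$.
\item The intervals of the noncrossing partition lattice of $I(m)$ number $3(m+1)$, not $2m+2$.
\item The Coxeter transformation $\Phi$ is the map induced on $K_0$ by $\tau^{\pm1}=(\bS[-1])^{\pm1}$. So the necessary numerical condition is $\Phi^{h+1}=(-1)^{N+h+1}$, not $(-1)^N$. With your sign the test would fail for both $H_3$ and $H_4$, since $h+1$ is odd in both cases.
\end{itemize}
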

        The fractionally Calabi-Yau dimensions claimed here will be explained in the next subsection. For now we just note that they agree with the ones predicted by Chapoton in \cite{Chapoton}. Rognerud computed the action of the Serre functor on exceptional interval modules over the Tamari lattice using explicit projective resolutions. For mutable intervals these resolutions are only available for intervals of the form $[\mathcal{T}\leq \mathcal T]$. To compute the Serre functor for larger intervals we introduce interval mutations. An interval mutation is a triple of mutable intervals $(B,I,A)$ such that
        $I = A\sqcup B$, $\max B = \max I$, and $\min A = \min I$. In terms of modules they correspond to short exact sequences
        \[0\rightarrow M_B\rightarrow M_I\rightarrow M_A\rightarrow 0\]
        and hence to triangles in the derived category. Note that any two parts of an interval mutation determine the third. We can use this and the following proposition to compute the Serre functor inductively.
        \begin{Proposition}\label{mutable_interval_introduction}
            Let $I = [\mathcal T\lneq \mathcal T']$ be a proper mutable interval in $\tors(\Lambda)$. Then there is an interval mutation $(B,I,A)$. In addition for any interval mutation $(B,I,A)$ either $(\bS I,\bS A, \bS B)$ or $(\bS A, \bS B, \bS I)$ is also an interval mutation.
        \end{Proposition}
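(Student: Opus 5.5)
The plan has two parts, one for each claim: existence of an interval mutation, and its compatibility with $\bS$.

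\emph{Existence.} Write $I=[\Gen W\leq \Gen W']$ with $W\subsetneq W'$ wide. Since $W'$ is equivalent to $\modulecat$ of a hereditary representation-finite algebra, it has finitely many simples. Its simple objects generate $W'$ under extensions, so some simple $S$ of $W'$ does not lie in $W$. I would choose $S$ so that the wide subcategory $W''$ generated by $W$ and $S$ gives a splitting of $I$ into $A$ and $B$, with $A$ ending at a torsion class and $B$ starting just above it.

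The natural candidate is to use the Ingalls--Thomas bijection. Set
\[A=[\Gen W\leq \mathcal U], \qquad \mathcal U=\Gen W' \cap {}^{\perp}S' \text{ for a suitable } S'.\]
Dually, let $B$ be the complementary up-set $[\mathcal V\leq \Gen W']$ inside $I$. I would then check three things.
\begin{enumerate}
\item $I=A\sqcup B$, which means $\{\mathcal T\in I:\mathcal T\subseteq\mathcal U\}$ and its complement are both intervals. This needs the complement to have a unique minimum $\mathcal V$, namely the torsion class generated by $\Gen W$ together with $S$.
\item $\mathcal U$ and $\mathcal V$ are of the form $\Gen$ of a wide subcategory.
\end{enumerate}
The cleanest route is to reduce to $W=0$ via the $\tau$-tilting reduction / Jasso reduction identifying $[\Gen W,\Gen W']$ with $\tors$ of the wide subcategory $W'\cap W^{\perp}$. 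There one can take $A=[0\leq \Gen(P^{\perp})]$-type intervals: $B=[\Gen S\leq \modulecat]$ for a simple projective $S$, and $A=[0\leq \mathcal T_S]$ with $\mathcal T_S$ the largest torsion class not containing $S$, which is ${}^{\perp}S\cap$ and is $\Gen$ of $S^{\perp}$ since $S$ is projective. One must verify that the reduction preserves mutability, i.e.\ it sends $\Gen$ of wide subcategories to $\Gen$ of wide subcategories. This should follow from the compatibility of the Ingalls--Thomas bijection with reduction.

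\emph{Compatibility with $\bS$.} The short exact sequence $0\to M_B\to M_I\to M_A\to0$ gives a triangle $M_B\to M_I\to M_A\to M_B[1]$. Applying $\bS$ and the previous Proposition yields a triangle
\[M_{\bS B}[K_B]\to M_{\bS I}[K_I]\to M_{\bS A}[K_A]\to .\]
The key step is to show that a triangle whose three terms are shifted mutable interval modules must, after rotation and up to shift, come from an interval mutation. This holds exactly when the shifts align so that one connecting map vanishes and the other two maps are genuine module maps. For this I would compute $\Hom$ and $\Ext^1$ between interval modules combinatorially. Nonzero morphisms $M_J\to M_{J'}$ exist roughly when $J'$ lies "above and overlapping" $J$. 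From this one sees that a nonsplit extension $0\to M_{J_1}\to E\to M_{J_2}\to 0$ with $E$ a single interval module forces the disjoint union configuration. The case distinction between $(\bS I,\bS A,\bS B)$ and $(\bS A,\bS B,\bS I)$ corresponds to whether $K_A=K_I$ or $K_B=K_I$, i.e.\ which rotation lands in degree zero.

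I expect the main obstacle to be this last step: controlling the shifts $K_\bullet$ and showing the middle term of the rotated triangle is indecomposable and concentrated in one degree. The tool for this is rigidity. $M_I$ is exceptional (I would prove $\operatorname{End}=k$ and no self-extensions for mutable intervals), and $\bS$ preserves exceptionality. So the rotated triangle is a triangle of exceptional objects with nonzero maps. Comparing Euler forms $\chi(M_J,M_{J'})$, computed as Möbius-type sums over the lattice, then pins down the shifts and excludes the degenerate cases.
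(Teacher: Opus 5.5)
Both halves of your proposal have a genuine gap.

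\textbf{Existence.} The reduction you call cleanest is false. A mutable interval $[\Gen W\leq\Gen W']$ is in general neither wide nor isomorphic to $\tors(W'\cap W^{\perp_{0,1}})$; the paper remarks explicitly that mutable and wide intervals are unrelated.

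Take type $A_2$ with simple projective $S_1$, simple injective $S_2$ and $0\to S_1\to P\to S_2\to 0$. The mutable interval $[\Gen S_2\leq\modulecat\Lambda]$ is the chain $\addcat S_2\subset\addcat(P\oplus S_2)\subset\modulecat\Lambda$. A three-element chain has one atom, so it is not the lattice of torsion classes of any algebra. Meanwhile $W'\cap W^{\perp_{0,1}}=\addcat P$ has only two torsion classes.

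The same example defeats your direct recipe of splitting along ``a simple of $W'$ not in $W$''. The only candidate is $S_1$, which gives $\mathcal V=\modulecat\Lambda$. The leftover piece $[\addcat S_2\leq\addcat(P\oplus S_2)]$ is then not mutable, since $\mathfrak a(\addcat(P\oplus S_2))=\addcat P$.

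The splitting object must be a simple $X$ of $\mathfrak a(\mathcal F)\cap\mathfrak a(\mathcal T')=W'\cap W^{\perp_{0,1}}$; here $X=P$. This category is nonzero for proper $I$ by the rank identity $\rank\mathfrak a(\mathcal T)+\rank(\mathfrak a(\mathcal F)\cap\mathfrak a(\mathcal T'))+\rank\mathfrak a(\mathcal F')=|\Lambda|$.

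The missing key step is this. For every torsion pair in $I$, the canonical sequence $0\to tX\to X\to X/tX\to 0$ lies in $\mathfrak a(\mathcal F)\cap\mathfrak a(\mathcal T')$. This holds because $\mathfrak a(\mathcal T')$ is closed under submodules lying in $\mathcal T'$, and dually. Simplicity of $X$ then forces $X$ to be torsion or torsion-free. That gives $I=A\sqcup B$ with $A=[\mathcal T\leq\mathcal T'\wedge{}^{\perp_0}X]$ and $B=[\mathcal T\vee\Gen X\leq\mathcal T']$. Mutability of $B$ (resp.\ $A$) is checked inside $\tors\mathfrak a(\mathcal T')$ (resp.\ $\tors\mathfrak a(\mathcal F)$), with no reduction needed.

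\textbf{Compatibility with $\bS$.} You use $\bS M_J\cong M_{\bS J}[K_J]$ for $J=A,B,I$. In the paper this isomorphism, with $K_J=\rank W_{free}^J$, is proved by induction on the size of $J$. Its inductive step for $I$ uses exactly the statement you are proving, in order to identify the middle term of $\bS M_B\to\bS M_I\to\bS M_A\to$. Only trivial intervals are handled independently, via boolean antichains. So your argument is circular.

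Even granting the isomorphism, you have misplaced the obstacle. The long exact cohomology sequence of the shifted triangle immediately gives $K_I\in\{K_A,K_B\}$ and exactly three configurations:
\begin{enumerate}
\item $K_I=K_B=K_A-1$, which gives $(\bS A,\bS B,\bS I)$;
\item $K_I=K_A=K_B+1$, which gives $(\bS I,\bS A,\bS B)$;
\item $K_A=K_B=K_I$, which gives $(\bS B,\bS I,\bS A)$.
\end{enumerate}
The third is incompatible with both allowed conclusions, by comparing cardinalities. Excluding it is the real content of the statement. Neither exceptionality of $M_I$ nor Euler-form considerations address it.

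The paper obtains it from $\rank W_{free}^B+1=\rank W_{free}^A$, proved through the 2-cluster category:
\begin{enumerate}
\item Every interval mutation is induced by an augmented interval $(I,X)$.
\item $(I,X)$ corresponds to an almost 2-cluster tilting object whose completions satisfy $T^B\prec T^I\prec T^A$.
\item $\interval(T[1])=\bS\interval(T)$, and $[1]$ preserves almost 2-cluster tilting objects and the cyclic order.
\item A case analysis on whether the complement in $T^I$ lies in $\modulecat\Lambda$, $\modulecat\Lambda[1]$ or $\proj\Lambda[2]$ decides which shifted completion becomes the middle term.
\end{enumerate}
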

        In order to explain this phenomenon and to compute the fractionally Calabi-Yau dimension we need a combinatorial model for the set of mutable intervals. This is given by the set of 2-cluster tilting objects in the 2-cluster category of $\Lambda$.
    \subsection{2-cluster combinatorics}
        The cluster category $\mathcal C_1(\Lambda)$ was introduced in \cite{BMRRT} as an additive categorification of the cluster algebra corresponding to $\Lambda$. It is constructed as the orbit category of the derived category $\mathcal D^b(\Lambda)$ under the functor $\tau^{-1}[1]$. An object $T\in \mathcal C_1(\Lambda)$ is called \emph{cluster tilting} if it is maximal with the property $\Ext^1(T,T) = 0$. The set of cluster tilting objects categorifies the clusters of the corresponding cluster algebra in the following way: indecomposable summands correspond to cluster variables, clusters correspond to cluster-tilting objects and mutation corresponds to replacing an indecomposable direct summands with a different one. A generalization of the set of clusters of a cluster algebra to the set of $m$-clusters was introduced by Fomin and Reading in \cite{clusterComplex}. The $m$-clusters also admit an additive categorification using $m$-cluster categories $\mathcal C_m(\Lambda)$ as shown by Thomas in \cite{mclusterThomas} and Zhu in \cite{Zhu}. To construct them one replaces the functor $\tau^{-1}[1]$ in the construction of $\mathcal C_1(\Lambda)$ by $\tau^{-1}[m]$. We refer to \cite{Buan} for an introduction. An object $T\in \mathcal C_m(\Lambda)$ is called $m$\emph{-cluster tilting} if it is maximal with the property $\Ext^i(T,T) =0$ for $1\leq i\leq m$. The $m$-cluster tilting objects also admit a notion of mutation: if one removes an indecomposable direct summand $T'$ from an $m$-cluster tilting object $T$ then there are exactly $m$ other indecomposable objects $T_i$ such that $(T/T') \oplus T_i$ is again an $m$-cluster tilting object. Relevant for this paper is only the case $m=2$. The set of 2-cluster tilting objects of a 2-cluster category has two relevant combinatorial structures: The first is a permutation given by sending a 2-cluster tilting object $T$ to $T[1]$ and the second is the mutation mentioned above relating sets of three 2-cluster tilting objects. We construct a bijection between the set of 2-cluster tilting objects in $\mathcal C_m(\Lambda)$ and the set of mutable intervals in $\tors\Lambda$ which identifies the combinatorial structures on both sets. This is our second main result.
        \begin{Theorem}\label{bijection_introudction}
            Let $\Lambda$ be a finite dimensional connected hereditary representation finite algebra. There is a bijection
            \begin{align*}
               \interval\colon \{\textnormal{2-cluster tilting objects in } \mathcal C_2(\Lambda)\}\rightarrow \{\textnormal{mutable intervals in } \tors(\Lambda)\}
            \end{align*}
            such that for any 2-cluster tilting object $T$ we have $\bS \interval (T) = \interval(T[1])$. Further let $T_1$ be a 2-cluster tilting object with a distinguished indecomposable direct summand $T'$ and  let $T_2$ and $T_3$ be the two 2-cluster tilting objects obtained by mutating $T_1$ at $T'$. Then there is a permutation $\sigma \in  S_3$ such that $(\interval T_{\sigma(1)},\interval T_{\sigma(2)},\interval T_{\sigma(3)})$ is an interval mutation. All interval mutations arise in this way.
        \end{Theorem}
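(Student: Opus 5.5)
I will construct $\interval$ through the standard description of the 2-cluster category by a fundamental domain. Every indecomposable object of $\mathcal C_2(\Lambda)$ is, up to isomorphism, either a module $X\in\modulecat\Lambda$, a shifted module $X[1]$, or a shifted projective $P[2]$. A 2-cluster tilting object $T$ therefore splits as $T = T_0\oplus T_1[1]\oplus P[2]$, where $T_0, T_1\in\modulecat\Lambda$. The vanishing of $\Ext^1$ and $\Ext^2$ within $T$ translates into the following conditions: $T_0$ and $T_1$ are rigid, $\Hom(T_1,T_0)=0$, $\Ext^1(T_0,T_1)=0$ in a suitable $\tau$-twisted sense, and the summands of $P$ are orthogonal to the others.

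I will then use the known bijection between 2-cluster tilting objects and pairs of nested support $\tau$-tilting or wide data (the "2-noncrossing partitions" of Fomin–Reading and Buan–Reiten–Thomas). Concretely, $T_0\oplus P[1]$ gives a silting object of the 2-term category. Via Ingalls–Thomas this yields a torsion class $\mathcal T'=\Gen W'$ for a wide subcategory $W'$. The summand $T_1$ is read inside the perpendicular category of $T_0$, which by Jasso reduction is again hereditary of smaller rank, and produces a wide subcategory $W\subseteq W'$. I set $\interval(T)=[\Gen W\le \Gen W']$. Bijectivity follows by counting both sides recursively via Jasso reduction, or by writing down the inverse: from $W\subseteq W'$, take the Ext-projectives of $\Gen W'$ and of $\Gen W$ relative to $W'$.

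Next I treat mutation. Mutating $T$ at an indecomposable summand $T'$ produces a triangle $T'\to E\to T''\to T'[1]$ in $\mathcal C_2(\Lambda)$, and then a second exchange. I will case-split on whether $T'$ lies in degree $0$, $1$ or $2$. In each case I will check that the three intervals are nested as $I=A\sqcup B$ with $\max B=\max I$ and $\min A=\min I$. Which of the three intervals plays the role of $I$ is what determines $\sigma$. The tool is the description of covering relations in $\tors\Lambda$ by bricks: removing a simple object of $W$ or $W'$ corresponds to a mutation of the corresponding torsion class. To show that all interval mutations arise this way, start from $(B,I,A)$ with $I=[\Gen W\le \Gen W']$. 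The shared boundary $\max A=\min B$ is a torsion class $\Gen W''$ that differs from the relevant end in a single summand of the corresponding 2-cluster tilting objects. This identifies the mutation directly.

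Finally I check equivariance, $\bS\interval(T)=\interval(T[1])$, by induction on the length of the interval. The base case is $[\mathcal T\le\mathcal T]$. Here $M_I$ is a simple module of $k\tors\Lambda$, and I will compute $\bS M_I$ from an explicit projective resolution. That resolution is governed by the lower covers of $\mathcal T$, i.e.\ by the Koszul/antichain-type resolution in a semidistributive lattice. I will compare the answer with $T[1]$, where shifting moves the $T_0$ part to degree 1 and the $T_1$ part to degree 2 (that is, $P[2]$ after $\tau$). For the inductive step, every proper mutable interval sits in an interval mutation, by the proposition stated in the introduction. The Serre functor is triangulated, so applying $\bS$ to the short exact sequence $0\to M_B\to M_I\to M_A\to 0$ gives a triangle. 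The shift $[1]$ commutes with mutation, so the two shifted 2-cluster tilting objects already known by induction determine the third. This gives $\bS M_I$ up to shift.

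I expect the main obstacle to be the base case. Computing $\bS$ of a simple module on a non-distributive lattice, and matching it with the combinatorics of $[1]$ on $\mathcal C_2(\Lambda)$, is delicate. If the direct computation becomes unwieldy, I will instead do the base case for $[0\le\mathcal T]$, where $M_I$ is projective and so $\bS M_I$ is injective, and for $[\mathcal T\le\modulecat\Lambda]$. I will then reach all intervals through the mutation step.
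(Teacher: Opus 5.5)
Your proposal has genuine gaps in all three parts of the theorem.

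\textbf{The bijection and the mutation correspondence.} Your map $\interval$ is never pinned down, and the parts you do specify are wrong. Since $|T_0|+|T_1|+|P|=|\Lambda|$, the object $T_0\oplus P[1]$ is only presilting unless $T_1=0$, so it does not determine a torsion class. Your proposed inverse already fails for $I=[\modulecat\Lambda\le\modulecat\Lambda]$: the Ext-projectives of $\Gen W'=\modulecat\Lambda$ and those of $\Gen W$ inside $W'$ are both $\Lambda$. This gives $\Lambda\oplus\Lambda[1]$, which has $2|\Lambda|$ summands and is not 2-rigid. Appealing to a known 2-cluster/2-noncrossing bijection does not help, because compatibility with $[1]$ and with mutation depends on the specific map. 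The choice that works is $\interval(T)=[\Gen T_1\le{}^{\perp_0}T_0]$, whose upper end is the torsion class with torsion-free part $\Cogen T_0$.
\begin{itemize}
\item It is a mutable interval.
\item Injectivity: the summands of $T_1$ inside $\extproj(\Gen T_1)$ are exactly those with vanishing $\Ext^1$ into the injectives of $\mathfrak a(\Cogen T_0)$.
\item Surjectivity follows from the 2-Catalan count.
\end{itemize}
Your argument that all interval mutations arise rests on a false statement. Since $A\cap B=\emptyset$, we have $\max A\neq\min B$, so there is no shared boundary. Write $I=[(\mathcal T,\mathcal F)\le(\mathcal T',\mathcal F')]$. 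What actually links $A$ and $B$ is a brick: the cover from $A$ into $\min B$ and the cover out of $\max A$ carry the same label $X$. This $X$ is simple in $\mathfrak a(\mathcal F)\cap\mathfrak a(\mathcal T')$, so every interval mutation is induced by an augmented interval $(I,X)$. Matching augmented intervals with almost 2-cluster tilting objects then needs two further ingredients:
\begin{itemize}
\item an explicit construction, split according to whether $X\in W(T_0)$, $X\in W(T_1)$, or neither (in the last case the exchanged summand of $T^I$ lies in $P[2]$);
\item a count showing there are $\frac n3$ times as many augmented intervals as mutable intervals, via the 2-Kreweras complement, to upgrade the injection to a bijection.
\end{itemize}
Your phrase ``in each case I will check'' contains none of this.

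\textbf{The equivariance.} Deriving it from the Serre functor of $\mathcal D^b(k\tors\Lambda)$ rather than combinatorially is a legitimate idea in principle. The paper does the opposite: it proves $\interval(T[1])=\bS\interval(T)$ directly, and only afterwards computes $\bS M_I$ by induction. The lower ends agree because the degree-one part of $T[1]$ is exactly $T_0$, the upper ends compare by 2-rigidity of $T$, and bijectivity forces equality.

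Both ends of your induction are open.
\begin{itemize}
\item Base case. Computing $\bS M_{[\mathcal T\le\mathcal T]}\cong M^\beta_D[|C|]$ is comparatively easy, since the upper covers of $\mathcal T$ form a boolean antichain. The real content is showing that $\{x\le\beta \mid x\not\le d \text{ for all } d\in D\}$ equals $\interval(T[1])$, and you postpone exactly this.
\item Inductive step. Put $A'=\interval(T^A[1])$, and define $B'$, $I'$ similarly. Knowing that these form an interval mutation in \emph{some} order does not identify $\bS M_I$ from the triangle $\bS M_B\to\bS M_I\to\bS M_A\to\bS M_B[1]$. You must know the arrangement and the relation between the shifts. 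The paper shows the triple is $(I',A',B')$ or $(A',B',I')$ with $k_A=k_B+1$. It uses the cyclic order $T^B\prec T^I\prec T^A$ on completions and the positions of the exchanged summands, then compares two non-split triangles with one-dimensional $\Ext^1$.
\item Fallback. A singleton $[\mathcal T\le\mathcal T]$ is never the middle term of an interval mutation. So an induction that builds $I$ from $A$ and $B$ never reaches it from projective and injective base cases.
\end{itemize}
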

        This reduces the combinatorics of the Serre functor of a Cambrian lattice to the study of the shift functor in 2-cluster categories. The coefficients in \cref{fcy_introduction_tors} are much easier to compute in this setting and \cref{mutable_interval_introduction} can be explained by observing that if we have three 2-cluster tilting objects $T_1$, $T_2$ and $T_3$ connected by mutation then also $T_1[1]$, $T_2[1]$ and $T_3[1]$ are connected by mutation.\\
        A geometric model for mutable intervals in linear type $A$ has been given by Rognerud in \cite{RognerudTamari} in terms of noncrossing trees while a model for 2-cluster tilting objects has been given by Baur and Marsh in \cite{GeometricClusterCategory} in terms of quadrangulations. The bijection $\interval$ also has a simple geometric interpretation in this case.\\
        In the appendix we describe and motivate the problem of classifying periodic Serre formal incidence algebras of lattices, which was one of the original goals when this project started. This problem seems to be a much more tractable problem compared to the classification problem of twisted fractional Calabi-Yau algebras among incidence algebras of lattices. We will give a conjecture for a combinatorial classification via the Coxeter matrix and give an example of a periodic Serre formal incidence algebra of a lattice that is not semidistributive and therefore not a Cambrian lattice.
    
    
    
    The paper is organized as follows:\\
    \textbf{Section 2} gives the necessary background on the Ingalls-Thomas bijections and lattices of torsion classes.\\
    \textbf{Section 3} recalls the 2-cluster category and its 2-cluster tilting objects.\\ 
    \textbf{Section 4} studies the set of mutable intervals and compares them to the set of 2-cluster tilting objects.\\
    \textbf{Section 5} extends this connection to interval mutations and mutation of 2-cluster tilting objects.\\
    \textbf{Section 6} gives an introduction to the relevant representation theory of incidence algebras and the Serre functor.\\
    \textbf{Section 7} completes the proof that Cambrian lattices are periodic Serre formal.\\
    \textbf{Section 8} contains a detailed example in type $A_3$.\\
    \textbf{Section 9} relates this paper to the work of Rognerud on the Tamari lattice and the geometric models for the Tamari lattice and 2-cluster categories.\\
    \textbf{The appendix} discusses and gives a combinatorial conjecture for the classification of periodic Serre formal incidence algebras of lattices via the Coxeter matrix.

\section{The Ingalls-Thomas bijections}
    For an introduction to the representation theory of finite dimensional algebras and the theory of hereditary algebras and torsion classes we refer to the textbook \cite{ASS}.
    Let $\Lambda$ be a finite dimensional connected representation finite hereditary algebra.
    The seminal paper \cite{IngallsThomas} by Ingalls and Thomas collects and extends a set of bijections between many families of objects obtained from the representation theory of $\Lambda$. See also \cite{RingelCatalan} for another account.\\
    This section recalls some of those bijections and some of the theory of lattices of torsion classes. The bijections most relevant to this article are summarised in the following commutative diagram:
    \[\begin{tikzcd}
    	{\{\text{wide subcategories}\}} & {\{\text{torsion classes}\}} & {\{\text{support tilting modules}\}} \\
    	{\{\text{wide subcategories}\}} & {\{\text{torsion-free classes}\}} & {\{\text{support tilting modules}\}}
    	\arrow["\begin{array}{c} W \mapsto \Gen W \\ \mathfrak a (\mathcal T)\mapsfrom \mathcal T \end{array}", tail reversed, from=1-1, to=1-2]
    	\arrow["\begin{array}{c} W \mapsto W^{\perp_{0,1}}\\ {}^{\perp_{0,1}}W \mapsfrom W \end{array}", tail reversed, from=1-1, to=2-1]
    	\arrow["\begin{array}{c} \mathcal T \mapsto \extproj \mathcal T\\ \Gen T \mapsfrom T \end{array}", tail reversed, from=1-2, to=1-3]
    	\arrow["\begin{array}{c} \mathcal T \mapsto \mathcal T^{\perp_0}\\ {}^{\perp_0} \mathcal F \mapsfrom \mathcal F \end{array}", tail reversed, from=1-2, to=2-2]
    	\arrow["\begin{array}{c} W \mapsto \Cogen W \\ \mathfrak a (\mathcal F)\mapsfrom \mathcal F \end{array}"', tail reversed, from=2-1, to=2-2]
    	\arrow["\begin{array}{c} \mathcal F \mapsto \extinj \mathcal T\\ \Cogen T \mapsfrom T \end{array}"', tail reversed, from=2-2, to=2-3]
    \end{tikzcd}\]
\subsection{Subcategories}
    \begin{Definition}
        Let $E\subset \modulecat \Lambda$ be an extension closed full subcategory.
        We call $E$ a
        \begin{enumerate}
            \item \emph{torsion class} if $E$ is closed under taking  quotients,
            \item \emph{torsion-free class} if $E$ is closed under taking submodules,
            \item \emph{wide subcategory} if $E$ is closed under taking kernels and cokernels.
        \end{enumerate}
    \end{Definition}
    Examples of these subcategories can be constructed using orthogonal categories:
    \begin{Definition}
        For a subcategory $\mathcal{C}\subset \modulecat \Lambda$ we define the full subcategories
        \begin{enumerate}
            \item $\mathcal{C}^{\perp_0} = \{Y\in \modulecat \Lambda\mid \Hom_\Lambda(X,Y) = 0\ \forall X\in \mathcal{C}\}$
            \item $\mathcal{C}^{\perp_1} = \{Y\in \modulecat \Lambda\mid \Ext_\Lambda^1(X,Y) = 0\ \forall X\in \mathcal{C}\}$
            \item $\mathcal{C}^{\perp_{0,1}} = \mathcal{C}^{\perp_0}\cap \mathcal{C}^{\perp_1}$
        \end{enumerate}
        The categories ${}^{\perp_0}\mathcal{C}$, ${}^{\perp_1}\mathcal{C}$ and ${}^{\perp_{0,1}}\mathcal{C}$ are defined dually. We also apply these construction to modules by identifying a module with its additive closure.
    \end{Definition}
    \begin{Lemma}
        Let $\mathcal{C}\subset \modulecat \Lambda$ be any subcategory. Then
        \begin{enumerate}
            \item the categories ${}^{\perp_0}\mathcal{C}$ and $\mathcal{C}^{\perp_1}$ are torsion classes,
            \item the categories $\mathcal{C}^{\perp_0}$ and ${}^{\perp_1}\mathcal{C}$ are torsion-free classes and
            \item the categories $\mathcal{C}^{\perp_{0,1}}$ and ${}^{\perp_{0,1}}\mathcal{C}$ are wide subcategories,
        \end{enumerate}
    \end{Lemma}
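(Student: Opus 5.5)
The lemma is checked directly from the long exact sequences for $\Hom_\Lambda(X,-)$ and $\Hom_\Lambda(-,Y)$, using that $\Lambda$ is hereditary, so $\Ext^2_\Lambda$ vanishes identically. By duality it suffices to treat $\mathcal C^{\perp_0}$, $\mathcal C^{\perp_1}$ and $\mathcal C^{\perp_{0,1}}$. The statements for ${}^{\perp_0}\mathcal C$, ${}^{\perp_1}\mathcal C$ and ${}^{\perp_{0,1}}\mathcal C$ follow by the same arguments with the roles of the variables swapped, or by passing to $\modulecat \Lambda^{op}$ via the standard duality $D$. Under $D$, torsion classes and torsion-free classes are exchanged, and wide subcategories go to wide subcategories.

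\textbf{Torsion and torsion-free classes.} Fix $X\in\mathcal C$ and a short exact sequence $0\to Y_1\to Y_2\to Y_3\to 0$. Applying $\Hom_\Lambda(X,-)$ gives
\[0\to \Hom(X,Y_1)\to\Hom(X,Y_2)\to\Hom(X,Y_3)\to\Ext^1(X,Y_1)\to\Ext^1(X,Y_2)\to\Ext^1(X,Y_3)\to 0,\]
where the final zero uses $\Ext^2=0$.
\begin{itemize}
\item Exactness at the first three terms shows that $\mathcal C^{\perp_0}$ is closed under extensions ($\Hom(X,Y_1)=0=\Hom(X,Y_3)$ forces $\Hom(X,Y_2)=0$). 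It is closed under submodules by left exactness.
\item Exactness at the last three terms shows that $\mathcal C^{\perp_1}$ is closed under extensions. Surjectivity of $\Ext^1(X,Y_2)\to\Ext^1(X,Y_3)$ shows it is closed under quotients.
\end{itemize}
Note that $\mathcal C^{\perp_1}$ is a torsion class only because $\Lambda$ is hereditary; this is the one place the hypothesis is really used. Extension closure is part of the definition of all three types of subcategory, so nothing more is needed here.

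\textbf{Wide subcategories.} The intersection $\mathcal C^{\perp_{0,1}}$ is extension closed, since both factors are. For a morphism $f\colon Y\to Y'$ between objects of $\mathcal C^{\perp_{0,1}}$, split it as
\[0\to K\to Y\to I\to 0, \qquad 0\to I\to Y'\to Q\to 0,\]
with $K=\ker f$, $I=\Image f$ and $Q=\Coker f$.
\begin{itemize}
\item $I$ is a quotient of $Y$, so $I\in\mathcal C^{\perp_1}$. It is a submodule of $Y'$, so $I\in \mathcal C^{\perp_0}$.
\item From the first sequence and the long exact sequence: $\Hom(X,K)\hookrightarrow\Hom(X,Y)=0$. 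Also $\Ext^1(X,K)$ sits between $\Hom(X,I)=0$ and $\Ext^1(X,Y)=0$, so it vanishes. Hence $K\in\mathcal C^{\perp_{0,1}}$.
\item From the second sequence: $\Hom(X,Q)$ sits between $\Hom(X,Y')=0$ and $\Ext^1(X,I)=0$, so it vanishes. Also $\Ext^1(X,Q)$ is a quotient of $\Ext^1(X,Y')=0$, so it vanishes. Hence $Q\in\mathcal C^{\perp_{0,1}}$.
\end{itemize}
The only mild obstacle is this bookkeeping: one must first show $I\in\mathcal C^{\perp_{0,1}}$, after which both the kernel and the cokernel follow from the two long exact sequences.
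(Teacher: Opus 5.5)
Your proof is correct. The paper states this lemma without proof as standard background, and your long-exact-sequence verification is the standard argument: $\Ext^2_\Lambda=0$ gives right exactness of $\Ext^1_\Lambda(X,-)$, and the left perpendicular categories follow by the duality $D$.

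One small quibble concerns your remark that hereditariness is used in only one place. The wide part also relies on it, both to get $I\in\mathcal C^{\perp_1}$ and to get $\Ext^1_\Lambda(X,Q)=0$, although every one of these uses reduces to the same vanishing of $\Ext^2_\Lambda$.
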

    \begin{Proposition}
        There are mutually inverse bijections
        \begin{align*}
            \{\text{torsion classes in }\modulecat \Lambda\}&\leftrightarrow \{\text{torsion-free classes in }\modulecat \Lambda\}\\
            \mathcal T&\mapsto \mathcal T^{\perp_0}\\
            {}^{\perp_0}\mathcal F &\mapsfrom \mathcal F
        \end{align*}
        and
        \begin{align*}
            \{\text{wide subcategories in }\modulecat \Lambda\}&\leftrightarrow \{\text{wide subcategories in }\modulecat \Lambda\}\\
            W&\mapsto W^{\perp_{0,1}}\\
            {}^{\perp_{0,1}}W &\mapsfrom W
        \end{align*}
    \end{Proposition}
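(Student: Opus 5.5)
The plan is to treat the two bijections separately. Each follows from a Galois-connection argument together with a characterisation of the subcategories as exact perpendicular categories of suitable modules. Throughout we use that $\Lambda$ is hereditary and representation finite. Hence every subcategory considered is additively generated by finitely many indecomposables, so $\Hom$ and $\Ext^1$ perpendiculars of a subcategory equal those of a single module.

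\emph{Torsion classes and torsion-free classes.} By the previous lemma, $\mathcal T^{\perp_0}$ is torsion-free and ${}^{\perp_0}\mathcal F$ is a torsion class, so both maps are well defined. Clearly $\mathcal T\subseteq {}^{\perp_0}(\mathcal T^{\perp_0})$. For the reverse inclusion, take $X\in{}^{\perp_0}(\mathcal T^{\perp_0})$ and let $tX$ be the largest submodule of $X$ lying in $\mathcal T$. This is the sum of the images of all maps from objects of $\mathcal T$; it lies in $\mathcal T$ because $\mathcal T$ is closed under quotients and finite direct sums, and finiteness holds since $X$ is finite dimensional. Next, $X/tX\in\mathcal T^{\perp_0}$: given a nonzero map $T\to X/tX$ with $T\in\mathcal T$, its image is a nonzero module in $\mathcal T$. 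Its preimage in $X$ is an extension of that image by $tX$, so it lies in $\mathcal T$ and strictly contains $tX$, contradicting maximality. Since $X\in{}^{\perp_0}(\mathcal T^{\perp_0})$, the projection $X\to X/tX$ vanishes, so $X=tX\in\mathcal T$. The dual argument, using the largest quotient lying in $\mathcal F$, gives $({}^{\perp_0}\mathcal F)^{\perp_0}=\mathcal F$. This part is standard and routine.

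\emph{Wide subcategories.} Again the lemma shows both maps land in wide subcategories, and $W\subseteq{}^{\perp_{0,1}}(W^{\perp_{0,1}})$ holds trivially, so the real content is the reverse inclusion. I would argue as follows. A wide subcategory $W$ of $\modulecat\Lambda$ is again hereditary, abelian and representation finite. It is equivalent to $\modulecat\Lambda_W$ for a hereditary algebra $\Lambda_W$, namely the endomorphism algebra of a progenerator of $W$. Its simple objects $S_1,\dots,S_r$ form an exceptional sequence, after ordering them so that $\Ext^1(S_j,S_i)=0$ for $j>i$; this uses that in the representation finite hereditary case every indecomposable is exceptional. Since $W$ is the extension closure of the $S_i$, we get $W^{\perp_{0,1}}=\{S_1,\dots,S_r\}^{\perp_{0,1}}$. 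Now invoke the theory of perpendicular categories of exceptional sequences (Geigle--Lenzing, Schofield, Crawley-Boevey). The perpendicular category $E^{\perp_{0,1}}$ of an exceptional sequence $E$ of length $r$ is equivalent to modules over a hereditary algebra with $n-r$ simples. Moreover, ${}^{\perp_{0,1}}(E^{\perp_{0,1}})$ is the smallest wide subcategory containing $E$, which is exactly $W$. Concretely, complete $E$ to a full exceptional sequence $(E,F)$ with $F$ spanning $E^{\perp_{0,1}}$. The left perpendicular of $F$ is then the wide subcategory of rank $r$ generated by $E$.

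The main obstacle is exactly this double-perpendicular identity ${}^{\perp_{0,1}}(W^{\perp_{0,1}})=W$. It requires the structure theory of exceptional sequences, in particular the completion of an exceptional sequence to a full one and a rank count. I would cite Geigle--Lenzing and Schofield for it rather than reprove it. An alternative I would keep in reserve is a dimension argument: both sides are wide, one contains the other, and they have the same number of simples (because $\operatorname{rank}W+\operatorname{rank}W^{\perp_{0,1}}=n$). A wide inclusion with equal rank in the representation finite hereditary setting is an equality, since the relative simples of the smaller category would have to be simple in the larger one by a Grothendieck group comparison. The symmetric statement $({}^{\perp_{0,1}}W)^{\perp_{0,1}}=W$ follows dually.
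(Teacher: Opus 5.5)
The paper gives no proof of this proposition. It is recalled in Section 2 as standard background (Ingalls--Thomas and the classical theory of perpendicular categories), so your proposal can only be measured against that literature.

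\textbf{Torsion pairs.} Your torsion/torsion-free half is the standard torsion-radical argument and is complete.

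\textbf{Wide subcategories.} Your main line reduces to the simples of $W$ and then cites the biperpendicular identity ${}^{\perp_{0,1}}(E^{\perp_{0,1}})=\mathcal C(E)$ for an exceptional sequence $E$ (Geigle--Lenzing, Schofield, Crawley-Boevey, Ringel). This is legitimate, and it is effectively what the paper does by citation. There are two small slips:
\begin{enumerate}
\item Ordering the simples of $W$ into an exceptional sequence uses that the $\Ext$-quiver of $W\simeq\modulecat\Lambda_W$ has no oriented cycles. The fact that indecomposables are exceptional only gives $\Ext^1(S_i,S_i)=0$.
\item With your convention ($\Ext^1(S_j,S_i)=0$ for $j>i$), completing $E$ by a sequence $F$ lying in $E^{\perp_{0,1}}$ gives $(F,E)$, not $(E,F)$.
\end{enumerate}

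\textbf{The fallback argument.} This one does not work as justified. The claim that an inclusion $W_1\subseteq W_2$ of wide subcategories of equal rank is an equality is true, but a Grothendieck group comparison cannot prove it. The classes $[T_i]$ of the simples of $W_1$ are nonnegative integer combinations of the classes $[S_j]$ of the simples of $W_2$, and they form a basis. However, a nonnegative integer matrix with determinant $\pm1$ need not be a permutation matrix; for example $\begin{pmatrix}2&1\\1&1\end{pmatrix}$ has no unit-vector row. So classes alone do not show that the $T_i$ are simple in $W_2$.

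You need a module-theoretic input. One option:
\begin{enumerate}
\item Let $P$ be a basic projective generator of $W_1$. Since $W_1$ is extension closed, $\Ext^1(P,P)=0$.
\item $P$ has $\rank W_2$ indecomposable summands, so by Bongartz it is a tilting object in $W_2\simeq\modulecat\Lambda_2$.
\item Take the tilting coresolution $0\to\Lambda_2\to P^0\to P^1\to 0$ with $P^i\in\addcat P$. Since $W_1$ is closed under kernels, $\Lambda_2\in W_1$.
\item Every object of $W_2$ is a cokernel of a map between projectives, hence lies in $W_1$.
\end{enumerate}

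Also note that your rank count uses both $\rank W+\rank W^{\perp_{0,1}}=n$ and its left-perpendicular dual. These are themselves the Geigle--Lenzing/Schofield results, so the fallback is not really independent of the citation.

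\textbf{A route internal to the paper.} If you want an argument using only the paper's own background list, set $\mathcal T=\Gen W$ and $\mathcal F=\mathcal T^{\perp_0}$, so that $W=\mathfrak a(\mathcal T)$. Then \cref{wideRankComplement}, together with its dual obtained via $D=\Hom_k(-,k)$, gives
\[
{}^{\perp_{0,1}}(W^{\perp_{0,1}})={}^{\perp_{0,1}}\mathfrak a(\mathcal F)=\mathfrak a(\mathcal T)=W.
\]
The other composite is handled symmetrically, starting from $\mathcal F=\Cogen W$.
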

    For $\mathcal T$ a torsion class we call the pair $(\mathcal T, \mathcal T^{\perp_0})$ a \emph{torsion pair}. Here $\mathcal T$ is the \emph{torsion part} and $\mathcal T^{\perp_0}$ is the \emph{torsion-free part} .
    In order to relate wide subcategories with torsion(-free) classes we need two more constructions.
    \begin{Definition}
        Let $E\subset \modulecat A$ be an extension closed full subcategory. We define the associated wide subcategory $\mathfrak a (E)$ as the additive closure of the set of objects $X$ in $E$ satisfying
        \begin{enumerate}
            \item every morphism in $E$ starting at $X$ has a cokernel in $E$
            \item every morphism in $E$ ending at $X$ has a kernel in $E$
        \end{enumerate}    
    \end{Definition}
    \begin{Definition}
        Let $\mathcal{C}\subset \modulecat \Lambda$ be an additive subcategory. We define the full subcategories
        \[\Gen(\mathcal C) = \{X\in \modulecat \Lambda \mid \exists M\twoheadrightarrow X, M\in \mathcal C  \},\]
        \[\Cogen(\mathcal C) = \{X\in \modulecat \Lambda \mid \exists X\hookrightarrow M, M\in \mathcal C  \}.\]
    \end{Definition}
    \begin{Proposition}
        There are mutually inverse bijections
        \begin{align*}
            \{\text{torsion classes in }\modulecat \Lambda\}&\leftrightarrow \{\text{wide subcategories in }\modulecat \Lambda\}\\
            \mathcal T&\mapsto \mathfrak a(\mathcal T)\\
            \Gen W &\mapsfrom W
        \end{align*}
        and
        \begin{align*}
            \{\text{torsion-free classes in }\modulecat \Lambda\}&\leftrightarrow \{\text{wide subcategories in }\modulecat \Lambda\}\\
            \mathcal F&\mapsto \mathfrak a(\mathcal F)\\
            \Cogen W &\mapsfrom W
        \end{align*}
    \end{Proposition}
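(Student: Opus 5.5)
The statement to prove is the Ingalls--Thomas bijection between torsion classes and wide subcategories, given by $W\mapsto \Gen W$ and $\mathcal T\mapsto \mathfrak a(\mathcal T)$, together with its dual for torsion-free classes via $\Cogen$. The dual statement follows by applying the standard duality $D=\Hom_k(-,k)\colon \modulecat\Lambda\to\modulecat\Lambda^{op}$. Indeed, $\Lambda^{op}$ is again hereditary and representation finite, $D$ exchanges torsion classes with torsion-free classes and $\Gen$ with $\Cogen$, and the definition of $\mathfrak a(E)$ is self-dual. So I will only treat torsion classes. The plan has three steps: (1) $\Gen W$ is a torsion class; (2) $\mathfrak a(\Gen W)=W$; (3) $\Gen\mathfrak a(\mathcal T)=\mathcal T$ and $\mathfrak a(\mathcal T)$ is wide.

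\textbf{Step 1.} $\Gen W$ is closed under quotients by definition, so the work is extension closure. Since $W$ is wide in a hereditary category, it is equivalent to $\modulecat$ of a hereditary algebra and has a projective generator $P_W$. This $P_W$ is Ext-projective in $W$, and $\Ext^1(P_W,W)=0$ yields $\Ext^1(P_W,\Gen W)=0$ by right exactness of $\Ext^1$ over a hereditary ring. Hence $\Gen W=\Gen P_W$ with $P_W$ rigid, and $\Gen$ of a rigid module is a torsion class.

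\textbf{Step 2.} First, $W\subseteq\mathfrak a(\Gen W)$. Take $X\in W$. For a morphism $f\colon X\to Y$ with $Y\in\Gen W$, the cokernel is a quotient of $Y$, so it lies in $\Gen W$. For a morphism $g\colon Y\to X$ with $Y\in\Gen W$, the image of $g$ is a quotient of an object of $W$. Here I use the key lemma that for $X\in W$ and $Z\in \Gen W$, every map $Z\to X$ factors through $W$: pull back along an epimorphism $W'\twoheadrightarrow Z$ and use that $W$ is closed under images. Thus $\Image g\in W$, and then $\ker g$ is an extension of objects in $\Gen W$.

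Conversely, $\mathfrak a(\Gen W)\subseteq W$. Let $X\in\mathfrak a(\Gen W)$ be indecomposable and choose a minimal right $W$-approximation $W_X\twoheadrightarrow X$. Its kernel $K$ must lie in $\Gen W$, because $X\in\mathfrak a$. Minimality together with the kernel condition should force $K\in W$: compare $K$ with its own $W$-cover and use hereditary splitting arguments. Wideness then gives $X\in W$. I expect this inclusion to be the main obstacle. As a fallback I would use representation finiteness and count: both sets are in bijection with support tilting modules, so injectivity of $W\mapsto\Gen W$ suffices.

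\textbf{Step 3.} $\mathfrak a(\mathcal T)$ is wide directly from its defining kernel and cokernel conditions, using that $\mathcal T$ is extension closed. To get $\Gen\mathfrak a(\mathcal T)=\mathcal T$, I would show that the minimal elements $\mathcal T$ needs are brick labels of the arrows $\mathcal T\to\mathcal T'$ in the Hasse diagram, and that these labels lie in $\mathfrak a(\mathcal T)$. This uses that $\tors\Lambda$ is finite, so every torsion class is a join of the join-irreducibles below it.

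Alternatively, knowing injectivity of $\Gen$ from Step 2, surjectivity follows from the equality of cardinalities of wide subcategories and torsion classes. Both are counted by the Coxeter--Catalan number, via noncrossing partitions and support tilting modules respectively. This is the route I would actually take if the direct argument stalls.
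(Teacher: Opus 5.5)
The paper does not prove this statement; it is recalled from Ingalls--Thomas. Judged on its own, your Step~1, the duality reduction and the inclusion $W\subseteq\mathfrak a(\Gen W)$ are fine. One wording fix: $\ker g$ is not ``an extension of objects in $\Gen W$''. It is a quotient of $\ker(g\pi)\in W$, where $\pi\colon W'\twoheadrightarrow Y$ is an epimorphism with $W'\in W$.

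The first genuine gap is the reverse inclusion $\mathfrak a(\Gen W)\subseteq W$, which you leave at ``should force''. Your fallback for it is circular. Injectivity of $W\mapsto\Gen W$ is exactly what $\mathfrak a(\Gen W)=W$ would give, and in Step~3 you then take that injectivity ``from Step~2''. Likewise, a bijection between wide subcategories and support tilting modules is a consequence of the statement, not an available input. The repair is one line, using the lemma you already proved:
\begin{itemize}
\item Take any epimorphism $W_X\twoheadrightarrow X$ with $W_X\in W$.
\item Since $W_X\in\Gen W$ and $X\in\mathfrak a(\Gen W)$, its kernel $K$ lies in $\Gen W$.
\item Your key lemma, applied to the inclusion $K\hookrightarrow W_X$, shows that $K$ is the image of a morphism in $W$, so $K\in W$.
\item Then $X\cong\Coker(K\hookrightarrow W_X)\in W$.
\end{itemize}
No approximations or splitting arguments are needed.

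The second gap is the heart of the theorem, $\Gen\mathfrak a(\mathcal T)=\mathcal T$, which you only gesture at. Writing $\mathcal T$ as a join of join-irreducibles does not help, because you would need those join-irreducibles to lie in $\Gen\mathfrak a(\mathcal T)$, which is the claim itself. The labels you need are those of the lower covers $\mathcal T'\lessdot\mathcal T$. Labels of arrows from $\mathcal T$ to a larger class lie in $\mathcal T^{\perp_0}$, so they cannot be in $\mathfrak a(\mathcal T)$.

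Here is an argument that works.
\begin{itemize}
\item $\Gen\mathfrak a(\mathcal T)$ is a torsion class by Step~1 and the wideness of $\mathfrak a(\mathcal T)$.
\item If it were strictly smaller than $\mathcal T$, finiteness of the lattice would place it inside some lower cover $\mathcal T'$.
\item So it suffices to show that the label $S$ of each lower cover lies in $\mathfrak a(\mathcal T)$. Here $S$ is characterised by $\mathcal T\cap(\mathcal T')^{\perp_0}=\addcat S$, a nontrivial input you must cite.
\item Take $g\colon Y\to S$ with $Y\in\mathcal T$, and let $t'$ be the torsion radical of $\mathcal T'$. Since $\Hom(\mathcal T',S)=0$, $g$ factors through $Y/t'Y$, which lies in $\addcat S$.
\item Hence $\ker g$ is an extension of $t'Y\in\mathcal T'$ by the kernel of a map $S^m\to S$.
\item That kernel lies in $\addcat S\subseteq\mathcal T$, since a nonzero such map is a split epimorphism because $S$ is a brick. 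So $\ker g\in\mathcal T$.
\end{itemize}

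Your counting alternative is valid once Step~2 is repaired as above. Then $\Gen$ is an injective map between finite sets of equal size, hence bijective, and $\mathfrak a(\Gen W)=W$ identifies $\mathfrak a$ as its inverse. But that route imports the enumeration of wide subcategories via noncrossing partitions, which is itself one of the main theorems of Ingalls--Thomas and far heavier than the direct argument.
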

    \begin{Proposition}\label{wideRankComplement}
        Let $(\mathcal T,\mathcal F)$ be a torsion pair. Then we have
        \[\mathfrak a (\mathcal T)^{\perp_{0,1}} = \mathfrak a (\mathcal F).\]
    \end{Proposition}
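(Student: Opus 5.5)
The plan is to reduce the claim to the Ingalls--Thomas bijection $\mathcal F\mapsto \mathfrak a(\mathcal F)$, $W\mapsto \Cogen W$ from the previous proposition. Put $W=\mathfrak a(\mathcal T)$, so that $\mathcal T=\Gen W$, and put $W'=W^{\perp_{0,1}}$. By the lemma above, $W'$ is a wide subcategory. Since torsion-free classes correspond bijectively to wide subcategories via $\Cogen$, it suffices to show
\[\Cogen W' = \mathcal F.\]
Applying $\mathfrak a$ to both sides then gives $\mathfrak a(\mathcal F)=W'=\mathfrak a(\mathcal T)^{\perp_{0,1}}$, which is the statement.

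First I record that $\mathcal F=\mathcal T^{\perp_0}=(\Gen W)^{\perp_0}=W^{\perp_0}$. The last equality holds because a morphism out of a quotient of $M\in W$ pulls back to a nonzero morphism out of $M$. The inclusion $\Cogen W'\subseteq \mathcal F$ is then immediate. Indeed, $W'\subseteq W^{\perp_0}=\mathcal F$, and $\mathcal F$ is closed under submodules.

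The main work is the reverse inclusion $\mathcal F\subseteq \Cogen W'$: each $X\in\mathcal F$ must embed into an object of $W^{\perp_{0,1}}$. Since $\Lambda$ is representation finite hereditary, $W$ is equivalent to the module category of a hereditary algebra. Let $P_W$ denote a projective generator of $W$; then $W^{\perp_{0,1}}=P_W^{\perp_{0,1}}$. I first take a universal (Bongartz-type) extension
\[0\rightarrow X\rightarrow Y\rightarrow P_W^m\rightarrow 0,\]
chosen so that the connecting map $\Hom(P_W,P_W^m)\rightarrow \Ext^1(P_W,X)$ is surjective. Applying $\Hom(P_W,-)$ gives $\Ext^1(P_W,Y)=0$, because $\Ext^1(P_W,P_W)=0$.

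Next I pass to $\overline Y=Y/tY$, where $tY$ is the torsion part of $Y$ with respect to $(\mathcal T,\mathcal F)$. Then $\Hom(W,\overline Y)=0$ because $\overline Y\in\mathcal F$. Also $\Ext^1(W,\overline Y)=0$: since $\Lambda$ is hereditary, $\Ext^2=0$, so $\Ext^1(W,Y)\twoheadrightarrow\Ext^1(W,\overline Y)$ is surjective. Hence $\overline Y\in W'$.

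The step I expect to be the main obstacle is showing that the composite $X\rightarrow Y\rightarrow \overline Y$ is injective, i.e.\ that $X\cap tY=0$. The idea is that $tY$ is the trace of $P_W$ in $Y$. Since $\Hom(P_W,X)=0$, the maps $P_W\rightarrow Y$ are controlled by the kernel of the connecting map. So I would choose $m$ minimal, with $P_W^m$ corresponding to an $\operatorname{End}(P_W)$-projective cover of $\Ext^1(P_W,X)$; then this kernel lies in the radical, and the trace of $P_W$ should meet $X$ trivially.

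If this direct argument becomes too delicate, the fallback is a counting argument. Inclusion in one direction is already known, so it suffices to compare ranks: $\operatorname{rank}W+\operatorname{rank}W^{\perp_{0,1}}=n$ for the perpendicular category, and the rank of $\mathfrak a(\mathcal T)$ plus the rank of $\mathfrak a(\mathcal F)$ should also be $n$, via the support tilting module $\extproj\mathcal T$. These rank identities would upgrade $\Cogen W'\subseteq\mathcal F$ to equality.
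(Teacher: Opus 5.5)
The paper gives no proof of this proposition; it is recalled as background from the Ingalls--Thomas theory, so your argument has to stand on its own. Most of it does. The reduction to $\Cogen(W^{\perp_{0,1}})=\mathcal F$ via the bijection $\mathcal F\mapsto\mathfrak a(\mathcal F)$ is sound. So are the identity $\mathcal F=W^{\perp_0}$, the easy inclusion, and the construction of $\overline Y\in W^{\perp_{0,1}}$. One small fix: apply the surjection $\Ext^1(-,Y)\twoheadrightarrow\Ext^1(-,\overline Y)$ only to $P_W$, then use $W^{\perp_{0,1}}=P_W^{\perp_{0,1}}$. For non-projective $M\in W$ and $m\geq 1$, $\Ext^1(M,Y)\twoheadrightarrow\Ext^1(M,P_W^m)\neq 0$, so $\Ext^1(W,Y)$ does not vanish.

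The genuine gap is the step you flagged: injectivity of $X\to\overline Y$. Your minimality heuristic does not prove it. Knowing that $\Hom(P_W,Y)$ maps into the radical of $\Hom(P_W,P_W^m)$ constrains $\pi(tY)$, not $\ker(\pi|_{tY})=X\cap tY$. The fallback fails outright. First, $\mathfrak a$ is not monotone. Second, an inclusion of cogenerated torsion-free classes plus equal ranks does not force equality. For $\Lambda$ of type $A_2$ with simple projective $S_1$ and projective-injective $P_2$, the wide subcategories $\addcat S_1$ and $\addcat P_2$ both have rank $1$, yet $\Cogen S_1=\addcat S_1\subsetneq\addcat(S_1\oplus P_2)=\Cogen P_2$.

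The missing ingredient is that $W=\mathfrak a(\mathcal T)$ is closed under kernels. Since $\Gen P_W=\Gen W=\mathcal T$, the torsion part $tY$ is the trace of $P_W$ in $Y$. So $tY=\phi(P_W^k)$ for some $\phi\colon P_W^k\to Y$. Let $\pi\colon Y\to P_W^m$ be the projection and $g=\pi\phi$; a direct check gives $X\cap tY=\phi(\ker g)$. Since $g$ is a morphism in the wide subcategory $W$, $\ker g\in W\subseteq\mathcal T$. Hence $X\cap tY$ is a quotient of an object of $\mathcal T$ and a submodule of $X\in\mathcal F$, so it lies in $\mathcal T\cap\mathcal F=0$.

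This step uses neither the universal property nor the minimality of $m$. The universal property is needed only for $\Ext^1(P_W,\overline Y)=0$. With this inserted, your argument is a complete, self-contained proof in place of the citation.
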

    \begin{Proposition}
        Let $W\subset \modulecat \Lambda$ be a wide subcategory. Then there exists a representation finite hereditary algebra $\Lambda'$ such that $W\cong \modulecat \Lambda'$. In particular $W$ is an abelian category with enough projectives. The $\Ext^1$ functor of $\modulecat \Lambda$ restricts to the $\Ext^1$ functor of $W$. By abuse of notation we will not distinguish between these two functors. 
    \end{Proposition}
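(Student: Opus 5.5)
We prove the three assertions of the proposition in turn: that $W$ is equivalent to $\modulecat \Lambda'$ for a representation finite hereditary algebra $\Lambda'$, that it has enough projectives, and that $\Ext^1$ restricts.

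\textbf{Abelian, finite length, finitely many indecomposables.} Since $W$ is a full additive subcategory closed under kernels and cokernels taken in $\modulecat \Lambda$, it is an abelian category, and its inclusion into $\modulecat\Lambda$ is exact. Every object of $W$ has finite length in $W$, because a strictly increasing chain of subobjects in $W$ is a chain of submodules in $\modulecat \Lambda$. The category $W$ is Krull--Schmidt, since it is closed under direct summands: a summand is the kernel of an idempotent endomorphism. Its indecomposables are indecomposable $\Lambda$-modules, so $W$ has only finitely many indecomposable objects up to isomorphism.

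\textbf{Projective generator.} Take $P$ to be the direct sum of the indecomposable objects of $W$ that are projective in $W$. The key step, and the main obstacle, is to show that every simple object of $W$ has a projective cover in $W$. For this I will use the exact inclusion, which gives that Yoneda $\Ext^1_W(X,Y)$ embeds into $\Ext^1_\Lambda(X,Y)$. The embedding holds because an extension in $\modulecat\Lambda$ of objects of $W$ has its middle term in $W$ by extension closure, and a sequence that splits in $\modulecat\Lambda$ splits in $W$ by fullness. Hence the map $\Ext^1_W\to\Ext^1_\Lambda$ is an isomorphism, which is the last claim of the proposition. The groups $\Ext^1_W(X,-)$ are finite dimensional over the base field $k$, and there are finitely many indecomposables, so one can construct projective covers by universal extensions, following Bongartz. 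Concretely, for a simple $S$ of $W$, iterate universal extensions of $S$ by the finitely many simples of $W$. This must terminate by finite representation type: lengths are bounded, since there are finitely many indecomposables and each has bounded length, and the resulting object is projective in $W$. It follows that $P$ is a projective generator, and by Morita theory $W\simeq \modulecat \Lambda'$ with $\Lambda'=\operatorname{End}_W(P)^{op}$. Then $\Lambda'$ is finite dimensional and representation finite.

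\textbf{Hereditary.} It remains to show that $\Lambda'$ is hereditary. Since $\Ext^2_\Lambda=0$, the functor $\Ext^1_\Lambda(X,-)$ is right exact on $\modulecat\Lambda$. Via the identification $\Ext^1_W\cong\Ext^1_\Lambda$ from the previous step, the functor $\Ext^1_W(X,-)$ is then right exact on $W$, because cokernels in $W$ coincide with cokernels in $\modulecat\Lambda$. Right exactness of $\Ext^1_{\Lambda'}(X,-)$ for all $X$ is equivalent to $\Ext^2_{\Lambda'}=0$, so $\Lambda'$ is hereditary. This completes the proof.
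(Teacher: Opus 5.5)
Your argument is correct in outline, but the paper does not prove this proposition. It is recalled as background for the Ingalls--Thomas bijections, and in that literature it comes from exceptional sequences and perpendicular categories (Geigle--Lenzing, Schofield). There, a wide subcategory is realised as the perpendicular category $E^{\perp_{0,1}}$ of an exceptional sequence, and such perpendicular categories inside a hereditary module category are shown directly to be module categories of hereditary algebras with $|\Lambda|-|E|$ simples. Your route is intrinsic instead:
\begin{itemize}
\item exactness and extension closure of the inclusion give $\Ext^1_W\cong\Ext^1_\Lambda$, which proves the last sentence of the proposition outright;
\item heredity then follows from right exactness of $\Ext^1_W(X,-)$;
\item finite representation type supplies the projective generator.
\end{itemize}
This is more elementary and self-contained, and the heredity half works verbatim for wide subcategories of any hereditary abelian category. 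The perpendicular-category route does not need finite type, since it handles finitely generated wide subcategories of arbitrary hereditary algebras. It also comes with the rank formula $\rank(W)+\rank(W^{\perp_{0,1}})=|\Lambda|$, which the paper uses immediately afterwards and which your argument does not give.

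The one step you should tighten is termination of the universal-extension iteration. Finite representation type bounds the length only of \emph{indecomposable} objects, so you need the iterates $P_i$ to stay indecomposable, and you do not argue this. It does hold if each step uses the universal extension $0\to T^{d}\to P_{i+1}\to P_i\to 0$ with $d$ the dimension of $\Ext^1_W(P_i,T)$ over the division ring $\operatorname{End}_W(T)$, that is, with respect to a basis. Then the connecting map $\Hom(T^{d},T)\to\Ext^1_W(P_i,T)$ is bijective. Hence $\Hom(P_{i+1},T')\cong\Hom(P_i,T')$ for every simple $T'$ of $W$, the top stays $S$, and $P_{i+1}$ is local.

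A quicker way to say the same thing is to take an indecomposable $P\in W$ with top $S$ and of maximal length. Suppose $0\to T\to E\to P\to 0$ were non-split with $T$ simple in $W$. No map from $E$ to a simple object can be nonzero on $T$, since it would split the sequence. So $E$ again has top $S$, is local, and is longer than $P$, a contradiction. Hence $\Ext^1_W(P,T)=0$ for all simples $T$, and by d\'evissage $P$ is projective in $W$ and covers $S$. With this fixed, the rest goes through as you wrote it: Morita theory, representation finiteness of $\Lambda'$, and heredity via injective hulls in $\modulecat\Lambda'$.
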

    This allows us to apply all constructions from this chapter within a given wide subcategory.
    \begin{Proposition}\label{frakaClosure}
        Let $\mathcal T$ be a torsion class, $M\in \mathfrak a (\mathcal T)$ and $N\subseteq M$ a submodule with $N\in \mathcal T$. Then $N\in \mathfrak a (\mathcal T)$. Dually let $\mathcal F$ be a torsion-free class, $M\in \mathfrak a (\mathcal F)$ and $N$ a factor module of $M$ with $N\in \mathcal F$. Then $N\in \mathfrak a (\mathcal F)$.
    \end{Proposition}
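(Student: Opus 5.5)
The plan is to check the two defining conditions of $\mathfrak a(\mathcal T)$ directly for $N$. The key facts are that $\mathcal T$ is closed under quotients and that composing with a monomorphism does not change kernels. Throughout, "a morphism in $\mathcal T$ has a kernel (cokernel) in $\mathcal T$" is read as "its kernel (cokernel) in $\modulecat \Lambda$ lies in $\mathcal T$". This is the natural reading, since $\mathcal T$ is a full subcategory of an abelian category.

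First I reduce to objects that satisfy the conditions themselves. $\mathfrak a(\mathcal T)$ is defined as an additive closure, so $M$ is a priori only a direct sum of objects $X_i$ satisfying conditions (1) and (2). I show that the class of objects satisfying (1) and (2) is closed under finite direct sums, so that $M$ satisfies them itself.

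Condition (1) holds for every object of $\mathcal T$. Indeed, for $g\colon X\to Z$ with $Z\in\mathcal T$, $\Coker g$ is a quotient of $Z$ and hence lies in $\mathcal T$.

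For condition (2), take $X_1,X_2$ satisfying (2) and a morphism $f=(f_1,f_2)\colon Y\to X_1\oplus X_2$ with $Y\in\mathcal T$. Then $\ker f$ is the kernel of the restriction $f_2|_{\ker f_1}\colon \ker f_1\to X_2$. Since $X_1$ satisfies (2), we have $\ker f_1\in\mathcal T$. Applying (2) for $X_2$ to this restricted map then gives $\ker f\in\mathcal T$. By induction, $M$ satisfies (2).

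Now let $N\subseteq M$ with $N\in\mathcal T$, and write $i\colon N\hookrightarrow M$ for the inclusion. Condition (1) for $N$ holds by the remark above. For condition (2), let $f\colon Y\to N$ with $Y\in\mathcal T$. Since $i$ is a monomorphism, $\ker f=\ker(i\circ f)$. The map $i\circ f\colon Y\to M$ is a morphism in $\mathcal T$, so its kernel lies in $\mathcal T$ because $M$ satisfies (2). Hence $N\in\mathfrak a(\mathcal T)$.

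The dual statement follows by the dual argument with the roles of kernels and cokernels exchanged. For a torsion-free class $\mathcal F$, condition (2) is automatic because $\mathcal F$ is closed under submodules. Condition (1) passes to direct sums dually, and it passes from $M$ to a quotient $p\colon M\twoheadrightarrow N$ with $N\in\mathcal F$ because $\Coker g=\Coker(g\circ p)$ for every $g\colon N\to Z$.

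The only step needing care is the passage from the additive closure to $M$ itself, which the direct-sum argument above handles. Everything else is formal.
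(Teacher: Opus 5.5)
Your proof is correct and complete. The paper gives no proof of this statement. It is recalled in Section 2 as part of the Ingalls--Thomas background material, so there is no route in the paper to compare against. Your direct check from the definition is the standard argument:
\begin{itemize}
\item the cokernel condition holds automatically because $\mathcal T$ is closed under quotients;
\item the kernel condition passes along the monomorphism $N\hookrightarrow M$, because $\ker f=\ker(i\circ f)$;
\item dually, the cokernel condition passes along an epimorphism in the torsion-free case.
\end{itemize}

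Two small remarks. First, you read ``has a kernel in $\mathcal T$'' as ``the kernel in $\modulecat\Lambda$ lies in $\mathcal T$''. This reading is forced, not merely natural. A torsion class has intrinsic categorical kernels, namely $t(\ker f)$. Under the intrinsic reading, condition (2) would therefore be vacuous and $\mathfrak a(\mathcal T)$ would be all of $\mathcal T$. Your reading also matches Ingalls--Thomas, who define $\mathfrak a(\mathcal T)$ by the kernel condition alone.

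Second, if ``additive closure'' is taken to include direct summands, then $M$ is only a summand of some $M'=\bigoplus X_i$. This costs nothing: since $N\subseteq M\subseteq M'$, you can run your main step with $M'$ in place of $M$. In particular, this shows that the class of objects satisfying (1) and (2) is already closed under summands.
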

    \begin{Definition}
        The \emph{rank} of a wide subcategory $W$ is the number of direct summands of a basic projective generator.
    \end{Definition}
    For a module $M$ we denote by $\rank(M) = |M|$ the number of indecomposable modules in $\addcat(M)$.
    \begin{Proposition}
        Let $W\subset \modulecat \Lambda$ be a wide subcategory.
        We have
        \[\rank(W) + \rank(W^{\perp_{0,1}}) = \rank (\Lambda)\]
    \end{Proposition}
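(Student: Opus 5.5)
We prove $\rank(W)+\rank(W^{\perp_{0,1}})=\rank(\Lambda)$ by realizing both ranks inside a single support tilting module built from the torsion class $\mathcal T=\Gen W$, using the Ingalls--Thomas bijections recalled above.

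\textbf{Step 1: set up the torsion pair.} Put $\mathcal T=\Gen W$. Then $\mathfrak a(\mathcal T)=W$, and by Proposition~\ref{wideRankComplement} we have $\mathfrak a(\mathcal F)=W^{\perp_{0,1}}$ for $\mathcal F=\mathcal T^{\perp_0}$. So it suffices to show
\[\rank\mathfrak a(\mathcal T)+\rank\mathfrak a(\mathcal T^{\perp_0})=\rank\Lambda.\]

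\textbf{Step 2: decompose the support tilting module.} Let $T=\extproj\mathcal T$, the basic support tilting module with $\Gen T=\mathcal T$. It is a tilting module over $\Lambda/\langle e\rangle$, where $e$ is the idempotent for the simples not in the support of $T$. Hence $|T|+\#\{\text{simples outside the support}\}=\rank\Lambda$.

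Split the summands of $T$ into two groups. The split projectives of $\mathcal T$ are those summands not in $\Gen$ of the others. The remaining summands are those admitting a nonzero map from another summand.

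\textbf{Step 3: identify $\rank W$.} Use the standard Ingalls--Thomas fact that $X\mapsto X/\rad_{\mathcal T}(T,X)$ sends the split projective summands of $T$ bijectively to the simple objects of $\mathfrak a(\mathcal T)$. Since $W\cong\modulecat\Lambda'$ with $\Lambda'$ hereditary, $\rank W$ is the number of simple objects of $W$. Therefore $\rank W$ equals the number of split projective summands of $T$.

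\textbf{Step 4: identify $\rank W^{\perp_{0,1}}$.} Dually, the simple objects of $\mathfrak a(\mathcal F)$ are the minimal extending objects for $\mathcal F$. Their number equals $|\extinj\mathcal F|$ minus the number of split injectives of $\mathcal F$.

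To relate this to $T$, use that $(T,P)$ and $(\extinj\mathcal F,\ldots)$ are $\tau$-tilting pairs related by the standard duality. Both have total rank $\rank\Lambda$. The summands that are not split projective in $\mathcal T$ correspond to the split injectives in $\mathcal F$.

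\textbf{Main obstacle.} This last correspondence, together with the bookkeeping of the non-supported simples, is the step I expect to be hardest.

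An alternative cleaner route avoids it. Induct on $\rank W$ using the Ingalls--Thomas bijection between wide subcategories and clusters/noncrossing partitions. For the induction step:
\begin{itemize}[label={}]
\item Pick a simple object $S$ of $W$.
\item Note $W^{\perp_{0,1}}=(W')^{\perp_{0,1}}\cap S^{\perp_{0,1}}$, where $W'$ is the wide subcategory generated by the other simples.
\item Use that $S^{\perp_{0,1}}$ is the perpendicular category of an exceptional module. By Geigle--Lenzing / Schofield it is equivalent to modules over a hereditary algebra with $\rank\Lambda-1$ simples.
\end{itemize}

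Within $S^{\perp_{0,1}}$, the category $W'$ sits as a wide subcategory of rank $\rank W-1$, after replacing $W'$ by the rank-preserving image under the equivalence. Applying induction inside $S^{\perp_{0,1}}$ gives the formula.

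The key input for this route is the perpendicular category theorem $\rank S^{\perp_{0,1}}=\rank\Lambda-1$. One must also check that the simples of $W$ other than $S$, suitably modified, lie in $S^{\perp_{0,1}}$.
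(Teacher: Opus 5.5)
The paper gives no proof of this statement; it is recalled background from Ingalls--Thomas, resting on the Geigle--Lenzing/Schofield perpendicular calculus. Your argument therefore has to stand on its own, and as written neither route closes.

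\textbf{First route.} Step~4 miscounts. By the dual half of \cref{wideSubcatsViaSplit}, the indecomposable injectives of $\mathfrak a(\mathcal F)$ are exactly the split-injective summands of $\extinj\mathcal F$. So $\rank W^{\perp_{0,1}}$ equals the number of split injectives, not $|\extinj\mathcal F|$ minus it. For $\mathcal T=0$ your formula gives $0$ instead of $n$. The asserted correspondence between non-split-projective summands of $T$ and split injectives of $\mathcal F$ fails in the same example: $T=0$, but there are $n$ split injectives. Once you repair it by adding the summands of $P$, the identity you need,
\[\#\{\text{split injectives of }\mathcal F\}=\#\{\text{non-split-projective summands of }T\}+|P|,\]
is, given Step~3, just a restatement of the proposition. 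Separately, ``summands admitting a nonzero map from another summand'' is not the complement of the split projectives. For $T=\Lambda$ of type $A_2$, both projectives are split projective in $\modulecat\Lambda$, yet the non-simple one receives a nonzero map from the simple one.

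The genuinely missing input is the $n$-regularity of the Hasse diagram of $\tors\Lambda$. Mutating the support tilting pair $(T,P)$ at each of its $n$ summands yields each neighbour of $\mathcal T$ exactly once (Adachi--Iyama--Reiten), so $\mathcal T$ has exactly $n$ upper and lower covers together. Granting this, both halves of \cref{SimplesAreLabels} together with \cref{wideRankComplement} identify the lower covers with the simples of $W$ and the upper covers with the simples of $W^{\perp_{0,1}}$. That finishes the proof and makes Steps~2--4 unnecessary.

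\textbf{Second route.} The strategy is viable, but the step you defer is the whole proof, and as phrased it fails. The other simples $S'$ of $W$ satisfy $\Hom_\Lambda(S,S')=0$, but in general $\Ext^1_\Lambda(S,S')\neq 0$. For example, take $W=\modulecat\Lambda$ of type $A_2$ with $S_1$ simple projective, $S=S_2$ and $S'=S_1$. So $W'\not\subseteq S^{\perp_{0,1}}$, and the equivalence $S^{\perp_{0,1}}\simeq\modulecat\Lambda''$ offers no ``image'' for objects outside $S^{\perp_{0,1}}$.

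Replace $W'$ by $W''=W\cap S^{\perp_{0,1}}$. It is wide in $S^{\perp_{0,1}}$ and has rank $\rank W-1$ by the perpendicular theorem applied inside $W\cong\modulecat\Lambda'$. In the $A_2$ example it is spanned by the projective-injective $P_2$, the middle term of the universal extension $0\to S_1\to P_2\to S_2\to 0$. You must then prove
\[W^{\perp_{0,1}}=S^{\perp_{0,1}}\cap W''^{\perp_{0,1}}.\]
One way is via the triangle $\mathbf R\Hom_\Lambda(S,X)\otimes S\to X\to C\to{}$ in $\mathcal D^b(\Lambda)$ for $X\in W$. The cohomology of $C$ lies in $W$, by closure under kernels, cokernels and extensions. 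Since $\Lambda$ is hereditary and $\mathbf R\Hom_\Lambda(S,C)=0$, it also lies in $S^{\perp_{0,1}}$, hence in $W''$.

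Finally, the induction must run over all hereditary ambient categories, because the ambient category changes from $\modulecat\Lambda$ to $S^{\perp_{0,1}}$. It then gives $(\rank W-1)+\rank W^{\perp_{0,1}}=\rank\Lambda-1$, as required.
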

    \begin{Proposition}
        Let $(\mathcal T,\mathcal F)$ be a torsion pair and $X\in \modulecat \Lambda$.
        Then there exists a canonical short exact sequence
        \[0\rightarrow t(X)\rightarrow X\rightarrow X/t(X)\rightarrow 0\]
        where $t(X)\in \mathcal T$ and $X/t(X)\in \mathcal F$. 
    \end{Proposition}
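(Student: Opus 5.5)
The statement is the existence of the canonical torsion sequence $0\rightarrow t(X)\rightarrow X\rightarrow X/t(X)\rightarrow 0$ with $t(X)\in\mathcal T$ and $X/t(X)\in\mathcal F=\mathcal T^{\perp_0}$. I would build $t(X)$ as the largest submodule of $X$ lying in $\mathcal T$, and then check that the cokernel lies in $\mathcal F$.

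\textbf{Step 1: existence of a largest torsion submodule.} Define $t(X)$ as the sum inside $X$ of all submodules of $X$ belonging to $\mathcal T$. Since $X$ is finite dimensional, this sum is already a finite sum $U_1+\dots+U_r$. It is the image of the natural map $U_1\oplus\dots\oplus U_r\rightarrow X$. The direct sum lies in $\mathcal T$, because $\mathcal T$ is extension closed and hence closed under finite direct sums. So $t(X)$ is a quotient of an object of $\mathcal T$, and therefore $t(X)\in\mathcal T$ because torsion classes are closed under quotients. By construction $t(X)$ contains every submodule of $X$ that lies in $\mathcal T$.

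\textbf{Step 2: the quotient is torsion free.} Set $F=X/t(X)$ and let $f\colon T\rightarrow F$ with $T\in\mathcal T$; we show $f=0$. The image $f(T)$ is a quotient of $T$, so $f(T)\in\mathcal T$. Let $U\subseteq X$ be the preimage of $f(T)$. It fits into the exact sequence $0\rightarrow t(X)\rightarrow U\rightarrow f(T)\rightarrow 0$, whose outer terms both lie in $\mathcal T$. Extension closure gives $U\in\mathcal T$, and maximality of $t(X)$ gives $U\subseteq t(X)$. Hence $f(T)=0$, so $F\in\mathcal T^{\perp_0}=\mathcal F$.

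\textbf{Step 3: canonicity.} The sequence is canonical because $t(X)$ is characterised intrinsically as the largest submodule of $X$ in $\mathcal T$. Moreover, any morphism $X\rightarrow Y$ sends $t(X)$ to a quotient of an object of $\mathcal T$, hence into $t(Y)$. So $t$ is a subfunctor of the identity, and the sequence is functorial in $X$.

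None of these steps is a real obstacle. The only point needing care is Step 1: the sum of torsion submodules is handled as the image of a finite direct sum, using closure under direct sums (from extensions) and then under quotients.
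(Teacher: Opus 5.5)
Your proof is correct: taking $t(X)$ to be the largest submodule of $X$ lying in $\mathcal T$ (a finite sum of torsion submodules, which lies in $\mathcal T$ by closure under direct sums and quotients) and then using extension closure to show $X/t(X)\in\mathcal T^{\perp_0}=\mathcal F$ is the standard argument. The paper gives no proof of its own here and treats the statement as textbook background (cf.\ \cite{ASS}), so your route is the one that the cited literature uses.
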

\subsection{Support tilting modules}
    \begin{Definition}
        The \emph{support} of a module $M\in \modulecat \Lambda$ is the set of indecomposable projectives $P\in \modulecat \Lambda$ with $\Hom_\Lambda(P,M)\neq 0$.
    \end{Definition}
    \begin{Definition}
        A \emph{partial tilting module} $M\in \modulecat \Lambda$ is a basic module satisfying $\Ext_\Lambda^1(M,M) = 0$. A partial tilting module $M$ is \emph{support tilting} if its rank is the same size as its support and \emph{tilting} if it has the same rank as $\Lambda$.
        In this case the \emph{support tilting pair} associated to $M$ is the pair $(M,P_M)$, where $P_M$ is a maximal basic projective module with $\Hom_\Lambda(P_M,M) = 0$.
    \end{Definition}
    In particular for any support tilting pair $(M,P_M)$ we have $|M| + |P_M| = |\Lambda|$. There are two canonical ways to complete a partial tilting module to a support tilting module.
    \begin{Proposition}[Bongartz complement]
        Let $M$ be a partial tilting modules. Then there exist basic modules $\beta(M)\in \Cogen M$ and $\gamma(M)\in \Gen M$ such that $M\oplus \beta(M)$ and $M\oplus \gamma(M'')$ are support tilting modules. The module $\beta(M)$ is called the \emph{Bongartz complement} and the module $\gamma(M)$ is called the \emph{co-Bongartz complement}.
    \end{Proposition}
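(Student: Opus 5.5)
Write $N:=\beta(M)$ and $N':=\gamma(M)$ for the two complements. The plan is to build both by universal extensions and then check the support tilting condition, counting ranks inside a suitable wide subcategory. For the Bongartz complement, let $\mathcal F=\Cogen M$ and let $\mathcal T={}^{\perp_0}\mathcal F$ be the torsion class with torsion-free part $\mathcal F$. Since $M$ is partial tilting and $\Lambda$ is hereditary, $\Ext^1(M,-)$ is right exact. I would therefore work with the torsion-free class ${}^{\perp_1}M\cap\mathcal F$, which is closed under submodules and extensions. Equivalently, I would use the wide subcategory $\mathfrak a(\mathcal F)$ and its perpendicular $\mathfrak a(\mathcal F)^{\perp_{0,1}}=\mathfrak a(\mathcal T)$ from \cref{wideRankComplement}.

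Concretely, I take $N$ to be the basic module whose indecomposable summands are the indecomposable $\Ext$-injective objects of the torsion-free class $\mathcal F'={}^{\perp_1}M\cap \Cogen M$ that are not already summands of $M$. The module $M$ lies in $\mathcal F'$. For $X\in\mathcal F'$ we have $\Ext^1(X,M)=0$, and since $\Lambda$ is hereditary and $X\in \Cogen M$, also $\Ext^1(X,M^{n})=0$. Hence every summand of $M$ is $\Ext$-injective in $\mathcal F'$. By the dual Ingalls--Thomas bijection (torsion-free class $\leftrightarrow$ support tilting module via $\extinj$), the module $\extinj \mathcal F'$ is support tilting, and it contains $M$ as a summand. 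So $N\in\mathcal F'\subseteq \Cogen M$, and $M\oplus N$ is support tilting.

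The co-Bongartz complement is handled dually. I take the torsion class $\mathcal T'=\Gen M\cap M^{\perp_1}$. It is closed under quotients because $\Ext^1(M,-)$ is right exact in the hereditary setting, and it is closed under extensions by the long exact sequence. I then use $\extproj \mathcal T'$, which is support tilting by the bijection $\mathcal T\mapsto\extproj\mathcal T$. Each summand of $M$ is $\Ext$-projective there: $\Ext^1(M,Y)=0$ for all $Y\in M^{\perp_1}$ by definition, and $M\in\Gen M\cap M^{\perp_1}$ because $M$ is rigid. Hence $M$ is a summand of $\extproj\mathcal T'$, and I set $N'$ to be the complement, so that $N'\in\mathcal T'\subseteq\Gen M$.

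The main obstacle is the claim $\mathcal T'=\Gen M$. In fact $\Gen M\subseteq M^{\perp_1}$ holds for $\Lambda$ hereditary: applying $\Ext^1(M,-)$ to $M^n\twoheadrightarrow X$ gives a surjection $\Ext^1(M,M^n)\to\Ext^1(M,X)$, and the source is $0$. So $\mathcal T'=\Gen M$. The delicate point is to make sure the chosen class really is a torsion class whose $\Ext$-projectives contain $M$, so that the Ingalls--Thomas bijection applies. The same remark shows $\mathcal F'=\Cogen M$ on the dual side.

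Basicness of $N$ and $N'$ holds by construction, and the rank equality $|M\oplus N|=$ support comes directly from the bijection. I therefore expect the write-up to reduce to these two closure checks plus citing the support tilting bijections.
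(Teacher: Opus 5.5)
Your argument is correct. The paper states this proposition as background without proof, so the relevant comparison is with the classical construction. You instead derive it from the bijections $\mathcal T\mapsto\extproj\mathcal T$ and $\mathcal F\mapsto\extinj\mathcal F$ recalled right afterwards. Because $\Lambda$ is hereditary and $M$ is rigid, $\Gen M\subseteq M^{\perp_1}$ and $\Cogen M\subseteq{}^{\perp_1}M$. So $M$ is ext-projective in $\Gen M$ and ext-injective in $\Cogen M$, and $\gamma(M)$, $\beta(M)$ are the complements of $M$ in $\extproj(\Gen M)$ and $\extinj(\Cogen M)$. This is the natural proof in the paper's setting, and it yields exactly the completions the statement asks for.

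The universal-extension plan of your first paragraph, which you never use, would not give these completions. Bongartz's sequence $0\to\Lambda\to E\to M^d\to 0$ gives the completion with torsion class $M^{\perp_1}$. For $M=P_1$ the simple projective in type $A_2$, its complement is the other indecomposable projective $P_2$, and $P_2\notin\Cogen P_1$. You should delete that paragraph.

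The closure check you leave least justified is extension-closure. ``By the long exact sequence'' only covers the $M^{\perp_1}$ (resp.\ ${}^{\perp_1}M$) part, not the $\Gen M$ (resp.\ $\Cogen M$) part. Take $0\to X\to Y\to Z\to 0$ with $X,Z\in\Gen M$. Lift an epimorphism $M^b\twoheadrightarrow Z$ to $Y$ using $\Ext^1_\Lambda(M,X)=0$, and add an epimorphism $M^a\twoheadrightarrow X$. Dually, for $\Cogen M$, extend $X\hookrightarrow M^a$ over $Y$ using $\Ext^1_\Lambda(Z,M)=0$. This is where rigidity of $M$ enters.

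By contrast, the equality $\Gen M\cap M^{\perp_1}=\Gen M$ that you call the main obstacle is not needed. The inclusion $N'\in\mathcal T'\subseteq\Gen M$ already gives the statement. The equality then follows for free from $\mathcal T'=\Gen(\extproj\mathcal T')=\Gen(M\oplus N')=\Gen M$. Your inclusion $\Gen M\subseteq M^{\perp_1}$ is still useful, because it lets you work with $\Gen M$ directly from the start.
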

    In the proposition we already implicitly associated a support tilting module $T$ with its torsion class $\Gen T$ and its torsion-free class $\Cogen T$. The next definition provides a way to go the other way.
    \begin{Definition}
        Let $\mathcal T$ be a torsion class. An object $X\in \mathcal T$ is \emph{ext-projective} in $\mathcal{T}$ if $\Ext_\Lambda^1(X,Y) = 0$ for all $Y\in \mathcal T$. Dually an object $X$ in a torsion-free class $\mathcal F$ is \emph{ext-injective} in $\mathcal F$ if $\Ext_\Lambda^1(Y,X) = 0$ for all $Y\in \mathcal F$.
    \end{Definition}
    \begin{Proposition}
        There are mutually inverse bijections
        \begin{align*}
            \{\text{torsion classes in }\modulecat \Lambda\}&\leftrightarrow \{\text{support tilting modules in }\modulecat \Lambda\}\\
            \mathcal T&\mapsto \extproj(\mathcal T)\\
            \Gen M &\mapsfrom M
        \end{align*}
        where $\extproj(\mathcal T)$ is the direct sum of all ext-projective objects in $\mathcal T$.
    \end{Proposition}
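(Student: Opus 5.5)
The approach is to show that both maps are well defined, and then check that the composites are identities. For the direction $\mathcal T \mapsto \extproj(\mathcal T)$ the plan is to build an explicit ext-projective generator of $\mathcal T$ from an approximation of $\Lambda$. Since $\Lambda$ is representation finite, $\mathcal T$ has only finitely many indecomposables, so $\Lambda$ has a minimal left $\mathcal T$-approximation $f\colon \Lambda\to T_0$. Two facts then follow.

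First, $T_0$ generates $\mathcal T$. For $X\in\mathcal T$ take an epimorphism $\Lambda^n\twoheadrightarrow X$. It factors through $f^n$, so $T_0^n\to X$ is surjective and $\mathcal T=\Gen T_0$.

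Second, $T_0$ is ext-projective in $\mathcal T$. Factor $f$ as $\Lambda\twoheadrightarrow \Image f\hookrightarrow T_0$; here $\Image f$ is projective because $\Lambda$ is hereditary, and put $C=\Coker f$. Then $C$ is a quotient of $T_0$, so $C\in\mathcal T$. Apply $\Hom_\Lambda(-,Y)$ with $Y\in\mathcal T$ to $0\to \Image f\to T_0\to C\to 0$. Every map $\Image f\to Y$ extends along $\Lambda\twoheadrightarrow\Image f$ and then factors through $f$, so the restriction map $\Hom(T_0,Y)\to\Hom(\Image f,Y)$ is surjective. Since $\Ext^1(\Image f,Y)=0$, we get $\Ext^1(T_0,Y)\cong \Ext^1(C,Y)$. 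To show this vanishes I would use the Auslander--Reiten formula $\Ext^1(T_0,Y)\cong D\overline{\Hom}(Y,\tau T_0)$ together with minimality of $f$; this is the usual Wakamatsu-type argument. The conclusion is that all summands of $T_0$ are ext-projective, and every ext-projective object is a summand of $T_0$, because it is a quotient of $T_0^n$ and the resulting sequence splits.

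So $M:=\extproj(\mathcal T)$ satisfies $\Gen M=\mathcal T$ and $\Ext^1(M,M)=0$. What remains for this direction is the rank condition $|M|=|\text{support of }M|$. This is the step I expect to be the main obstacle. The idea is to pass to the quotient algebra $\Lambda/\mathrm{ann}(\mathcal T)$, which is again hereditary of rank equal to the size of the support. Inside its module category, $M$ is a partial tilting module generating a faithful torsion class. Then I would apply the Bongartz complement from the Proposition above: let $\beta(M)\in\Cogen M$ be the Bongartz complement, so $M\oplus\beta(M)$ is support tilting. I would argue that $\beta(M)$ lies in $\Gen M=\mathcal T$ and is ext-projective there, hence $\beta(M)\in\addcat M$. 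It follows that $M$ already has full rank in that quotient.

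For the reverse direction, let $M$ be support tilting. The class $\Gen M$ is closed under quotients. It is closed under extensions because $\Lambda$ is hereditary and $\Ext^1(M,M)=0$: for $0\to X\to E\to Z\to 0$ with $X,Z\in\Gen M$, a surjection $M^m\twoheadrightarrow Z$ lifts to $E$, since $\Ext^1(M,X)$ is a quotient of $\Ext^1(M,M^n)=0$ by right exactness of $\Ext^1$ in the hereditary case. Each summand of $M$ is ext-projective in $\Gen M$ by the same right-exactness. Hence $M\in\addcat\extproj(\Gen M)$. Both $M$ and $\extproj(\Gen M)$ are partial tilting with the same support, and the bound $|M|\le$ size of support forces equality; therefore $\extproj(\Gen M)=M$. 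Together with $\Gen\extproj(\mathcal T)=\mathcal T$ from the first direction, this shows the two maps are mutually inverse.
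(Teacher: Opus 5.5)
The paper does not prove this statement; it is recalled background from Ingalls--Thomas and Adachi--Iyama--Reiten, so I judge your argument on its own. Your reverse direction is fine. The forward direction, however, has a genuine gap at exactly the step you flagged, and it also contains two false intermediate claims.

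First, $\Image f$ need not be projective: quotients of projectives are not projective even for hereditary $\Lambda$. For $Q\colon 1\to 2$ with $P_2=S_2\subset P_1$ and $\mathcal T=\addcat S_1$, the map $f$ is the projection $P_1\oplus P_2\twoheadrightarrow S_1$. The vanishing of $\Ext^1(T_0,Y)$ needs no Auslander--Reiten formula. Take $0\to Y\to E\xrightarrow{\pi}T_0\to 0$ with $Y\in\mathcal T$. Then $E\in\mathcal T$, so a lift $g\colon\Lambda\to E$ of $f$ factors as $g=hf$. Hence $(\pi h)f=f$, left minimality makes $\pi h$ an automorphism, and $\pi$ splits. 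Combined with your surjectivity of $\Hom(T_0,Y)\to\Hom(\Image f,Y)$, this gives $\Ext^1(C,Y)\hookrightarrow\Ext^1(T_0,Y)=0$, so $C$ is ext-projective as well.

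Second, not every ext-projective is a summand of $T_0$. Take $\mathcal T=\Gen P_1=\addcat(P_1\oplus S_1)$ in the same quiver. Then $\addcat T_0=\addcat P_1$, yet $S_1=C$ is ext-projective. The kernel $S_2$ of $P_1\twoheadrightarrow S_1$ is not in $\mathcal T$, so nothing forces a splitting. This error does not affect $\Gen M=\mathcal T$ or $\Ext^1(M,M)=0$, but it is exactly the phenomenon the rank count rests on.

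The rank step itself is not proved. You assert without reason that $\Lambda/\mathrm{ann}(\mathcal T)$ is hereditary. Quotients of hereditary algebras usually are not, and here it only follows a posteriori from the statement. The claim $\beta(M)\in\Gen M$ is equivalent to what you want to prove. If it held, the summands of $\beta(M)$ would be ext-projective in $\Gen(M\oplus\beta(M))=\mathcal T$ by your right-exactness argument. They would then lie in $\addcat M$, so $M$ would already be support tilting.

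You can close the gap in either of two ways.

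\begin{enumerate}
\item Within the paper's framework, use the co-Bongartz complement $\gamma(M)\in\Gen M$ instead. The argument just given shows $\gamma(M)\in\addcat M$, so $\gamma(M)=0$ and $M$ is support tilting, with no quotient algebra needed. In the literature, though, $\gamma$ is usually constructed via this very bijection.
\item Self-containedly, approximate each $P_i$ separately: $P_i\xrightarrow{f_i}T_0^{(i)}\to C^{(i)}\to 0$. By the argument above, $T_0^{(i)}\oplus C^{(i)}\in\addcat M$. For $i$ in the support of $\mathcal T$ we have $f_i\neq 0$, so $\Image f_i$ is a nonzero quotient of $P_i$. Its dimension vector has entry $1$ at $i$ and is supported inside the support of $P_i$. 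Since $Q$ is acyclic, these vectors are unitriangular in a suitable vertex order, hence linearly independent. Each equals $\underline{\dim}\,T_0^{(i)}-\underline{\dim}\,C^{(i)}$. Thus the dimension vectors of the indecomposable summands of $M$ span a space of dimension at least the support size, giving $|M|\geq$ support size. The reverse inequality is the partial tilting bound you already use in the reverse direction.
\end{enumerate}
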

    The dual statement is the following:
    \begin{Proposition}
        There are mutually inverse bijections
        \begin{align*}
            \{\text{torsion-free classes in }\modulecat \Lambda\}&\leftrightarrow \{\text{support tilting modules in }\modulecat \Lambda\}\\
            \mathcal F&\mapsto \extinj(\mathcal F)\\
            \Cogen M &\mapsfrom M
        \end{align*}
        where $\extinj(\mathcal F)$ is the direct sum of all ext-injective objects in $\mathcal F$.
    \end{Proposition}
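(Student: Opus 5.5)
The statement to prove is the bijection between torsion-free classes and support tilting modules, $\mathcal F\mapsto \extinj(\mathcal F)$ and $M\mapsto \Cogen M$. The plan is to deduce it from the preceding proposition (torsion classes $\leftrightarrow$ support tilting modules via $\extproj$ and $\Gen$) by duality, rather than proving it from scratch.

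\textbf{Step 1: transport the problem to $\Lambda^{op}$.} The standard duality $D=\Hom_k(-,k)\colon \modulecat\Lambda\to\modulecat\Lambda^{op}$ is an exact contravariant equivalence. We check how it interacts with the relevant notions.
\begin{enumerate}
\item It exchanges submodules with quotients, so it sends torsion-free classes of $\Lambda$ bijectively to torsion classes of $\Lambda^{op}$.
\item It satisfies $\Ext^1_\Lambda(X,Y)\cong\Ext^1_{\Lambda^{op}}(DY,DX)$. Hence a module $X$ is ext-injective in $\mathcal F$ if and only if $DX$ is ext-projective in $D\mathcal F$, so $D(\extinj\mathcal F)=\extproj(D\mathcal F)$.
\item It sends $\Cogen M$ to $\Gen DM$.
\end{enumerate}
Note that $\Lambda^{op}$ is again finite dimensional, hereditary and representation finite, so the previous proposition applies to it.

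\textbf{Step 2: $D$ preserves support tilting modules.} This is the only genuinely non-formal point. Partial tilting is preserved by item 2 above, and rank is obviously preserved. It remains to compare supports.
\begin{itemize}
\item The support of $M$ is the set of vertices $i$ with $e_iM\neq 0$, where $e_i$ is the primitive idempotent with $P_i=\Lambda e_i$, since $\Hom(P_i,M)=e_iM$.
\item For $\Lambda^{op}$, the indecomposable projectives are $e_i\Lambda$, and $\Hom_{\Lambda^{op}}(e_i\Lambda,DM)=(DM)e_i=D(e_iM)$.
\item Hence $M$ and $DM$ have the same support, indexed by the same vertex set.
\end{itemize}
So $|M|=|\mathrm{supp}\, M|$ if and only if the same holds for $DM$, and $D$ gives a bijection between support tilting modules of $\Lambda$ and of $\Lambda^{op}$.

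\textbf{Step 3: conclude.} Compose the bijections: torsion-free classes of $\Lambda$, then (via $D$) torsion classes of $\Lambda^{op}$, then (via the previous proposition) support tilting $\Lambda^{op}$-modules, then (via $D$, using $D^2\cong\identity$) support tilting $\Lambda$-modules. By Steps 1 and 2 this composite is
\[\mathcal F\mapsto D\,\extproj(D\mathcal F)=\extinj\mathcal F,\]
and its inverse is $M\mapsto D\Gen(DM)=\Cogen M$. These are exactly the maps in the statement, so they are mutually inverse bijections.

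The main obstacle is Step 2, the support compatibility. The definition of support is phrased via projectives, and one has to observe that it is really a condition on vertices, which $D$ preserves. Once this is checked, everything else is formal bookkeeping.
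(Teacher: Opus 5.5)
Your proposal is correct and follows essentially the paper's route. The paper gives no separate proof: it presents this as "the dual statement" of the preceding $\extproj$/$\Gen$ bijection. Your argument via $D=\Hom_k(-,k)$ and $\Lambda^{op}$ is the spelled-out version of that duality, including the check that supports match, which is the one point the paper leaves implicit.
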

    The next definition helps describe $\mathfrak a(\Gen T)$ for a support tilting module $T$.
    \begin{Definition}
        Let $\mathcal T$ be a torsion class. An object $X\in \mathcal T$ is \emph{split-projective} in $\mathcal{T}$ if every surjection $Y\twoheadrightarrow X$ in $\mathcal T$ splits. Dually an object $X$ in a torsion-free class is \emph{split-injective} in $\mathcal{F}$ if every injection $X\hookrightarrow Y$ in $\mathcal F$ splits.
    \end{Definition}
    All split-projectives are ext-projective and all split-injectives are ext-injective.
    \begin{Proposition}\label{wideSubcatsViaSplit}
        Let $T$ be a support tilting module. We write $T = T_{split}\oplus T_{ext}$ where $T_{split}$ is the sum of the split-projectives in $\Gen T$ and $T_{ext}$ is the direct sum of the remaining summands of $T$. Then
        \[\mathfrak a (\Gen T) = \{X\in \Gen T \mid \Hom_\Lambda(T_{ext}, X) = 0\}\]
        and $T_{split}$ is the direct sum of the projective objects in $\mathfrak a (\Gen T)$. Dually we can write $T = T'_{split}\oplus T'_{ext}$ where $T'_{split}$ is the sum of the split-injectives in $\Cogen T$ and $T'_{ext}$ is the direct sum of the remaining summands of $T$. Then
        \[\mathfrak a (\Cogen T) = \{X\in \mathcal \Cogen T \mid \Hom_\Lambda(X, T'_{ext}) = 0\}\]
        and $T_{split}$ is the direct sum of the injective objects in $\mathfrak a (\Cogen T)$.
    \end{Proposition}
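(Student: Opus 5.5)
Write $W=\mathfrak a(\Gen T)$ and $V=\{X\in \Gen T\mid \Hom_\Lambda(T_{ext},X)=0\}$. The plan is to first identify the projectives of $W$ with the summands of $T_{split}$, and then compare $W$ with $V$. The dual statement then follows by applying the first statement to $\Lambda^{op}$ via the standard duality $D$. This duality exchanges torsion and torsion-free classes, $\Gen$ and $\Cogen$, split-projectives and split-injectives, and $\mathfrak a$ with itself.

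\emph{Step 1: $T_{split}$ lies in $W$ and is projective there.} We use the bijection $\mathcal T\mapsto\mathfrak a(\mathcal T)$ with inverse $\Gen$, which gives $\Gen W=\Gen T$. For a split-projective summand $T_i$ there is therefore a surjection $W_0\twoheadrightarrow T_i$ with $W_0\in W\subseteq \Gen T$. This surjection splits, so $T_i$ is a summand of $W_0$ and hence $T_i\in W$. Any surjection onto $T_i$ inside $W$ is a surjection inside $\Gen T$, so it splits; thus $T_i$ is projective in $W$.

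\emph{Step 2: every indecomposable projective $P$ of $W$ is a summand of $T_{split}$.} First we show $P$ is ext-projective in $\Gen T$. Let $Y\in\Gen T=\Gen W$ and choose a surjection $W_0\twoheadrightarrow Y$ with $W_0\in W$. Because $\Lambda$ is hereditary, $\Ext^1(P,W_0)\to\Ext^1(P,Y)$ is surjective. Since $\Ext^1$ restricts to $W$, we have $\Ext^1(P,W_0)=0$, and hence $\Ext^1(P,Y)=0$. So $P$ is ext-projective and is therefore a summand of $T=\extproj(\Gen T)$. Next we show $P$ is split-projective. Let $Y\twoheadrightarrow P$ be a surjection in $\Gen T$, and precompose it with a surjection $W_0\twoheadrightarrow Y$, $W_0\in W$. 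The composite splits because $P$ is projective in $W$, so $Y\twoheadrightarrow P$ splits as well. Hence $P$ is a summand of $T_{split}$. Together with Step 1, this shows that $T_{split}$ is the sum of the projectives of $W$.

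\emph{Step 3: $W=V$.} For $W\subseteq V$, the key claim is $\Hom(T_{ext},X)=0$ for $X\in W$. Let $f\colon T_j\to X$ with $T_j$ a summand of $T_{ext}$. The image $\Image f$ lies in $\Gen T$ and is a submodule of $X\in W$, so $\Image f\in W$ by \cref{frakaClosure}. Take a projective cover $P_W\twoheadrightarrow \Image f$ in $W$; by Step 2, $P_W\in\addcat T_{split}$. Its kernel lies in $W\subseteq\Gen T$, and $T_j$ is ext-projective in $\Gen T$, so $T_j\twoheadrightarrow\Image f$ lifts to $T_j\to P_W$. To conclude it suffices to show $\Hom(T_{ext},T_{split})=0$. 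Suppose $g\colon T_j\to T_i$ is nonzero with $T_i$ split-projective. I would try to show that a minimal right $\addcat(T)$-approximation of $T_j$ through $T_{split}$ forces $T_j$ itself to be split-projective, a contradiction. The argument would use that $\Image g\in W$ and that $T_i$ is projective in $W$, together with Step 2.

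For $V\subseteq W$, I would argue by ranks. Note $V=\Gen T\cap T_{ext}^{\perp_0}$, and since $T_{ext}\in\Gen T$ is ext-projective, $V$ coincides with $T_{ext}^{\perp_{0,1}}\cap\Gen T$. I would show $V$ is wide of rank $|T|-|T_{ext}|=|T_{split}|$, using Geigle–Lenzing perpendicular calculus inside the wide subcategory $\mathfrak a$ of the support of $T$. Since $W\subseteq V$ are wide subcategories of equal rank, a projective generator of $W$ is then a tilting object in $V$; as $W$ is closed under extensions and cokernels, this forces $W=V$.

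I expect the main obstacle to be the vanishing $\Hom(T_{ext},T_{split})=0$ together with the rank computation for $V$. If the direct argument fails, I would instead invoke the known description of $\mathfrak a(\Gen T)$ via the Jasso reduction $T_{ext}^{\perp_{0,1}}\cap {}^{\perp_0}(\tau T_{ext})$ and identify it with $V$.
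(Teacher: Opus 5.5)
The paper gives no proof of this proposition; it is recalled as background from the Ingalls--Thomas theory, so your argument has to stand on its own. Steps 1 and 2 are correct, and so is the reduction of the dual statement via $D$. The gap is in Step 3, in the inclusion $W\subseteq V$.

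\textbf{The inclusion $W\subseteq V$.} Reducing to $\Hom_\Lambda(T_{ext},T_{split})=0$ makes no progress. By Step 1, $T_{split}\in W$, so this is a special case of the very claim $\Hom_\Lambda(T_{ext},W)=0$ you are trying to prove. The ``minimal right approximation'' sketch is not an argument.

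The missing idea is the \emph{kernel} half of the definition of $\mathfrak a$; you only used \cref{frakaClosure}. For $f\colon T_j\to X$ with $X\in W$ and $T_j\in\Gen T$, the definition of $\mathfrak a$ gives $\ker f\in\Gen T$. Let $\pi\colon P_W\twoheadrightarrow\Image f$ be the projective cover in $W$. You can then lift in both directions.
\begin{itemize}
\item The surjection $T_j\twoheadrightarrow\Image f$ lifts to $h\colon T_j\to P_W$, because $\ker\pi\in W\subseteq\Gen T$ and $T_j$ is ext-projective.
\item The map $\pi$ lifts to $h'\colon P_W\to T_j$, because $\ker f\in\Gen T$ and $P_W$ is ext-projective by your Step 2.
\end{itemize}
Then $h'h-\identity$ maps $T_j$ into $\ker f$, which is a proper submodule if $f\neq 0$. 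So $h'h-\identity$ is a non-unit of the local ring $\operatorname{End}_\Lambda(T_j)$, and hence $h'h$ is invertible. Therefore $T_j$ is a direct summand of $P_W\in\addcat T_{split}$. Since $T$ is basic, $T_j$ is a summand of $T_{split}$, contradicting the assumption that $T_j$ is a summand of $T_{ext}$. This gives $\Hom_\Lambda(T_{ext},X)=0$ for all $X\in W$.

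\textbf{The inclusion $V\subseteq W$.} You assert that $V$ is wide of rank $|T_{split}|$ without argument. A priori $V$ is only a torsion class inside $T_{ext}^{\perp_{0,1}}$, and its wideness is essentially what is being proved. Use a sandwich instead.
\begin{itemize}
\item Let $\mathcal S_T=\{Y\mid \Hom_\Lambda(P_T,Y)=0\}$. This is the wide subcategory of modules supported on the support of $T$, and $T$ is tilting in it.
\item Set $U:=T_{ext}^{\perp_{0,1}}\cap\mathcal S_T$. Then $W\subseteq V\subseteq U$, where the last inclusion uses the ext-projectivity of $T_{ext}$.
\item By the rank formula applied inside $\mathcal S_T$, $U$ is wide of rank $|T|-|T_{ext}|=|T_{split}|$. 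By Steps 1 and 2, $W$ has the same rank.
\item An inclusion of wide subcategories of equal rank is an equality: take $\perp_{0,1}$ inside $U$. Hence $W=U$, and therefore $W=V$.
\end{itemize}
Your closing sentence, that $W$ is ``closed under extensions and cokernels'', would not suffice by itself. Cokernels of maps in $\addcat$ of a tilting object only produce $\Gen$ of it; you need kernels too, which is what the rank/perpendicular argument supplies.

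(Incidentally, the last line of the statement should read $T'_{split}$.)
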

    \begin{Proposition}\label{WideFromTilting}
        Let $T$ be a partial tilting module and let $W(T)$ be the smallest wide subcategory containing $T$. Then $|T| = \rank(W(T))$. In addition $T$ is tilting in $W(T)$ and $T^{\perp_{0,1}}=W(T)^{\perp_{0,1}}$.
    \end{Proposition}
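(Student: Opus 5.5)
Let $T$ be a partial tilting module and $W(T)$ the smallest wide subcategory containing it. The plan is to work inside $W' := {}^{\perp_{0,1}}(T^{\perp_{0,1}})$ and prove three things: $W(T)=W'$, $T$ is tilting in $W'$, and the perpendicular categories agree. Once $W(T)=W'$ is known, the equality $T^{\perp_{0,1}} = W(T)^{\perp_{0,1}}$ follows immediately from the bijection $W \mapsto W^{\perp_{0,1}}$ on wide subcategories.

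\textbf{Step 1: the two wide subcategories coincide.} By the lemma on orthogonal categories, both $T^{\perp_{0,1}}$ and $W'$ are wide.
\begin{itemize}
\item $W(T)\subseteq W'$: clearly $T\in W'$, and $W'$ is wide, so it contains the smallest wide subcategory containing $T$.
\item $W(T)^{\perp_{0,1}} = T^{\perp_{0,1}}$: the inclusion $W(T)^{\perp_{0,1}}\subseteq T^{\perp_{0,1}}$ is trivial. Conversely, if $Y\in T^{\perp_{0,1}}$, then ${}^{\perp_{0,1}}Y$ is a wide subcategory containing $T$. Hence it contains $W(T)$, so $Y\in W(T)^{\perp_{0,1}}$.
\end{itemize}
Applying ${}^{\perp_{0,1}}(-)$ and using the bijection between wide subcategories, this gives $W(T) = {}^{\perp_{0,1}}(W(T)^{\perp_{0,1}}) = W'$.

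\textbf{Step 2: $T$ is tilting in $W(T)$.} By the proposition stating that wide subcategories are module categories of representation finite hereditary algebras, $W(T)\cong \modulecat\Lambda'$ with the same $\Ext^1$. So $T$ is a partial tilting $\Lambda'$-module, and it suffices to show $|T| = \rank W(T)$; the statement $|T|=\rank(W(T))$ is then the same fact.

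Inside $\modulecat\Lambda'$, take the Bongartz completion $T\oplus\beta(T)$, which is support tilting in $W(T)$. Let $(T\oplus \beta(T), P)$ be the associated support tilting pair, so $|T|+|\beta(T)|+|P| = \rank W(T)$. We need $\beta(T)=0$ and $P=0$.
\begin{itemize}
\item Suppose $P\neq 0$. Then the support of $T$ is a proper subset of the simples of $\Lambda'$. The smallest wide subcategory of $\modulecat \Lambda'$ containing $T$ is then contained in the modules supported away from $P$, a proper wide subcategory. This contradicts $W(T)$ being the whole of $\modulecat\Lambda'$.
\item Suppose $\beta(T)\neq 0$. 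Let $\mathcal T = \Gen(T\oplus\beta(T))$. Then $\mathfrak a(\mathcal T)$, whose projectives are the split-projective summands by \cref{wideSubcatsViaSplit}, should be a smaller wide subcategory; alternatively, a proper wide subcategory containing $T$ should be obtainable directly from the nonzero summand $\beta(T)$.
\end{itemize}

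\textbf{Main obstacle.} The cleanest route I would actually try for the second case is a rank count. Let $U = T^{\perp_{0,1}}$, computed in $\modulecat\Lambda$. There is a standard fact that for a partial tilting module (or exceptional sequence) of size $|T|$ in a hereditary category, $T^{\perp_{0,1}}$ has rank $\rank\Lambda - |T|$; this is Geigle–Lenzing/Schofield perpendicular calculus. I would prove it by induction on $|T|$, ordering the summands of $T$ into an exceptional sequence (possible since $\Ext^1(T,T)=0$ in a representation finite hereditary algebra) and using that the perpendicular category of a single exceptional module has rank one less. Combined with the rank formula $\rank W + \rank W^{\perp_{0,1}} = \rank\Lambda$ and Step 1, this gives $\rank W(T) = \rank\Lambda - \rank U = |T|$, which makes the Bongartz-complement case analysis unnecessary. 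Establishing this perpendicular rank drop is where I expect the real work.
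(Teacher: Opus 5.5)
The paper does not prove this proposition; it recalls it as background from Ingalls--Thomas. So there is no argument of the paper's to compare with, and your proposal has to stand on its own.

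Your Step~1 does stand. The observation that ${}^{\perp_{0,1}}Y$ is a wide subcategory containing $T$ for every $Y\in T^{\perp_{0,1}}$ proves $T^{\perp_{0,1}}=W(T)^{\perp_{0,1}}$ outright. The reduction of ``$T$ is tilting in $W(T)$'' to the count $|T|=\rank(W(T))$ is correct, and so is your first Bongartz bullet.

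The gap is that this count, the only substantive claim, is never proved. The second Bongartz bullet contains no argument. The rank-count route does not supply one either: by Step~1 and the rank formula $\rank(W)+\rank(W^{\perp_{0,1}})=\rank(\Lambda)$, the ``standard fact'' $\rank(T^{\perp_{0,1}})=\rank(\Lambda)-|T|$ is literally equivalent to $|T|=\rank(W(T))$, so quoting it amounts to quoting the proposition. Your induction does correctly reduce it to a single summand. (The ordering of the summands comes from directedness of $\modulecat\Lambda$, i.e.\ the absence of cycles of nonzero non-isomorphisms between indecomposables, not from $\Ext^1_\Lambda(T,T)=0$.) But the one-summand rank drop is left open.

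That case is easy with tools you already have. Let $E$ be indecomposable with $\Ext^1_\Lambda(E,E)=0$.
\begin{enumerate}
\item $\operatorname{End}_\Lambda(E)$ is a division ring, since every indecomposable over a representation finite hereditary algebra is a brick.
\item Hence $\addcat(E)$ is already wide. A morphism $E^a\to E^b$ is a matrix over a division ring, so it is isomorphic to $\identity_{E^r}\oplus 0$ and its kernel and cokernel stay in $\addcat(E)$. Extensions split because $\Ext^1_\Lambda(E,E)=0$.
\item So $W(E)=\addcat(E)$ has rank $1$, and Step~1 plus the rank formula give $\rank(E^{\perp_{0,1}})=\rank(\Lambda)-1$.
\end{enumerate}
For the induction step, write $T=T'\oplus E$ with $\Hom_\Lambda(E,T')=0$. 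Then $T'$ is partial tilting in $E^{\perp_{0,1}}$. This is the module category of a (possibly disconnected) representation finite hereditary algebra, with the same $\Hom$ and $\Ext^1$. Moreover $T^{\perp_{0,1}}$ is exactly the perpendicular category of $T'$ computed inside $E^{\perp_{0,1}}$, so the induction hypothesis applies there.

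Alternatively, you can finish your Bongartz approach without induction.
\begin{enumerate}
\item Step~1 gives $W(T)\cap T^{\perp_{0,1}}=W(T)\cap W(T)^{\perp_{0,1}}=0$.
\item Take $X\in W(T)$ with $\Ext^1_\Lambda(T,X)=0$. Using hereditarity and rigidity of $T$, the quotient of $X$ by the trace of $T$ lies in $W(T)\cap T^{\perp_{0,1}}=0$. Hence $W(T)\cap T^{\perp_1}=W(T)\cap\Gen T$.
\item This torsion class of $W(T)$ contains the injectives of $W(T)$, so $T$ is sincere in $W(T)$.
\item For any ext-projective of this torsion class, a right $\addcat(T)$-approximation has its kernel in the class, so it splits. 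Thus all ext-projectives lie in $\addcat(T)$.
\end{enumerate}
Hence $T$ is support tilting with full support in $W(T)$, i.e.\ tilting.
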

    
\subsection{Lattices}
    \begin{Definition}
        The lattice $\wide\Lambda$ consists of the wide subcategories of $\modulecat \Lambda$ ordered by inclusion. The meet is given by the intersection of wide subcategories. The lattice $\tors\Lambda$ consists of all torsion pairs $(\mathcal{T},\mathcal{F})$ of $\modulecat \Lambda$. It is ordered by inclusion on the torsion part or equivalently reverse inclusion of the torsion-free part. Here the meet is given by intersection of the torsion part and the join is given by intersection of the torsion-free part. We will often denote an element of $\tors \Lambda$ using only its torsion part.
    \end{Definition}
    \begin{CatalandRemark}
        One can associate an oriented Dynkin diagram to $\Lambda$. Then the lattice $\tors \Lambda$ is isomorphic to the Cambrian lattice associated to that oriented Dynkin diagram and the lattice $\wide \Lambda$ is isomorphic to the lattice of noncrossing partitions.
    \end{CatalandRemark}
    \begin{Lemma}
        The map 
        \begin{align*}
            \wide\Lambda&\rightarrow \tors \Lambda\\
            W&\mapsto (\Gen W,(\Gen W)^{\perp_0})
        \end{align*}
        is a monotone bijection, but in general not an isomorphism.
    \end{Lemma}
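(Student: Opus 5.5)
The lemma has three parts: the map is well defined, it is a monotone bijection, and it is not in general an isomorphism of posets.

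\emph{Bijectivity.} By the proposition establishing the mutually inverse bijections $W\mapsto \Gen W$ and $\mathcal T\mapsto \mathfrak a(\mathcal T)$ between wide subcategories and torsion classes, $\Gen W$ is a torsion class and the assignment is bijective. Since a torsion pair is determined by its torsion part, via the bijection $\mathcal T\mapsto \mathcal T^{\perp_0}$, composing gives a bijection $\wide\Lambda\to\tors\Lambda$, $W\mapsto(\Gen W,(\Gen W)^{\perp_0})$.

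\emph{Monotonicity.} Suppose $W\subseteq W'$. If $X\in \Gen W$, there is an epimorphism $M\twoheadrightarrow X$ with $M\in W\subseteq W'$, so $X\in \Gen W'$. Hence $\Gen W\subseteq \Gen W'$. This is exactly the order in $\tors\Lambda$, since that order is inclusion of torsion parts.

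\emph{Failure of isomorphism.} It suffices to give one algebra where the inverse map is not monotone. Take $\Lambda=kQ$ with $Q\colon 1\to 2$, which has simples $S_1,S_2$, projective $P_1$ with top $S_1$, and $P_2=S_2$. We have the non-split sequence $0\to S_2\to P_1\to S_1\to 0$. Set $W=\addcat S_1$ and $W'=\addcat P_1$; both are wide, since each is the additive closure of a brick with no self-extensions.

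Then $\Gen W'=\addcat(P_1\oplus S_1)$, because $S_1$ is a quotient of $P_1$. This contains $\Gen W=\addcat S_1$, so $\Gen W\subseteq\Gen W'$. However, $W=\addcat S_1$ is not contained in $W'=\addcat P_1$. So the inverse bijection $\tors\Lambda\to\wide\Lambda$ is not monotone, and the map is not an isomorphism of posets.
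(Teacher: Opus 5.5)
Your proof is correct and matches the paper's treatment, which states this lemma without proof as a direct consequence of the preceding Ingalls--Thomas bijection $W\mapsto\Gen W$; monotonicity is the inclusion $\Gen W\subseteq\Gen W'$ that you check. Your $A_2$ example fills in the ``not an isomorphism'' part, which the paper leaves unillustrated: $\Gen(\addcat S_1)\subseteq\Gen(\addcat P_1)$ although $\addcat S_1\not\subseteq\addcat P_1$, so the inverse bijection is not monotone.
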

    \begin{Definition}
        Let $\mathcal T \lessdot \mathcal T'$ be a cover relation in $\tors \Lambda$. Then there exists a unique object $M = \gamma(\mathcal T \lessdot \mathcal T')$ called the \emph{edge label} of $\mathcal T \lessdot \mathcal T'$ such that $M\in \mathcal T^{\perp_0} \cap\mathcal T'$
    \end{Definition}
    The edge labels can also be described as follows:
    \begin{Proposition} \label{edgeLabelViaJoinIrred}
        Let $M$ be an object in $\modulecat \Lambda$. Then $M$ defines the two torsion classes $\Gen M$ and $\Gen M\setminus\{M\}$.  The edge label of a cover relation $\mathcal T \lessdot \mathcal T'$ is $M$ if and only if $(\Gen M\setminus\{M\})\subset \mathcal T$, $M\notin \mathcal T$ and $M\in \mathcal T'$.
    \end{Proposition}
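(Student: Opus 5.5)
The plan is to prove the statement in two stages. First I check that $\Gen M\setminus\{M\}$ is a torsion class. Then I prove the two implications of the edge-label criterion using the brick property of $M$ and the standard description of $\mathcal T'\cap\mathcal T^{\perp_0}$ for a cover relation. Throughout, $M$ is indecomposable. Since $\Lambda$ is representation finite hereditary, $M$ is then a brick with $\Ext^1_\Lambda(M,M)=0$, so $\Gen M$ is a torsion class. Here $\Gen M\setminus\{M\}$ means the modules in $\Gen M$ having no direct summand isomorphic to $M$.

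\textbf{Key observation.} Let $X\in\Gen M$ and suppose there is a surjection $X\twoheadrightarrow M$. I claim $M$ is then a direct summand of $X$. Choose $M^n\twoheadrightarrow X$. The composite $M^n\to X\to M$ is surjective, so some component $M\to M$ is nonzero. This component is an isomorphism because $M$ is a brick, and hence $M\to M^n\to X$ is a split monomorphism.

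\textbf{Torsion class.} Closure of $\Gen M\setminus\{M\}$ under quotients follows directly from the observation. For extensions, take $0\to X\to E\to Z\to 0$ with $X,Z\in\Gen M\setminus\{M\}$. Then $E\in\Gen M$. If $E$ had a summand $M$, we would get a surjection $p\colon E\twoheadrightarrow M$. If $p|_X\neq 0$, its image is a nonzero quotient of $X$ inside $M$. Composing with $M^n\twoheadrightarrow X$ gives a nonzero endomorphism of $M$, which must be surjective, so $X\twoheadrightarrow M$; this contradicts the observation. If $p|_X=0$, then $p$ factors through $Z\twoheadrightarrow M$, again a contradiction.

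\textbf{Forward direction.} Let $M$ be the edge label of $\mathcal T\lessdot\mathcal T'$ and write $\mathcal F=\mathcal T^{\perp_0}$. I will use the standard fact from brick labelling (Demonet--Iyama--Reading--Reiten--Thomas, Barnard--Carroll--Zhu) that $\mathcal T'\cap\mathcal F=\Cogen$-closure reduces to $\addcat M$, i.e.\ $\mathcal T'\cap\mathcal F=\addcat M$, and that $\mathcal T'=\mathcal T\vee\Gen M$. By definition $M\in\mathcal T'$ and $M\notin\mathcal T$. Now take an indecomposable $N\in\Gen M\setminus\{M\}$. Its torsion-free quotient $N/t(N)$ lies in $\mathcal F$, and it lies in $\mathcal T'$ because $N\in\Gen M\subseteq\mathcal T'$. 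Hence $N/t(N)\in\addcat M$. If it were nonzero we would get $N\twoheadrightarrow M$, contradicting the observation. So $N=t(N)\in\mathcal T$.

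\textbf{Converse.} Assume $\Gen M\setminus\{M\}\subseteq\mathcal T$, $M\notin\mathcal T$ and $M\in\mathcal T'$. Let $L$ be the actual label, so $\mathcal T'\cap\mathcal F=\addcat L$; the goal is $L\cong M$.
\begin{enumerate}
\item Since $\mathcal T\subsetneq\mathcal T\vee\Gen M\subseteq\mathcal T'$ is a cover, $\mathcal T'=\mathcal T\vee\Gen M$. This join is the filtration closure of $\mathcal T\cup\Gen M$, which by hypothesis equals the filtration closure of $\mathcal T\cup\{M\}$.
\item The bottom filtration factor of $L$ is a submodule of $L\in\mathcal F$, so it cannot lie in $\mathcal T$; therefore $\Hom(M,L)\neq0$.
\item The image of a nonzero map $M\to L$ is a quotient of $M$ lying in $\mathcal F$. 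A proper quotient would lie in $\Gen M\setminus\{M\}\subseteq\mathcal T$, which is impossible, so $M\hookrightarrow L$.
\item On the other side, $M/t(M)\in\mathcal T'\cap\mathcal F=\addcat L$, and it is nonzero because $M\notin\mathcal T$. This gives $M\twoheadrightarrow L$.
\item Counting dimensions, $\dim M\le\dim L\le\dim M/t(M)\le\dim M$. Hence $t(M)=0$ and $M\cong L$.
\end{enumerate}

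The main obstacle is the input $\mathcal T'\cap\mathcal T^{\perp_0}=\addcat(\text{label})$ for a cover relation. I would cite it from the brick-labelling literature. If it has to be proved internally, I would try first to show that for any nonzero $Y\in\mathcal T'\cap\mathcal F$ the torsion class $\mathcal T\vee\Gen Y$ equals $\mathcal T'$, and then use minimality of $\mathcal T'$ over $\mathcal T$ to force every indecomposable such $Y$ to be the label.
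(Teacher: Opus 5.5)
Your proof is correct. The paper gives no proof of this proposition; it is recalled as background from the brick/join-irreducible labelling literature, where the usual argument is lattice-theoretic:
\begin{enumerate}
\item $\tors\Lambda$ is semidistributive;
\item its join-irreducibles are the classes $\Gen M$ for bricks $M$, each with unique lower cover $\Gen M\cap{}^{\perp_0}M$;
\item for a cover $\mathcal T\lessdot\mathcal T'$, join-semidistributivity gives a minimum $j$ of $\{\mathcal S\mid \mathcal T\vee\mathcal S=\mathcal T'\}$;
\item this $j$ is the unique join-irreducible with $j\leq\mathcal T'$, $j\nleq\mathcal T$, $j_*\leq\mathcal T$, and it is then matched with the brick label.
\end{enumerate}

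You bypass semidistributivity and argue directly with modules. Your key observation, together with the composition trick in your extension step, shows that every nonzero morphism from an object of $\Gen M$ to $M$ is a split epimorphism. This gives both that $\Gen M\setminus\{M\}$ is a torsion class and that it equals $\Gen M\cap{}^{\perp_0}M$. In the converse, the chain $\dim M\leq\dim L\leq\dim M/t(M)\leq\dim M$ is an elementary way to pin down the label. The lattice route buys generality: it works for any finite semidistributive lattice, and for arbitrary finite-dimensional algebras with $\mathrm{Filt}(M)$ in place of $\addcat M$. Your route is self-contained in the representation-finite hereditary setting. It uses only that indecomposables are rigid bricks and that joins of torsion classes are filtration closures, which the paper itself uses in \cref{serrePermSimple}.

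The ``main obstacle'' you flag is not really an obstacle here. The paper's definition of the edge label says that $M$ is the unique indecomposable object in $\mathcal T^{\perp_0}\cap\mathcal T'$. This intersection is closed under finite direct sums and direct summands, since $\mathcal T'$ is closed under quotients and $\mathcal T^{\perp_0}$ under submodules. So by Krull--Schmidt it equals $\addcat M$, and your sketched internal proof is unnecessary. Your phrase ``$\Cogen$-closure'' should read $\mathrm{Filt}$-closure: the general brick-labelling theorem gives $\mathcal T'\cap\mathcal T^{\perp_0}=\mathrm{Filt}(M)$. This equals $\addcat M$ here because $\Ext^1_\Lambda(M,M)=0$, so every filtration by copies of $M$ splits.
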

    Edge labels are also closely related to the wide subcategories associated to torsion classes
    \begin{Proposition}[5.1.8 in \cite{SimplesAreLabels}]\label{SimplesAreLabels}
        Let $\mathcal T'\in \tors \Lambda$ be a torsion class. Then there is a bijection
        \begin{align*}
            \{\mathcal{T}\in \tors \Lambda \mid \mathcal T \lessdot \mathcal T'\}&\rightarrow \{\textnormal{simple objects of }\mathfrak a(\mathcal T')\}\\
            \mathcal T&\mapsto \gamma(\mathcal T \lessdot \mathcal T')\\
            \mathcal T \wedge {}^{\perp_0}S &\mapsfrom S
        \end{align*}
        Dually let $(\mathcal{T},\mathcal F)$ be a torsion pair in $\tors \Lambda$. Then there is a bijection
        \begin{align*}
            \{(\mathcal{T}',\mathcal F')\in \tors \Lambda \mid (\mathcal{T},\mathcal F) \lessdot (\mathcal{T}',\mathcal F')\}&\rightarrow \{\textnormal{simple objects of }\mathfrak a(\mathcal F)\}\\
            (\mathcal{T}',\mathcal F')&\mapsto \gamma((\mathcal{T},\mathcal F) \lessdot (\mathcal{T}',\mathcal F')).\\
            (\mathcal T, \mathcal F)\vee (\Gen S, S^{\perp_0})&\mapsfrom S
        \end{align*}
    \end{Proposition}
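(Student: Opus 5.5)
The statement to prove is Proposition~\ref{SimplesAreLabels}, which is cited from \cite{SimplesAreLabels}. I would treat only the first bijection in detail and obtain the second by duality. For that duality I would pass to $\Lambda^{op}$ via $D=\Hom_k(-,k)$: this exchanges torsion and torsion-free classes, reverses the order on $\tors$, and exchanges $\mathfrak a(\mathcal T)$ and $\mathfrak a(\mathcal F)$ in the obvious way.

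The plan for the first bijection has three steps.

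\textbf{Step 1: the label of a lower cover of $\mathcal T'$ is simple in $\mathfrak a(\mathcal T')$.} Let $\mathcal T\lessdot\mathcal T'$ with label $M$, so $M\in\mathcal T^{\perp_0}\cap\mathcal T'$.
\begin{enumerate}
\item Since the cover is minimal, $\mathcal T'$ is the smallest torsion class containing $\mathcal T$ and $M$.
\item Take any morphism $f\colon M\to Y$ with $Y\in\mathcal T'$. Its image is a quotient of $M$, hence lies in $\mathcal T'$.
\item By \cref{edgeLabelViaJoinIrred}, any proper quotient of $M$ lies in $\Gen M\setminus\{M\}\subseteq\mathcal T$. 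Since $\Hom(\mathcal T,M)=0$ and $M$ is a brick, the image is either $0$ or all of $M$, so $f$ is injective or zero.
\item If $f$ is injective, then $\Coker f\in\mathcal T'$ because $\mathcal T'$ is closed under quotients. This gives the cokernel condition in the definition of $\mathfrak a$.
\item For the kernel condition, take $g\colon Y\to M$ in $\mathcal T'$. Then $\Image g$ is a submodule of $M$ lying in $\mathcal T'$.
\end{enumerate}
The expected main obstacle is showing that $\Image g$ is $0$ or $M$. I would argue with the torsion pair $(\mathcal T,\mathcal F)$. Write $Z=\Image g$. Its torsion part $t(Z)$ is a subobject of $M\in\mathcal F$, so $t(Z)=0$ and $Z\in\mathcal F\cap\mathcal T'$.

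The key claim is $\mathcal F\cap\mathcal T'=\Cogen M\cap\Gen M$, essentially $\addcat M$ together with filtrations by $M$. Given this claim, a nonzero submodule $Z\subseteq M$ with $Z\in\mathcal F\cap\mathcal T'$ must surject from $M^n$. Since $\End M=k$ (division ring) and $Z\subseteq M$, it follows that $Z=M$.

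To justify the claim I would use that in a cover every module of $\mathcal T'$ admits a filtration with subquotients in $\mathcal T$ and copies of $M$. For $Z\in\mathcal F$ the $\mathcal T$ parts vanish, so $Z$ is filtered by $M$. Because $\Ext^1(M,M)=0$, this filtration splits and $Z\in\addcat M$. Then $\ker g\in\mathcal T'$ as required, and $M$ is simple since every nonzero map to it inside $\mathfrak a(\mathcal T')$ is surjective.

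\textbf{Step 2: inverse map.} For a simple $S$ of $\mathfrak a(\mathcal T')$, set $\mathcal T=\mathcal T'\wedge{}^{\perp_0}S$. Since $S\in\mathcal T'$ and $\Hom(S,S)\neq0$, we have $S\notin\mathcal T$, so $\mathcal T\subsetneq\mathcal T'$.

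To show this is a cover, take $X\in\mathcal T'$ and the canonical sequence $0\to t_S(X)\to X\to X'\to0$ for the torsion pair $({}^{\perp_0}S,\Cogen S\text{-closure})$. The plan is to check that $X'$ is filtered by $S$, using that $S$ is a simple, split-projective-like object of $\mathfrak a(\mathcal T')$, via \cref{wideSubcatsViaSplit}. Then any torsion class strictly between $\mathcal T$ and $\mathcal T'$ contains $S$ and hence equals $\mathcal T'$. This also shows that the label is $S$, since $S\in\mathcal T^{\perp_0}\cap\mathcal T'$ by construction.

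\textbf{Step 3: the maps are mutually inverse.} Starting from a cover with label $M$, we have $\mathcal T\subseteq\mathcal T'\cap{}^{\perp_0}M$. The inclusion is an equality because $\mathcal T'\cap{}^{\perp_0}M$ does not contain $M$ and the cover admits no class strictly between. Together with Step 2, this proves the bijection.
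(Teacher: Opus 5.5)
The paper gives no proof of this statement; it imports it from the cited reference, so your argument has to stand on its own. Your three-step architecture and Step 3 are fine. You also correctly read the inverse as $\mathcal T'\wedge{}^{\perp_0}S$; the displayed $\mathcal T\wedge{}^{\perp_0}S$ is a typo. However, the two substantive verifications have genuine gaps.

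\textbf{Step 1.} Your derivation of $\mathcal F\cap\mathcal T'=\addcat M$ is incomplete. In a filtration of $Z\in\mathcal F$ by objects of $\mathcal T$ and copies of $M$, only the bottom $\mathcal T$-piece is forced to vanish. A $\mathcal T$-subquotient sitting above a copy of $M$ is not excluded by $Z\in\mathcal F$ alone. You can simply cite this claim instead, since it is exactly the uniqueness built into the paper's definition of the edge label.

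The real gap is the kernel condition. Knowing that $g\colon Y\to M$ is zero or surjective says nothing about whether $\ker g\in\mathcal T'$, because torsion classes are not closed under submodules. For instance, take $\Lambda=k(1\to 2)$ and $\mathcal T'=\Gen P_1=\addcat(P_1\oplus S_1)$. Every nonzero map from $\mathcal T'$ to $S_1$ is surjective, yet $\ker(P_1\twoheadrightarrow S_1)=S_2\notin\mathcal T'$. So you need an argument that uses the specific properties of $M$. One way:
put $K=\ker g$, let $t'(K)$ be its $\mathcal T'$-torsion part, and set $E=Y/t'(K)\in\mathcal T'$. This gives $0\to F\to E\to M\to 0$ with $F=K/t'(K)\in\mathcal T'^{\perp_0}$. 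Since $\Hom_\Lambda(\mathcal T,M)=0$, the $\mathcal T$-torsion part $t(E)$ lies in $F$. It therefore vanishes, because $t(E)\in\mathcal T'$ and $F\in\mathcal T'^{\perp_0}$. Hence $E\in\mathcal F\cap\mathcal T'=\addcat M$. A surjection $M^b\to M$ splits because $\Hom_\Lambda(M,M)$ is a division ring, so $F\in\addcat M\cap\mathcal T'^{\perp_0}=0$ and $K\in\mathcal T'$.

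Item 3 of Step 1 is also false. In the same example, $\addcat S_1\lessdot\Gen P_1$ has label $P_1$, and $P_1\twoheadrightarrow S_1$ is neither zero nor injective. The condition $\Hom(\mathcal T,M)=0$ constrains maps into $M$, not out of it. This error is harmless only because the cokernel condition is automatic for a torsion class, so just drop the item.

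\textbf{Step 2.} A simple object of $\mathfrak a(\mathcal T')$ is not ``split-projective-like''. By \cref{wideSubcatsViaSplit}, the split projectives of $\mathcal T'$ are the \emph{projectives} of $\mathfrak a(\mathcal T')$; take $\mathcal T'=\modulecat\Lambda$ and $S=S_1$ to see the difference. So that citation cannot give $X'\in\addcat S$. An argument that works:
\begin{enumerate}
\item For $0\neq X'\in\mathcal T'\cap({}^{\perp_0}S)^{\perp_0}$, choose a nonzero $\phi\colon X'\to S$.
\item Its image is a nonzero submodule of $S$ lying in $\mathcal T'$. By \cref{frakaClosure} it lies in $\mathfrak a(\mathcal T')$, so it equals $S$ by simplicity.
\item $\ker\phi$ lies in $\mathcal T'$ by the kernel condition for $S$, and in $({}^{\perp_0}S)^{\perp_0}$ as a submodule of $X'$.
\item Induction on length, together with $\Ext^1_\Lambda(S,S)=0$, then gives $X'\in\addcat S$.
\end{enumerate}
Your conclusion ``hence equals $\mathcal T'$'' additionally needs $t_S(X)\in\mathcal T'\cap{}^{\perp_0}S$, that is, $t_S(X)\in\mathcal T'$. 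Nothing in your sketch provides this. It follows from the kernel condition again, applied to $X\to X'\cong S^m$ with $S^m\in\mathfrak a(\mathcal T')$.

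With these additions, every $X\in\mathcal T'$ sits in a sequence $0\to t_S(X)\to X\to S^m\to 0$ with $t_S(X)\in\mathcal T'\cap{}^{\perp_0}S$. Both the cover relation and the label $S$ follow from that. In short, the skeleton is right, but the two places carrying the actual content are currently asserted rather than proved: that labels satisfy the kernel condition, and that simples of $\mathfrak a(\mathcal T')$ split $\mathcal T'$ over $\mathcal T'\cap{}^{\perp_0}S$.
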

    When two torsion classes are related by a cover relation, then the corresponding support tilting modules are related by a mutation, that is they differ by at most one summand.
    \begin{Proposition}\label{mutation}
        Let $\mathcal T \lessdot \mathcal T'$ be a cover relation. Then one of the following hold:
        \begin{enumerate}
            \item $W(\extproj \mathcal T) = W(\extproj \mathcal T')$ and there are indecomposable objects $T_1,T_2$ and a partial tilting object $T$ such that we can write $\extproj \mathcal T \cong T \oplus T_1$ and $\extproj \mathcal T' \cong T \oplus T_2$,
            \item $W(\extproj \mathcal T) \subsetneq W(\extproj \mathcal T')$ and there is an indecomposable object $T_2$ such that $\extproj \mathcal T' \cong \extproj \mathcal T \oplus T_2$.
        \end{enumerate}
    \end{Proposition}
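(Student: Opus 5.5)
The plan is to work with ext-projectives and their Bongartz/co-Bongartz completions. Write $T = \extproj \mathcal T$ and $T' = \extproj \mathcal T'$. Since $\mathcal T \lessdot \mathcal T'$, \cref{SimplesAreLabels} gives a simple object $S$ of $\mathfrak a(\mathcal T')$ with $\mathcal T = \mathcal T' \wedge {}^{\perp_0}S$, and $S$ is the edge label by \cref{edgeLabelViaJoinIrred}. Both $T$ and $T'$ are support tilting with $|T| + |P_T| = |T'| + |P_{T'}| = |\Lambda|$.

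First, identify the common part. Let $U$ be the direct sum of those indecomposable summands of $T'$ that lie in $\mathcal T$. Each such summand is ext-projective in $\mathcal T'$, hence in the smaller class $\mathcal T$, so $U$ is a summand of $T$. Next I would show that $T'$ has exactly one summand outside $\mathcal T$. Any summand $X$ of $T'$ with $X \notin \mathcal T$ must satisfy $\Hom(X,S) \neq 0$. Using \cref{wideSubcatsViaSplit}, the summand of $T'$ that is split-projective and projective in $\mathfrak a(\mathcal T')$ covering $S$ is the only one with nonzero Hom to $S$. Call it $T_2$, so $T' = U \oplus T_2$ with $T_2$ indecomposable. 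Symmetrically, using the torsion-free side and the dual part of \cref{SimplesAreLabels}, all but at most one summand of $T$ lie in $U$, so $T = U \oplus T_1$ or $T = U$.

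Then split into cases according to whether $|T| = |T'|$ or $|T| = |T'| - 1$; these are the only possibilities by the counting above. In the second case $T = U$, so $\extproj \mathcal T' \cong \extproj\mathcal T \oplus T_2$. By \cref{WideFromTilting}, $\rank W(T) = |T| < |T'| = \rank W(T')$ and $W(T) \subseteq W(T')$, so the inclusion is strict, which gives (2). In the first case both $T = U \oplus T_1$ and $T' = U\oplus T_2$ have the same rank. Since $P_T$ and $P_{T'}$ then have the same size and $\operatorname{supp}\, T \subseteq \operatorname{supp}\, T'$, the supports coincide. Both are then tilting in the module category of the same support algebra $\Lambda_e$, so $W(T) = W(T') = \modulecat \Lambda_e$ by \cref{WideFromTilting}, which gives (1).

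The main obstacle is the claim that exactly one summand of $T'$ fails to lie in $\mathcal T$ and at most one summand of $T$ fails to be ext-projective in $\mathcal T'$. I would try to settle it by the standard argument that $\mathcal T'$ and $\mathcal T$ differ only by the filtration closure of $S$. Concretely, $\mathcal T = \{X \in \mathcal T' \mid \Hom(X,S)=0\}$. A summand of $T'$ mapping onto a top isomorphic to $S$ inside $\mathfrak a(\mathcal T')$ is unique because $T_{split}$ is the basic projective generator of $\mathfrak a(\mathcal T')$. Any other ext-summand $Y$ with $\Hom(Y,S) \neq 0$ would have to factor through $T_{split}$, contradicting $\Hom(T_{ext}, S) = 0$.
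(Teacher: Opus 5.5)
The step that fails is the bound that all but at most one summand of $T=\extproj\mathcal T$ lie in $U$. You justify it ``symmetrically'', via the torsion-free side $\mathcal F'\subsetneq\mathcal F$ and the dual half of \cref{SimplesAreLabels}. That argument compares $\extinj\mathcal F$ with $\extinj\mathcal F'$. These are different support tilting modules from $\extproj\mathcal T$ and $\extproj\mathcal T'$: already for the torsion pair $(\modulecat\Lambda,0)$ one has $\extproj(\modulecat\Lambda)=\Lambda$ but $\extinj(0)=0$. So it gives no information about the summands of $T$. Your last paragraph, which is meant to settle this ``main obstacle'', also only treats the $T'$ half.

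The repair is the support count you use later anyway. From $T\in\mathcal T\subseteq\Gen T'$ you get $\mathrm{supp}\,T\subseteq\mathrm{supp}\,T'$, hence
\[|T|=|\mathrm{supp}\,T|\le|\mathrm{supp}\,T'|=|T'|=|U|+1.\]
Since $U$ is a summand of $T$, this leaves exactly two possibilities: $T=U$, or $T=U\oplus T_1$ with $T_1$ indecomposable. With this substitution your case analysis goes through.

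Everything else is fine. The identification $\mathcal T=\mathcal T'\cap{}^{\perp_0}S$ is correct. So is the observation that, among the summands of $T'$, only the $\mathfrak a(\mathcal T')$-projective cover of $S$ has a nonzero map to $S$. Note that $\Hom_\Lambda(T'_{ext},S)=0$ is immediate from the description of $\mathfrak a(\Gen T')$ in \cref{wideSubcatsViaSplit}, so no factorisation argument is needed. In case (1), deducing $W(T)=W(T')$ from equal supports uses, beyond \cref{WideFromTilting}, that two nested wide subcategories of the same rank coincide; this is routine.

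The paper itself gives no proof of this proposition; it recalls it as background from the mutation theory of support ($\tau$-)tilting modules. There it is usually derived from the fact that an almost complete support tilting pair has exactly two completions. Once the bound above is repaired, your argument via the edge label and $\mathfrak a(\mathcal T')$ is a direct, self-contained proof using only the paper's preliminaries. It also identifies the exchanged summand $T_2$ explicitly as the $\mathfrak a(\mathcal T')$-projective cover of the label.
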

    An important class of intervals in $\tors \Lambda$ is given by the wide intervals. They were introduced and studied in detail in \cite{WideIntervals}.
    \begin{Definition}
        An interval $I=[(\mathcal{T},\mathcal{F}) \leq (\mathcal{T}',\mathcal{F}')]$ is called \emph{wide} if $\mathcal{F}\cap \mathcal T'$ is a wide subcategory.
    \end{Definition}
    \begin{CatalandRemark}
        In \cite{ReadingCambrian} the corresponding intervals of Cambrian lattices are called \emph{facial intervals}.
    \end{CatalandRemark}
    Wide intervals have a number of surprising properties and characterizations. The first one we need is a purely lattice theoretic characterization.
    \begin{Definition}
        An interval $[A,B]$  in a lattice $L$ is called \emph{atomic} if
        \[B=A\vee\bigvee \{C\in [A,B]\mid C \text{ covers } A\}.\] It is called \emph{coatomic} if \[A=B\wedge\bigwedge \{C\in [A,B]\mid C \text{ cocovers } B\}.\]
    \end{Definition}
    \begin{Theorem}[{\cite[Theorem 1.6]{WideIntervals}}]\label{atomicIsWide}
        For an interval $I$ in $\tors \Lambda$ the following are equivalent:
        \begin{enumerate}
            \item $I$ is wide
            \item $I$ is atomic
            \item $I$ is coatomic.
        \end{enumerate}
    \end{Theorem}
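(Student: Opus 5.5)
Since this is \cite[Theorem 1.6]{WideIntervals}, one could simply cite it. A self-contained proof would go as follows. Write $I=[(\mathcal T,\mathcal F)\leq(\mathcal T',\mathcal F')]$ and $W_I:=\mathcal F\cap\mathcal T'$. The equivalence (1)$\Leftrightarrow$(3) should follow from (1)$\Leftrightarrow$(2) by duality. Passing from torsion classes to torsion-free classes reverses the order of $\tors\Lambda$, and so exchanges atomic and coatomic intervals. The subcategory $W_I$ is defined symmetrically in the torsion and torsion-free parts, and the dual half of \cref{SimplesAreLabels} is available. So the main work is (1)$\Leftrightarrow$(2).

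\textbf{Labels of the atoms.} By the dual half of \cref{SimplesAreLabels}, the covers of $(\mathcal T,\mathcal F)$ correspond to the simple objects $S$ of $\mathfrak a(\mathcal F)$. The cover attached to $S$ is $(\mathcal T,\mathcal F)\vee(\Gen S,S^{\perp_0})$, and it lies in $I$ exactly when $S\in\mathcal T'$. Let $S_1,\dots,S_r$ be the simples of $\mathfrak a(\mathcal F)$ lying in $W_I$. They are pairwise Hom-orthogonal bricks. Hence $W_0:=\operatorname{Filt}(S_1,\dots,S_r)$ is a wide subcategory, with simple objects the $S_i$. Moreover $W_0\subseteq\mathfrak a(\mathcal F)\cap\mathcal T'\subseteq W_I$. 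The join of the atoms of $I$ is the torsion class $\mathcal T'':=\operatorname{Filt}(\mathcal T\cup\Gen W_0)$. Thus $I$ is atomic iff $\mathcal T''=\mathcal T'$.

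\textbf{(1)$\Rightarrow$(2).} Assume $W_I$ is wide and take $X\in\mathcal T'$. The canonical sequence for $(\mathcal T,\mathcal F)$ gives $0\to tX\to X\to X/tX\to 0$ with $X/tX\in\mathcal F\cap\mathcal T'=W_I$. So it suffices to show $W_I\subseteq W_0$, i.e.\ that every simple object $S$ of the wide category $W_I$ is simple in $\mathfrak a(\mathcal F)$. First I would show $S\in\mathfrak a(\mathcal F)$ using the definition of $\mathfrak a$:
\begin{itemize}
\item a nonzero map $S\to Y$ with $Y\in\mathcal F$ is injective, because its image is a quotient of $S$ lying in $\mathcal F\cap\mathcal T'=W_I$, and $S$ is simple there;
\item for a map $Y\to S$ in $\mathcal F$, the image is a submodule of $S$ lying in $\mathcal F\cap\mathcal T'$, hence in $W_I$, hence $0$ or $S$.
\end{itemize}
These are exactly the kernel and cokernel conditions. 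Simplicity in $\mathfrak a(\mathcal F)$ then holds because $\mathfrak a(\mathcal F)\cap\mathcal T'\subseteq W_I$.

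\textbf{(2)$\Rightarrow$(1).} Assume $\mathcal T'=\mathcal T''$. I will show $W_I=W_0$, which is wide. Take $X\in\mathcal F\cap\mathcal T''$. It has a filtration whose factors are in $\mathcal T$ or are quotients of objects of $W_0$. I would induct on the length of this filtration, peeling off the top factor and using $\Hom(\mathcal T,X)=0$ to discard $\mathcal T$-factors at the bottom. The key input is the dual of \cref{frakaClosure}: a quotient of an object of $\mathfrak a(\mathcal F)$ that lies in $\mathcal F$ again lies in $\mathfrak a(\mathcal F)$. Every quotient $Q$ of a $W_0$-object splits as $tQ\to Q\to Q/tQ$ with $Q/tQ\in\mathfrak a(\mathcal F)\cap\mathcal T'$. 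The latter is contained in $W_0$, since the simples of $\mathfrak a(\mathcal F)$ occurring in it lie in $W_I$. One then rewrites the filtration of $X$ so that all torsion pieces move to the bottom and vanish, leaving a filtration of $X$ by $W_0$-objects.

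\textbf{Main obstacle.} The hard step is this rearrangement of filtrations. Extensions of a $W_0$-object by a $\mathcal T$-object need not split, so one must argue with torsion parts of the successive quotients. I would try induction on $\dim X$: take the largest submodule of $X$ in $\operatorname{Filt}(\Gen W_0)$ and show that it lies in $W_0$ and that the quotient is again in $\mathcal F\cap\mathcal T''$.
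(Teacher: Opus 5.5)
The paper gives no proof of this statement; it only cites \cite[Theorem 1.6]{WideIntervals}. Your framework for a self-contained argument is sound: the atoms of $I$ are labelled by the simples $S_1,\dots,S_r$ of $\mathfrak a(\mathcal F)$ lying in $\mathcal T'$, atomicity is equivalent to $\mathcal T''=\mathcal T'$, and (1)$\Leftrightarrow$(3) follows by duality. However, both directions of (1)$\Leftrightarrow$(2) have a gap at the same point, namely the defining cokernel condition of $\mathfrak a(\mathcal F)$.

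For $E=\mathcal F$ the kernel condition in the definition of $\mathfrak a$ is automatic, since the kernel of a map $Y\to S$ is a submodule of $Y\in\mathcal F$. The only real condition for $S\in\mathfrak a(\mathcal F)$ is that $\Coker g\in\mathcal F$ for every $g\colon S\to Y$ with $Y\in\mathcal F$.

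\emph{(1)$\Rightarrow$(2).} You never verify this condition. Injectivity of $g$ says nothing about $\Coker g$. Your second bullet is also wrong, because torsion classes are not closed under submodules: if $S$ has a proper nonzero submodule $Y$, then $Y\in\mathcal F$, and the inclusion $Y\to S$ has image neither $0$ nor $S$. The repair uses wideness of $W_I$. Let $Z\subseteq Y$ be the preimage of the torsion part $t(\Coker g)$ with respect to $(\mathcal T,\mathcal F)$. Then $Z\in\mathcal F$, and $Z\in\mathcal T'$ as an extension of $t(\Coker g)\in\mathcal T$ by $\Image g\in\mathcal T'$. So $g$ corestricts to a morphism $S\to Z$ in $W_I$, whose cokernel $t(\Coker g)$ lies in $W_I\cap\mathcal T=0$. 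This gives $W_I\subseteq\mathfrak a(\mathcal F)$, after which your simplicity step goes through (using once more that $W_I$ is closed under kernels).

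\emph{(2)$\Rightarrow$(1).} Here the decisive step is only announced. The missing tool is the same cokernel condition, now applied to objects of $W_0\subseteq\mathfrak a(\mathcal F)$. With it no rearrangement of filtrations is needed, provided you peel from the bottom rather than the top.
\begin{enumerate}
\item For $0\neq X\in\mathcal F\cap\mathcal T''$, choose a filtration $0=X_0\subsetneq X_1\subsetneq\dots\subsetneq X_m=X$ with factors in $\mathcal T\cup\Gen W_0$.
\item The bottom term $X_1$ is a nonzero submodule of $X\in\mathcal F$, hence not in $\mathcal T$. So $X_1\in\Gen W_0\cap\mathcal F$.
\item This lies in $W_0$, by the dual of \cref{frakaClosure} together with the fact that $W_0$ is the Serre subcategory of $\mathfrak a(\mathcal F)$ generated by the $S_i$ (you already use this for $Q/tQ$).
\item Since $X_1\in\mathfrak a(\mathcal F)$, the cokernel condition gives $X/X_1\in\mathcal F$. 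Also $X/X_1\in\mathcal T''$ as a quotient of $X$.
\item Induction on length yields $X\in W_0$, so $\mathcal F\cap\mathcal T'=W_0$ is wide.
\end{enumerate}
Your alternative via the largest submodule of $X$ in $\operatorname{Filt}(\Gen W_0)$ needs exactly the same input, both to put that submodule into $W_0$ and to keep the quotient in $\mathcal F$. With these two repairs your argument becomes a complete self-contained proof in the hereditary representation-finite setting, which the paper itself does not provide.
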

    Next we state a description of the wide subcategory associated to a wide interval in terms of the wide subcategories associated to the bounds of the interval.
    \begin{Proposition}[{\cite[Theorem 6.6]{WideIntervals}}]\label{simpleWideIntervals}
        Let $I=[(\mathcal{T},\mathcal{F}) \leq (\mathcal{T}',\mathcal{F}')]$ be a wide interval and $W=\mathcal F\cap \mathcal T'$ the associated wide subcategory. Then we have 
        \begin{enumerate}
            \item $W = \mathfrak a(\mathcal F)\cap \mathfrak a (\mathcal T')$, 
            \item The simple objects of $W$ are a subset of the simple objects of $\mathfrak a(\mathcal F)$ and a subset of the simple objects of $\mathfrak a(\mathcal T')$.
        \end{enumerate}
    \end{Proposition}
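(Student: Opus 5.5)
The plan is to prove (2) first, separately for $\mathfrak a(\mathcal T')$ and for $\mathfrak a(\mathcal F)$, and then deduce (1). For (2) on the $\mathfrak a(\mathcal T')$ side I will use coatomicity from \cref{atomicIsWide} together with the edge-label description of \cref{SimplesAreLabels}. The $\mathfrak a(\mathcal F)$ side is the exact dual: use atomicity and the second half of \cref{SimplesAreLabels}.

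\emph{Step 1: the lower-cover labels lie in $W$ and are simple there.} Let $\mathcal T_1,\dots,\mathcal T_r$ be the elements of $I$ covered by $\mathcal T'$. By coatomicity, $\mathcal T=\mathcal T'\wedge\bigwedge_i\mathcal T_i$. Let $S_i=\gamma(\mathcal T_i\lessdot\mathcal T')$. By \cref{SimplesAreLabels}, each $S_i$ is a simple object of $\mathfrak a(\mathcal T')$. Since $\mathcal T\subseteq \mathcal T_i$, we have $S_i\in \mathcal T_i^{\perp_0}\subseteq\mathcal T^{\perp_0}=\mathcal F$, so $S_i\in W$. To see that $S_i$ is simple in $W$, take a nonzero subobject $N\subseteq S_i$ with $N\in W$. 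Then $N\in\mathcal T'$, so $N\in\mathfrak a(\mathcal T')$ by \cref{frakaClosure}. Simplicity of $S_i$ in $\mathfrak a(\mathcal T')$ then forces $N=S_i$.

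\emph{Step 2: these labels are all the simples of $W$.} I will show $W=\operatorname{Filt}\{S_1,\dots,S_r\}$ by induction on length. Let $0\neq X\in W$. Since $X\in\mathcal T'$ but $X\notin\mathcal T=\bigwedge_i \mathcal T_i$, there is some $i$ with $X\notin\mathcal T_i$. The torsion-free quotient of $X$ with respect to $\mathcal T_i$ is then nonzero and lies in $\mathcal T_i^{\perp_0}\cap\mathcal T'$. For a cover relation this category consists of modules filtered by the brick label $S_i$, a standard fact that I would verify from \cref{edgeLabelViaJoinIrred}. Hence there is a surjection $X\twoheadrightarrow S_i$. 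Its kernel lies in $W$ because $W$ is wide, and induction applies to it. Consequently every simple object of $W$ is one of the $S_i$, which proves (2) for $\mathfrak a(\mathcal T')$. The dual argument, using the upper covers of $\mathcal T$ inside $I$ and atomicity, proves (2) for $\mathfrak a(\mathcal F)$.

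\emph{Step 3: deduce (1).} The inclusion $\mathfrak a(\mathcal F)\cap\mathfrak a(\mathcal T')\subseteq\mathcal F\cap\mathcal T'=W$ is trivial. Conversely, $W$ is filtered by its simples, and these lie in both $\mathfrak a(\mathcal F)$ and $\mathfrak a(\mathcal T')$ by (2). Both of these are wide, hence closed under extensions, so $W\subseteq\mathfrak a(\mathcal F)\cap\mathfrak a(\mathcal T')$.

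The main obstacle I expect is the fact used in Step 2, namely that $\mathcal T_i^{\perp_0}\cap\mathcal T'$ is filtered by the label for a cover $\mathcal T_i\lessdot\mathcal T'$, so that $X$ really surjects onto $S_i$. If this is not available directly, I would instead argue that $\Hom(X,S_i)\neq0$ via minimality of the torsion class $\mathcal T_i\vee\Gen X=\mathcal T'$, and then use simplicity of $S_i$ in $W$ to conclude that this map is surjective.
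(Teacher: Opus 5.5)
Your proof is correct. The paper gives no argument for this statement; it imports it from \cite[Theorem 6.6]{WideIntervals}, so you are comparing against a black box.

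You derive the statement from two other imported facts, the coatomic/atomic characterization in \cref{atomicIsWide} and the cover--simple bijection in \cref{SimplesAreLabels}. In doing so you prove a bit more than is stated. The simples of $W$ are exactly the labels of the lower covers of $\mathcal T'$ inside $I$, and also exactly the labels of the upper covers of $\mathcal T$ inside $I$. Moreover, $W$ is the extension closure of either set. This is the label statement the paper later reads off from \cref{wideIntervalsAsTors} in the proof of \cref{wideIntfromExcepInt}.

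A shorter in-paper route would use that label-preserving isomorphism $\tors W\cong I$ directly. The lower covers of the top element $W$ of $\tors W$ are labelled by the simples of $W$. So those simples are the labels of the lower covers of $\mathcal T'$ in $I$, hence simple in $\mathfrak a(\mathcal T')$ by \cref{SimplesAreLabels}. The argument for $\mathfrak a(\mathcal F)$ is dual, and (1) follows as in your Step 3. Your version avoids the lattice isomorphism of \cref{wideIntervalsAsTors} and needs only coatomicity plus an elementary filtration argument.

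\textbf{On the filtration fact in Step 2.} The claim that $\mathcal T_i^{\perp_0}\cap\mathcal T'$ is filtered by $S_i$ does not really come from \cref{edgeLabelViaJoinIrred}. It is the content of the paper's definition of the edge label, read as: $S_i$ is the unique indecomposable in $\mathcal T_i^{\perp_0}\cap\mathcal T'$. Since that subcategory is closed under direct summands, it equals $\addcat S_i$ here.

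Your fallback also works and is self-contained:
\begin{enumerate}
\item Since $\mathcal T_i\vee\Gen X=\mathcal T'$, the module $S_i$ has a filtration with subquotients in $\mathcal T_i\cup\Gen X$.
\item Its lowest nonzero term is a nonzero submodule of $S_i\in\mathcal T_i^{\perp_0}$. So it is not in $\mathcal T_i$ and therefore lies in $\Gen X$, giving $\Hom_\Lambda(X,S_i)\neq 0$.
\item The image of such a map is a nonzero subobject of $S_i$ in the wide subcategory $W$, so the simplicity from Step 1 makes the map surjective.
\end{enumerate}

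\textbf{On Step 3.} You do not need (2) as an intermediate. The equality $W=\operatorname{Filt}\{S_1,\dots,S_r\}$ from Step 2, together with extension-closure of $\mathfrak a(\mathcal T')$, already gives $W\subseteq\mathfrak a(\mathcal T')$. Dually, $W\subseteq\mathfrak a(\mathcal F)$.
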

    Finally we give a description of the interval as a lattice of torsion classes.
    \begin{Theorem}[{\cite[Theorem 1.6]{WideIntervals}}]\label{wideIntervalsAsTors}
        Let $I=[(\mathcal{T},\mathcal{F}) \leq (\mathcal{T}',\mathcal{F}')]$ be a wide interval of $\tors \Lambda$ and $W=\mathcal F\cap \mathcal T'$ the associated wide subcategory. Then we have a bijection
        \begin{align*}
            \tors W &\rightarrow  I\\
            \hat{\mathcal T}&\mapsto \mathcal T\vee \Gen(\hat {\mathcal T})\\
            \mathcal T \cap W &\mapsfrom \mathcal T
        \end{align*}
        This bijection preserves edge labels. 
    \end{Theorem}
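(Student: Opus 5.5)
We prove that the two maps are well defined and mutually inverse, and then read off the edge labels. Write $\Phi(\hat{\mathcal T}) = \mathcal T\vee \Gen(\hat{\mathcal T})$ and $\Psi(\mathcal U) = \mathcal U\cap W$. The key inputs are the canonical sequence of a torsion pair and the fact that the join of torsion classes is the smallest torsion class containing both.

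\emph{Well-definedness.} For $\mathcal U\in I$, the category $\mathcal U\cap W$ is extension closed, being an intersection of extension closed subcategories. Let $X\in\mathcal U\cap W$ and let $X\twoheadrightarrow Y$ be a quotient in $W$. Since $W$ is wide, its cokernels are the cokernels in $\modulecat\Lambda$, so $Y$ is a quotient of $X$ in $\modulecat\Lambda$. Hence $Y\in\mathcal U$, and $\mathcal U\cap W\in\tors W$. Conversely, for $\hat{\mathcal T}\in\tors W$ we have $\hat{\mathcal T}\subseteq W\subseteq \mathcal T'$. This gives $\mathcal T\le \Phi(\hat{\mathcal T})\le \mathcal T'$, so $\Phi(\hat{\mathcal T})\in I$. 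Both maps are clearly monotone.

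\emph{$\Phi\Psi=\identity$.} Let $\mathcal U\in I$. The inclusion $\Phi(\mathcal U\cap W)\subseteq\mathcal U$ is clear. For the reverse, take $X\in\mathcal U$ and its canonical sequence $0\to t(X)\to X\to X/t(X)\to 0$ for $(\mathcal T,\mathcal F)$. Here $t(X)\in\mathcal T$. The quotient $X/t(X)$ lies in $\mathcal F$, and also in $\mathcal U\subseteq\mathcal T'$ because $\mathcal U$ is closed under quotients. Thus $X/t(X)\in \mathcal F\cap\mathcal T'\cap\mathcal U=W\cap\mathcal U$. So $X$ is an extension of objects of $\Phi(\mathcal U\cap W)$ and therefore lies in it.

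\emph{$\Psi\Phi=\identity$.} The inclusion $\hat{\mathcal T}\subseteq \Phi(\hat{\mathcal T})\cap W$ is clear. For the reverse, let $X\in\Phi(\hat{\mathcal T})\cap W$ and take its canonical sequence in $W$ for the torsion pair $(\hat{\mathcal T},\hat{\mathcal T}^{\perp_0}\cap W)$ of $W$. The quotient $Y$ lies in $W\subseteq\mathcal F$, and it lies in $\Phi(\hat{\mathcal T})$ as a quotient of $X$. We have $\Hom(\mathcal T,Y)=0$ since $Y\in\mathcal F$, and $\Hom(\hat{\mathcal T},Y)=0$ by construction. Hence $\mathcal T$ and $\hat{\mathcal T}$ both lie in the torsion class ${}^{\perp_0}Y$. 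Since the join is the smallest torsion class containing both, $\Phi(\hat{\mathcal T})\subseteq{}^{\perp_0}Y$. In particular $\Hom(Y,Y)=0$, so $Y=0$ and $X\in\hat{\mathcal T}$. This step, describing the join without an explicit filtration, is the main point; the orthogonality trick above handles it.

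\emph{Edge labels.} Since $\Phi$ and $\Psi$ are mutually inverse monotone maps, $\Phi$ is a lattice isomorphism and carries cover relations to cover relations. Let $\mathcal U\lessdot\mathcal U'$ be a cover in $I$ with label $M\in \mathcal U^{\perp_0}\cap\mathcal U'$. Then $M\in\mathcal F\cap\mathcal T'=W$, because $\mathcal U^{\perp_0}\subseteq\mathcal T^{\perp_0}=\mathcal F$ and $\mathcal U'\subseteq\mathcal T'$. It follows that $M\in \mathcal U'\cap W$ and $\Hom(\mathcal U\cap W,M)=0$. Thus $M$ lies in $(\mathcal U\cap W)^{\perp_0}\cap(\mathcal U'\cap W)$, computed in $W$. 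By uniqueness of edge labels in $\tors W$, $M$ is the label of the cover $\Psi(\mathcal U)\lessdot\Psi(\mathcal U')$.
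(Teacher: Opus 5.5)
Your proof is correct. The paper gives no argument of its own for this statement. It is quoted from \cite{WideIntervals}, where it sits inside a general theory of wide intervals; the same source also supplies the wide/atomic/coatomic equivalence of \cref{atomicIsWide} and the description of $W$ in \cref{simpleWideIntervals}.

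Your route is a short, self-contained verification whose only non-formal step is the surjectivity half. One standard way to control the join is to prove directly that the extension category $\mathcal T*\hat{\mathcal T}$ is already closed under quotients and extensions. You avoid this: you use only that $\Phi(\hat{\mathcal T})$ is the \emph{smallest} torsion class containing $\mathcal T\cup\hat{\mathcal T}$. The $\hat{\mathcal T}$-torsion-free quotient $Y$ then vanishes because ${}^{\perp_0}Y$ is such a torsion class.

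This also makes clear where wideness enters. Your step $\Phi\Psi=\identity$ works for an arbitrary interval. The step $\Psi\Phi=\identity$ needs $W$ to be abelian with exact inclusion, so that $\tors W$ makes sense and canonical sequences in $W$ are exact in $\modulecat\Lambda$. As a byproduct, your injectivity step shows $\mathcal U=\mathcal T*(\mathcal U\cap W)$ for every $\mathcal U\in I$, so you get $\Phi(\hat{\mathcal T})=\mathcal T*\hat{\mathcal T}$ for free.

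The label step is fine here because edge labels over a representation-finite hereditary algebra are exceptional. Hence the unique indecomposable in $(\Psi\mathcal U)^{\perp_0}\cap\Psi\mathcal U'$ is the label. Over a general algebra these categories are only filtered by the brick label, so a small extra comparison would be needed.

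What the citation buys over your argument is the surrounding theory: atomicity and the simple objects of $W$, which the paper needs elsewhere anyway.
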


\section{The 2-cluster category}
    Cluster categories were introduced in \cite{BMRRT} as an additive categorification of cluster algebras. They were generalised by Thomas \cite{mclusterThomas} and Zhu \cite{Zhu} to $m$-cluster categories in order to categorify the $m$-cluster complex introduced by Fomin and Reading in \cite{clusterComplex}. We refer for example to \cite{Buan} for an introduction to higher cluster categories and their properties.
    In this section we recall this theory specialised to 2-cluster categories.
    \begin{Definition}
        The 2-cluster category is the orbit category $\mathcal{C}_2(\Lambda) = \mathcal{D}^b(\Lambda)/(\tau^{-1}[2])$. It has a triangulated structure such that the projection functor $\mathcal{D}^b(\Lambda)\rightarrow \mathcal{C}_2(\Lambda)$ is triangulated.
    \end{Definition}
    That $\mathcal{C}_2(\Lambda)$ admits a triangulated structure compatible with the projection was shown by Keller in \cite{Keller}. We can describe objects in the 2-cluster category as follows:
    \begin{Definition}
        Let $\Fund$ be the smallest full additive subcategory of $\mathcal{D}^b(\Lambda)$ containing $\modulecat \Lambda$, $(\modulecat \Lambda)[1]$ and the object $P[2]$ for each projective object $P\in \modulecat \Lambda$. Then $\Fund$ is a fundamental region for the action of $\tau^{-1}[2]$. We use the same notation  for objects in $\Fund$ and their images in $\mathcal{C}_2(\Lambda)$.
    \end{Definition}
    Unlike the case of 1-cluster categories, $\Hom$ and $\Ext^1$ do not change when passing from the module category to the 2-cluster category. This is not true for higher or negative $\Ext^i$.
    \begin{Proposition}\label{homscoincide}
        Let $A,B\in \modulecat \Lambda\subset \Fund$. Then \[\Hom_\Lambda(A,B) = \Hom_{\mathcal{C}_2(\Lambda)}(A,B)\]
        and
        \[\Ext^1_\Lambda(A,B) = \Ext^1_{\mathcal{C}_2(\Lambda)}(A,B).\]
    \end{Proposition}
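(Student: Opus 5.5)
The plan is to compute both sides directly in $\mathcal{D}^b(\Lambda)$. The orbit category has
\[\Hom_{\mathcal C_2(\Lambda)}(A,B)=\bigoplus_{i\in\bZ}\Hom_{\mathcal D^b(\Lambda)}\bigl(A,(\tau^{-1}[2])^iB\bigr),\]
so it suffices to show that for $A,B\in\modulecat\Lambda$ the only summand that can be nonzero is the one with $i=0$, both for $B$ and for $B[1]$. Since $\Ext^1_{\mathcal C_2(\Lambda)}(A,B)=\Hom_{\mathcal C_2(\Lambda)}(A,B[1])$ and $\Ext^1_\Lambda(A,B)=\Hom_{\mathcal D^b(\Lambda)}(A,B[1])$, this gives both claimed equalities at once.

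The first step is to record the standard description of $\mathcal D^b(\Lambda)$ for hereditary $\Lambda$. Every indecomposable object is a stalk complex $X[n]$ with $X\in\modulecat\Lambda$ indecomposable. Moreover $\Hom_{\mathcal D^b}(X[n],Y[m])=0$ unless $m-n\in\{0,1\}$. Also, $\tau^{-1}$ sends a module to a module, except that it sends an indecomposable injective $I$ to $\nu^{-1}I[1]$, a projective shifted by one. Hence, for a module $B$, the object $\tau^{-i}B$ with $i\ge 0$ lies in the degree range $[0,i]$, and $\tau^{i}B$ with $i\ge 0$ lies in the degree range $[-i,0]$, using the dual statement for $\tau$ on projectives.

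The second step treats $i\ge 1$. Here $(\tau^{-1}[2])^iB=\tau^{-i}B[2i]$ has all its indecomposable summands in degrees $\ge 2i\ge 2$. A nonzero morphism from a module $A$ in degree $0$ needs the target summands to sit in degree $0$ or $1$. So these summands vanish, for $B$ and also for $B[1]$, whose summands sit in degrees $\ge 2i+1$.

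The third step treats $i\le -1$. Write $j=-i\ge1$; then $(\tau^{-1}[2])^{-j}B=\tau^{j}B[-2j]$ lies in degrees $\le -2j\le -2$. Shifting by one for the $\Ext^1$ case gives degrees $\le -1$. Maps from degree $0$ to negative degrees vanish in the hereditary derived category, so these summands vanish as well.

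The only point needing care is the degree bookkeeping for $\tau^{\pm1}$ crossing between modules and shifted projectives or injectives. Even in the worst case the shift by $2$ per step dominates the shift by at most $1$ coming from $\tau^{\pm1}$. This is exactly why the statement holds for $m=2$ and fails for $m=1$.
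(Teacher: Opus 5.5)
Your proposal is correct and follows essentially the same route as the paper. Both arguments expand $\Hom_{\mathcal C_2(\Lambda)}(A,B)$ and $\Hom_{\mathcal C_2(\Lambda)}(A,B[1])$ as direct sums over the $\tau^{-1}[2]$-orbit, and use the hereditary structure of $\mathcal D^b(\Lambda)$ to show only the $i=0$ summand survives; you just spell out the degree bounds for $i\ge 1$ and $i\le -1$ more explicitly than the paper does.

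One small quibble concerns your closing heuristic, not the proof. $\tau^{\mp1}$ moves the shift (weakly) in the same direction as $[\pm2]$, so nothing needs to ``dominate''. The real point is that $[\pm2]$ alone already pushes $B$ and $B[1]$ out of the window $\{0,1\}$, whereas $[\pm1]$ does not.
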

    \begin{proof}
        By definition we have
        \[\Hom_{\mathcal{C}_2(\Lambda)}(A,B) = \bigoplus_{n\in\bZ}\Hom_{\mathcal D(\Lambda)}(A,\tau^{-n}B[2n])\]
        and
        \[\Ext^1_{\mathcal{C}_2(\Lambda)}(A,B) = \bigoplus_{n\in\bZ}\Hom_{\mathcal D(\Lambda)}(A,\tau^{-n}B[2n+1]).\]
        Consider an indecomposable object $M\in \mathcal 
        D(\Lambda)$. Standard result about the derived category of representation finite hereditary algebras (see \cite{Happel}) imply
        \begin{enumerate}
            \item $M\in\modulecat \Lambda [k]$ for some $k$,
            \item $\tau^{-1} M\in(\modulecat \Lambda [k]\cup \modulecat \Lambda [k+1])$ for $k$ as above and
            \item if $\Hom_{\mathcal D(\Lambda)}(A,M)\neq 0$, then $M\in (\modulecat \Lambda \cup \modulecat \Lambda[1])$.
        \end{enumerate}
        In particular the family of objects $\{\tau^{-n}B[2n]\mid n\in \bZ\}$ contains only one element in $(\modulecat \Lambda \cup \modulecat \Lambda[1])$, namely $B$ itself. Similarly $B[1]$ is the only element of the family $\{\tau^{-n}B[2n+1]\mid n\in \bZ\}$ in $(\modulecat \Lambda \cup \modulecat \Lambda[1])$. The Proposition follows.
    \end{proof}
    The identification of $\tau$ with $[2]$ turns the Auslander-Reiten formula into a Calabi-Yau property:
    \begin{Proposition}[\cite{Keller}]\label{clustercalabiyau}
        The 2-cluster category is 3-Calabi-Yau.
        That is for any $X,Y\in \mathcal{C}_2(\Lambda)$ and $i\in \bZ$ we have 
        \[\Ext_{\mathcal{C}_2(\Lambda)}^i(X,Y) \cong \Ext_{\mathcal{C}_2(\Lambda)}^{3-i}(Y,X)^*\]
    \end{Proposition}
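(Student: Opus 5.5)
The plan is to transport Serre duality from $\mathcal D^b(\Lambda)$ to the orbit category and then use the defining identification $\tau^{-1}[2]\cong \identity$ in $\mathcal C_2(\Lambda)$.

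First I recall Happel's result \cite{Happel}. Since $\Lambda$ is hereditary and finite dimensional, $\mathcal D^b(\Lambda)$ has the Serre functor $\bS=\tau[1]$, where $\tau$ is the derived Auslander--Reiten translation, an autoequivalence. Thus for all $X,Y\in\mathcal D^b(\Lambda)$ there is an isomorphism
\[\Hom_{\mathcal D}(X,Y)\cong \Hom_{\mathcal D}(Y,\tau X[1])^*,\]
natural in both variables.

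Write $F=\tau^{-1}[2]$. Since $\tau$ commutes with shifts, $\tau\cong F^{-1}[2]$ and hence $\tau[1]\cong F^{-1}[3]$. As in the proof of \cref{homscoincide}, morphisms in the orbit category are
\[\Ext^i_{\mathcal C_2}(X,Y)=\bigoplus_{n\in\bZ}\Hom_{\mathcal D}(X,F^nY[i]).\]
I apply Serre duality termwise and then move $F^n[i]$ to the other side, which is allowed because $F$ and the shifts are autoequivalences. This gives
\[\Hom_{\mathcal D}(X,F^nY[i])\cong\Hom_{\mathcal D}(F^nY[i],F^{-1}X[3])^*\cong \Hom_{\mathcal D}(Y,F^{-n-1}X[3-i])^*.\]
Reindexing by $m=-n-1$ and summing over $n$ gives
\[\Ext^i_{\mathcal C_2}(X,Y)\cong \Bigl(\bigoplus_{m}\Hom_{\mathcal D}(Y,F^{m}X[3-i])\Bigr)^*=\Ext^{3-i}_{\mathcal C_2}(Y,X)^*.\]

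The key point, and the main thing to check carefully, is that the dual of a direct sum equals the direct sum of the duals here. For this I need that only finitely many terms are nonzero. This follows from the same facts about indecomposables in $\mathcal D^b(\Lambda)$ used in \cref{homscoincide}:
\begin{itemize}
\item for indecomposable $X$, the objects $F^nY[i]$ lie in $\modulecat\Lambda[k_n]$ with $k_n\to\pm\infty$ as $n\to\pm\infty$;
\item $\Hom_{\mathcal D}(X,-)$ vanishes outside two consecutive shifted copies of $\modulecat\Lambda$.
\end{itemize}
So the sum is finite, and finite dimensional since $\Lambda$ is representation finite.

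Finally, I check naturality. The isomorphisms are natural in $X$ and $Y$ in $\mathcal D$. They are compatible with the action of $F$ because Happel's Serre duality commutes with the autoequivalence $F$ up to the canonical isomorphism $F\tau[1]\cong\tau[1]F$. Hence they descend to natural isomorphisms in $\mathcal C_2(\Lambda)$, for all $X,Y$ and all $i\in\bZ$. Additivity reduces everything to indecomposable objects, so no further cases are needed.

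This establishes the stated isomorphism. If one also wanted $[3]$ to be a Serre functor compatible with the triangulated structure, one would invoke Keller's construction \cite{Keller}, but the proposition as stated only needs the natural $\Ext$-isomorphism above.
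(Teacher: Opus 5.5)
Your argument is correct for the proposition as stated. It differs from the paper, which gives no proof and simply cites Keller. There the result comes out of the general theory of triangulated orbit categories: essentially, the Serre functor $\tau[1]$ of $\mathcal D^b(\Lambda)$ descends to $\mathcal D^b(\Lambda)/\tau^{-1}[2]$, and since $\tau[1]\cong(\tau^{-1}[2])^{-1}[3]$ it becomes $[3]$.

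You instead carry out the objectwise content of this by hand. You apply Happel's Serre duality termwise to $\Ext^i_{\mathcal C_2(\Lambda)}(X,Y)=\bigoplus_n\Hom_{\mathcal D}(X,F^nY[i])$ with $F=\tau^{-1}[2]$, and then reindex by $m=-n-1$. You rightly single out finiteness of the sum as the point needing care, and it does follow from the same facts as in the proof of \cref{homscoincide}. This route is elementary and self-contained. It gives exactly what the paper later uses in \cref{2clusterTiltviaModulecat}, \cref{equivariance} and the remark $\Ext^3(M,M)\cong\Hom(M,M)^*$, where only dimensions and vanishing of $\Ext$ groups matter.

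What the citation buys in addition is the Calabi--Yau property in the categorical sense: bifunctorial duality isomorphisms making $[3]$ a Serre functor of the triangulated category $\mathcal C_2(\Lambda)$. Your naturality paragraph only asserts this. To prove it you would have to check that the canonical isomorphism $F\circ\tau[1]\cong\tau[1]\circ F$ is coherent with the identifications $F^nF^m\cong F^{n+m}$ used to compose morphisms in the orbit category. That is the bookkeeping Keller's framework handles, and it is not needed for the statement here.

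A minor point: each summand is finite dimensional because $\Lambda$ is a finite dimensional algebra; representation finiteness plays no role in that step.
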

    In particular $\Ext_{\mathcal{C}_2(\Lambda)}^3(M,M) = \Hom_{\mathcal{C}_2(\Lambda)}(M,M)^*$ is zero if and only if $M$ is zero. This motivates the next definition.
    \begin{Definition}
        A basic object $T\in \mathcal{C}_2(\Lambda)$ is
        \begin{enumerate}
            \item \emph{2-rigid} if $\Ext_{\mathcal{C}_2(\Lambda)}^1(T,T) = 0 = \Ext_{\mathcal{C}_2(\Lambda)}^2(T,T)$,
            \item a \emph{2-cluster tilting object} if it is a maximal 2-rigid object or equivalently if it is a 2-rigid object with exactly $n$ summands where $n=|\Lambda|$,
            \item an \emph{almost 2-cluster tilting object} if it is 2-rigid and has exactly $n-1$ summands.
        \end{enumerate}
    \end{Definition}
    An example of a 2-cluster category and (almost) 2-cluster tilting objects can be found in \cref{ExampleSection}.
    The set of 2-cluster tilting objects and its combinatorics play a central role in this paper. There are two important structures on this set: a canonical permutation and a notion of mutation.
    \begin{Definition}
        The \emph{shift permutation} is given by
        \begin{align*}
            [1]\colon\{\text{2-cluster tilting objects}\}&\rightarrow \{\text{2-cluster tilting objects}\}\\
            T&\mapsto T[1]
        \end{align*}
    \end{Definition}
    We state the following enumerative result for later reference. It follows directly from Happel's description of the derived category of $\Lambda$, see \cite{Happel}.
    \begin{Proposition}\label{[1]count}
        Let $h$ be the Coxeter number of $\Lambda$ and $N$ the number of indecomposable representations of $\Lambda$.
        The shift permutation is $2h+2$-periodic and for any 2-cluster tilting object $T$ we have
        \[\sum_{i=1}^{2h+2} \rank (T[i]\cap \modulecat \Lambda) = 2N.\]
        Further if $\Lambda$ is of Dynkin type $A_1$, $B_n$, $C_n$, $D_n$ ($n$ even), $E_7$, $E_8$ or $G_2$, then the shift permutation is $h+1$-periodic and for any 2-cluster tilting object $T$ we have
        \[\sum_{i=1}^{h+1} \rank (T[i]\cap \modulecat \Lambda) = N.\]
    \end{Proposition}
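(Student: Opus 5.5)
The plan is to work entirely on indecomposable objects and use Happel's description of $\mathcal D^b(\Lambda)$ as the mesh category of $\bZ Q$, where $Q$ is the Dynkin quiver of $\Lambda$. Three standard facts will be used.
\begin{enumerate}
    \item The Nakayama functor $\nu=\tau[1]$ acts on $\bZ Q$ by $\nu(m,v)=(m+p_v-1,\sigma v)$, up to the conventions for coordinates. Here $\sigma$ is the Nakayama involution on vertices and $p_v$ is the number of indecomposable modules in the $\tau$-orbit of $P_v$.
    \item $p_v+p_{\sigma v}=h$. Consequently $\tau^{-h}\cong[2]$ on objects of $\mathcal D^b(\Lambda)$.
    \item In the types $A_1,B_n,C_n,D_n$ ($n$ even), $E_7,E_8,G_2$ we have $\sigma=\identity$ and $h$ even. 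Consequently $\tau^{-h/2}\cong[1]$ on objects.
\end{enumerate}
Since everything is decided on indecomposables and a 2-cluster tilting object is basic, it suffices to control $[1]$ on isoclasses of indecomposables of $\mathcal C_2(\Lambda)$.

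\emph{Periodicity.} In $\mathcal C_2(\Lambda)$ we have $\tau^{-1}[2]\cong\identity$, so $[2]\cong\tau$. Then
\[[2h+2]=[2]^{h+1}\cong\tau^{h+1}=\tau\cdot\tau^{h}.\]
By fact 2, $\tau^{h}\cong[-2]$ on objects, and $[-2]\cong\tau^{-1}$ in $\mathcal C_2(\Lambda)$. Hence $[2h+2]$ fixes every indecomposable, and so every 2-cluster tilting object. In the special types, fact 3 gives $[h]=[2]^{h/2}\cong\tau^{h/2}\cong[-1]$, so $[h+1]$ fixes every indecomposable.

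\emph{Counting.} Write $T=\bigoplus_{j=1}^n X_j$. Exchanging the order of summation gives
\[\sum_{i=1}^{2h+2}\rank(T[i]\cap\modulecat\Lambda)=\sum_{j}c(X_j),\qquad c(X)=\#\{i\in\bZ/(2h+2)\mid X[i]\in\modulecat\Lambda\}.\]
It therefore suffices to show $c(X)=h$ for every indecomposable $X$, since then the total is $nh=2N$, using $N=nh/2$. In the special types one shows likewise that the count over $\bZ/(h+1)$ is $h/2$, giving $N$.

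To compute $c(X)$, lift $X$ to $\Fund$ in the $\tau$-orbit of a vertex $v$, and follow $X,X[1],X[2],\dots$ in $\mathcal C_2(\Lambda)$, reducing each back into $\Fund$ by $\tau^{-1}[2]$. Each application of $[2]$ is $\tau$, so it moves one step back along the $\tau$-orbit inside a fixed degree. Each application of $[1]$ alternates between the copies of $\modulecat\Lambda$ and $\modulecat\Lambda[1]$, and wraps through $P[2]$ exactly when a $\tau$-orbit is exhausted. Under $[1]$ the $\tau$-orbit of $v$ in one degree is matched with that of $\sigma v$ in the next, via $\nu$. Over one full period the shifts therefore sweep once through the module part of the $\tau$-orbit of $v$ and once through that of $\sigma v$. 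This gives $c(X)=p_v+p_{\sigma v}=h$. As a consistency check, the remaining $2h+2-h=h+2$ positions should be the $h$ positions in $\modulecat\Lambda[1]$ together with the two positions in $\Lambda[2]$.

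The main obstacle is this bookkeeping: pinning down how $[1]$ moves an object in $\Fund$ across the boundary $P[2]\leftrightarrow \tau^{-1}P[2]$ and between the degrees $0$ and $1$, with the correct twist by $\sigma$. I would handle it by computing $[1]$ explicitly on $\bZ Q$ coordinates via $\nu$. In the special types $\sigma=\identity$ and the period halves, and the same sweep argument gives $c'(X)=p_v=h/2$ over $\bZ/(h+1)$.
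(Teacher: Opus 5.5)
Your argument is correct, and it is the detailed unpacking of Happel's description of $\mathcal D^b(\Lambda)$ that the paper's one-line justification appeals to. The key steps check out: $[2]\cong\tau$ in $\mathcal C_2(\Lambda)$ together with $\tau^{-h}\cong[2]$ (resp.\ $\tau^{-h/2}\cong[1]$ when $\sigma=\identity$) gives the periodicity, and the even and odd shifts of an indecomposable $X$ sweep the $\tau$-orbits of $X$ and $X[1]$ in $\mathcal C_2(\Lambda)$, each of size $h+1$ and containing $p_v$ resp.\ $p_{\sigma v}$ modules, so each summand contributes $h$ (resp.\ $h/2$) and the totals are $nh=2N$ (resp.\ $N$).
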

    Next we introduce mutation. In the 1-cluster category every almost cluster tilting object is a direct summand of exactly two cluster tilting object and hence defines a 'mutation' between them. This can be adapted to the 2-cluster category as follows:
    \begin{Proposition}[{\cite[Theorem 3]{mclusterThomas}}]
        Any almost 2-cluster tilting object is a direct summand of exactly 3 non-isomorphic 2-cluster tilting objects.
    \end{Proposition}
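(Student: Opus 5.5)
The plan is to use Calabi--Yau reduction in the sense of Iyama--Yoshino. This turns the count of complements of an almost 2-cluster tilting object into a count of indecomposable 2-rigid objects in a 2-cluster category of rank one, where the count can be done by hand. We may take as given that $\mathcal C_2(\Lambda)$ is a Hom-finite, Krull--Schmidt, $3$-Calabi-Yau triangulated category (\cref{clustercalabiyau}) that has 2-cluster tilting objects, for example $\Lambda$ itself. We may also take as given that the 2-cluster tilting objects are exactly the basic 2-rigid objects with $n=|\Lambda|$ summands.

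Fix an almost 2-cluster tilting object $U$ and consider the subfactor category
\[\mathcal Z = \{X\in \mathcal C_2(\Lambda)\mid \Ext^1(U,X)=0=\Ext^2(U,X)\}, \qquad \mathcal U = \mathcal Z/[\addcat U].\]
By \cite{IyamaYoshino}, $\mathcal U$ is again a triangulated $3$-Calabi-Yau category. Its shift $\langle 1\rangle$ is obtained by taking the cone of a minimal left $\addcat U$-approximation, and every Ext group in $\mathcal U$ is computed by the corresponding Ext in $\mathcal C_2(\Lambda)$. The same reference shows that $X\mapsto X$ induces a bijection between the 2-cluster tilting objects of $\mathcal C_2(\Lambda)$ containing $U$ and the 2-cluster tilting objects of $\mathcal U$. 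Note that any 2-rigid $U\oplus X$ automatically has $X\in\mathcal Z$, by \cref{clustercalabiyau}. So a 2-cluster tilting object containing $U$ is the same as an indecomposable $X\in\mathcal Z\setminus\addcat U$ that is 2-rigid. It therefore suffices to show that $\mathcal U$ has exactly three indecomposable objects and that each of them is 2-rigid.

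The rank count is where I expect the difficulty. Using the hereditary structure, I would reduce further to the case $\Lambda=k$. Concretely, write $U$ in the fundamental domain $\Fund$. After applying a suitable power of $[1]$ and $\tau$, which are autoequivalences permuting 2-cluster tilting objects, we may assume that $U$ contains a projective or a shifted projective summand. Removing it should identify $\mathcal U$ with the corresponding reduction over a hereditary algebra of smaller rank. This is analogous to the reduction $P^{\perp_{0,1}}\simeq\modulecat\Lambda'$ in the wide subcategory picture (\cref{WideFromTilting}, and the Proposition that wide subcategories are module categories of representation finite hereditary algebras). By induction on the rank we arrive at $\mathcal U\simeq\mathcal C_2(k)$. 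Establishing this identification rigorously is the main obstacle. If it resists, I would instead argue directly inside $\mathcal U$ as follows.
\begin{itemize}
\item Since $\mathcal U$ has 2-cluster tilting objects with exactly one summand, every indecomposable $Y$ of $\mathcal U$ lies in the triangulated hull of such a summand $X$.
\item Hence $\mathcal U$ is generated by $X$, $X\langle1\rangle$ and $X\langle2\rangle$.
\item The $3$-Calabi-Yau property gives $\End(X)=k$.
\end{itemize}

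In the base case the count is explicit. For $\Lambda=k$ we have $\tau=\bS[-1]=[-1]$, so $\tau^{-1}[2]=[3]$ and $\mathcal C_2(k)=\mathcal D^b(k)/[3]$. Its indecomposables are $k$, $k[1]$ and $k[2]$. For each of them,
\[\Ext^i_{\mathcal C_2(k)}(k[j],k[j])=\bigoplus_{m\in\bZ}\Hom_{\mathcal D^b(k)}(k,k[i+3m]),\]
and this vanishes for $i=1,2$. So all three are 2-rigid, pairwise non-isomorphic, and hence all are 2-cluster tilting. Transporting back along the Iyama--Yoshino bijection, the three objects $X_0$, $X_1=X_0\langle1\rangle$ and $X_2=X_0\langle2\rangle$ give exactly three non-isomorphic 2-cluster tilting objects $U\oplus X_i$. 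These are linked by exchange triangles $X_i\to B_i\to X_{i+1}\to X_i[1]$ with $B_i\in\addcat U$, which is the expected form of 2-cluster mutation.
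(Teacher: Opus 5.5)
Your proposal has a genuine gap: the step you flag as the main obstacle is the entire content of the proposition, and it is not supplied. As set up, the Calabi--Yau reduction does not make the problem smaller.

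In $\mathcal C_2(\Lambda)$ one has $\tau\cong[2]$, so every indecomposable object is a shift of an indecomposable projective module, and hence is 2-rigid by \cref{homscoincide} and \cref{clustercalabiyau}. Moreover, an indecomposable $X\notin\addcat U$ lies in $\mathcal Z$ exactly when $U\oplus X$ is 2-rigid. So the indecomposable objects of $\mathcal U$ are precisely the complements of $U$. Your sufficient condition (``exactly three indecomposables, each 2-rigid'') is therefore just the proposition restated.

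What the reduction does give you is the lower bound. Suppose one complement $X$ exists; this already needs a completion result you do not supply. Then $X$, $X\langle1\rangle$, $X\langle2\rangle$ are 2-cluster tilting in $\mathcal U$ and pairwise non-isomorphic, because $\Hom_{\mathcal U}(X,X\langle i\rangle)\cong\Ext^i_{\mathcal C_2(\Lambda)}(X,X)=0$ for $i=1,2$ while $\Hom_{\mathcal U}(X,X)\neq 0$. What is missing is the upper bound: that there is no fourth complement.

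Neither of your routes provides it.

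\emph{Route (i).} You only assert that the reduction at an indecomposable projective summand $P$ is $\mathcal C_2(\Lambda')$, and this is not a formality. On modules, $\mathcal Z_P$ is cut out by the single condition $\Ext^1_\Lambda(M,P)=0$. Already for $\Lambda$ of type $A_2$ with $P$ projective-injective, $\mathcal Z_P\setminus\addcat P$ consists of both simple modules and one shifted simple. Since $\mathcal C_2(\Lambda')$ has only one indecomposable in degree $0$, no identification can send modules to $\Lambda'$-modules. The functor therefore has to be constructed and shown to respect $\Ext^1$ and $\Ext^2$. The induction also needs compatibility of iterated reductions.

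\emph{Route (ii).} Even the precise statement $\mathcal U=X*X\langle1\rangle*X\langle2\rangle$ bounds nothing on its own. To conclude that $X,X\langle1\rangle,X\langle2\rangle$ are the only indecomposables, you would need two further facts:
\begin{enumerate}
\item $X\langle3\rangle\cong X$, equivalently $\Hom_{\mathcal U}(X,X\langle-1\rangle)=0=\Hom_{\mathcal U}(X,X\langle-2\rangle)$;
\item $\Hom_{\mathcal U}(X,X)$ is a division ring.
\end{enumerate}
The 2-cluster tilting condition only controls degrees $1$ and $2$. Iyama--Yoshino identify $\Hom_{\mathcal U}(X,Y\langle i\rangle)$ with $\Hom_{\mathcal C_2(\Lambda)}(X,Y[i])$ only for $i=1,2$, not for ``every Ext group''. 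The $3$-Calabi--Yau property gives $\Hom_{\mathcal U}(X,X\langle 3\rangle)\cong\Hom_{\mathcal U}(X,X)^*$, not $\Hom_{\mathcal U}(X,X)=k$.

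For comparison, the paper gives no argument of its own here; it quotes the statement from \cite[Theorem 3]{mclusterThomas}.
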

    In particular almost 2-cluster tilting objects define a 'mutation' involving three objects instead of two.
    This allows us to relate the number of almost 2-cluster tilting objects with the number of 2-cluster tilting objects.
     \begin{Proposition}\label{almostCounting}
        Let $n$ be the rank of $\Lambda$. Then we have
        \[|\{\textnormal{almost 2-cluster tilting objects in  }\mathcal{C}_2(\Lambda)\}| = \frac{n}{3}|\{\textnormal{2-cluster tilting objects in  }\mathcal{C}_2(\Lambda)\}|\]
    \end{Proposition}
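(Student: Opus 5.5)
Double counting of incidences between almost 2-cluster tilting objects and 2-cluster tilting objects: let $\mathcal A$ be the set of almost 2-cluster tilting objects and $\mathcal T$ the set of 2-cluster tilting objects in $\mathcal C_2(\Lambda)$, and let
\[P = \{(A,T)\in \mathcal A\times \mathcal T \mid A \text{ is a direct summand of } T\}.\]
I will compute $|P|$ once by projecting to $\mathcal T$ and once by projecting to $\mathcal A$, and compare.

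\textbf{Counting over $\mathcal T$.} Fix $T\in\mathcal T$. Since $T$ is basic with exactly $n$ indecomposable summands, removing one summand gives $n$ pairwise non-isomorphic basic objects $T/T'$ with $n-1$ summands. Each is 2-rigid, because $\Ext^1$ and $\Ext^2$ vanish on $T$ and hence on every direct summand; so each lies in $\mathcal A$. Conversely, any basic direct summand of $T$ with $n-1$ summands is obtained this way. Hence the fibre over $T$ has exactly $n$ elements, and $|P| = n|\mathcal T|$.

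\textbf{Counting over $\mathcal A$.} By \cite[Theorem 3]{mclusterThomas}, each $A\in\mathcal A$ is a direct summand of exactly $3$ non-isomorphic 2-cluster tilting objects. So the fibre over $A$ has exactly $3$ elements, and $|P| = 3|\mathcal A|$.

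Equating the two counts gives $3|\mathcal A| = n|\mathcal T|$, which is the claim. The only point needing care is to treat objects up to isomorphism throughout, so that removing distinct summands of a basic $T$ really gives distinct almost 2-cluster tilting objects. This holds because $T$ is basic and decompositions into indecomposables are unique up to isomorphism, since $\mathcal C_2(\Lambda)$ is Krull--Schmidt. Finiteness of both sets is automatic: $\Lambda$ is representation finite, so $\Fund$ has finitely many indecomposables.
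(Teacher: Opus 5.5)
Your double-counting argument is correct and is the one the paper intends. The paper gives no separate proof; it introduces the statement with ``This allows us to relate\ldots'' right after citing Thomas's theorem that each almost 2-cluster tilting object has exactly three completions, and it uses the same ``each object is counted three times'' reasoning in its proof of \cref{augmentedCounting}.
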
    
    \begin{CatalandRemark}
        The 2-rigid objects in $\mathcal{C}_2(\Lambda)$ form a categorification of the 2-cluster complex. In particular the 2-cluster tilting objects correspond to facets.
    \end{CatalandRemark}
    We also get a phenomenon that is unique to higher cluster categories: The three completions of an almost 2-cluster tilting object admit a cyclic ordering:
    \begin{Proposition}\label{cyclicOrderCompletions}
        Let $T_a, T_b$ and $T_c$ be the three completions of an almost 2-cluster tilting object $T$. Then there exists a cyclic order $\prec$ on $\{T_a,T_b,T_c\}$ such that ($T_i\prec T_j$), that is $T_i$ is the predecessor of $T_j$, if and only if $\Ext^1_{\mathcal{C}_2(\Lambda)}(T_j,T_i)\neq 0$.
    \end{Proposition}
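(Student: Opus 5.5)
We want to prove \cref{cyclicOrderCompletions}. Write $T_i = T\oplus X_i$ for $i\in\{a,b,c\}$, where $X_a,X_b,X_c$ are the three pairwise non-isomorphic indecomposable complements of $T$. Then $\Ext^1_{\mathcal C_2(\Lambda)}(T_j,T_i)\cong\Ext^1_{\mathcal C_2(\Lambda)}(X_j,X_i)$ for $i\neq j$, because $T\oplus X_i$ and $T\oplus X_j$ are both 2-rigid, so every other component of the $\Ext^1$ vanishes. It therefore suffices to show that the relation $X_i\to X_j$ defined by $\Ext^1(X_j,X_i)\neq 0$ is a cyclic order on the three complements. Concretely, we must show that for each unordered pair $\{i,j\}$ exactly one of $\Ext^1(X_j,X_i)$ and $\Ext^1(X_i,X_j)$ is nonzero, and that the resulting tournament is a 3-cycle rather than transitive.

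We use the exchange triangles from the theory of $m$-cluster tilting mutation in \cite{mclusterThomas} and \cite{Buan}. For a complement $X_i$, take a minimal left $\addcat T$-approximation $X_i\to B_i$. Its cone $X_{i'}$ is again a complement, and this gives a triangle
\[X_i\rightarrow B_i\rightarrow X_{i'}\rightarrow X_i[1].\]
Iterating from $X_a$ produces $X_a\to B\to X_b\to X_a[1]$, then $X_b\to B'\to X_c\to X_b[1]$, then $X_c\to B''\to X_a\to X_c[1]$. The cycle closes after exactly $m+1=3$ steps, which is the content of the counting behind Thomas' theorem. Since $X_a\notin\addcat T$, the connecting map $X_b\to X_a[1]$ is nonzero. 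Indeed, if it were zero the triangle would split and $X_b$ would be a summand of $B\in\addcat T$. Hence $\Ext^1(X_b,X_a)\neq0$, and in the same way $\Ext^1(X_c,X_b)\neq0$ and $\Ext^1(X_a,X_c)\neq0$. This gives the candidate cyclic order $X_a\prec X_b\prec X_c\prec X_a$.

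It remains to show that the reverse extensions vanish, for example $\Ext^1(X_a,X_b)=0$. Apply $\Hom(X_a,-)$ to the triangle $X_a\to B\to X_b\to X_a[1]$. This yields the exact sequence
\[\Ext^1(X_a,B)\to\Ext^1(X_a,X_b)\to\Ext^2(X_a,X_a).\]
The left term vanishes because $T\oplus X_a$ is 2-rigid, and the right term vanishes for the same reason. Hence $\Ext^1(X_a,X_b)=0$, and the same argument handles the other two pairs.

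The main obstacle is justifying that the exchange triangles cycle through all three complements in this way: the cone of the approximation must again be a complement, and successive cones must be distinct and exhaust $\{X_a,X_b,X_c\}$. To settle this I would first invoke the known result of Iyama--Yoshino or Thomas on $m$-cluster mutation, namely that the complements form a chain $X^{(0)},\dots,X^{(m)}$ linked by these triangles with $X^{(m+1)}=X^{(0)}$. If that result is not available, I would verify directly that the cone $Y$ satisfies $\Ext^{1,2}(T\oplus Y,T\oplus Y)=0$. This check uses the 3-Calabi--Yau property \cref{clustercalabiyau} together with the approximation property, and distinctness of successive cones follows from the nonvanishing of $\Ext^1$ established above.
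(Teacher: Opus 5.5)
Your argument is correct. The paper gives no proof of this proposition to compare against: it is stated in Section~3 without proof or reference, as background from the theory of higher cluster categories. It is essentially the cyclic family of exchange triangles of Iyama--Yoshino, as used in Buan--Thomas' coloured quiver mutation. Your proposal supplies the derivation the paper omits, and each step checks out:
\begin{itemize}
\item the reduction $\Ext^1(T_j,T_i)\cong \Ext^1(X_j,X_i)$ for $i\neq j$;
\item the nonvanishing of $\Ext^1(X_b,X_a)$, because the exchange triangle cannot split;
\item the vanishing of $\Ext^1(X_a,X_b)$, from the exact sequence $\Ext^1(X_a,B)\to\Ext^1(X_a,X_b)\to\Ext^2(X_a,X_a)$, whose outer terms are killed by 2-rigidity of $T\oplus X_a$.
\end{itemize}
Once the three triangles close up, these settle all six ordered pairs.

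If you take the fallback route instead of citing Iyama--Yoshino, the rigidity check works as you indicate.
\begin{itemize}
\item $\Ext^1(T,Y)=0$ is immediate.
\item $\Ext^2(T,Y)=0$, because $\Hom(T,X_a[3])\to\Hom(T,B[3])$ is, via \cref{clustercalabiyau}, dual to the surjection $\Hom(B,T)\to\Hom(X_a,T)$ given by the approximation.
\item $\Ext^1(Y,T)$ and $\Ext^2(Y,T)$ then vanish by Calabi--Yau duality.
\item $\Ext^1(Y,Y)$ embeds into $\Ext^2(Y,X_a)\cong\Ext^1(X_a,Y)^*=0$ by your vanishing computation, and $\Ext^2(Y,Y)\cong\Ext^1(Y,Y)^*$.
\end{itemize}

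Two details should be made explicit there. First, left minimality of $X_a\to B$ is what excludes summands of $Y$ in $\addcat T$. The summand count then forces all indecomposable summands of $Y$ to be isomorphic, which is enough; indecomposability is not needed. Second, $X_c\not\cong X_a$ does not follow from nonvanishing of $\Ext^1$ alone. It follows by comparing $\Ext^1(X_b,X_a)\neq 0$ with $\Ext^1(X_b,X_c)=0$. The same comparison, together with Thomas' count of exactly three completions, shows that the mutation of $X_c$ is $X_a$, which closes the cycle.
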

    This definition also yields a cyclic order on the two completions of an almost cluster tilting object in the 1-cluster category, but a cyclic order on two objects is not meaningful.

\section{Mutable intervals and 2-cluster tilting objects}
    In the next two sections we will recover the combinatorics of 2-cluster tilting objects within a class of intervals of the lattice of torsion classes called mutable intervals. As a first step we describe 2-cluster tilting objects as triples of partial tilting modules in $\modulecat \Lambda$ satisfying certain orthogonality conditions.\\
    Any 2-cluster tilting object $T\in \mathcal C_2(\Lambda)$ can be decomposed as $T = T_{free}\oplus T_{tors}[1] \oplus T_{supp}[2]$ where $T_{free}, T_{tors}\in \modulecat \Lambda$ and $T_{supp}\in \proj (\modulecat \Lambda )$. The next Proposition describes when three such modules form a 2-cluster tilting object.
    \begin{Proposition}\label{2clusterTiltviaModulecat}
        Let $T_{free}$ and $T_{tors}$ be two partial tilting modules in $\modulecat \Lambda$ and $T_{supp}$ a basic projective module. Then $T = T_{free}\oplus T_{tors}[1] \oplus T_{supp}[2]$ is 2-rigid in $\mathcal C_2(\Lambda)$ if and only if all of the following hold:
        \begin{enumerate}
            \item $\Hom_\Lambda(T_{tors},T_{free}) = 0 = \Ext^1_\Lambda(T_{tors},T_{free})$
            \item $\Hom_\Lambda(T_{supp},T_{tors}) = 0 = \Hom_\Lambda(T_{supp},T_{free})$
        \end{enumerate}
        Further $T$ is 2-cluster tilting if and only if $|\Lambda| = |T_{free}|+|T_{tors}|+|T_{supp}|$.
    \end{Proposition}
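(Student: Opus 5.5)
The plan is to reduce 2-rigidity to a symmetric vanishing of $\Ext^1_{\mathcal C_2(\Lambda)}$. By \cref{clustercalabiyau} we have $\Ext^2_{\mathcal C_2(\Lambda)}(X,Y)\cong \Ext^1_{\mathcal C_2(\Lambda)}(Y,X)^*$. So $T$ is 2-rigid if and only if $\Ext^1_{\mathcal C_2(\Lambda)}(X,Y)=0$ for every ordered pair $(X,Y)$ of summands taken from $T_{free}$, $T_{tors}[1]$, $T_{supp}[2]$. I will go through the six unordered types of pairs, computing each $\Ext^1$ in both directions by moving everything into $\modulecat \Lambda$. The tools are \cref{homscoincide}, the fact that $[1]$ is an autoequivalence, and the identifications $\tau^{-1}[2]\cong \identity$ in $\mathcal C_2(\Lambda)$ and $\tau P\cong \nu P[-1]$ in $\mathcal D^b(\Lambda)$ for projective $P$. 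Together these give $P[2]\cong \nu P[-1]$, hence $P[3]\cong \nu P = I_P$, the injective with the same top as $P$.

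\textbf{Diagonal pairs.} Shifts preserve $\Ext^1$, so by \cref{homscoincide} we get $\Ext^1_{\mathcal C_2}(T_{free},T_{free})=\Ext^1_\Lambda(T_{free},T_{free})=0$. The same argument handles $T_{tors}[1]$ and $T_{supp}[2]$, since all three modules are partial tilting or projective.

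\textbf{The pair $T_{tors}[1]$ and $T_{free}$.} Here $\Ext^1_{\mathcal C_2}(T_{tors}[1],T_{free})=\Hom_{\mathcal C_2}(T_{tors},T_{free})=\Hom_\Lambda(T_{tors},T_{free})$. In the other direction, $\Ext^1_{\mathcal C_2}(T_{free},T_{tors}[1])=\Ext^2_{\mathcal C_2}(T_{free},T_{tors})\cong \Ext^1_{\mathcal C_2}(T_{tors},T_{free})^*=\Ext^1_\Lambda(T_{tors},T_{free})^*$. This is exactly condition (1).

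\textbf{The pair $T_{supp}[2]$ and $T_{tors}[1]$.} We have $\Ext^1_{\mathcal C_2}(T_{supp}[2],T_{tors}[1])=\Hom_\Lambda(T_{supp},T_{tors})$. In the other direction, $\Ext^1_{\mathcal C_2}(T_{tors}[1],T_{supp}[2])=\Ext^1_\Lambda(T_{tors},T_{supp})$, which by 3-CY is dual to $\Ext^2_{\mathcal C_2}(T_{supp},T_{tors})=\Ext^1_{\mathcal C_2}(T_{supp}[-1]\ldots)$. I would rather just observe it equals $\Ext^2_{\mathcal C_2}(T_{supp},T_{tors})^*\cong\Ext^1_\Lambda(T_{supp},T_{tors})$ up to duality, which vanishes because $T_{supp}$ is projective. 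This gives half of condition (2).

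\textbf{The pair $T_{supp}[2]$ and $T_{free}$.} This is the step requiring an actual computation in the orbit category, and I expect it to be the main obstacle. First, $\Ext^1_{\mathcal C_2}(T_{free},P[2])=\Hom_{\mathcal C_2}(T_{free},P[3])=\Hom_{\mathcal C_2}(T_{free},\nu P)=\Hom_\Lambda(T_{free},\nu P)\cong \Hom_\Lambda(P,T_{free})^*$, which is the other half of (2). For the reverse direction I write
\[\Ext^1_{\mathcal C_2}(P[2],X)=\Hom_{\mathcal C_2}(P,X[-1])=\bigoplus_{n\in\bZ}\Hom_{\mathcal D}(P,\tau^{-n}X[2n-1]).\]
I then show every summand vanishes, using the facts from Happel quoted in the proof of \cref{homscoincide}: $\tau^{-n}X$ lies in $\modulecat\Lambda[j]$ with $0\le j\le n$ for $n\ge 0$, and with $n\le j\le 0$ for $n<0$. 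For $n\le 0$ the target lies in negative degree. For $n\ge 2$ it lies in degree at least $3$. For $n=1$ the target lies in $\modulecat\Lambda[1]\cup\modulecat\Lambda[2]$, where maps from the projective $P$ vanish. So $\Ext^1_{\mathcal C_2}(P[2],X)=0$ automatically.

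\textbf{Counting.} Since 2-cluster tilting objects are exactly the 2-rigid objects with $|\Lambda|$ summands, and the three pieces contribute disjoint indecomposable summands, $T$ is 2-cluster tilting precisely when $|T_{free}|+|T_{tors}|+|T_{supp}|=|\Lambda|$.
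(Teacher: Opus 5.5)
Your overall strategy is the paper's: use the 3-Calabi--Yau property to reduce 2-rigidity to the vanishing of $\Ext^1_{\mathcal C_2(\Lambda)}$ in both directions between the blocks $T_{free}$, $T_{tors}[1]$, $T_{supp}[2]$, and then translate each term into $\modulecat\Lambda$. The one real variation is $\Ext^1_{\mathcal C_2(\Lambda)}(T_{supp}[2],T_{free})$. You kill it by expanding the orbit-category Hom and using Happel's degree bounds; the paper instead uses 3-CY and $\tau P[1]\cong\nu_\Lambda P$ to rewrite it as $\Ext^1_\Lambda(T_{free},\nu_\Lambda T_{supp})^*$, which vanishes because $\nu_\Lambda T_{supp}$ is injective. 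Both arguments work. Your other computations (diagonal terms, the $T_{tors}[1]$/$T_{free}$ pair, $\Ext^1_{\mathcal C_2(\Lambda)}(T_{supp}[2],T_{tors}[1])$, $\Ext^1_{\mathcal C_2(\Lambda)}(T_{free},T_{supp}[2])$, and the count) are correct.

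The step that fails as written is $\Ext^1_{\mathcal C_2(\Lambda)}(T_{tors}[1],T_{supp}[2])$. The shift bookkeeping gives $\Hom_{\mathcal C_2(\Lambda)}(T_{tors},T_{supp}[2])=\Ext^2_{\mathcal C_2(\Lambda)}(T_{tors},T_{supp})$, not $\Ext^1_\Lambda(T_{tors},T_{supp})$. The latter genuinely need not vanish. In type $A_2$ take the simple projective $S_1$ and the non-projective simple $S_2$: then $\Ext^1_\Lambda(S_2,S_1)\neq 0$, whereas the correct computation below gives $\Ext^1_{\mathcal C_2(\Lambda)}(S_2[1],S_1[2])=0$. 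So your first equality would impose a spurious extra condition.

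Your fallback identification $\Ext^2_{\mathcal C_2(\Lambda)}(T_{supp},T_{tors})^*\cong\Ext^1_\Lambda(T_{supp},T_{tors})$ is also wrong. By 3-CY its left side is $\Ext^1_\Lambda(T_{tors},T_{supp})$, which is nonzero in the same example while the right side vanishes. The two index errors cancel, so your conclusion is right but not justified. The correct chain is
\[
\Ext^1_{\mathcal C_2(\Lambda)}(T_{tors}[1],T_{supp}[2])=\Ext^2_{\mathcal C_2(\Lambda)}(T_{tors},T_{supp})\cong\Ext^1_{\mathcal C_2(\Lambda)}(T_{supp},T_{tors})^*=\Ext^1_\Lambda(T_{supp},T_{tors})^*=0,
\]
which is exactly the paper's treatment of this term. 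With this substituted, your proof is complete.
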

    \begin{proof}
        The object $T$ is 2-rigid if and only if $\Ext^1_{\mathcal C_2(\Lambda)}(T,T) = 0 = \Ext^2_{\mathcal C_2(\Lambda)}(T,T)$. Because $\mathcal C_2(\Lambda)$ is 3-Calabi-Yau (\cref{clustercalabiyau}), we only need to check one of the two. Combining \cref{homscoincide} and \cref{clustercalabiyau} yields the following isomorphisms:
        \begin{align}
            \Ext^1_{\mathcal C_2(\Lambda)}(T_{free},T_{free})&\cong \Ext^1_\Lambda(T_{free},T_{free})\label{modulechareq1}\\
            \Ext^1_{\mathcal C_2(\Lambda)}(T_{free},T_{tors}[1])&\cong \Ext^1_\Lambda(T_{tors},T_{free})^*\label{modulechareq2}\\
            \Ext^1_{\mathcal C_2(\Lambda)}(T_{free},T_{supp}[2])&\cong \Hom_\Lambda(T_{supp},T_{free})^*\label{modulechareq3}\\
            \Ext^1_{\mathcal C_2(\Lambda)}(T_{tors}[1],T_{free})&\cong \Hom_\Lambda(T_{tors},T_{free})\label{modulechareq4}\\
            \Ext^1_{\mathcal C_2(\Lambda)}(T_{tors}[1],T_{tors}[1])&\cong \Ext^1_\Lambda(T_{tors},T_{tors})\label{modulechareq5}\\
            \Ext^1_{\mathcal C_2(\Lambda)}(T_{tors}[1],T_{supp}[2])&\cong \Ext^1_\Lambda(T_{supp},T_{tors})^*\label{modulechareq6}\\
            \Ext^1_{\mathcal C_2(\Lambda)}(T_{supp}[2],T_{free})&\cong \Ext^1_\Lambda(T_{free},\nu_\Lambda T_{supp})\label{modulechareq7}^*\\
            \Ext^1_{\mathcal C_2(\Lambda)}(T_{supp}[2],T_{tors}[1])&\cong \Hom_\Lambda(T_{supp},T_{tors})\label{modulechareq8}\\
            \Ext^1_{\mathcal C_2(\Lambda)}(T_{supp}[2],T_{supp}[2])&\cong \Ext^1_\Lambda(T_{supp},T_{supp}).\label{modulechareq9}
        \end{align}
        In \eqref{modulechareq7} the functor $\nu_\Lambda$ is the Nakayama functor and we used the identity $\tau P[1] \cong \nu_\Lambda P$ for $P$ projective.
        The equations \eqref{modulechareq1},\eqref{modulechareq5} and \eqref{modulechareq9} correspond to the assumption that $T_{tors}$, $T_{free}$ and $T_{supp}$ are partial tilting over $\Lambda$. Equation \eqref{modulechareq6} vanishes since $T_{supp}$ is projective. The object $\nu_\Lambda T_{supp}$ is injective in $\modulecat \Lambda$ and hence equation \eqref{modulechareq7} always vanishes. The remaining four equations \eqref{modulechareq2},\eqref{modulechareq3},\eqref{modulechareq4} and \eqref{modulechareq8} match the assumptions. The claim about 2-cluster tilting objects is immediate from the definition.
    \end{proof}
    This decomposition can be used to associate an interval in $\tors \Lambda$ to any 2-cluster tilting object $T$. We also associate two wide subcategories to $T$ which will be important in the proofs.
    \begin{Definition}
        Let $T= T_{free}\oplus T_{tors}[1] \oplus T_{supp}[2]$ be a 2-cluster tilting object in $C_2(\Lambda)$. The interval $\interval (T)$ associated to $T$ is given by 
        \[\interval(T) = [(\Gen T_{tors},T_{tors}^{\perp_0})\leq({}^{\perp_0}T_{free},\Cogen T_{free})].\]
        Further we let $W_{free}(T) = W(T_{free})$ and $W_{tors}(T) = W(T_{tors})$ be the smallest wide subcategories containing $T_{free}$, respectively $T_{tors}$. We write $W_{free} = W_{free}(T)$ and $W_{tors} = W_{tors}(T)$ when this is unambiguous.
    \end{Definition}    
    This construction is computed fore some examples in \cref{ExampleSection}. The intervals obtained this way will turn out to be the mutable intervals which we introduce next.
    \begin{Definition}
        Let $ I = [(\mathcal{T},\mathcal{F}) \leq (\mathcal{T}',\mathcal{F}')]$ be an interval in the lattice of torsion pairs. We call $I$ \emph{mutable} if it satisfies $\mathfrak a(\mathcal{T})\subset \mathfrak a(\mathcal{T}')$ or equivalently $\mathfrak a(\mathcal{F}')\subset \mathfrak a(\mathcal{F})$.
    \end{Definition}
    \begin{proof}[Proof of the equivalence of definitions.]
        Assume $\mathfrak a(\mathcal{T})\subset \mathfrak a(\mathcal{T}')$. Then 
        \[\mathfrak a(\mathcal{F}') = \mathfrak a(\mathcal{T}')^{\perp_{0,1}} \subset \mathfrak a(\mathcal{T})^{\perp_{0,1}} = \mathfrak a(\mathcal{F}).\]
    \end{proof}
    \begin{Example}\label{mutableExamples}
        Let $(\mathcal{T},\mathcal{F})$ be a torsion pair. Then the following intervals are always mutable:
        \begin{enumerate}
            \item $[(\mathcal{T},\mathcal{F}) \leq (\mathcal{T},\mathcal{F})]$
            \item $[(\mathcal{T},\mathcal{F}) \leq (\modulecat \Lambda, 0)]$
            \item $[(0,\modulecat \Lambda) \leq (\mathcal{T},\mathcal{F})]$
        \end{enumerate}
    \end{Example}
    More explicit examples in type $A_3$ can be found in \cref{ExampleSection}.
    There are mutable intervals that are not wide and wide intervals that are not mutable. The two properties don't seem to be directly related.
    \begin{Proposition}\label{intiswelldefined}
        Let $T = T_{free}\oplus T_{tors}[1] \oplus T_{supp}[2]$ be a 2-cluster tilting object. Then $\interval (T)$ is a mutable interval.
    \end{Proposition}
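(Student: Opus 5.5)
The plan is to check the two things the statement requires. First, $\interval(T)$ is actually an interval, i.e.\ $\Gen T_{tors}\subseteq {}^{\perp_0}T_{free}$. Second, it is mutable, i.e.\ $\mathfrak a(\Gen T_{tors})\subseteq \mathfrak a({}^{\perp_0}T_{free})$. Both should follow from the orthogonality conditions in \cref{2clusterTiltviaModulecat} together with the Ingalls–Thomas bijections; the summand $T_{supp}$ plays no role.

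\emph{Step 1 (it is an interval).} By \cref{2clusterTiltviaModulecat} we have $\Hom_\Lambda(T_{tors},T_{free})=0$, so $T_{tors}\in{}^{\perp_0}T_{free}$. Since ${}^{\perp_0}T_{free}$ is a torsion class, it is closed under quotients, and hence $\Gen T_{tors}\subseteq {}^{\perp_0}T_{free}$. This also gives the reverse inclusion of the torsion-free parts.

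\emph{Step 2 (a chain of wide subcategories).} I aim to prove
\[\mathfrak a(\Gen T_{tors})\subseteq W(T_{tors})\subseteq {}^{\perp_{0,1}}T_{free} = {}^{\perp_{0,1}}W(T_{free})\subseteq {}^{\perp_{0,1}}\mathfrak a(\Cogen T_{free}) = \mathfrak a({}^{\perp_0}T_{free}).\]

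For the first inclusion, use \cref{wideSubcatsViaSplit}: the projectives of $\mathfrak a(\Gen T_{tors})$ are split-projectives in $\Gen T_{tors}$. Any such $X$ receives a surjection from some object of $\addcat T_{tors}$, and that surjection splits, so $X\in\addcat T_{tors}$. Every object of a wide subcategory $\cong\modulecat\Lambda'$ is a cokernel of a map between its projectives, so $\mathfrak a(\Gen T_{tors})\subseteq W(T_{tors})$.

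For the second inclusion, condition (1) of \cref{2clusterTiltviaModulecat} says $T_{tors}\in{}^{\perp_{0,1}}T_{free}$. This category is wide by the lemma on orthogonal categories, so it contains $W(T_{tors})$.

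The equality ${}^{\perp_{0,1}}T_{free}={}^{\perp_{0,1}}W(T_{free})$ is the dual of \cref{WideFromTilting}.

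For the third inclusion, dually to the first, split-injectives of $\Cogen T_{free}$ are summands of $T_{free}$. Hence $\mathfrak a(\Cogen T_{free})\subseteq W(T_{free})$, and taking ${}^{\perp_{0,1}}$ reverses this inclusion.

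For the final equality, apply \cref{wideRankComplement} to the torsion pair $({}^{\perp_0}T_{free},\Cogen T_{free})$. It gives $\mathfrak a({}^{\perp_0}T_{free})^{\perp_{0,1}}=\mathfrak a(\Cogen T_{free})$. Applying the inverse bijection ${}^{\perp_{0,1}}(-)$ on wide subcategories then yields ${}^{\perp_{0,1}}\mathfrak a(\Cogen T_{free})=\mathfrak a({}^{\perp_0}T_{free})$.

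The main obstacle I expect is the first and third inclusions: identifying the projectives (resp.\ injectives) of $\mathfrak a(\Gen M)$ (resp.\ $\mathfrak a(\Cogen M)$) as summands of $M$, when $M$ is only partial tilting. This needs \cref{wideSubcatsViaSplit}, which is stated for support tilting modules. So I would first pass to the co-Bongartz completion $M\oplus\gamma(M)$, which is support tilting with $\Gen(M\oplus\gamma(M))=\Gen M$. The splitting argument then shows that its split-projective summands already lie in $\addcat M$. The dual argument uses the Bongartz complement.
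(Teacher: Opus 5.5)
Your proof is correct and is essentially the paper's argument read on the torsion side. The paper proves the equivalent inclusion $\mathfrak a(\Cogen T_{free})\subseteq W_{free}\subseteq W_{tors}^{\perp_{0,1}}\subseteq\mathfrak a(\Gen T_{tors})^{\perp_{0,1}}$, which is your chain after taking perpendicular categories. The differences are only local: the paper obtains $\mathfrak a(\Cogen T_{free})\subseteq W_{free}$ by applying the Ingalls--Thomas bijection inside $W_{free}$ (since $\Cogen T_{free}$ is cogenerated by the torsion-free class $\Cogen T_{free}\cap W_{free}$ of $W_{free}$) rather than via split-injectives, and you additionally check explicitly that $\Gen T_{tors}\subseteq{}^{\perp_0}T_{free}$, which the paper leaves implicit.
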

    \begin{proof}
        To simplify notation we write $\mathcal F' = \Cogen T_{free}$ and $\mathcal T = \Gen T_{tors}$. We need to show $\mathfrak a(\mathcal{F}') \subset \mathfrak a(\mathcal{T})^{\perp_{0,1}}$. The class $\mathcal F'$ is cogenerated by the torsion-free class $\mathcal{F'}\cap W_{free}$ of $W_{free}$ because $T_{free}\in \mathcal{F'}\cap W_{free}$. This implies $\mathfrak a(\mathcal{F}') = \mathfrak a(\mathcal{F}'\cap W_{free})\subset W_{free}$. Dually we get $\mathfrak a(\mathcal{T}) \subset W_{tors}$. By \cref{WideFromTilting}, $T_{free}\in T_{tors}^{\perp_{0,1}}$ implies $W_{free}\subset W_{tors}^{\perp_{0,1}}$ and hence
        \[\mathfrak a(\mathcal{F}) \subset W_{free}\subset W_{tors}^{\perp_{0,1}} \subset \mathfrak a(\mathcal{T'})^{\perp_{0,1}}.\]
    \end{proof}
    \begin{CatalandRemark}
        Mutable intervals are a generalization of the exceptional intervals Rognerud defined for Tamari lattices in \cite{RognerudTamari}. We use a different name here to avoid confusion with exceptional sequences. While the lattice of torsion classes forms a Cambrian lattice, the lattice of wide subcategories corresponds to noncrossing partitions. From this perspective mutable intervals correspond to intervals in the lattice of noncrossing partitions. In the Tamari case this connection was already observed by Rognerud in \cite{RognerudNCP}. As part of m-eralized Fuß-Catalan combinatorics intervals of noncrossing partitions are better known as 2-noncrossing partitions. Both the 2-noncrossing partitions and the 2-clusters are counted by 2-$W$-Catalan numbers and bijections between them are known.
    \end{CatalandRemark}
    Our next goal is to show that $\interval$ defines a bijection between 2-cluster tilting objects and mutable intervals.
    \begin{Lemma}\label{equiCardinal}
        We have 
        \[|\{\textnormal{2-cluster tilting objects in } \mathcal C_2(\Lambda)\}| = |\{\textnormal{mutable intervals in } \tors(\Lambda)\}|.\]
        In fact both are given by the 2-$W$-Catalan number where $W$ is the Coxeter type of $\Lambda$.
    \end{Lemma}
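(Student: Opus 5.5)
Both sides will be identified with known Fuß–Catalan families counted by the 2-$W$-Catalan number
\[\mathrm{Cat}^{(2)}(W)=\prod_{i=1}^n \frac{2h+e_i+1}{e_i+1},\]
where $e_1,\dots,e_n$ are the exponents of $W$.

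\textbf{2-cluster tilting objects.} By the Catalan remark in Section 3, basic 2-rigid objects in $\mathcal C_2(\Lambda)$ categorify the generalized cluster complex $\Delta^{(2)}(\Phi)$ of Fomin–Reading, with 2-cluster tilting objects corresponding to facets. This is due to Thomas \cite{mclusterThomas} and Zhu \cite{Zhu}: indecomposables of $\mathcal C_2(\Lambda)$, represented in $\Fund$, biject with colored almost positive roots, and 2-rigidity matches compatibility. Fomin and Reading \cite{clusterComplex} computed the number of facets of $\Delta^{(2)}(\Phi)$ to be $\mathrm{Cat}^{(2)}(W)$. In non-simply-laced types the Dynkin type of $\Lambda$ must be matched with the correct root system. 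The count of facets is the same for $\Phi$ and its dual, so this causes no problem.

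\textbf{Mutable intervals.} By definition, a mutable interval is determined by a pair of torsion classes $\mathcal T\le\mathcal T'$ with $\mathfrak a(\mathcal T)\subseteq \mathfrak a(\mathcal T')$. Since $\mathcal T\mapsto\mathfrak a(\mathcal T)$ is a bijection $\tors\Lambda\to\wide\Lambda$, we first check that every inclusion $W\subseteq W'$ of wide subcategories automatically gives $\Gen W\le\Gen W'$; this holds because $\Gen$ is monotone. Hence mutable intervals are in bijection with chains $W\subseteq W'$ in $\wide\Lambda$. By Ingalls–Thomas \cite{IngallsThomas}, $\wide\Lambda$ is isomorphic to the lattice $NC(W)$ of noncrossing partitions, so these chains are exactly multichains $w_1\le w_2$ in $NC(W)$, that is, 2-noncrossing partitions in Armstrong's sense. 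Their number is $\mathrm{Cat}^{(2)}(W)$: this is Chapoton's zeta-polynomial formula $Z(NC(W),m+1)=\mathrm{Cat}^{(m)}(W)$, proved uniformly by Armstrong. Comparing the two counts gives the lemma.

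\textbf{Main obstacle.} The care lies in the citations rather than the argument. One must confirm that the isomorphism $\wide\Lambda\cong NC(W)$ holds as posets under inclusion, not merely as a bijection, so that chains correspond. One must also confirm that the Thomas–Zhu correspondence really sends 2-cluster tilting objects (maximal 2-rigid objects) to facets. Alternatively, one can avoid the literature by counting mutable intervals recursively. Fix $W'$; the number of $W\subseteq W'$ equals the number of wide subcategories of $W'\cong\modulecat\Lambda'$, which is $\mathrm{Cat}(W')$. Summing over $W'$ gives $\sum_{W'}\mathrm{Cat}(W')$, and one would then match this against a recursion for 2-clusters. The citation route is shorter, so I would take it.
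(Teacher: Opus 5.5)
Your proposal is correct and follows essentially the same route as the paper's proof. Both arguments identify mutable intervals with pairs $W\subseteq W'$ in $\wide\Lambda\cong NC(W)$, i.e.\ 2-noncrossing partitions, and identify 2-cluster tilting objects with facets of $\Delta^{(2)}(\Phi)$; both families are then counted by $\mathrm{Cat}^{(2)}(W)$ via the cited literature. Your explicit check that $W\subseteq W'$ forces $\Gen W\le\Gen W'$ simply spells out what the paper calls ``by definition''.
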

    \begin{proof}
        Mutable intervals are by definition in bijection with intervals in the lattice of wide subcategories. The lattice of wide subcategories of $\Lambda$ is isomorphic to the lattice of noncrossing partitions of $W$ and intervals in that lattice, better known as 2-noncrossing partitions, are counted by the 2-$W$-Catalan number.\\
        The 2-cluster tilting objects in $\mathcal C_2(\Lambda)$ are in bijection with 2-clusters of type $W$, which are also counted by 2-$W$-Catalan number. For more details on the underlying Fuß-Catalan combinatorics see \cite{Cataland}.
    \end{proof}
    \begin{Proposition}\label{clustersAreMutableBijection}
        There is a bijection
        \begin{align*}
           \interval\colon \{\textnormal{2-cluster tilting objects in } \mathcal C_2(\Lambda)\}&\rightarrow \{\textnormal{intervals in } \tors(\Lambda)\}\\
           T&\mapsto \interval (T)
        \end{align*}
    \end{Proposition}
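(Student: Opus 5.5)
The plan is to prove injectivity of $\interval$ and then conclude by counting. By \cref{intiswelldefined} the image of $\interval$ consists of mutable intervals, and by \cref{equiCardinal} the set of 2-cluster tilting objects and the set of mutable intervals are finite of the same cardinality. So it suffices to show that $\interval$ is injective. That is, given $\interval(T) = [(\mathcal T,\mathcal F)\leq(\mathcal T',\mathcal F')]$, we must recover $T_{free}$, $T_{tors}$ and $T_{supp}$ from the two torsion pairs. (Read this way, the bijection is onto the mutable intervals, which is the intended statement.)

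The difficulty is that $T_{tors}$ is only partial tilting, so $T_{tors}$ is not simply $\extproj(\Gen T_{tors})$. By the Bongartz proposition,
\[\extproj(\mathcal T) = T_{tors}\oplus\gamma(T_{tors}),\]
and dually
\[\extinj(\mathcal F') = T_{free}\oplus\beta'(T_{free})\]
for the dual complement. So we must single out which summands of $\extproj(\mathcal T)$ belong to $T_{tors}$, using the second endpoint. The ingredients are:
\begin{itemize}
\item the orthogonality conditions of \cref{2clusterTiltviaModulecat}, namely $T_{tors}\in{}^{\perp_{0,1}}T_{free}$ and $T_{supp}\in{}^{\perp_0}(T_{tors}\oplus T_{free})$;
\item the rank count $|T_{free}|+|T_{tors}|+|T_{supp}| = |\Lambda|$;
\item \cref{WideFromTilting}, which gives $T_{free}^{\perp_{0,1}} = W_{free}^{\perp_{0,1}}$, and $W_{free}\subset W_{tors}^{\perp_{0,1}}$.
\end{itemize}

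First I would show that $W_{tors}$ and $W_{free}$ are determined by the interval. The proof of \cref{intiswelldefined} gives $\mathfrak a(\mathcal T)\subset W_{tors}$ and $\mathfrak a(\mathcal F')\subset W_{free}$. I would try to characterize $W_{tors}$ as
\[W_{tors} = {}^{\perp_{0,1}}W_{free}\cap \mathcal T',\]
or a similar expression built only from $\mathcal T'$ and $\mathcal F'$. I would check this by the rank count. We have
\[\rank({}^{\perp_{0,1}}W_{free}) = |\Lambda|-|T_{free}| = |T_{tors}|+|T_{supp}|,\]
and intersecting with $\mathcal T'={}^{\perp_0}T_{free}$ should cut away exactly the part supported on $T_{supp}$, since $T_{supp}$ annihilates $T_{tors}$ and $T_{free}$.

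Once $W_{tors}$ is known, $T_{tors}$ is recovered as the ext-projective generator of the torsion class $\mathcal T\cap W_{tors}$ of $W_{tors}$. Indeed, $T_{tors}$ is tilting in $W_{tors}$, so this torsion class is $\Gen_{W_{tors}} T_{tors}$, and the bijection between torsion classes and support tilting modules applied inside $W_{tors}\cong\modulecat\Lambda''$ returns $T_{tors}$. Dually, $T_{free}$ is recovered as the ext-injective cogenerator of $\mathcal F'\cap W_{free}$. Finally, $T_{supp}$ is the maximal basic projective with $\Hom_\Lambda(T_{supp},T_{tors}\oplus T_{free})=0$. This is forced because $T$ is maximal 2-rigid, by the condition in \cref{2clusterTiltviaModulecat}.

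I expect the main obstacle to be recovering $W_{tors}$ (equivalently $W_{free}$) intrinsically from the interval; the rest is formal. If the proposed formula fails, I would instead argue summand by summand. By \cref{wideSubcatsViaSplit}, the split-projective summands of $\extproj(\mathcal T)$ are the projectives of $\mathfrak a(\mathcal T)\subset W_{tors}$, and these lie in $T_{tors}$. A remaining summand $X$ of $\extproj(\mathcal T)$ lies in $T_{tors}$ if and only if $X\in{}^{\perp_{0,1}}T_{free}$. The reverse implication, that a summand of $\gamma(T_{tors})$ always has nonzero $\Hom$ or $\Ext^1$ into $T_{free}$, would be proved via maximality of $T$: a summand $X$ of $\gamma(T_{tors})$ satisfying the conditions of \cref{2clusterTiltviaModulecat} could be adjoined to $T$, contradicting that $T$ already has $|\Lambda|$ summands.
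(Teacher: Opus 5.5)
\textbf{There is a genuine gap at the crux.} Your reduction to injectivity by counting is fine. So is your recovery of $T_{supp}$, and so is recovering $T_{tors}$ from $W_{tors}$ as the ext-projective generator of $\mathcal T\cap W_{tors}$. You also correctly locate the crux, but neither of your two routes through it works.

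\emph{The formula for $W_{tors}$.} The formula $W_{tors}={}^{\perp_{0,1}}W_{free}\cap\mathcal T'$ is circular, since $W_{free}$ is exactly as unknown as $W_{tors}$. Moreover, no expression in $\mathcal T',\mathcal F'$ alone can exist: $T=\Lambda[1]$ and $T=\Lambda[2]$ both have upper endpoint $\modulecat\Lambda$, but $W_{tors}$ is $\modulecat\Lambda$, respectively $0$. The formula is also false as written. Since $T_{free}\in W_{free}$, we have ${}^{\perp_{0,1}}W_{free}\subseteq{}^{\perp_0}T_{free}=\mathcal T'$, so intersecting with $\mathcal T'$ removes nothing and the result has rank $|T_{tors}|+|T_{supp}|$. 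For $T=\Lambda[2]$ it returns $\modulecat\Lambda$ instead of $0$.

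\emph{The fallback criterion.} The criterion that a summand $X$ of $\extproj(\mathcal T)$ belongs to $T_{tors}$ iff $X\in{}^{\perp_{0,1}}T_{free}$ is true, and your maximality argument proves it. But it tests against $T_{free}$, which has not been recovered, and dually $T_{free}$ would be tested against $T_{tors}$. So it does not reconstruct $T$ from $\interval(T)$, and it gives no way to compare two objects with the same interval.

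\textbf{The missing idea.} This is the core of the paper's proof: test only against the part of $T_{free}$ that is visible from the interval, namely the indecomposable injectives of $\mathfrak a(\mathcal F')=\mathfrak a(\Cogen T_{free})$. These are split-injective in $\Cogen T_{free}$, hence summands of $T_{free}$, so $T_{tors}$ is $\Ext^1$-orthogonal to them.

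The non-formal part is the converse: every summand $\hat T$ of $\gamma(T_{tors})$ has $\Ext^1_\Lambda(\hat T,I)\neq 0$ for some such injective $I$. The paper proves this as follows.
\begin{enumerate}
\item Pass to $T_{supp}^{\perp_{0,1}}$, so that $T_{supp}=0$. The rank count then gives $W_{free}=W_{tors}^{\perp_{0,1}}$.
\item Your maximality argument gives $\Ext^1_\Lambda(\hat T,T_{free})\neq 0$.
\item Take an embedding $\phi\colon T_{free}\hookrightarrow\hat I$ with $\hat I$ injective in $\mathfrak a(\mathcal F')\subseteq W_{free}$. Then $\Coker(\phi)\in W_{free}$, so $\Hom_\Lambda(\hat T,\Coker(\phi))=0$ because $\hat T\in\Gen W_{tors}$.
\item The long exact sequence then forces $\Ext^1_\Lambda(\hat T,\hat I)\neq 0$.
\end{enumerate}

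Equivalently, $T_{tors}$ consists of exactly those summands of $\extproj(\mathcal T)$ lying in ${}^{\perp_{0,1}}\mathfrak a(\mathcal F')=\mathfrak a(\mathcal T')$. This is the intrinsic description of $W_{tors}$ that should replace your formula.
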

    \begin{proof}
        The map is well defined by \cref{intiswelldefined}. By \cref{equiCardinal} we only need to show that it is injective. Let $T = T_{free}\oplus T_{tors}[1] \oplus T_{supp}[2]$ be a 2-cluster tilting object. We will reconstruct $T$ from $\interval(T) = [\Gen 
        T_{tors}\leq{}^{\perp_0}\Cogen T_{free}]$. The module $T_{supp}$ is the maximal basic projective module in ${}^{\perp_0}T_{tors} \cap {}^{\perp_0}T_{free} = {}^{\perp_0}\Gen T_{tors} \cap {}^{\perp_0}\Cogen T_{free}$. Let $\gamma(T_{tors})$ be the Bongartz complement of $T_{tors}$. Then $\widetilde{T} =T_{tors}\oplus \gamma(T_{tors})$ is the unique support tilting module generating $\Gen T_{tors}$. In order to identify the summands of $\widetilde{T}$ belonging to $T_{tors}$ we need the following claim:
        \begin{Claim}
            Let $\gamma(T_{tors})$ be the Bongartz complement and $\hat T$ a direct summand of $\gamma(T_{tors})$. Then there exists an indecomposable $I\in \mathfrak{a}(\Cogen T_{free})$ that is injective in $\mathfrak{a}(\Cogen T_{free})$ with $\Ext^1_\Lambda(\hat T, I) \neq 0$.
        \end{Claim}
        All indecomposable injective objects of $\mathfrak{a}(\Cogen T_{free})$ are direct summands of $T_{free}$ and hence $\Ext^1_\Lambda(T_{tors},I)=0$ for every injective object of $\mathfrak{a}(\Cogen T_{free})$. This distinguishes the summands of $\widetilde{T}$ belonging to $T_{tors}$ from those belonging to $\gamma(T_{tors})$ in a way that only depends on $\interval(T)$. Dually we can also reconstruct $T_{free}$ from $\interval(T)$. 
        \begin{innerproof}[Proof of claim]
            All relevant objects live in the wide subcategory $T_{supp}^{\perp_{0,1}}$ and $T_{free}\oplus T_{tors}[1]$ is a 2-cluster tilting object in the 2-cluster category corresponding to $T_{supp}^{\perp_{0,1}}$. So without loss of generality we assume $T_{supp} = 0$.
             By \cref{WideFromTilting} we know that $T_{free}$ and $T_{tors}$ are tilting modules in $W_{free}$, respectively $W_{tors}$. Counting summands yields 
            \[\rank (W_{free}) + \rank (W_{tors}) = |\Lambda|.\]
            Further we have 
            \[\Hom_\Lambda(W_{tors},W_{free}) = 0 = \Ext_\Lambda^1(W_{tors},W_{free})\]
            and hence $W_{free} = W_{tors}^{\perp_{0,1}}$. In particular $(\Gen W_{tors},\Cogen W_{free})$ is a torsion pair.
            By definition $\hat{T}\in \Gen W_{tors}$, but $\hat T \notin W_{tors}$, since otherwise $T_{tors}\oplus \hat T$ would be a partial tilting module in $W_{tors}$ with more summands than the rank of $W_{tors}$. Since $\Hom_\Lambda(\hat T, T_{free}) = 0$ we must have $\Ext_\Lambda^1(\hat T, T_{free}) \neq 0$. We choose an injective object $\hat{I}$ of $\mathfrak{a}(\Cogen T_{free})$ that admits an injection $\phi\colon T_{free}\hookrightarrow \hat{I}$. Then the following is part of a long exact sequence:
            \[\Hom_\Lambda(\hat T,\Coker(\phi))\rightarrow \Ext_\Lambda^1(\hat T,T_{free})\rightarrow \Ext_\Lambda^1(\hat T,\hat I).\]
            We know $\Hom_\Lambda(\hat T,\Coker(\phi)) = 0$, since $\Coker(\phi)\in W_{free}$ and $\hat T\in \Gen W_{tors}$. Then $\Ext_\Lambda^1(\hat T,T_{free}) \neq 0$ implies $\Ext_\Lambda^1(\hat T,\hat I)\neq 0$. The object $I$ from the claim is obtained by choosing a direct summand of $\hat I$ that witnesses the non-vanishing.
        \end{innerproof}
    \end{proof}
    \begin{Remark}
        A different bijection between 2-clusters and 2-noncrossing partitions (and by extension to mutable intervals) has been given in \cite{SiltingVersusSimpleMinded}.
    \end{Remark}
    The next definition constructs an inverse to $\interval$ by associating additional wide subcategories and partial tilting modules to a mutable intervals. This construction is easier to work with than the one used in the proof of \cref{clustersAreMutableBijection}.
    \begin{Definition}
        Let $I=[(\mathcal{T},\mathcal{F}) \leq (\mathcal{T}',\mathcal{F}')]$ be a mutable interval. We let $T_{free}^I$ be the $\mathfrak a(\mathcal F)$-support tilting module corresponding to the torsion-free class $\mathcal F'\cap \mathfrak a(\mathcal F )$ of $\mathfrak a(\mathcal F)$ and $T_{tors}^I$ be the $\mathfrak a(\mathcal T')$-support tilting module corresponding to the torsion class $\mathcal T\cap \mathfrak a(\mathcal T' )$ of $\mathfrak a(\mathcal T')$. Finally we let $T_{supp}^I$ be the maximal basic projective module with $\Hom_\Lambda(T_{supp},\mathcal T) =0 =\Hom_\Lambda(T_{supp},\mathcal F')$. In addition we define the wide subcategories $W_{tors}^I = W(T_{tors}^I)$ and $W_{free}^I = W(T_{free}^I)$.
    \end{Definition}
    In particular we have $\mathfrak a (\mathcal T)\subseteq W^I_{tors} \subseteq \mathfrak a (\mathcal T')$ and $\mathfrak a (\mathcal F')\subseteq W^I_{free} \subseteq \mathfrak a (\mathcal F)$.
    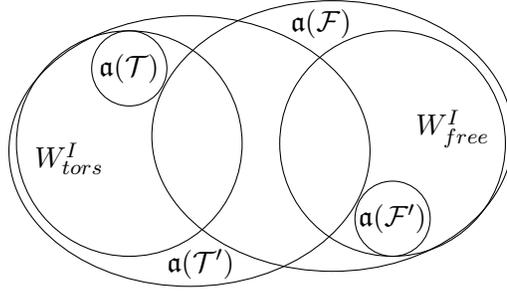
\begin{figure}[h]
        \centering
        \caption{A Venn diagram of the wide categories involved in a mutable pair. There are no morphisms or extension going from a category on the left to one further to the right.}
    \begin{tikzpicture}
        \draw (0,0) circle[radius=1.5] ; 
        \node at (-0.8,-0.2) {$W^I_{tors}$};
		\draw (3.5,0) circle[radius=1.5] ; 
        \node at (4.3,0.2) {$W^I_{free}$};
        \draw (0,1) circle[radius = 0.5] ; 
        \node at (0,1) {$\mathfrak a (\mathcal T)$};
        \draw (3.5,-1) circle[radius = 0.5] ; 
        \node at (3.5,-1) {$\mathfrak a (\mathcal F')$};
        \draw (0.8,-0.1) ellipse[x radius=2.4, y radius = 1.8] ; 
        \node at (0.95,-1.6) {$\mathfrak a (\mathcal T')$};
        \draw (2.7,0.1) ellipse[x radius=2.4, y radius = 1.8] ; 
        \node at (2.55,1.6) {$\mathfrak a (\mathcal F)$};
    \end{tikzpicture}
    \end{figure}
    \begin{Proposition}\label{moduleFromInterval}
        Given a mutable interval $I$ we have
        \[\interval(T_{free}^I\oplus T_{tors}^I[1]\oplus T_{supp}^I[2]) = I\]
    \end{Proposition}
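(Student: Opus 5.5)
The plan has two parts. First I show that $T^I := T_{free}^I\oplus T_{tors}^I[1]\oplus T_{supp}^I[2]$ is a 2-cluster tilting object, using the criterion of \cref{2clusterTiltviaModulecat}. Then I compute $\interval(T^I)$ directly. Write $I=[(\mathcal T,\mathcal F)\leq(\mathcal T',\mathcal F')]$. Throughout I use mutability in both forms: $\mathfrak a(\mathcal T)\subseteq\mathfrak a(\mathcal T')$ and $\mathfrak a(\mathcal F')\subseteq\mathfrak a(\mathcal F)$.

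\textbf{Computing the interval.} By construction, $T_{tors}^I$ generates $\mathcal T\cap\mathfrak a(\mathcal T')$ inside $\mathfrak a(\mathcal T')$, so $\Gen T_{tors}^I=\Gen(\mathcal T\cap\mathfrak a(\mathcal T'))\subseteq\mathcal T$. Conversely, $\mathfrak a(\mathcal T)\subseteq \mathcal T\cap\mathfrak a(\mathcal T')$ by mutability, and the Ingalls--Thomas bijection gives $\mathcal T=\Gen\mathfrak a(\mathcal T)$. Hence $\Gen T_{tors}^I=\mathcal T$. The dual argument, using $\mathfrak a(\mathcal F')\subseteq\mathcal F'\cap\mathfrak a(\mathcal F)$ and $\mathcal F'=\Cogen\mathfrak a(\mathcal F')$, gives $\Cogen T_{free}^I=\mathcal F'$, and therefore ${}^{\perp_0}T_{free}^I=\mathcal T'$. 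So as soon as $T^I$ is known to be 2-cluster tilting, $\interval(T^I)=I$ follows from the definition of $\interval$.

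\textbf{Checking the conditions of \cref{2clusterTiltviaModulecat}.} The partial tilting properties of $T_{free}^I$ and $T_{tors}^I$ hold by construction, since $\Ext^1$ in a wide subcategory agrees with $\Ext^1_\Lambda$. The Hom conditions are straightforward:
\begin{itemize}
\item $\Hom_\Lambda(T_{tors}^I,T_{free}^I)=0$, because $T_{tors}^I\in\mathcal T'$, $T_{free}^I\in\mathcal F'$, and $(\mathcal T',\mathcal F')$ is a torsion pair;
\item $\Hom_\Lambda(T_{supp}^I,T_{tors}^I\oplus T_{free}^I)=0$ by the definition of $T_{supp}^I$, since $T_{tors}^I\in\mathcal T$ and $T_{free}^I\in\mathcal F'$.
\end{itemize}

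\textbf{Main obstacle: $\Ext^1_\Lambda(T_{tors}^I,T_{free}^I)=0$.} We know $T_{tors}^I\in\mathcal T\cap\mathfrak a(\mathcal T')$ and $T_{free}^I\in\mathcal F'\cap\mathfrak a(\mathcal F)$, and $\Ext^1$ from $\mathcal T$ to $\mathcal F$ need not vanish in general. What I would try first is as follows.
\begin{enumerate}
\item Take an extension $0\to T_{free}^I\to E\to T_{tors}^I\to 0$. Its outer terms lie in $\mathcal T'$-free and $\mathcal T'$ parts respectively, so I would analyse $E$ via its canonical sequence for $(\mathcal T',\mathcal F')$.
\item Since $T_{free}^I\in\mathcal F'$, the torsion part $t'(E)$ maps isomorphically onto a submodule of $T_{tors}^I$ lying in $\mathcal T'$.
\item Use that $T_{tors}^I$ is ext-projective in $\mathcal T\cap\mathfrak a(\mathcal T')$ and $T_{free}^I$ is ext-injective in $\mathcal F'\cap\mathfrak a(\mathcal F)$. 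Together with \cref{frakaClosure}, which keeps submodules and quotients inside the relevant $\mathfrak a(\cdot)$, this should show that the sequence splits.
\end{enumerate}
If this route fails, the fallback is the Auslander--Reiten formula $\Ext^1(T_{tors}^I,T_{free}^I)\cong D\Hom(T_{free}^I,\tau T_{tors}^I)$, combined with $\mathfrak a(\mathcal F)=\mathfrak a(\mathcal T)^{\perp_{0,1}}$ (\cref{wideRankComplement}).

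\textbf{Counting summands.} It remains to show $|T_{free}^I|+|T_{tors}^I|+|T_{supp}^I|=|\Lambda|$. Here $T_{tors}^I$ is support tilting in $\mathfrak a(\mathcal T')$, so $|T_{tors}^I|$ equals $\rank\mathfrak a(\mathcal T')$ minus the number of projectives of $\mathfrak a(\mathcal T')$ outside its support; the analogous statement holds for $T_{free}^I$. I would combine these with $\rank W+\rank W^{\perp_{0,1}}=|\Lambda|$ and \cref{wideRankComplement}. This bookkeeping looks delicate, so I would rather avoid it. Instead, I would show that $T^I$ is maximal 2-rigid by extending it to a 2-cluster tilting object $T$, and then argue from the first step that $\interval(T)=I$. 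Injectivity of $\interval$ (\cref{clustersAreMutableBijection}), together with the explicit reconstruction of the summands given in its proof, should then force $T=T^I$.
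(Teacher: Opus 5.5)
Your computation of $\Gen T^I_{tors}=\mathcal T$ and $\Cogen T^I_{free}=\mathcal F'$ is correct, and so are the Hom conditions. The gap is maximality. The map $\interval$ is only defined on 2-cluster tilting objects, and you never establish $|T^I_{free}|+|T^I_{tors}|+|T^I_{supp}|=|\Lambda|$.

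The proposed detour does not close this gap. Suppose $T\supseteq T^I$ is a 2-cluster tilting completion. Then you only get $\Gen T_{tors}\supseteq\Gen T^I_{tors}=\mathcal T$ and ${}^{\perp_0}T_{free}\subseteq{}^{\perp_0}T^I_{free}=\mathcal T'$, that is, $\interval(T)\subseteq I$. Nothing in your first step stops the added summands from enlarging the torsion class or the torsion-free class. Injectivity of $\interval$ also says nothing about $T^I$, because $T^I$ is not yet known to lie in the domain of $\interval$. Showing that every extra summand of $T_{tors}$ lies in $\mathcal T$ (and dually for $T_{free}$) is exactly the missing count. That count is a nontrivial rank identity involving $\mathfrak a(\mathcal T)$, $\mathfrak a(\mathcal F)\cap\mathfrak a(\mathcal T')$, $\mathfrak a(\mathcal F')$ and the supports of $T^I_{tors}$ and $T^I_{free}$ inside $\mathfrak a(\mathcal T')$ and $\mathfrak a(\mathcal F)$. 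Your proposal neither proves this identity nor avoids it.

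The way out is to run the argument in the other direction, as the paper does. By \cref{equiCardinal} and \cref{clustersAreMutableBijection}, $\interval$ is already surjective onto mutable intervals. So pick $T'$ with $\interval(T')=I$ and identify its three components.
\begin{itemize}
\item $T'_{supp}\cong T^I_{supp}$, because $\Hom_\Lambda(P,T'_{tors}\oplus T'_{free})=0$ holds if and only if $\Hom_\Lambda(P,\mathcal T)=0=\Hom_\Lambda(P,\mathcal F')$.
\item The Claim in the proof of \cref{clustersAreMutableBijection} shows that $T'_{tors}$ consists of the summands of $\extproj\mathcal T$ that are $\Ext^1$-orthogonal to the injective cogenerator of $\mathfrak a(\mathcal F')$. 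These are exactly the summands lying in ${}^{\perp_{0,1}}\mathfrak a(\mathcal F')=\mathfrak a(\mathcal T')$, and their sum is $T^I_{tors}$. For the last step, note that every ext-projective of $\mathcal T\cap\mathfrak a(\mathcal T')$ in $\mathfrak a(\mathcal T')$ is ext-projective in $\mathcal T$, since $\mathcal T=\Gen\mathfrak a(\mathcal T)$ and $\Ext^1_\Lambda(Y,-)$ is right exact.
\item Dually, $T'_{free}\cong T^I_{free}$.
\end{itemize}
2-rigidity and the count then come for free.

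Separately, your sketch for $\Ext^1_\Lambda(T^I_{tors},T^I_{free})=0$ does not work as written. In step 2, $t'(E)\cap T^I_{free}$ need not vanish, since torsion classes are not closed under submodules. The relative ext-projectivity and ext-injectivity also do not apply, because neither module lies in the other's class. The vanishing itself is easy, though:
\begin{itemize}
\item Choose an epimorphism $P^m\twoheadrightarrow T^I_{tors}$, where $P$ is the projective generator of $\mathfrak a(\mathcal T)$.
\item Its kernel lies in the wide subcategory $\mathfrak a(\mathcal T')\subseteq\mathcal T'$, so it has no nonzero maps to $T^I_{free}\in\mathcal F'$.
\item $\Ext^1_\Lambda(P,T^I_{free})=0$, because $T^I_{free}\in\mathfrak a(\mathcal F)=\mathfrak a(\mathcal T)^{\perp_{0,1}}$.
\end{itemize}
So the rigidity half of your plan can be completed, but the maximality half cannot as proposed.
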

    \begin{proof}
        We let $T'_{free}\oplus T'_{tors}[1]\oplus T'_{supp}[2]$ be the preimage of $I$ under $\interval$ and write $I=[(\mathcal T,\mathcal F)\leq (\mathcal T',\mathcal F')]$.
        The object $T'_{supp}$ is maximal among the basic projective objects $P$ with the property  $\Hom_{\Lambda}(P,T'_{free})=0=\Hom_{\Lambda}(P,T'_{tors})$ while $T_{supp}^I$ is maximal with $\Hom_{\Lambda}(P,\Cogen T'_{free})=0=\Hom_{\Lambda}(P,\Gen T'_{tors})$. Since those two properties are equivalent we have $T'_{supp}\cong T_{supp}^I$.\\
        Let $\Tilde{T}$ be the support tilting modules associated to $\mathcal T$. The claim in the proof of \cref{clustersAreMutableBijection} identifies $T'_{tors}$ as containing exactly those summands $X$ of $\Tilde{T}$ with $\Ext_\Lambda^1(X,I)=0$ where $I$ is the injective cogenerator of $\mathfrak a (\mathcal F')$ or equivalently exactly those summands in $\mathfrak a (\mathcal F')^{\perp_{0,1}}=\mathfrak{a}(\mathcal T')$. In particular $T'_{tors}$ is the support tilting module of $\mathcal T\cap \mathfrak a (\mathcal T')$ in $\mathfrak a (\mathcal T')$, matching the definition of $T_{tors}^I$. Dually we get $T'_{free}\cong T_{free}^I$.
    \end{proof}
    Our next goal is to interpret the shift permutation in terms of mutable intervals. We give a description using torsion classes here and one using the Serre functor in \cref{CategoricalSerre}.
    \begin{Definition}
         Let $ I = [(\mathcal{T},\mathcal{F}) \leq (\mathcal{T}',\mathcal{F}')]$ be a mutable interval. The image of $I$ under the \emph{Serre permutation} is  \[\mathbb SI =(\Gen T_{free}^I,T_{free}^{I\perp_0}) \leq (\mathcal{T}\vee \Gen W_{free}^I,\mathcal{F}\cap W_{free}^{I\perp_0}).\] 
         The \emph{inverse Serre permutation} of $I$ is given by
         \[\mathbb S^{-1}I =(\mathcal T'\cap {}^{\perp_0}W_{tors}^I,\mathcal F'\wedge \Cogen W_{tors}^I) \leq ({}^{\perp_0}T_{tors}^I,\Cogen T_{tors}^I).\]
    \end{Definition}
    We need some preparation before comparing the Serre- and shift permutations.
    \begin{Lemma}\label{wideIntfromExcepInt}
        Let $ I = [(\mathcal{T},\mathcal{F}) \leq (\mathcal{T}',\mathcal{F}')]$ be a mutable interval. Then the interval $J=[\mathcal{T}\leq\mathcal T\vee \Gen W_{free}^I]$ is a wide interval with associated wide subcategory $W_{free}^I$. 
    \end{Lemma}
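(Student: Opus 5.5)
The plan is to reduce everything to the wide interval sitting directly above $\mathcal T$ and then apply \cref{wideIntervalsAsTors}. Write $W=W_{free}^I$. Since $T_{free}^I$ is by definition a support tilting module of $\mathfrak a(\mathcal F)$, we have $W\subseteq \mathfrak a(\mathcal F)$. The mutability of $I$ is not used beyond this inclusion. The key steps are as follows.

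\emph{Step 1.} Show that $K=[\mathcal T\leq \mathcal T\vee \Gen \mathfrak a(\mathcal F)]$ is a wide interval with associated wide subcategory $\mathfrak a(\mathcal F)$.
\begin{enumerate}
    \item By the dual part of \cref{SimplesAreLabels}, the upper covers of $\mathcal T$ are exactly $\mathcal T\vee \Gen S$ for $S$ a simple object of $\mathfrak a(\mathcal F)$.
    \item The join of these covers is $\mathcal T\vee\Gen\mathfrak a(\mathcal F)$, because $\mathfrak a(\mathcal F)$ is the extension closure of its simples and torsion classes are extension closed.
    \item Hence $K$ is atomic, and so wide by \cref{atomicIsWide}.
    \item Let $W_K$ be its wide subcategory. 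By \cref{simpleWideIntervals}, the simples of $W_K$ are among the simples of $\mathfrak a(\mathcal F)$. Conversely, each simple $S$ of $\mathfrak a(\mathcal F)$ is an edge label of an atom of $K$, so $S\in\mathcal F\cap(\mathcal T\vee\Gen S)\subseteq W_K$.
    \item Both are wide, so $W_K=\mathfrak a(\mathcal F)$.
\end{enumerate}

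\emph{Step 2.} Transport the question to $\tors\mathfrak a(\mathcal F)$.
\begin{enumerate}
    \item \cref{wideIntervalsAsTors} gives a bijection $\tors\mathfrak a(\mathcal F)\to K$, $\hat{\mathcal T}\mapsto \mathcal T\vee\Gen\hat{\mathcal T}$, which preserves edge labels.
    \item A monotone bijection with monotone inverse is a lattice isomorphism, so it preserves covers and joins.
    \item $W$ is a wide subcategory of $\mathfrak a(\mathcal F)$, so $\Gen_{\mathfrak a(\mathcal F)}W$ is a torsion class of $\mathfrak a(\mathcal F)$.
    \item Its image is $\mathcal T\vee\Gen(\Gen_{\mathfrak a(\mathcal F)}W)=\mathcal T\vee\Gen W$.
    \item Consequently $J$ is the image of the interval $[0\leq \Gen_{\mathfrak a(\mathcal F)}W]$ in $\tors \mathfrak a(\mathcal F)$.
\end{enumerate}

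\emph{Step 3.} Show that $[0\leq \Gen_{\mathfrak a(\mathcal F)}W]$ is wide in $\tors\mathfrak a(\mathcal F)$ with wide subcategory $W$.
\begin{enumerate}
    \item Its torsion-free lower end is all of $\mathfrak a(\mathcal F)$.
    \item The associated subcategory is therefore $\Gen_{\mathfrak a(\mathcal F)}W$. Equivalently, the interval is atomic, with atoms $\Gen S$ for $S$ simple in $W$.
    \item I will check that $\Gen_{\mathfrak a(\mathcal F)}W\cap\mathfrak a(\mathcal F)$ has wide part $W$, using the Ingalls–Thomas bijection $\mathfrak a(\Gen W)=W$ applied inside $\mathfrak a(\mathcal F)$.
    \item Then its edge labels are exactly the bricks of $W$.
\end{enumerate}

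\emph{Step 4.} Transfer back to $J$.
\begin{enumerate}
    \item Atomicity is a lattice-theoretic property, so it transfers along the isomorphism of Step 2. Hence $J$ is atomic, and wide by \cref{atomicIsWide}.
    \item Its wide subcategory $\mathcal F\cap(\mathcal T\vee\Gen W)$ contains the edge labels of the atoms of $J$. These are the simples of $W$, since labels are preserved.
    \item By \cref{simpleWideIntervals}, its simples are among those edge labels.
    \item Two wide subcategories with the same simples coincide, so the wide subcategory of $J$ is $W$.
\end{enumerate}

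The main obstacle is the bookkeeping in Step 1 and Step 4: justifying that the wide subcategory of a wide interval is determined by the labels of its atoms. This is essentially the second part of \cref{simpleWideIntervals} together with the description of atoms from \cref{SimplesAreLabels}. If that becomes delicate, the fallback is a direct argument for $\mathcal F\cap(\mathcal T\vee\Gen W)\subseteq W$. Take an object in this intersection and filter it by quotients of objects of $\mathcal T$ and of $W$. Then use $\Hom(\mathcal T,\mathcal F)=0$ and the closure property \cref{frakaClosure} inside $\mathfrak a(\mathcal F)$ to push it into $W$.
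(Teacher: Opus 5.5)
Your proposal has a genuine gap. The gap is in Step 3, and it traces back to your remark that nothing beyond $W\subseteq\mathfrak a(\mathcal F)$ is used.

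As you note in 3.2, the subcategory attached to $[0\le \Gen_{\mathfrak a(\mathcal F)}W]$ is $\Gen_{\mathfrak a(\mathcal F)}W$ itself. So this interval is wide with wide subcategory $W$ exactly when $\Gen_{\mathfrak a(\mathcal F)}W=W$. The identity $\mathfrak a(\Gen W)=W$ that you plan to use in 3.3 holds for every wide $W$ and does not give this. It describes the Ingalls--Thomas wide subcategory of the upper endpoint, not the subcategory $\mathcal F\cap\mathcal T'$ attached to the interval.

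Similarly, ``atoms $\Gen S$ for $S$ simple in $W$'' presupposes that the simples of $W$ are simple in $\mathfrak a(\mathcal F)$, because atoms of $\tors\mathfrak a(\mathcal F)$ are indexed by $\mathfrak a(\mathcal F)$-simples. You reuse this in Step 4 to identify the labels of the atoms of $J$.

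For an arbitrary wide $W\subseteq\mathfrak a(\mathcal F)$ the statement is false. Take $\Lambda$ of type $A_2$ and $\mathcal T=0$, so $\mathfrak a(\mathcal F)=\modulecat\Lambda$. Let $W=\addcat M$, where $M$ is the indecomposable of length two with top $S$. Then $[0\le\Gen M]$ is the chain $0\lessdot\Gen S\lessdot\Gen M$, which is not atomic, and $\Gen M=\addcat(M\oplus S)$ is not wide. Your fallback fails on the same example, since $S\in\mathcal F\cap\Gen M$ but $S\notin W$.

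The missing input is the first step of the paper's proof. We have $W=W(T^I_{free})$, where $T^I_{free}$ is a \emph{support tilting} module of $\mathfrak a(\mathcal F)$. Such a module is a tilting object of the Serre subcategory of $\mathfrak a(\mathcal F)$ consisting of objects supported on its support, so $W$ is exactly that Serre subcategory. In particular its simples $S_1,\dots,S_k$ are simple in $\mathfrak a(\mathcal F)$ and $\Gen_{\mathfrak a(\mathcal F)}W=W$. With this fact inserted, your transport argument goes through.

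The paper is more direct and needs neither your Step 1 nor the isomorphism $K\cong\tors\mathfrak a(\mathcal F)$. It writes $\mathcal T\vee\Gen W=\mathcal T\vee\bigvee_i\Gen S_i$ to get atomicity. It then excludes any further atom $\mathcal T\vee\Gen\hat S$: a different $\mathfrak a(\mathcal F)$-simple $\hat S$ admits no maps from the $S_i$, hence lies in $\mathcal F\cap W^{\perp_0}=(\mathcal T\vee\Gen W)^{\perp_0}$.
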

    \begin{proof}
        Let $S_1,\dots,S_k$ be the $W_{free}^I$-simple objects. The module $T_{free}^I$ is a support tilting module in $\mathfrak a (\mathcal F)$. Therefore the simple objects in $W_{free}^I$ are a subset of the simple objects in $\mathfrak a (\mathcal F)$ and by \cref{SimplesAreLabels} we know that the $S_i$ are labels of edges $\mathcal T\lessdot \mathcal T_i$. We compute
        \[\mathcal T\vee \Gen W_{free}^I = \mathcal{T}\vee\bigvee_i\Gen S_i.\]
        This shows that $J$ is atomic and hence wide by \cref{atomicIsWide}. We know from \cref{wideIntervalsAsTors} that the simple objects of the wide subcategory associated to $J$ are exactly the edge labels of cover relations $\mathcal{T}\lessdot \Tilde{\mathcal T}$ with $\Tilde{\mathcal T} < \mathcal T\vee \Gen W_{free}^I$. We have already seen that all of the $S_i$ belong to this set. Let $\hat S$ be a simple object of $\mathfrak a (\mathcal F)$ not in $W_{free}^I$. Then $\hat S\in \mathcal F\cap \bigcap_{i}S_i^{\perp_0} = \mathcal F \cap W_{free}^{I\perp_0}$. In particular $\mathcal T\vee \Gen \hat S$ is not less than $\mathcal T\vee \Gen W_{free}^I$. This shows that the only covers of $\mathcal T$ in $J$ correspond to simple objects in $W_{free}^I$ and hence the Lemma.
    \end{proof}
    \begin{Lemma}\label{SerrePermWellDefined}
        Let $I$ be a mutable interval.
        Then $\bS I$ and $\bS^{-1}I$ are mutable intervals.
    \end{Lemma}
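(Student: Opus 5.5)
The plan is to check the defining inclusion of mutability directly: $\mathfrak a$ of the lower bound must lie in $\mathfrak a$ of the upper bound. In both cases the wide subcategory $W_{free}^I$ (resp.\ $W_{tors}^I$) will be squeezed in between. I treat $\bS I$ in detail; $\bS^{-1}I$ is the dual argument.

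Write $L=\Gen T_{free}^I$ and $U=\mathcal T\vee \Gen W_{free}^I$.

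\emph{Step 1: $\bS I$ is an interval.} Since $T_{free}^I\in W_{free}^I$, we have $L\subseteq \Gen W_{free}^I\subseteq U$.

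\emph{Step 2: $\mathfrak a(L)\subseteq W_{free}^I$.} I argue as in the proof of \cref{intiswelldefined}. The torsion class $L$ is generated by the torsion class $\Gen T_{free}^I\cap W_{free}^I$ of $W_{free}^I$, because $T_{free}^I$ lies in it. Quotients of sums of $T_{free}^I$ taken inside $W_{free}^I$ stay in $W_{free}^I$, since $W_{free}^I$ is closed under cokernels. Hence $\mathfrak a(L)$ agrees with the associated wide subcategory of this torsion class computed inside $W_{free}^I$. In particular $\mathfrak a(L)\subseteq W_{free}^I$.

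\emph{Step 3: $W_{free}^I\subseteq \mathfrak a(U)$.} By \cref{wideIntfromExcepInt}, the interval $J=[\mathcal T\leq U]$ is wide with associated wide subcategory $W_{free}^I$. Applying \cref{simpleWideIntervals}(1) to $J$ gives
\[W_{free}^I=\mathfrak a(\mathcal F)\cap \mathfrak a(U)\subseteq \mathfrak a(U).\]
Combining Steps 2 and 3 yields $\mathfrak a(L)\subseteq W_{free}^I\subseteq\mathfrak a(U)$, so $\bS I$ is mutable.

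\emph{The case $\bS^{-1}I$.} I use the torsion-free formulation of mutability. Let the lower bound be $L'=(\mathcal T'\cap{}^{\perp_0}W_{tors}^I,\ \mathcal F'\wedge\Cogen W_{tors}^I)$ and the upper bound $U'=({}^{\perp_0}T_{tors}^I,\ \Cogen T_{tors}^I)$. Write $\mathcal F_{L'}$ for the torsion-free part of $L'$. I must show $L'\le U'$ and $\mathfrak a(\Cogen T_{tors}^I)\subseteq \mathfrak a(\mathcal F_{L'})$.

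The order relation follows from $\Cogen T_{tors}^I\subseteq \Cogen W_{tors}^I\subseteq \mathcal F_{L'}$. The dual of Step 2 gives $\mathfrak a(\Cogen T_{tors}^I)\subseteq W_{tors}^I$.

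For the dual of Step 3 I need the dual of \cref{wideIntfromExcepInt}: the interval $[L'\le \mathcal T']$ is wide with associated wide subcategory $W_{tors}^I$. I prove it by mirroring the proof of that lemma.
\begin{enumerate}
\item $T_{tors}^I$ is support tilting in $\mathfrak a(\mathcal T')$, so the simples of $W_{tors}^I$ are among the simples of $\mathfrak a(\mathcal T')$.
\item By the first half of \cref{SimplesAreLabels}, these simples label cocovers $\mathcal T'_i\lessdot\mathcal T'$.
\item The meet of these cocovers is $\mathcal T'\cap\bigcap_i{}^{\perp_0}S_i=\mathcal T'\cap{}^{\perp_0}W_{tors}^I$. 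So the interval is coatomic, hence wide by \cref{atomicIsWide}.
\item Any other simple $\hat S$ of $\mathfrak a(\mathcal T')$ lies in ${}^{\perp_0}W_{tors}^I$, so its cocover does not lie above $L'$. Together with \cref{wideIntervalsAsTors}, this identifies the associated wide subcategory as $W_{tors}^I$.
\end{enumerate}
Then \cref{simpleWideIntervals}(1) gives $W_{tors}^I=\mathfrak a(\mathcal F_{L'})\cap\mathfrak a(\mathcal T')\subseteq\mathfrak a(\mathcal F_{L'})$, which completes the argument for $\bS^{-1}I$.

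The main obstacle is this dual version of \cref{wideIntfromExcepInt}, in particular the final identification of the wide subcategory. I expect it to go through by strict duality, but the labelling bookkeeping for cocovers needs care.
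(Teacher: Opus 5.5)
Your proposal is correct and follows the paper's proof. Like the paper, you squeeze $W_{free}^I$ between $\mathfrak a(\Gen T_{free}^I)$ and $\mathfrak a(\mathcal T\vee\Gen W_{free}^I)$, the latter via \cref{wideIntfromExcepInt} and \cref{simpleWideIntervals}. For Step 2 the paper's justification is cleaner than your cokernel remark: take the wide $W\subseteq W_{free}^I$ generating $\Gen T_{free}^I\cap W_{free}^I$; then $\Gen W=\Gen T_{free}^I$ forces $\mathfrak a(\Gen T_{free}^I)=W$.

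The only difference is that the paper dismisses $\bS^{-1}I$ with ``by duality'', whereas you work out the dual of \cref{wideIntfromExcepInt} explicitly. Your cocover and edge-label bookkeeping there is sound.
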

    \begin{proof} 
        By duality it suffices to check $\bS I$. We write $ I = [(\mathcal{T},\mathcal{F}) \leq (\mathcal{T}',\mathcal{F}')]$. We will show \[\mathfrak{a}(\Gen T_{free}^I)  \subseteq W_{free}^I\subseteq\mathfrak{a}(\mathcal{T}\vee W_{free}^I).\]
        "$\mathfrak{a}(\Gen T_{free}^I)  \subseteq W_{free}^I$": The subcategory $W_{free}^I\cap\Gen T_{free}^I$ is a torsion class of $W_{free}^I$ and hence generated by a unique wide subcategory $W\subset W_{free}^I$. We have \[\Gen T_{free}^I = \Gen (W_{free}^I\cap\Gen(T_{free}^I)) =\Gen W\] and hence $\mathfrak a (\Gen T_{free}^I) = W\subseteq W_{free}^I$.\\
        "$W_{free}^I\subseteq\mathfrak{a}(\mathcal{T}\vee \Gen W_{free}^I)$": We know from \cref{wideIntfromExcepInt} that the interval $\mathcal{T}\leq\mathcal T\vee \Gen W_{free}^I$ is wide with associated wide subcategory $W_{free}^I$. Then by \cref{simpleWideIntervals} we get $W_{free}^I\subseteq\mathfrak{a}(\mathcal{T}\vee \Gen W_{free}^I)$.
    \end{proof}
    \begin{Proposition}\label{inverseofserre}
        The maps $\bS$ and $\bS^{-1}$ define mutually inverse bijections on the set of mutable intervals in $\tors \Lambda$.
    \end{Proposition}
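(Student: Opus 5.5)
Since the set of mutable intervals is finite and $\bS$, $\bS^{-1}$ map it to itself by \cref{SerrePermWellDefined}, it suffices to prove $\bS^{-1}\bS I = I$ for every mutable interval $I$. This makes $\bS$ injective, hence bijective, with inverse $\bS^{-1}$. The dual identity $\bS\bS^{-1}=\identity$ then follows automatically; it could also be proved by the dual argument.

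Fix $I=[(\mathcal T,\mathcal F)\leq(\mathcal T',\mathcal F')]$. By \cref{moduleFromInterval} and \cref{clustersAreMutableBijection} we may write $I=\interval(T)$ with $T_{free}=T^I_{free}$, $T_{tors}=T^I_{tors}$. Then $\mathcal T=\Gen T_{tors}$, $\mathcal T'={}^{\perp_0}T_{free}$, and $W_{free}\subseteq W_{tors}^{\perp_{0,1}}$. Put $J=\bS I=[\Gen T_{free}\leq \mathcal T\vee\Gen W_{free}]$.

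\textbf{Step 1 (main obstacle).} Show $T^J_{tors}=T_{free}$, and hence $W^J_{tors}=W_{free}$. By \cref{SerrePermWellDefined}, $W_{free}\subseteq\mathfrak a(\mathcal T\vee\Gen W_{free})$, so $T_{free}\in\mathfrak a(\mathcal T'_J)$, where $\mathcal T'_J=\mathcal T\vee\Gen W_{free}$ denotes the upper bound of $J$. By definition, $T^J_{tors}$ is the ext-projective generator of $\Gen T_{free}\cap\mathfrak a(\mathcal T'_J)$ inside $\mathfrak a(\mathcal T'_J)$. The plan is to use the characterisation from the proof of \cref{moduleFromInterval}: $T^J_{tors}$ consists of exactly those summands of $\extproj(\Gen T_{free})=T_{free}\oplus\gamma(T_{free})$ that lie in $\mathfrak a(\mathcal T'_J)$.

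It then remains to show that no summand $\hat T$ of the co-Bongartz complement lies in $\mathfrak a(\mathcal T'_J)$. I would argue as in the Claim of \cref{clustersAreMutableBijection}. First, $\hat T\in\Gen W_{free}\setminus W_{free}$. Moreover, $\mathfrak a(\mathcal T'_J)$ is contained in the wide subcategory $W_{tors}\vee W_{free}$, in which $W_{free}$ is "the whole" of $\Gen W_{free}\cap\mathfrak a(\mathcal T'_J)$; this should come from the wide interval description in \cref{wideIntfromExcepInt} and \cref{simpleWideIntervals}. Together these force $\hat T\notin\mathfrak a(\mathcal T'_J)$. A rank count may serve as a fallback: $|T_{free}|=\rank W_{free}$, and this should equal the rank of the relevant support in $\mathfrak a(\mathcal T'_J)$. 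I expect this step to need the most care.

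\textbf{Step 2 (upper bound).} Now $\bS^{-1}J$ has upper bound ${}^{\perp_0}T^J_{tors}={}^{\perp_0}T_{free}=\mathcal T'$.

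\textbf{Step 3 (lower bound).} The lower bound of $\bS^{-1}J$ is $(\mathcal T\vee\Gen W_{free})\cap{}^{\perp_0}W_{free}$, and this should equal $\mathcal T$.
\begin{enumerate}
\item For "$\supseteq$": we have $\mathcal T=\Gen T_{tors}$ and $\Hom(W_{tors},W_{free})=0$, so $\mathcal T\subseteq{}^{\perp_0}W_{free}$.
\item For "$\subseteq$": take $X$ in the left side and consider its canonical sequence $0\to t(X)\to X\to X/t(X)\to0$ for the pair $(\mathcal T,\mathcal F)$. The quotient $X/t(X)$ lies in $\mathcal F\cap(\mathcal T\vee\Gen W_{free})$, and this intersection equals $W_{free}$ because the interval $[\mathcal T\leq\mathcal T\vee\Gen W_{free}]$ is wide with associated wide subcategory $W_{free}$ (\cref{wideIntfromExcepInt}). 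Since $\Hom(X,W_{free})=0$, the quotient $X/t(X)$ vanishes, so $X\in\mathcal T$.
\end{enumerate}
Hence $\bS^{-1}\bS I=I$.
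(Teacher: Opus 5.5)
Your proposal is correct. Its skeleton matches the paper's: first show $T^{\bS I}_{tors}=T^I_{free}$, then read off the upper bound, then check the lower bound. The differences are in how Steps 1 and 3 are carried out and in how you get the two-sided inverse.

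\textbf{Step 1.} The key claim you only gesture at, $\Gen W_{free}\cap\mathfrak a(\mathcal T'_J)=W_{free}$, is true but needs one more line. Apply \cref{simpleWideIntervals}(2) to the wide interval of \cref{wideIntfromExcepInt}: every simple object of $W_{free}$ is then simple in $\mathfrak a(\mathcal T'_J)$. By Jordan--H\"older in $\mathfrak a(\mathcal T'_J)$, it follows that $W_{free}$ is closed under quotients taken in $\mathfrak a(\mathcal T'_J)$. An epimorphism $Z\twoheadrightarrow Y$ with $Z\in W_{free}$ and $Y\in\mathfrak a(\mathcal T'_J)$ has its kernel in $\mathfrak a(\mathcal T'_J)$, so it is such a quotient, and the claim follows. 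The containment in $W_{tors}\vee W_{free}$ plays no role.

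Once you have this, the detour through co-Bongartz summands is unnecessary. $\Gen T_{free}\cap\mathfrak a(\mathcal T'_J)$ is then a torsion class of $W_{free}$, in which $T_{free}$ is tilting, so $T_{free}$ is directly its ext-projective generator. This is essentially the paper's argument and your ``rank count'' fallback. The paper phrases it as: the composition factors of $T^I_{free}$ are simple in $\mathfrak a(\mathcal{ST}')$, so $T^I_{free}$ is support tilting there with $\Gen T^I_{free}=\mathcal{ST}$. The direct route also keeps the proof independent of the bijection $\interval$. Your version depends on it through the proof of \cref{moduleFromInterval}, and hence on the enumerative input of \cref{equiCardinal}.

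\textbf{Step 3.} Here you take a genuinely different path. You use the canonical sequence for $(\mathcal T,\mathcal F)$ together with the defining identity $\mathcal F\cap(\mathcal T\vee\Gen W_{free})=W_{free}$ of the associated wide subcategory. The paper instead uses that the wide interval $[\mathcal T\leq\mathcal T\vee\Gen W_{free}]$ is coatomic (\cref{atomicIsWide}) and computes the torsion-free part as a meet, $\mathcal F=(\mathcal F\cap W_{free}^{\perp_0})\wedge\Cogen W_{free}$. Your version is more elementary and works directly with torsion classes; the paper's is more lattice-theoretic.

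\textbf{Two-sided inverse.} Instead of the paper's dual argument for $\bS\bS^{-1}I=I$, you use finiteness. This is valid because both maps preserve the finite set of mutable intervals by \cref{SerrePermWellDefined}.
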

    \begin{proof}
        Let $I =[(\mathcal{T},\mathcal{F}) \leq (\mathcal{T}',\mathcal{F}')]$ be a mutable interval. We will show $\bS^{-1}\bS I = I$. To simplify notation we write $\bS I = [(\mathcal{ST},\mathcal{SF}) \leq (\mathcal{ST}',\mathcal{SF}')]$. We start by computing $T_{tors}^{\bS I}$ and $W_{tors}^{\bS I}$. The simple objects in $W_{free}^I$ are exactly the simple objects appearing in a $\mathfrak {a}(\mathcal F)$ composition series of $T_{free}^I$, because $T_{free}^I$ is an $\mathfrak {a}(\mathcal F)$-support tilting module. As observed in the proof of \cref{SerrePermWellDefined}, the simple objects in $W_{free}^I$ are also simple in $\mathfrak a(\mathcal T \vee \Gen W_{free}^I) = \mathfrak a (\mathcal {ST}')$. Therefore $T_{free}^I$ is a support tilting module in $\mathfrak a (\mathcal {ST}')$ with $\Gen T_{free}^I = \mathcal {ST}$. Hence $T_{free}^I = T_{tors}^{\bS I}$ and $W_{free}^I = W_{tors}^{\bS I}$. The torsion-free part of the upper bound of $\bS^{-1}\bS I$ is given by $\Cogen T_{tors}^{\bS I} = \Cogen T_{free}^I = \mathcal F'$ and therefore agrees with the upper bound of $I$. It remains to check that the lower bounds agree. The interval $[\mathcal T\leq \mathcal T\vee \Gen W_{free}^I]$ is wide by \cref{atomicIsWide} with associated wide subcategory $W_{free}^I$. In particular it is coatomic by \cref{atomicIsWide}. We compute on torsion-free parts
        \begin{align*}
            \mathcal F &=  (\mathcal F \cap W_{free}^{I\perp_0})\wedge\bigwedge_{S\text{ simple in }W_{free}^I}\left((\mathcal F \cap W_{free}^{I\perp_0}) \wedge \Cogen S\right)\\
           &= (\mathcal F \cap W_{free}^{I\perp_0})\wedge \Cogen W_{free}^I\\
           &=  \mathcal{SF}\wedge \Cogen W_{tors}^{\bS I}
        \end{align*}
        where the first equality follows from the interval being coatomic. This shows that the lower bounds agree. A dual argument yields $\bS\bS^{-1}I = I$.      
    \end{proof}
    Finally we can identify the Serre permutation with the shift permutation.
    \begin{Proposition}\label{equivariance}
        Let $T$ be a 2-cluster tilting object in $C_2(\Lambda)$. Then 
        \[\interval(T[1]) = \bS\interval(T).\]
    \end{Proposition}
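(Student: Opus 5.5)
The plan is to compute $T[1]$ explicitly inside the fundamental region $\Fund$, read off its three parts, and compare the bounds of $\interval(T[1])$ with those of $\bS\interval(T)$. Write $T = T_{free}\oplus T_{tors}[1]\oplus T_{supp}[2]$ and $I=\interval(T)=[(\mathcal T,\mathcal F)\leq(\mathcal T',\mathcal F')]$. By \cref{moduleFromInterval} and the bijectivity in \cref{clustersAreMutableBijection} we have $T_{free}=T^I_{free}$, $T_{tors}=T^I_{tors}$ and $W_{free}^I=W(T_{free})$.

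\textbf{Step 1: the parts of $T[1]$.} In $\mathcal C_2(\Lambda)$ we have $X[2]\cong\tau X$. Split $T_{tors}=T_{tors}^{p}\oplus T_{tors}^{np}$ into projective and non-projective summands. Then:
\begin{itemize}
\item $T_{free}[1]$ stays in $(\modulecat\Lambda)[1]$;
\item $T_{tors}^{p}[2]$ lies in $\Fund$ as a projective shifted by two;
\item $T_{tors}^{np}[2]\cong\tau T_{tors}^{np}\in\modulecat\Lambda$;
\item $T_{supp}[3]\cong(\tau T_{supp}[1])[0]\cong\nu_\Lambda T_{supp}$, an injective module, using $\tau P[1]\cong\nu_\Lambda P$ as in the proof of \cref{2clusterTiltviaModulecat}.
\end{itemize}
Hence
\[
T[1]_{tors}=T_{free},\qquad T[1]_{free}=\tau T_{tors}^{np}\oplus\nu_\Lambda T_{supp},\qquad T[1]_{supp}=T_{tors}^{p}.
\]
The lower bound of $\interval(T[1])$ is therefore $(\Gen T_{free},T_{free}^{\perp_0})$. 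This is exactly the lower bound of $\bS I$, since $T_{free}^I=T_{free}$.

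\textbf{Step 2: the upper bound.} By the Auslander--Reiten formula for hereditary algebras, $\Hom_\Lambda(X,\tau Y)\cong \Ext^1_\Lambda(Y,X)^*$. Also $\Hom_\Lambda(X,\nu_\Lambda P)\cong\Hom_\Lambda(P,X)^*$, and $\Ext^1_\Lambda(T^p_{tors},-)=0$. Together these give
\[
{}^{\perp_0}\bigl(T[1]_{free}\bigr)=T_{tors}^{\perp_1}\cap T_{supp}^{\perp_0}.
\]
So it remains to prove
\[
T_{tors}^{\perp_1}\cap T_{supp}^{\perp_0}=\Gen T_{tors}\vee\Gen W(T_{free}),
\]
where $\Gen T_{tors}=\mathcal T$.

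For "$\supseteq$": the left side is a torsion class, being an intersection of two torsion classes. It contains $\Gen T_{tors}$, because $T_{tors}$ is ext-projective in $\Gen T_{tors}$ and $\Hom(T_{supp},T_{tors})=0$. It also contains $W(T_{free})$, because \cref{WideFromTilting} gives $W(T_{free})\subseteq T_{tors}^{\perp_{0,1}}$ and we have $\Hom(T_{supp},T_{free})=0$.

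For "$\subseteq$": take $X$ in the left side and its canonical sequence $0\to t X\to X\to X''\to 0$ for the torsion pair $(\Gen T_{tors},T_{tors}^{\perp_0})$. Since $\Lambda$ is hereditary, $\Ext^1(T_{tors},X)\to\Ext^1(T_{tors},X'')$ is surjective, so $X''\in T_{tors}^{\perp_{0,1}}$. Moreover $X''\in T_{supp}^{\perp_0}$ as a quotient of $X$, and automatically $X''\in T_{supp}^{\perp_1}$. Thus
\[
X''\in T_{tors}^{\perp_{0,1}}\cap T_{supp}^{\perp_{0,1}}.
\]
Inside the wide subcategory $T_{supp}^{\perp_{0,1}}$ the argument of the Claim in \cref{clustersAreMutableBijection} gives $W(T_{free})=W(T_{tors})^{\perp_{0,1}}$, which by \cref{WideFromTilting} equals $T_{tors}^{\perp_{0,1}}$ there. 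So $X''\in W(T_{free})$, and $X$ is an extension of an object of $\Gen T_{tors}$ by an object of $W(T_{free})$. Hence $X$ lies in the join, proving "$\subseteq$". The upper torsion-free parts then agree automatically, because a torsion pair is determined by its torsion part.

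\textbf{Main obstacle.} The delicate step is the identification of ${}^{\perp_0}$ of the new free part in Step 2. It relies on the rank-count reduction to $T_{supp}^{\perp_{0,1}}$. There one must check carefully that the orthogonal calculus of \cref{WideFromTilting} holds in the wide subcategory while the join is taken in $\tors\Lambda$. The extension argument above is designed to sidestep this: it exhibits the decomposition of $X$ directly in $\modulecat\Lambda$, so only the equality of wide subcategories needs the reduction.
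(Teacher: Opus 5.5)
Your proof is correct, and it takes a genuinely different route from the paper's.

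\textbf{The paper's route.} The paper never determines the upper bound of $\interval(T[1])$ exactly. After noting that the lower bounds agree, it only shows $T[1]_{free}\in T_{tors}^{\perp_0}\cap W_{free}^{I\perp_0}$, reading off the vanishing of $\Hom_\Lambda(T^I_{tors},T[1]_{free})$ from $2$-rigidity of $T$ in $\mathcal C_2(\Lambda)$. This says the upper bound of $\bS\interval(T)$ lies below that of $\interval(T[1])$. Equality is then forced by a finiteness argument: $\interval(-[1])$ and $\bS\interval(-)$ are both bijections onto the finite set of mutable intervals, and two such bijections that are pointwise comparable must agree. That step depends on \cref{inverseofserre} and on surjectivity of $\interval$, hence on the Fu\ss--Catalan count in \cref{equiCardinal}.

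\textbf{Your route.} You locate the summands of $T[1]$ in $\Fund$ and translate ${}^{\perp_0}(T[1]_{free})$ into $T_{tors}^{\perp_1}\cap T_{supp}^{\perp_0}$ via the Auslander--Reiten formula and Nakayama duality. You then prove both inclusions with $\mathcal T\vee\Gen W_{free}^I$ directly, the key input being $T_{tors}^{\perp_{0,1}}\cap T_{supp}^{\perp_{0,1}}=W(T_{free})$ from the rank count.

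\textbf{What each buys.} Your approach gives an explicit formula for the parts of $T[1]$, which is the same bookkeeping \cref{Rotation} uses implicitly. It is also independent of \cref{inverseofserre}. Moreover, you use bijectivity of $\interval$ only to get $T_{free}=T^I_{free}$, and the argument of \cref{moduleFromInterval} yields this directly for the given $T$. So your proof needs no enumerative input at all. The paper's route is shorter: it replaces the AR-formula computation and the second inclusion with a single Calabi--Yau vanishing plus a global counting argument.

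\textbf{One detail to state explicitly.} The Lemma in Section~2 only says $\mathcal C^{\perp_0}$ is torsion-free. So the closure of $T_{supp}^{\perp_0}$ under quotients, which you use in both inclusions, comes from projectivity of $T_{supp}$, or equivalently from $T_{supp}^{\perp_0}={}^{\perp_0}(\nu_\Lambda T_{supp})$.
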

    \begin{proof}
        We define a partial order $\prec$ on the set of mutable intervals by $[A\leq B]\prec [C\leq D]$ if and only if $B\leq D$. Our goal is to show that for every 2-cluster tilting object $T$ we have $\bS\interval(T)\prec\interval(T[1])$. The proposition follows from this because both $\interval(-[1])$ and $\bS\interval(-)$ are bijections.\\
        Let $T$ be  2-cluster tilting object. Using \cref{moduleFromInterval} we can write $T=\interval(T_{free}^I\oplus T_{tors}^I[1]\oplus T_{supp}^I[2]$ where $I = \interval(T)$. Further we write $T[1] = T'_{free}\oplus T'_{tors}[1]\oplus T'_{supp}[2]$. Note that $T'_{tors} \cong T_{free}^I$. The lower bound of $\interval(T[1])$ is given by $\Gen T'_{tors}$ while the lower bound of $\bS\interval(T)$ is given by $\Gen T_{free}^I$. In particular both are equal. The torsion-free part of the upper bound of $\interval(T[1])$ is given by $\Cogen T'_{free}$, while the torsion-free part of the upper bound of $\bS\interval(T)$ is given by $T_{tors}^{I\perp_0}\cap W_{free}^{I\perp_0}.$ That is we have to show 
        \[T'_{free}\in T_{tors}^{I\perp_0}\cap W_{free}^{I\perp_0}\]
        because torsion-free classes are ordered by reverse inclusion. We have $T'_{free}\in T_{tors}^{\prime\perp_{0,1}} = T_{free}^{I\perp_{0,1}} = W_{free}^{I\perp_{0,1}}$ and compute
        \begin{align*}
            \Hom_{\Lambda}(T_{tors}^I,T'_{free}) &= \Hom_{\mathcal C^2(\Lambda)}(T_{tors}^I,T'_{free})\\
            &= \Ext^2_{\mathcal C^2(\Lambda)}(T_{tors}^I[1],T'_{free}[-1])\\
            &\subseteq \Ext^2_{\mathcal C^2(\Lambda)}(T_{tors}^I[1],T_{tors}^I[1]\oplus T_{supp}^I[2])\\
            &=0.
        \end{align*}
        Here the first equality is \cref{homscoincide}, the inclusion holds because the summands of $T[1]$ in $\modulecat\Lambda$ are shifts of summands of $T$ in $\modulecat \Lambda[1]$ and $\proj \Lambda[2]$. The final equality holds because $T$ is 2-cluster tilting. This shows the required inclusion and thus proves the proposition.
    \end{proof}
    \cref{[1]count} gave some enumerative information on the orbits of the shift permutation. The next proposition translates this to mutable intervals. This will allow us to compute the fractionally Calabi-Yau dimension later.
    \begin{Proposition}\label{serrecount}
        Let $h$ be the Coxeter number of $\Lambda$ and $N$ the number of indecomposable representations of $\Lambda$.
        The Serre permutation is $2h+2$-periodic and for any mutable interval $I$ we have
        \[\sum_{i=1}^{2h+2} \rank (W_{free}^{\bS^iI}) = 2N.\]
        Further if $\Lambda$ is of Dynkin type $A_1$, $B_n$, $C_n$, $D_n$ ($n$ even), $E_7$, $E_8$ or $G_2$, then the Serre permutation is $h+1$-periodic and for any mutable interval $I$ we have
        \[\sum_{i=1}^{h+1} \rank (W_{free}^{\bS^iI}) = N.\]
    \end{Proposition}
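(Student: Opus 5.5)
The plan is to transport \cref{[1]count} to mutable intervals along the bijection $\interval$ of \cref{clustersAreMutableBijection}, using the equivariance of \cref{equivariance}. By \cref{equivariance} and induction on $i$ we have $\interval(T[i]) = \bS^i\interval(T)$ for every 2-cluster tilting object $T$ and every $i\geq 0$. Since $\interval$ is a bijection, it conjugates the shift permutation to the Serre permutation. The periodicity statements therefore transfer directly. If $T[2h+2]\cong T$ for all $T$, then $\bS^{2h+2}I = I$ for all mutable intervals $I$, and likewise with $h+1$ in the listed Dynkin types.

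For the sum formula, fix a mutable interval $I$ and set $T=\interval^{-1}(I)$. By \cref{moduleFromInterval} we may take $T = T^I_{free}\oplus T^I_{tors}[1]\oplus T^I_{supp}[2]$. Applying this to $\bS^iI = \interval(T[i])$ gives the key identity
\[T[i]\cap\modulecat\Lambda = T^{\bS^iI}_{free}.\]
Here $T[i]\cap\modulecat\Lambda$ denotes the sum of the summands of $T[i]$ lying in $\modulecat\Lambda\subset\Fund$. It remains to show that $\rank(T^{\bS^iI}_{free}) = \rank(W^{\bS^iI}_{free})$. This follows from \cref{WideFromTilting}, since $W^J_{free}=W(T^J_{free})$ by definition and $|T^J_{free}| = \rank W(T^J_{free})$ because $T^J_{free}$ is partial tilting. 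Substituting into the two sums of \cref{[1]count} yields
\[\sum_{i=1}^{2h+2}\rank(W_{free}^{\bS^iI}) = 2N,\]
together with the analogous identity with $h+1$ and $N$ in the special types.

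The only point needing care is the identification of the module part of $T[i]$ with $T^{\bS^iI}_{free}$. One must verify that the decomposition $T[i] = T'_{free}\oplus T'_{tors}[1]\oplus T'_{supp}[2]$ taken in the fundamental region $\Fund$ is the one that \cref{moduleFromInterval} recovers from $\interval(T[i])$. This holds because $\interval$ is injective and \cref{moduleFromInterval} exhibits the preimage of $\bS^iI$ in exactly this form, so the summands in $\modulecat\Lambda$ are forced to be $T^{\bS^iI}_{free}$. One should also note that \cref{[1]count} measures rank as the number of indecomposable summands, and this matches $|T^J_{free}|$. No further computation is needed; the remaining content is already contained in \cref{[1]count}, which rests on Happel's description of $\mathcal D^b(\Lambda)$.
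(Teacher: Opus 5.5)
Your argument is correct and follows the route the paper intends: the paper gives no separate written proof and presents the statement as the translation of \cref{[1]count} along the bijection $\interval$, using the equivariance $\interval(T[1])=\bS\interval(T)$ of \cref{equivariance}. You fill in exactly the details the paper leaves implicit, namely identifying the module part of $T[i]$ with $T^{\bS^iI}_{free}$ via \cref{moduleFromInterval} and the injectivity of $\interval$, and then using $|T^J_{free}|=\rank W^J_{free}$ from \cref{WideFromTilting}.
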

    We conclude the chapter with an alternative description of the Serre permutation for trivial intervals. The fractionally Calabi-Yau property will be proven by induction on the size of the mutable interval. The next Proposition is critical to the induction start, while the next chapter prepares the induction step.
    \begin{Proposition}\label{serrePermSimple}
        Let $I= [(\mathcal T,\mathcal F)\leq(\mathcal T,\mathcal F)]$ be a simple interval and let $S_1,\dots ,S_{k_I}$ be the simple objects in $\mathfrak a(\mathcal F)$. We define $\hat I$ as the set of all $\hat {\mathcal T}\in  \tors \Lambda$ satisfying
        \begin{enumerate}
            \item $\hat{\mathcal T}\leq \mathcal T \vee \Gen \mathfrak  a(\mathcal F)$ and
            \item for each $1\leq i\leq k_I$ we have $\hat{\mathcal T}\nleq \mathcal T \vee \bigvee_{j\neq i} \Gen  S_j$.
        \end{enumerate}
        Then $\bS I = \hat I$.    
    \end{Proposition}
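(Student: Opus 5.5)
The plan is to compute $\bS I$ directly from the definition and then compare it with $\hat I$, using the wide interval $J=[\mathcal T\leq \mathcal T\vee \Gen \mathfrak a(\mathcal F)]$ as the common ambient structure.

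\textbf{Step 1: compute $\bS I$.} For the trivial interval $I=[(\mathcal T,\mathcal F)\leq(\mathcal T,\mathcal F)]$ we have $\mathcal F'\cap\mathfrak a(\mathcal F)=\mathfrak a(\mathcal F)$. The $\mathfrak a(\mathcal F)$-support tilting module of the torsion-free class $\mathfrak a(\mathcal F)$ of $\mathfrak a(\mathcal F)$ consists of its ext-injectives. So $T^I_{free}=Q$ is the basic injective cogenerator of $\mathfrak a(\mathcal F)$, and $W^I_{free}=W(Q)=\mathfrak a(\mathcal F)$. By definition this gives
\[\bS I=[\Gen Q\leq \mathcal T\vee\Gen\mathfrak a(\mathcal F)].\]
By \cref{wideIntfromExcepInt}, $J=[\mathcal T\leq \mathcal T\vee \Gen \mathfrak a(\mathcal F)]$ is a wide interval with wide subcategory $\mathfrak a(\mathcal F)$. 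Hence its torsion-free side satisfies $\mathcal F\cap(\mathcal T\vee\Gen\mathfrak a(\mathcal F))=\mathfrak a(\mathcal F)$. For each $i$ put $U_i=\mathcal T\vee\bigvee_{j\neq i}\Gen S_j$. Under \cref{wideIntervalsAsTors} this is the image of $\Gen\{S_j\}_{j\neq i}\in\tors\mathfrak a(\mathcal F)$, and its torsion-free part is $\mathcal F_i:=\mathcal F\cap\bigcap_{j\neq i}S_j^{\perp_0}$.

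\textbf{Step 2: $\bS I\subseteq\hat I$.} Condition (1) holds for all of $\bS I$ because $Q\in\mathfrak a(\mathcal F)$. For condition (2) it suffices to check the minimum $\Gen Q$. Let $Q_i$ be the injective hull of $S_i$ in $\mathfrak a(\mathcal F)$. Its $\mathfrak a(\mathcal F)$-socle is $S_i$, so $\Hom(S_j,Q_i)=0$ for $j\neq i$, and hence $Q_i\in\mathcal F_i$. Thus $\Gen Q\not\leq U_i$.

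\textbf{Step 3: $\hat I\subseteq \bS I$, the main obstacle.} Let $\hat{\mathcal T}$ satisfy (1) and (2). Condition (2) says $\hat{\mathcal T}\cap\mathcal F_i\neq 0$, so $\hat{\mathcal T}$ contains an indecomposable $X_i\in\mathcal F_i$. Using (1), $X_i\in\mathcal F\cap(\mathcal T\vee\Gen\mathfrak a(\mathcal F))=\mathfrak a(\mathcal F)$, so $X_i\in\mathfrak a(\mathcal F)$ has socle a sum of copies of $S_i$. One must then show that a torsion class containing such an $X_i$ for every $i$ contains every $Q_i$.

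\begin{itemize}
\item This cannot be done one $i$ at a time, since $\Gen S_i\not\ni Q_i$; the argument must use all $i$ simultaneously.
\item I would first try to show $\hat{\mathcal T}\cap\mathfrak a(\mathcal F)$ contains a sincere family whose torsion closure contains all of $\mathfrak a(\mathcal F)$'s injectives. This means working inside $\mathfrak a(\mathcal F)\cong\modulecat\Lambda'$ and arguing by induction along the injective cogenerator, taking quotients of extensions of the $X_i$.
\item Failing that, I would rephrase (2) via the dual description in \cref{SimplesAreLabels}: $\hat{\mathcal T}\not\le U_i$ means that $\hat{\mathcal T}\vee U_i$ reaches the label $S_i$ above $U_i$.
\item I would then use coatomicity of $J$ (\cref{atomicIsWide}) to show that $\hat{\mathcal T}\vee\mathcal T$ equals the top of $J$.
\item From this I would deduce, via the torsion-free parts, that $\hat{\mathcal T}^{\perp_0}\cap\mathfrak a(\mathcal F)=0$. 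Together with $\hat{\mathcal T}\le\mathcal T\vee\Gen\mathfrak a(\mathcal F)$, this should force every ext-injective $Q_i$ of $\mathfrak a(\mathcal F)$ to lie in $\hat{\mathcal T}$.
\end{itemize}

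This final deduction is the step I expect to require the most care.
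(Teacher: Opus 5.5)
Your Steps 1 and 2 are correct. Step 2 is a bit more direct than the paper's contradiction argument, since $Q_i$ is a nonzero object of $\Gen Q\cap U_i^{\perp_0}$. The opening of Step 3 is also correct: passing to the $U_i$-torsion-free quotient of an object of $\hat{\mathcal T}\setminus U_i$ and using (1) gives a nonzero $X_i\in\hat{\mathcal T}\cap\mathfrak a(\mathcal F)$ with $\Hom_\Lambda(S_j,X_i)=0$ for $j\neq i$.

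The gap is that the decisive claim, that such a $\hat{\mathcal T}$ contains every $Q_i$, is never proved, and the fallback route you sketch for it fails. That route treats the $U_i$ as the coatoms of $J$ with labels $S_i$. Under \cref{wideIntervalsAsTors}, however, $U_i$ corresponds to the torsion class of objects of $\mathfrak a(\mathcal F)$ filtered by the $S_j$ with $j\neq i$. The coatom with label $S_i$ instead corresponds to ${}^{\perp_0}S_i\cap\mathfrak a(\mathcal F)$. These differ as soon as $\mathfrak a(\mathcal F)$ is not semisimple, and coatomicity of $J$ concerns its bottom in any case.

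The intermediate conclusions are in fact false. By your own Step 2, $\hat{\mathcal T}=\Gen Q$ lies in $\hat I$. For $\mathcal T=0$ and $\Lambda$ not semisimple, $\Gen Q=\Gen D\Lambda$, which misses every simple projective non-injective module $S$: such an $S$ exists, and quotients of injectives are injective because $\Lambda$ is hereditary. So $\mathcal T\vee\hat{\mathcal T}$ is not the top of $J$, and $0\neq S\in\hat{\mathcal T}^{\perp_0}\cap\mathfrak a(\mathcal F)$. That route would prove that $\hat I$ consists of the top element only.

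The missing idea is to use that $\mathfrak a(\mathcal F)$ is hereditary and to induct on length.
\begin{enumerate}
\item $\hat{\mathcal T}\cap\mathfrak a(\mathcal F)$ is a torsion class of $\mathfrak a(\mathcal F)$.
\item $S_i$ is a composition factor of $X_i$, and $\Hom_\Lambda(-,Q_i)$ is exact on $\mathfrak a(\mathcal F)$. Hence there is a nonzero $f\colon X_i\to Q_i$, and $\Image f\in\hat{\mathcal T}\cap\mathfrak a(\mathcal F)$ as a quotient of $X_i$.
\item The cokernel $Q_i/\Image f$ is a quotient of an injective, hence injective in $\mathfrak a(\mathcal F)$, and it has strictly smaller length.
\item Induct on length over all indecomposable injectives of $\mathfrak a(\mathcal F)$. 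This gives $Q_i/\Image f\in\hat{\mathcal T}$, hence $Q_i\in\hat{\mathcal T}$ by extension closure, i.e.\ $\Gen Q\leq\hat{\mathcal T}$.
\end{enumerate}
This is exactly the descent in Step 1 of the paper's proof of $\hat I\subset\bS I$. You produced $X_i$ inside $\hat{\mathcal T}$ itself rather than inside $\mathcal T\vee\hat{\mathcal T}$. Completing your argument this way would therefore also make the paper's Step 2 unnecessary (the filtration argument returning from $\mathcal T\vee\hat{\mathcal T}$ to $\hat{\mathcal T}$).
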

        \begin{proof}
            In this case $T_{free}^I$ is by definition the injective cogenerator of $\mathfrak a(\mathcal F)$ and $W_{free}^I =\mathfrak a(\mathcal F)$.\\
            "$\bS I\subset \hat I$":
            Recall that $\bS I = [\Gen T_{free}^I\leq \mathcal T \vee \Gen W_{free}^I]$. In particular every element of $\bS I$ satisfies $(1)$. If there is an element of $\bS I$ that does not satisfy $(2)$, then $\Gen T_{free}^I$ does not satisfy $(2)$ either. In this case let $i$ be such that 
                \[\Gen T_{free}^I\leq \mathcal T \vee \bigvee_{j\neq i} \Gen  S_j.\]
            The interval $[\mathcal T \leq \mathcal T \vee \bigvee_{j\neq i} \Gen  S_j]$ is atomic, hence wide with associated wide subcategory $W_i$. The simple objects of $W_i$ are the $S_j$ with $j\neq i$. Let $T_i$ be the injective hull of $S_i$ in $W_{free}^I$. Then there is a short exact sequence
            \[0\rightarrow S_i\rightarrow T_i\rightarrow T_i/S_i\rightarrow 0\]
            in $W_{free}^I$. The module $T_i/S_i$ is filtered by the modules $S_j$ with $j\neq i$ and hence we have $T_i/S_i\in W_i\subset \mathfrak a(\mathcal T \vee \bigvee_{j\neq i} \Gen S_j)$. The module $T_i$ is injective in $W^I_{free}$ and hence a direct summand of $T^I_{free}\in \mathcal T \vee \bigvee_{j\neq i} \Gen  S_j$. The definition of $\mathfrak a$ and the short exact sequence now show $S_i \in \mathcal T \vee \bigvee_{j\neq i} \Gen S_j$, but we also know \[S_i \in \mathcal T^{\perp_0} \vee \bigvee_{j\neq i} S_j^{\perp_0} = (\mathcal T \vee \bigvee_{j\neq i} \Gen S_j)^{\perp_0},\]
            a contradiction.\\
            "$\hat I \subset\bS I$":
            Let $\hat{\mathcal T}\in \hat I$ be a torsion class. Property $(1)$ implies $\hat{\mathcal T} \leq \mathcal T \vee \Gen W_{free}^I$ so it remains to show $\Gen T_{free}^I\leq \hat{\mathcal T}$. We will prove this in two steps:\\
            \textbf{Step 1:} $T_{free}^I\in \mathcal T\vee\hat{\mathcal T}$:\\
            Because $\hat T$ satisfies conditions $(1)$ and $(2)$, so does $\mathcal T\vee\hat{\mathcal T}$. In addition the torsion class $\mathcal T\vee\hat{\mathcal T}$ is in the wide interval $[\mathcal T\leq \mathcal T \vee \Gen W_{free}^I]$. Using \cref{wideIntervalsAsTors} we can interpret this in $\tors W_{free}^I$. We have to show $T_{free}^I\in (\mathcal T\vee\hat{\mathcal T}) \cap W_{free}^I$. For a given $1\leq i \leq k_I$ \cref{wideIntervalsAsTors} shows that the torsion class
            $(\mathcal T \vee \bigvee_{j\neq i} \Gen S_j)\cap W_{free}^I$ of $W_{free}^I$ consist of exactly those objects filtered by the $S_j$ except $S_i$. Condition $(2)$ then shows that $S_i$ appears in the $W_{free}^I$-composition series of some object $X$ of $(\mathcal T\vee\hat{\mathcal T})\cap W_{free}^I$. Let $T_i$ be the $W_{free}^I$-injective hull of $S_i$. Because $S_i$ appears in the composition series, there must be a non-zero homomorphism $f\colon X\rightarrow T_i$ and hence a short exact sequence
            \[0\rightarrow \Image(f)\rightarrow T_i\rightarrow T_i/\Image(f)\rightarrow 0\]
            In particular $T_i$ is in $(\mathcal T\vee\hat{\mathcal T})\cap W_{free}^I$ if and only if $T_i/\Image(f)$ is and the latter is a $W_{free}^I$-injective object of strictly lower $W_{free}^I$-length. By infinite descent we are done since $0\in (\mathcal T\vee\hat{\mathcal T})\cap W_{free}^I$. The object $T_{free}^I$ is the direct sum of all indecomposable $W_{free}^I$-injectives so this proves Step 1.\\
            \textbf{Step 2:} $T_{free}^I\in \hat{\mathcal T}$:\\
            The torsion class $\mathcal T\vee\hat{\mathcal T}$ consists of those objects in $\modulecat \Lambda$ that admit a filtration by objects in $\mathcal T\cup\hat{\mathcal T}$. Let $0 = X_0\subset X_1\subset\dots\subset X_m = T_{free}^I$ be such a filtration. Since $\mathcal T\cup\hat{\mathcal T}\subset \mathcal T \vee \Gen W_{free}^I$ all of the $X_i$ are elements of $\mathcal T \vee \Gen W_{free}^I$. Further $W_{free}^I\subset\mathfrak a(\mathcal T \vee \Gen W_{free}^I)$. This implies that $X_i\in \mathfrak a(\mathcal T \vee \Gen W_{free}^I)$ for all $i$ because $X_i$ is a submodule of $T_{free}^I$ and \cref{frakaClosure}. In fact $X_i\in W_{free}^I$ because the simple objects of $W_{free}^I$ are a subset of the simple objects of $\mathfrak a(\mathcal T \vee \Gen W_{free}^I)$ and $T^I_{free}\in W^I_{free}$. Finally this shows that $0 = X_0\subset X_1\subset\dots\subset X_m = T_{free}^I$ is a filtration in $W_{free}^I$. In particular $X_i/X_{i-1}\notin \mathcal T$ for all $i$. But then each of those subfactors must be in $\hat{\mathcal T}$ and hence $T_{free}^I\in \hat{\mathcal T}$ by extension closure.
        \end{proof}

\section{Interval mutation and almost 2-cluster tilting objects}
    In this section we will interpret mutation of 2-cluster tilting objects in terms of mutable intervals. Any almost 2-cluster tilting object admits three completions to 2-cluster tilting objects. The mutable intervals corresponding to those completions form an interval mutation which we introduce next.
    \begin{Definition}
        A triple $(B,I,A)$ of mutable intervals is called an \emph{interval mutation} if
        \begin{enumerate}
            \item $I = A\sqcup B$
            \item $\max B = \max I$
            \item $\min A = \min I $
        \end{enumerate}
    \end{Definition}
    Interval mutations relate a bigger interval to two smaller ones. This forms the basis for an induction argument for the main theorem. We study interval mutations on their own first, then relate them to mutation of 2-cluster tilting objects at the end of this section.\\   
    To construct interval mutations we need to associate another wide subcategory to a mutable interval
    \begin{Definition}
        Let $ I = [(\mathcal{T},\mathcal{F}) \leq (\mathcal{T}',\mathcal{F}')]$ be a mutable interval. The \emph{$\delta$-sequence} of $I$ is the sequence of wide subcategories $(\mathfrak a (\mathcal T), \mathfrak a (\mathcal F)\cap \mathfrak a (\mathcal T'),\mathfrak a (\mathcal F'))$.
    \end{Definition}
    Note that any mutable interval can be recovered from its $\delta$-sequence.
    \begin{CatalandRemark}
        Under the identification of wide subcategories with noncrossing partitions this is the $\delta$-sequence of the 2-noncrossing partition corresponding to $I$.
    \end{CatalandRemark}
    To be able to use interval mutations for an inductive argument we need to show that any proper mutable interval is the middle term of an interval mutation. We will construct such an interval mutation using simple modules of $\mathfrak a (\mathcal F)\cap \mathfrak a (\mathcal T')$. The next two results show that this category does not vanish for proper intervals.
    \begin{Lemma}
        Let $ I = [(\mathcal{T},\mathcal{F}) \leq (\mathcal{T}',\mathcal{F}')]$ be a mutable interval.
        Then we have 
        \[\rank(\mathfrak a (\mathcal T)) + \rank(\mathfrak a (\mathcal F)\cap \mathfrak a (\mathcal T')) +\rank(\mathfrak a (\mathcal F')) = |\Lambda|\]
    \end{Lemma}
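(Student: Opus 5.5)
The plan is to reduce the statement to the rank complementarity $\rank(W)+\rank(W^{\perp_{0,1}})=|\Lambda|$, applied once in $\modulecat\Lambda$ and once inside the wide subcategory $\mathfrak a(\mathcal F)$. By \cref{wideRankComplement} we have $\mathfrak a(\mathcal F)=\mathfrak a(\mathcal T)^{\perp_{0,1}}$. Hence
\[\rank(\mathfrak a(\mathcal T))+\rank(\mathfrak a(\mathcal F)) = |\Lambda|.\]
It therefore suffices to show
\[\rank(\mathfrak a(\mathcal F)) = \rank(\mathfrak a(\mathcal F)\cap\mathfrak a(\mathcal T'))+\rank(\mathfrak a(\mathcal F')).\]
Set $W=\mathfrak a(\mathcal F)$. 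Mutability gives $\mathfrak a(\mathcal F')\subseteq W$, and $W\cong\modulecat\Lambda'$ for some representation finite hereditary $\Lambda'$. So rank complementarity holds inside $W$:
\[\rank(\mathfrak a(\mathcal F')) + \rank(\mathfrak a(\mathcal F')^{\perp_{0,1}}\cap W) = \rank W,\]
where we use that $\Ext^1$ and $\Hom$ in $W$ are the restrictions from $\modulecat\Lambda$. The remaining task is to identify the $W$-relative perpendicular category $\mathfrak a(\mathcal F')^{\perp_{0,1}}\cap W$ with $\mathfrak a(\mathcal F)\cap\mathfrak a(\mathcal T')$.

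Note that the left perpendicular ${}^{\perp_{0,1}}$ is the one that matches \cref{wideRankComplement} and the bijection ${}^{\perp_{0,1}}W\leftrightarrow W$. Applying \cref{wideRankComplement} to the torsion pair $(\mathcal T',\mathcal F')$ gives $\mathfrak a(\mathcal T')^{\perp_{0,1}}=\mathfrak a(\mathcal F')$, and hence $\mathfrak a(\mathcal T')={}^{\perp_{0,1}}\mathfrak a(\mathcal F')$. So the identity we actually need is
\[\mathfrak a(\mathcal F)\cap{}^{\perp_{0,1}}\mathfrak a(\mathcal F') = \text{the left perpendicular of }\mathfrak a(\mathcal F')\text{ computed in }W.\]
This holds by definition, since the left perpendicular inside $W$ is just ${}^{\perp_{0,1}}\mathfrak a(\mathcal F')$ intersected with $W$. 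The rank formula for wide subcategories, applied in $W\cong\modulecat\Lambda'$ with left perpendiculars (equivalent to right ones by the bijection ${}^{\perp_{0,1}}(-)$, $(-)^{\perp_{0,1}}$), then gives
\[\rank(\mathfrak a(\mathcal F')) + \rank\bigl(W\cap\mathfrak a(\mathcal T')\bigr)=\rank(W).\]
Combining this with the first equation yields the claim.

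The main point to check carefully is that the earlier results apply inside $W$. The rank formula and the bijections between wide subcategories were stated for $\modulecat\Lambda$, but they transfer to $W$ through the equivalence $W\cong\modulecat\Lambda'$. Because $W$ is closed under kernels, cokernels and extensions in $\modulecat\Lambda$, the relative $\perp_{0,1}$ is the absolute one intersected with $W$, and wide subcategories of $W$ are exactly the wide subcategories of $\modulecat\Lambda$ contained in $W$. If one prefers to avoid left perpendiculars, one can instead use $\mathfrak a(\mathcal T')\supseteq\mathfrak a(\mathcal T)$ symmetrically: decompose $\mathfrak a(\mathcal T')$ as $\mathfrak a(\mathcal T)$ together with $\mathfrak a(\mathcal T)^{\perp_{0,1}}\cap\mathfrak a(\mathcal T')=\mathfrak a(\mathcal F)\cap\mathfrak a(\mathcal T')$, which gives $\rank\mathfrak a(\mathcal T')=\rank\mathfrak a(\mathcal T)+\rank(\mathfrak a(\mathcal F)\cap\mathfrak a(\mathcal T'))$. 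Then add $\rank\mathfrak a(\mathcal F')=|\Lambda|-\rank\mathfrak a(\mathcal T')$.
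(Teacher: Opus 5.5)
Your argument is correct and is essentially the paper's proof. You apply rank complementarity to $\mathfrak a(\mathcal T)$ and $\mathfrak a(\mathcal F)=\mathfrak a(\mathcal T)^{\perp_{0,1}}$, then apply it again inside $\mathfrak a(\mathcal F)$ to $\mathfrak a(\mathcal F')$ and its relative left perpendicular $\mathfrak a(\mathcal F)\cap{}^{\perp_{0,1}}\mathfrak a(\mathcal F')=\mathfrak a(\mathcal F)\cap\mathfrak a(\mathcal T')$.

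One wording slip needs fixing. The right perpendicular $\mathfrak a(\mathcal F')^{\perp_{0,1}}\cap W$ that you first propose to identify is in general not equal to $\mathfrak a(\mathcal F)\cap\mathfrak a(\mathcal T')$; only the ranks agree. For example, take $\Lambda=k(1\to 2)$ and $I=[0\leq \Gen S_1]$: then $\mathfrak a(\mathcal F')=\addcat P_1$ has right perpendicular $\addcat S_2$, whereas $\mathfrak a(\mathcal T')=\addcat S_1$. So state the left-perpendicular version from the start, or use your alternative route through $\mathfrak a(\mathcal T)\subseteq\mathfrak a(\mathcal T')$, which needs only right perpendiculars.
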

    \begin{proof}
        We have
        \begin{align*}
            |\Lambda| &= \rank(\mathfrak a (\mathcal T)) + \rank(\mathfrak a (\mathcal F))\\
            &= \rank(\mathfrak a (\mathcal T)) + \rank(\mathfrak a (\mathcal F)\cap {}^{\perp_{0,1}}(\mathfrak a (\mathcal F'))) +\rank(\mathfrak a (\mathcal F'))\\
            &= \rank(\mathfrak a (\mathcal T)) + \rank(\mathfrak a (\mathcal F)\cap \mathfrak a (\mathcal T')) +\rank(\mathfrak a (\mathcal F')).
        \end{align*}
        Here the first and second equalities are \cref{wideRankComplement}.
        In addition the second equality uses $\mathfrak a (\mathcal F')\subset \mathfrak a (\mathcal F)$
    \end{proof}
    \begin{Corollary}
        Let $ I = [(\mathcal{T},\mathcal{F}) \lneq (\mathcal{T}',\mathcal{F}')]$ be a proper mutable interval.
        Then $\mathfrak{a}(\mathcal{F})\cap\mathfrak{a}(\mathcal{T}')$ is not zero.
    \end{Corollary}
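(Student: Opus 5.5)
The plan is to argue by contraposition: assume $\mathfrak a(\mathcal F)\cap\mathfrak a(\mathcal T')=0$ and deduce $\mathcal T=\mathcal T'$. By the preceding Lemma, this assumption gives
\[\rank(\mathfrak a(\mathcal T))+\rank(\mathfrak a(\mathcal F'))=|\Lambda|.\]
\cref{wideRankComplement} gives $\rank(\mathfrak a(\mathcal T'))+\rank(\mathfrak a(\mathcal F'))=|\Lambda|$. Hence $\rank\mathfrak a(\mathcal T)=\rank\mathfrak a(\mathcal T')$.

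Next we use mutability, $\mathfrak a(\mathcal T)\subseteq\mathfrak a(\mathcal T')$, to upgrade equal rank to equality of the wide subcategories. Both are module categories of hereditary algebras, say $\mathfrak a(\mathcal T')\cong\modulecat\Lambda'$. Then $\mathfrak a(\mathcal T)$ is a wide subcategory of $\modulecat\Lambda'$. The rank formula $\rank(W)+\rank(W^{\perp_{0,1}})=\rank$ applied inside $\modulecat\Lambda'$ shows that its perpendicular category in $\mathfrak a(\mathcal T')$ has rank $0$, hence is zero. A wide subcategory with zero perpendicular is everything, since ${}^{\perp_{0,1}}0$ is the whole category and the perpendicular operations are mutually inverse bijections. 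Therefore $\mathfrak a(\mathcal T)=\mathfrak a(\mathcal T')$.

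Finally, the Ingalls–Thomas bijection $W\mapsto\Gen W$ gives $\mathcal T=\Gen\mathfrak a(\mathcal T)=\Gen\mathfrak a(\mathcal T')=\mathcal T'$, contradicting properness.

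The only delicate step is the middle one, namely that an inclusion of wide subcategories of equal rank is an equality. I would justify it as above by applying the rank-complement proposition within $\mathfrak a(\mathcal T')$, which is permitted since all constructions may be carried out inside a wide subcategory. An alternative is to note that a tilting module of $\mathfrak a(\mathcal T)$ is partial tilting in $\mathfrak a(\mathcal T')$ with the maximal number of summands, hence tilting there. Then \cref{WideFromTilting} gives that the smallest wide subcategory containing it is all of $\mathfrak a(\mathcal T')$.
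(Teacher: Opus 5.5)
Your proof is correct and is essentially the paper's argument: the preceding Lemma with vanishing middle term, together with $\rank\mathfrak a(\mathcal T')+\rank\mathfrak a(\mathcal F')=|\Lambda|$, forces $\rank\mathfrak a(\mathcal T)=\rank\mathfrak a(\mathcal T')$. Mutability then gives $\mathfrak a(\mathcal T)=\mathfrak a(\mathcal T')$ and hence $\mathcal T=\mathcal T'$. The only difference is that you explicitly justify why an inclusion of wide subcategories of equal rank is an equality, using the rank-complement formula and the perpendicular bijection inside $\mathfrak a(\mathcal T')$; the paper leaves this step implicit.
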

    \begin{proof}
        If  $\mathfrak{a}(\mathcal{F})\cap\mathfrak{a}(\mathcal{T}') = 0$ then $|\Lambda| = \rank(\mathfrak a (\mathcal T)) + \rank(\mathfrak a (\mathcal F'))$. 
        Since $\mathfrak a (\mathcal T)\subseteq \mathfrak a (\mathcal T')$ and $|\Lambda| = \rank(\mathfrak a (\mathcal T')) + \rank(\mathfrak a (\mathcal F'))$ we must have
        $\mathfrak a (T) = \mathfrak a (T')$ and hence $\mathcal T = \mathcal T'$.
    \end{proof}
    Choosing a simple module in $\mathfrak{a}(\mathcal{F})\cap\mathfrak{a}(\mathcal{T}')$ yields an interval mutation as follows.  
    \begin{Proposition}
        Let $ I = [(\mathcal{T},\mathcal{F}) < (\mathcal{T}',\mathcal{F}')]$ be a mutable interval.
        Let $X$ be a simple module of $\mathfrak{a}(\mathcal{F})\cap\mathfrak{a}(\mathcal{T}')$.
        We call the pair $(I,X)$ an \emph{augmented interval} and set
        \[ A = [(\mathcal{T},\mathcal{F}) \leq (\mathcal{T}',\mathcal{F}')\wedge ({}^{\perp_0}X,\Cogen X)]\]
        and
        \[B = [(\mathcal{T},\mathcal{F})\vee (\Gen X,X^{\perp_0}) \leq (\mathcal{T}',\mathcal{F}')].\]
        Then $(B,I,A)$ is an interval mutation.
        We call it the \emph{interval mutation induced by $(I,X)$}. In particular both intervals $A$ and $B$ are mutable.
    \end{Proposition}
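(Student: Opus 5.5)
The plan is to check the three defining conditions of an interval mutation one by one, and then prove mutability of $A$ and $B$. Mutability is the only part that needs new input.

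Write $W=\mathfrak a(\mathcal F)\cap\mathfrak a(\mathcal T')$. The first fact I want is that $X$ is simple not only in $W$ but also in $\mathfrak a(\mathcal T')$ and in $\mathfrak a(\mathcal F)$. I would get this from mutability of $I$, which gives $\mathfrak a(\mathcal T)\subseteq \mathfrak a(\mathcal T')$:
\begin{enumerate}
\item Inside the hereditary abelian category $\mathfrak a(\mathcal T')$, the pair $(\mathcal T\cap\mathfrak a(\mathcal T'),\mathcal F\cap\mathfrak a(\mathcal T'))$ is a torsion pair.
\item Its torsion part contains the wide subcategory $\mathfrak a(\mathcal T)$.
\item Applying the Ingalls--Thomas bijections inside $\mathfrak a(\mathcal T')$, together with \cref{wideRankComplement} and \cref{frakaClosure}, I would identify $\mathcal F\cap\mathfrak a(\mathcal T')$ as cogenerated by $W$. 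In particular every submodule of $X$ that lies in $\mathcal F\cap\mathfrak a(\mathcal T')$ should lie in $W$.
\end{enumerate}
I expect this step to be the main obstacle. What I would try first is to show that any $Y\in\mathfrak a(\mathcal T')\cap\mathcal F$ satisfies $\Ext^1(\mathfrak a(\mathcal T),Y)=0$. To do so I would use that $\Ext^1$ inside $\mathfrak a(\mathcal T')$ is computed in $\modulecat\Lambda$, and that torsion objects of a hereditary category admit the canonical sequence with respect to $(\Gen\mathfrak a(\mathcal T),-)$.

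With this in hand, the decomposition $I=A\sqcup B$ is a dichotomy on whether a torsion class contains $X$. Let $\mathcal U\in I$.
\begin{enumerate}
\item If $X\in\mathcal U$, then $\mathcal U\geq \mathcal T\vee\Gen X$, so $\mathcal U\in B$.
\item If $X\notin\mathcal U$, I claim $\Hom(\mathcal U,X)=0$, so that $\mathcal U\leq \mathcal T'\wedge{}^{\perp_0}X$ and hence $\mathcal U\in A$. Suppose instead there is a nonzero $f\colon Y\to X$ with $Y\in\mathcal U$. Then $\Image f\in\mathcal U\subseteq\mathcal T'$ is a submodule of $X\in\mathfrak a(\mathcal T')$, so $\Image f\in\mathfrak a(\mathcal T')$ by \cref{frakaClosure}. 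It also lies in $\mathcal F$, being a submodule of $X\in\mathcal F$. By the first step $\Image f\in W$, and simplicity of $X$ in $W$ forces $\Image f=X$. Thus $X\in\mathcal U$, a contradiction.
\end{enumerate}
The two pieces are disjoint: every element of $B$ contains $X$, while no element of $A$ does, since $X\notin{}^{\perp_0}X$. The conditions $\max B=\max I$ and $\min A=\min I$ hold by construction. Properness of both pieces follows because $X\in\mathcal T'\setminus\mathcal T$.

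It remains to show $A$ and $B$ are mutable; by duality I treat $A$. Since $X$ is simple in $\mathfrak a(\mathcal T')$, \cref{SimplesAreLabels} shows that $\mathcal T'':=\mathcal T'\wedge{}^{\perp_0}X$ is the cover $\mathcal T''\lessdot\mathcal T'$ labelled by $X$. I need $\mathfrak a(\mathcal T)\subseteq\mathfrak a(\mathcal T'')$. By the description in \cref{wideSubcatsViaSplit} and \cref{mutation}, passing down the cover labelled $X$ should replace $\mathfrak a(\mathcal T')$ by a wide subcategory of $\mathfrak a(\mathcal T')$ that contains exactly those objects of $\mathfrak a(\mathcal T')$ admitting no morphism to $X$. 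Objects of $\mathfrak a(\mathcal T)$ lie in $\mathcal T$, and $X\in\mathcal F$, so they have no morphisms to $X$. This should give the inclusion.

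Concretely, for $Z\in\mathfrak a(\mathcal T)$ I would verify directly the kernel and cokernel conditions defining $\mathfrak a(\mathcal T'')$. A morphism in $\mathcal T''$ ending at $Z$ is also a morphism in $\mathcal T'$, so it has a kernel in $\mathcal T'$ because $Z\in\mathfrak a(\mathcal T')$. That kernel is a submodule of an object of $\mathcal T''$, so it has no morphism onto $X$ and lies in $\mathcal T''$. Cokernels are quotients, so they stay in $\mathcal T''$ automatically.
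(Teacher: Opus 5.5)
\textbf{There is a genuine gap.} The skeleton is right: the max/min conditions, disjointness, and splitting $I$ according to whether $X\in\mathcal U$. But the lemma you build the decomposition on is false. $X$ need not be simple in $\mathfrak a(\mathcal T')$ or in $\mathfrak a(\mathcal F)$, and a submodule of $X$ lying in $\mathcal F\cap\mathfrak a(\mathcal T')$ need not lie in $W$.

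For a counterexample, take $\Lambda$ of type $A_2$ with indecomposables $1$, $\begin{smallmatrix}2\\1\end{smallmatrix}$, $2$ (notation of \cref{ExampleSection}, so $1$ is simple projective and $\Ext^1_\Lambda(2,1)\neq 0$). Let $\mathcal T=\addcat 2$ and $\mathcal T'=\modulecat\Lambda$. Then $\mathcal F=\addcat(1\oplus\begin{smallmatrix}2\\1\end{smallmatrix})$ and $W=\mathfrak a(\mathcal F)=\addcat\begin{smallmatrix}2\\1\end{smallmatrix}$, so $X=\begin{smallmatrix}2\\1\end{smallmatrix}$. This $X$ is not simple in $\mathfrak a(\mathcal T')=\modulecat\Lambda$. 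Its submodule $1$ lies in $\mathcal F\cap\mathfrak a(\mathcal T')$ but not in $W$. Moreover $2\in\mathfrak a(\mathcal T)$ with $\Ext^1_\Lambda(2,1)\neq 0$ refutes the vanishing you planned to prove. The dual choice $\mathcal T=0$, $\mathcal T'=\Gen\begin{smallmatrix}2\\1\end{smallmatrix}$ shows that $X$ need not be simple in $\mathfrak a(\mathcal F)$ either.

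Consequently the step ``by the first step $\Image f\in W$'' is unsupported. For an arbitrary image you only know $\Image f\in\mathfrak a(\mathcal T')\cap\mathcal F$, and nothing controls $X/\Image f$. The missing idea is to use the canonical sequence $0\to t_{\mathcal U}X\to X\to X/t_{\mathcal U}X\to 0$ instead:
\begin{enumerate}
\item $X/t_{\mathcal U}X\in\mathcal U^{\perp_0}\subseteq\mathcal F$ is a factor of $X\in\mathfrak a(\mathcal F)$, so it lies in $\mathfrak a(\mathcal F)$ by the dual half of \cref{frakaClosure}.
\item Together with $t_{\mathcal U}X\in\mathfrak a(\mathcal T')$ and closure of both wide subcategories under kernels and cokernels, the whole sequence lies in $W$.
\item Simplicity of $X$ in $W$ then gives $t_{\mathcal U}X\in\{0,X\}$.
\end{enumerate}
This is exactly the paper's argument.

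Your mutability argument for $A$ has the same defect. In the example, $\mathcal T''=\mathcal T'\wedge{}^{\perp_0}X=\addcat 2$ lies two steps below $\modulecat\Lambda$. So $\mathcal T''\lessdot\mathcal T'$ fails and \cref{SimplesAreLabels} gives nothing. In your concrete check, the inference ``$\ker g$ is a submodule of an object of $\mathcal T''$, so it has no morphism to $X$'' is invalid, because torsion classes are not closed under submodules.

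The conclusion is nevertheless true and needs no simplicity, only heredity. Let $g\colon Y\to Z$ with $Y\in\mathcal T''$, $Z\in\mathfrak a(\mathcal T)$ and $K=\ker g$. The vanishing $\Hom_\Lambda(Y,X)=0$ gives an injection $\Hom_\Lambda(K,X)\hookrightarrow\Ext^1_\Lambda(\Image g,X)$. Since $\Lambda$ is hereditary, $\Ext^1_\Lambda(\Image g,X)$ is a quotient of $\Ext^1_\Lambda(Z,X)$. The latter vanishes because $X\in\mathfrak a(\mathcal F)=\mathfrak a(\mathcal T)^{\perp_{0,1}}$ by \cref{wideRankComplement}. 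Hence $K\in\mathcal T'\cap{}^{\perp_0}X=\mathcal T''$.

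With this repair (and its dual for $B$), your direct verification of $\mathfrak a(\mathcal T)\subseteq\mathfrak a(\mathcal T'')$ is a valid alternative to the paper's route. The paper forms the join $(\mathcal T\cap\mathfrak a(\mathcal T'))\vee(\Gen X\cap\mathfrak a(\mathcal T'))$ in $\tors\mathfrak a(\mathcal T')$. It identifies the wide subcategory generating this join with $\mathfrak a(\mathcal T\vee\Gen X)$, which gives $\mathfrak a(\mathcal T\vee\Gen X)\subseteq\mathfrak a(\mathcal T')$ for $B$, and argues dually for $A$.
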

    \begin{proof}
     We clearly have $\max B = \max I$ and $\min A = \min I$. Because the object $X$ is torsion for every torsion pair in $B$ and torsion-free for every torsion pair in $A$ we have $A\cap B = \emptyset$. It remains to check that $A$ and $B$ are mutable and that their union is $I$.
    \begin{Claim}
        The intervals $A$ and $B$ are mutable.
    \end{Claim}
    \begin{proof}
        We only show this for $B$, the argument for $A$ is dual. Consider the torsion class $\hat{\mathcal T}$ of $\mathfrak a(\mathcal T')$ given by  $\hat{\mathcal T} = (\mathcal T\cap\mathfrak a (\mathcal T'))\vee (\Gen X\cap \mathfrak a (\mathcal T'))$ where the join is taken in $\tors \mathfrak a(\mathcal T')$. Then $\hat{\mathcal T}$ is generated by a wide subcategory $W\subset \mathfrak a(\mathcal T')$. From $\Gen \hat{\mathcal T} = \mathcal T \vee \Gen X$ we get  $W = \mathfrak a (\mathcal T \vee \Gen X)$ and hence $A$ is mutable . 
    \end{proof}
    \begin{Claim}
        $I = A\sqcup B$
    \end{Claim}
    \begin{innerproof}
        Let $(\mathcal{T},\mathcal{F}) \leq(\hat{\mathcal{T}},\hat{\mathcal{F}})\leq (\mathcal{T}',\mathcal{F}')$ be a torsion pair in $I$.
        We consider the canonical exact sequence of $X$ with respect to $(\hat{\mathcal{T}},\hat{\mathcal{F}})$:
        \[0\rightarrow T\rightarrow X\rightarrow F\rightarrow 0\]
        From $T\in\hat{\mathcal{T}}\subseteq \mathcal{T}'$ and $X\in \mathfrak a(\mathcal{T}')$ we get $T\in \mathfrak a(\mathcal{T}')$ by \cref{frakaClosure}.
        The canonical exact sequence above then implies $F\in \mathfrak a(\mathcal{T}')$.
        Dually we must have $T\in \mathfrak a(\mathcal{F})$ and $F \in \mathfrak a(\mathcal{F})$.
        In particular the canonical exact sequence lives in $\mathfrak a(\mathcal{F})\cap \mathfrak a(\mathcal{T}')$. Because $X$ is simple in that category we must have $X\cong T$ or $X \cong F$. The former implies $(\hat{\mathcal{T}},\hat{\mathcal{F}})\in B$ and the latter $(\hat{\mathcal{T}},\hat{\mathcal{F}})\in A$.
    \end{innerproof}
    \end{proof}
    In fact all interval mutations arise in this way from augmented intervals. In this way the augmented intervals behave similar to almost 2-cluster tilting objects.
    \begin{Proposition}
        Let $(B,I,A)$ be an interval mutation. Then there exists $X\in \modulecat \Lambda$ such that $(I,X)$ is an augmented interval that induces $(B,I,A)$.
    \end{Proposition}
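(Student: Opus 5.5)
Write $I=[(\mathcal T,\mathcal F)\leq(\mathcal T',\mathcal F')]$, $A=[(\mathcal T,\mathcal F)\leq(\mathcal T_A,\mathcal F_A)]$ and $B=[(\mathcal T_B,\mathcal F_B)\leq(\mathcal T',\mathcal F')]$. The plan is to extract $X$ from the unique "gap" between $A$ and $B$, show that it is simple in $\mathfrak a(\mathcal F)\cap\mathfrak a(\mathcal T')$, and then check that it reproduces $A$ and $B$.

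\textbf{Step 1: find a cover relation and its label.} Since $I=A\sqcup B$, with $A$ a down-closed subinterval containing $\min I$ and $B$ an up-closed one containing $\max I$, I first show $\mathcal T_A\lessdot\mathcal T_B$.
\begin{itemize}
\item We have $\mathcal T_A\not\geq\mathcal T_B$. Otherwise $\mathcal T_B$ would lie in $[\mathcal T,\mathcal T_A]=A$, contradicting disjointness.
\item Every element of $I$ is either $\leq\mathcal T_A$ or $\geq\mathcal T_B$. Hence $\mathcal T_A\vee\mathcal T_B$ lies in $B$, and $\mathcal T_A\wedge\mathcal T_B$ lies in $A$, so $\mathcal T_A\wedge\mathcal T_B\leq\mathcal T_A$.
\item Pick any cover $\mathcal T_A\lessdot\mathcal U$ inside $I$; one exists since $\mathcal T_A<\max I$. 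Then $\mathcal U\notin A$, so $\mathcal U\in B$ and $\mathcal T_B\leq\mathcal U$.
\end{itemize}
The cleanest route is to show $\mathcal T_B\leq\mathcal T_A\vee\mathcal T_B$ forces $\mathcal T_B=\mathcal U$. Let $X=\gamma(\mathcal T_A\lessdot\mathcal U)$. Every torsion class in $B$ contains $\mathcal U$? Not immediately. Instead I argue directly:
\begin{itemize}
\item $X\in\mathcal U\subseteq\mathcal T'$, and $X\in\mathcal T_A^{\perp_0}=\mathcal F_A\subseteq\mathcal F$.
\item For any $\hat{\mathcal T}\in I$, the torsion part of $X$ with respect to $\hat{\mathcal T}$ lies in $\mathfrak a(\mathcal T')\cap\mathfrak a(\mathcal F)$. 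This is exactly the canonical-sequence argument from the previous proposition, once $X$ is known to lie in that wide category.
\end{itemize}

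\textbf{Step 2: $X\in\mathfrak a(\mathcal F)\cap\mathfrak a(\mathcal T')$, and $X$ is simple there.} This is the main obstacle. The plan is to use mutability of $A$ and $B$:
\begin{itemize}
\item By \cref{SimplesAreLabels}, $X$ is simple in $\mathfrak a(\mathcal U)$ and simple in $\mathfrak a(\mathcal F_A)$.
\item Mutability gives $\mathfrak a(\mathcal T_B)\subseteq\mathfrak a(\mathcal T')$ and $\mathfrak a(\mathcal F_A)\subseteq\mathfrak a(\mathcal F)$.
\end{itemize}
So I want $\mathcal U=\mathcal T_B$ and $X\in\mathfrak a(\mathcal T_B)$. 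To get $\mathcal T_B=\mathcal U$, I try the following.
\begin{enumerate}
\item Take the $\delta$-sequences. Mutability of $A$ gives $\mathfrak a(\mathcal T)\subseteq\mathfrak a(\mathcal T_A)$, and mutability of $B$ gives $\mathfrak a(\mathcal T_B)\subseteq\mathfrak a(\mathcal T')$.
\item Apply the rank lemma to $A$, $B$ and $I$ and compare. Rank additivity of the middle terms should give
\[\rank(\mathfrak a(\mathcal F)\cap\mathfrak a(\mathcal T'))=\rank(\mathfrak a(\mathcal F)\cap\mathfrak a(\mathcal T_A))+\rank(\mathfrak a(\mathcal F_B)\cap\mathfrak a(\mathcal T'))+1,\]
with the extra $1$ accounting for the simple $X$.
\end{enumerate}
Alternatively, and more directly, I would induct on the length of $I$: pick any simple $Y$ of $\mathfrak a(\mathcal F)\cap\mathfrak a(\mathcal T')$ and compare the induced mutation with $(B,I,A)$. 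If $Y\in\mathcal T_A$, then both $(B,I,A)$ and the mutation induced by $Y$ restrict to the subinterval $B_Y$. One then applies induction there, showing that the restricted triple is again an interval mutation.

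\textbf{Step 3: recover $A$ and $B$ from $X$.} Once $X$ is simple in $\mathfrak a(\mathcal F)\cap\mathfrak a(\mathcal T')$, the previous proposition yields an interval mutation $(B_X,I,A_X)$.
\begin{itemize}
\item Every element of $A$ is $\leq\mathcal T_A$, and $X\in\mathcal F_A$, so $X$ is torsion-free throughout $A$. Hence $A\subseteq A_X$.
\item Every element of $B$ is $\geq\mathcal T_B\ni X$, so $X$ is torsion throughout $B$. Hence $B\subseteq B_X$.
\end{itemize}
Since both triples partition $I$, it follows that $A=A_X$ and $B=B_X$. I expect the real work to be Step 2: showing that the label $X$ lies in $\mathfrak a(\mathcal T')$, not just in $\mathcal T'$. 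For that I would use \cref{frakaClosure} together with mutability of $B$, i.e. $\mathfrak a(\mathcal T_B)\subseteq\mathfrak a(\mathcal T')$.
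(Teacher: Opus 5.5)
Your choice $X=\gamma(\mathcal T_A\lessdot\mathcal U)$ is in fact the right module. You also correctly obtain $X\in\mathcal F_A$ and $X\in\mathfrak a(\mathcal F_A)\subseteq\mathfrak a(\mathcal F)$. But there is a genuine gap exactly where you locate the difficulty: you never show $X\in\mathcal T_B$, nor $X\in\mathfrak a(\mathcal T')$, nor that $X$ is simple in $\mathfrak a(\mathcal F)\cap\mathfrak a(\mathcal T')$. Step~3 silently needs $X\in\mathcal T_B$ to get $B\subseteq B_X$, yet for your $X$ you only know $X\in\mathcal U$.

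The route you propose cannot work, because both the opening claim $\mathcal T_A\lessdot\mathcal T_B$ and the target $\mathcal U=\mathcal T_B$ are false in general. Take $\Lambda$ of type $A_2$ with simple projective $S$. Then $B=[\Gen S,\modulecat\Lambda]$, $I=[0,\modulecat\Lambda]$, $A=[0,{}^{\perp_0}S]$ is the interval mutation induced by $(I,S)$. Yet $\mathcal T_A={}^{\perp_0}S$ and $\mathcal T_B=\addcat S$ are incomparable, and $\mathcal U=\modulecat\Lambda\neq\mathcal T_B$. Knowing $X$ is simple in $\mathfrak a(\mathcal U)$ does not help: on that side, the only containment mutability gives you is $\mathfrak a(\mathcal T_B)\subseteq\mathfrak a(\mathcal T')$. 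Your rank identity is asserted (``should give'') rather than derived, and the induction is only gestured at.

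The missing idea is to approach the gap from below $\mathcal T_B$ as well and compare edge labels.
Since $\mathcal T<\mathcal T_B$, pick $\hat{\mathcal T}\in I$ with $\hat{\mathcal T}\lessdot\mathcal T_B$. It is not in $B$, so it lies in $A$ and $\hat{\mathcal T}\leq\mathcal T_A$. Let $Y=\gamma(\hat{\mathcal T}\lessdot\mathcal T_B)$, so that $\mathcal T_B=\hat{\mathcal T}\vee\Gen Y$. Then:
$\Gen Y\setminus\{Y\}\subseteq\hat{\mathcal T}\subseteq\mathcal T_A$;
$Y\in\mathcal T_B\subseteq\mathcal U$;
$Y\notin\mathcal T_A$, since otherwise $\mathcal T_B\subseteq\mathcal T_A$.
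Hence \cref{edgeLabelViaJoinIrred} gives $Y=X$. Applying \cref{SimplesAreLabels} to $\hat{\mathcal T}\lessdot\mathcal T_B$ now yields $X\in\mathcal T_B$ and $X\in\mathfrak a(\mathcal T_B)\subseteq\mathfrak a(\mathcal T')$.

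Your Step~3 then goes through. It only needs $A_X\cap B_X=\emptyset$, which holds for any nonzero $X$ without simplicity.

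Simplicity still needs its own argument, which is how the paper finishes.
Since $I=A_X\sqcup B_X$, $X$ is torsion or torsion-free for every pair in $I$.
Take $0\to X''\to X\to X'\to 0$ in $\mathfrak a(\mathcal F)\cap\mathfrak a(\mathcal T')$ with $X'$ simple there. The pair with torsion class $\mathcal T'\cap{}^{\perp_0}X'$ lies in $I$ and gives $X$ a nonzero torsion-free quotient. So $X$, and hence $X''$, is torsion-free for that pair.
If $X''\neq 0$, the sequence does not split because $X$ is indecomposable. So some indecomposable summand $\tilde X$ of $X''$ has $\Ext^1_\Lambda(X',\tilde X)\neq 0$. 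By directedness $\Hom_\Lambda(\tilde X,X')=0$, so $\tilde X\in\mathcal T'\cap{}^{\perp_0}X'$ is torsion, a contradiction. Hence $X\cong X'$ is simple.
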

    \begin{proof}
        Let $I$ be the interval $ I = [(\mathcal{T},\mathcal{F}) < (\mathcal{T}',\mathcal{F}')]$ and set $\min B = (\mathcal{T}_B,\mathcal{F}_B)$ and $\max A = (\mathcal{T}_A,\mathcal{F}_A)$. 
        Because $\min B \gneq \min I$ there must be a torsion pair $(\hat{\mathcal T},\hat{\mathcal F}) \geq \min I$ covered by $\min B$. In particular $(\hat{\mathcal T},\hat{\mathcal F})\in A$. In fact this defines a unique torsion pair: If a second pair $(\Tilde{\mathcal T},\Tilde{\mathcal F})$ satisfies the same conditions, then $\min B = (\hat{\mathcal T},\hat{\mathcal F})\vee (\Tilde{\mathcal T},\Tilde{\mathcal F})$ and hence $\min B\in A$ which is impossible. Let $X$ be the edge label of the cover relation $(\hat{\mathcal T},\hat{\mathcal F})\lessdot \min B$.
        Dually we obtain a cover relation $\max A \lessdot (\Tilde{\mathcal T},\Tilde{\mathcal F})$. 
        We want to show that this second cover relation also has edge label $X$. By \cref{edgeLabelViaJoinIrred} we have $\Gen X\subset \mathcal{T}_B\subset \Tilde{\mathcal T}$ and $\Gen X\setminus \{X\}\subset \hat{\mathcal T}\subset \mathcal{T}_A$. In addition $X\notin \mathcal{T}_A$ because otherwise $\mathcal{T}_B = \hat{\mathcal T}\vee \Gen X \subset \mathcal{T}_A$. Again by  \cref{edgeLabelViaJoinIrred} the module $X$ must be the edge label of $\max A \lessdot (\Tilde{\mathcal T},\Tilde{\mathcal F})$.
        Now \cref{SimplesAreLabels} shows that $X\in \mathfrak a(\mathcal{T}_B)\subset \mathfrak a(\mathcal T')$ and $X\in \mathfrak a(\mathcal{F}_A)\subset \mathfrak a(\mathcal F)$ and hence $X\in \mathfrak a (\mathcal F)\cap \mathfrak a(\mathcal T')$. 
        We set \[ A' = [(\mathcal{T},\mathcal{F}) \leq (\mathcal{T}',\mathcal{F}')\wedge ({}^{\perp_0}X,\Cogen X)]\]
        and
        \[B' = [(\mathcal{T},\mathcal{F})\vee (\Gen X,X^{\perp_0}) \leq (\mathcal{T}',\mathcal{F}')].\]
        Because $X\in \mathcal T_B$ we have $B\subset B'$ and dually $A\subset A'$. Since $X$ is torsion for all torsion pairs in $B'$ and torsion-free for all pairs in $A'$ we get $A'\cap B' = \emptyset$ and hence $A= A'$ and $B=B'$.
        The only thing left to show is that $X$ is simple in $\mathfrak a (\mathcal F)\cap \mathfrak a(\mathcal T')$. We choose a short exact sequence
        \[0\rightarrow X''\rightarrow X\rightarrow X'\rightarrow 0\] in $\mathfrak a (\mathcal F)\cap \mathfrak a(\mathcal T')$ with $X'$ simple in $\mathfrak a (\mathcal F)\cap \mathfrak a(\mathcal T')$ and consider the torsion pair $(\mathcal T'\cap {}^{\perp_0}X',\mathcal 
        F' \wedge \Cogen X')\in I$. With respect to this torsion pair $X$ has a torsion free quotient: $X'$. Because $X$ is either torsion or torsion-free for every torsion pair in $I$, it must be torsion free here. Since $X''$ is a submodule of $X$, it must also be torsion-free. If $X''$ is not zero, then it contains an indecomposable direct summand $\Tilde X$ with $\Ext_\Lambda^1(X', \Tilde X) \neq 0$. Because $\Lambda$ is directed, we have $\Hom_\Lambda(\Tilde X, X') = 0$. In particular $\Tilde X \in \mathcal T'\cap {}^{\perp_0}X'$ is torsion. As a summand of $X''$ it is also torsion free. This contradiction shows that $X'' = 0$ and further $X \cong X'$ is simple in $\mathfrak a (\mathcal F)\cap \mathfrak a(\mathcal T')$.
    \end{proof}
     Before describing the bijection to almost 2-cluster tilting objects we count augmented intervals. This is the analogue of \cref{almostCounting}.
    \begin{Proposition}\label{augmentedCounting}
        Let $n$ be the rank of $\Lambda$. Then we have
        \[|\{\textnormal{augmented intervals over }\Lambda\}| = \frac{n}{3}|\{\textnormal{mutable intervals in }\tors \Lambda\}|\]
    \end{Proposition}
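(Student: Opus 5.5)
We count augmented intervals by a double-counting argument over mutable intervals, organised by their $\delta$-sequences. By definition an augmented interval is a pair $(I,X)$ with $I$ a mutable interval and $X$ a simple object of $\mathfrak a(\mathcal F)\cap\mathfrak a(\mathcal T')$. The number of such $X$ is the rank of the middle term of the $\delta$-sequence of $I$. So
\[|\{\text{augmented intervals}\}| = \sum_{I}\rank\bigl(\mathfrak a(\mathcal F)\cap\mathfrak a(\mathcal T')\bigr),\]
and it suffices to show that the average of this middle rank over all mutable intervals is $n/3$.

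\textbf{Step 1.} The preceding lemma gives, for every mutable interval,
\[\rank(\mathfrak a(\mathcal T))+\rank(\mathfrak a(\mathcal F)\cap\mathfrak a(\mathcal T'))+\rank(\mathfrak a(\mathcal F')) = n.\]
Summing over all mutable intervals, it suffices to show that the three sums $\sum_I \rank$ of the three $\delta$-terms are equal. Each sum then equals $\tfrac n3$ times the number of mutable intervals.

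\textbf{Step 2.} To show the three sums agree, I would use a bijection on mutable intervals that cyclically rotates the $\delta$-sequence, up to a rank-preserving identification of the terms. The natural candidate is the Serre permutation $\bS$. In the proof of \cref{inverseofserre} we saw that $W_{tors}^{\bS I}=W_{free}^I$.

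I would aim to prove that $\bS$ sends the $\delta$-sequence $(\mathfrak a(\mathcal T),\,M,\,\mathfrak a(\mathcal F'))$ of $I$ to a sequence whose ranks are those of $I$ rotated. Concretely:
\begin{itemize}
\item $\mathfrak a$ of the lower bound $\Gen T^I_{free}$ of $\bS I$ is a wide subcategory of $W^I_{free}$. It should have the same rank as the middle term $M$, because $T^I_{free}$ is support tilting in $\mathfrak a(\mathcal F)$ for the torsion-free class $\mathcal F'\cap\mathfrak a(\mathcal F)$.
\item The middle term of $\bS I$ should have rank equal to $\rank\mathfrak a(\mathcal F')$.
\item The last term $\mathfrak a(\mathcal F\cap W_{free}^{I\perp_0})$ should have rank equal to $\rank \mathfrak a(\mathcal T)$.
\end{itemize}
Since $\bS$ is a bijection by \cref{inverseofserre}, summing over all mutable intervals then makes the three sums equal.

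\textbf{Alternative for Step 2.} A cleaner route uses the bijection $\interval$ together with \cref{equivariance}. The ranks $\rank\mathfrak a(\mathcal T)$ and $\rank\mathfrak a(\mathcal F')$ are the ranks of $\mathfrak a(\Gen T_{tors})$ and $\mathfrak a(\Cogen T_{free})$. Via \cref{wideSubcatsViaSplit}, these are the numbers of split-projective summands of the support tilting module of $\Gen T_{tors}$ and of split-injective summands of that of $\Cogen T_{free}$. These counts can be expressed on the 2-cluster side. For a purely enumerative fallback, since \cref{clustersAreMutableBijection} and \cref{almostCounting} are available, it would even suffice to show that augmented intervals are in bijection with almost 2-cluster tilting objects. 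That is, however, what the next part of the paper presumably does using this count, so I prefer the direct route.

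\textbf{Main obstacle.} The hardest part is the rank bookkeeping in Step 2. One must check that $\rank\mathfrak a(\Gen T^I_{free})$ equals $\rank(\mathfrak a(\mathcal F)\cap\mathfrak a(\mathcal T'))$, i.e. that the wide subcategory generated by the torsion class $\Gen T^I_{free}\cap W^I_{free}$ has exactly that rank. I would first try to use the rank identity $\rank\mathfrak a(\mathcal T)+\rank\mathfrak a(\mathcal F)=n$ inside the wide subcategory $W^I_{free}$ (respectively $\mathfrak a(\mathcal F)$). Combining it with the torsion pair $(\mathcal F'\cap\mathfrak a(\mathcal F))$ in $\mathfrak a(\mathcal F)$, whose associated wide subcategories are $\mathfrak a(\mathcal F')$ and $\mathfrak a(\mathcal F)\cap\mathfrak a(\mathcal T')$ by \cref{wideRankComplement} applied inside $\mathfrak a(\mathcal F)$, should identify the ranks.
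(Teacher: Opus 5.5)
Your Step 1 reduction is correct and is the skeleton of the paper's argument. The rank identity for $\delta$-sequences, together with a bijection on mutable intervals that cyclically rotates the ranks of the $\delta$-sequence, forces the three rank sums to agree.

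\textbf{The gap.} Step 2 fails because the Serre permutation $\bS$ does not rotate these ranks; your first and third bullets are false.
Take a simple interval $I=[\mathcal T\leq\mathcal T]$ with $\mathcal T\neq\modulecat\Lambda$. Its middle $\delta$-term is $\mathfrak a(\mathcal F)\cap\mathfrak a(\mathcal T)\subseteq\mathcal F\cap\mathcal T=0$. On the other hand, $T^I_{free}$ is the injective cogenerator of $\mathfrak a(\mathcal F)\neq 0$. Hence $\Gen T^I_{free}\neq 0$, and the first $\delta$-term $\mathfrak a(\Gen T^I_{free})$ of $\bS I$ is nonzero.
Concretely, in the $A_3$ example of \cref{ExampleSection}, the simple interval of the first 2-cluster tilting object has $\delta$-ranks $(2,0,1)$. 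Its image under $\bS$, the interval $[\Gen \begin{smallmatrix}2\\1\end{smallmatrix}\leq(\mathcal T'\mid 1)]$, has $\delta$-ranks $(1,1,1)$. Since even the multisets of ranks differ, no rank-preserving identification of terms can rescue the argument. Your ``main obstacle'' is therefore a false statement, not a technical hurdle: $\bS$ models the shift in $\mathcal C_2(\Lambda)$, not a rotation of $\delta$-sequences.
Your alternatives are not carried out either. The fallback via almost 2-cluster tilting objects would need an independent proof that the injection of \cref{augmentedIsAlmost} is surjective, whereas the paper deduces that bijection from this very count.

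\textbf{The missing idea.} Rotate $\delta$-sequences directly with the 2-Kreweras complement
\[\krew_2\colon(W_1,W_2,W_3)\mapsto(W_2,W_3,W_2^{\perp_{0,1}}\cap W_3^{\perp_{0,1}}),\]
whose inverse is $(W_1,W_2,W_3)\mapsto({}^{\perp_{0,1}}W_1\cap{}^{\perp_{0,1}}W_2,W_1,W_2)$.
\begin{itemize}
\item A mutable interval is determined by its $\delta$-sequence, so $\krew_2$ is a bijection on mutable intervals.
\item The new last term has rank $n-\rank W_2-\rank W_3=\rank W_1$.
\item The paper notes that the augmented intervals of the forms $(\krew_2^{-1}I,X)$, $(I,X)$ and $(\krew_2 I,X)$ number $\rank W_1+\rank W_2+\rank W_3=n$.
\item Summing over all $I$ counts each augmented interval exactly three times.
\end{itemize}
With $\krew_2$ in place of $\bS$, your Step 1 argument goes through unchanged.
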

    \begin{proof}
        Let $I$ be a mutable interval with delta sequence $(W_1,W_2,W_3)$. Then the number of augmented intervals of the form $(I,X)$ is the rank of $W_2$. We consider the following permutation on the set of mutable intervals interpreted as delta sequences:
        \begin{align*}
            \krew_2\colon \{\delta\text{-sequences}\} &\rightarrow \{\delta\text{-sequences}\}\\
            (W_1,W_2,W_3)&\mapsto (W_2,W_3,W_2^{\perp_{0,1}}\cap W_2^{\perp_{0,1}}).
        \end{align*}
        Its inverse is given by 
        \begin{align*}
            \krew_2^{-1}\colon \{\delta\text{-sequences}\} &\rightarrow \{\delta\text{-sequences}\}\\
            (W_1,W_2,W_3)&\mapsto ({}^{\perp_{0,1}}W_1\cap {}^{\perp_{0,1}}W_2,W_1,W_2).
        \end{align*}
        In particular there are in total $\rank(W_1) + \rank(W_2)+\rank(W_3) = n$ augmented intervals of  one of the forms $(\krew_2^{-1}(I),X)$,$(I,X)$ or $(\krew_2(I),X)$. Summing over all mutable intervals and accounting for the fact that each augmented interval is counted thrice yields the result.
    \end{proof}
    \begin{CatalandRemark}
        As observed earlier mutable intervals correspond to 2-noncrossing partitions and hence are counted by the 2-W-Catalan numbers. The permutation used in the proof is the 2-Kreweras complement.
    \end{CatalandRemark}
    Finally we can identify augmented intervals and almost 2-cluster tilting objects. An explicit example in type $A_3$ is given in \cref{ExampleSection}.
    \begin{Proposition}\label{augmentedIsAlmost}
        Let $(I,X)$ be an augmented interval and $(B,I,A)$ the corresponding interval mutation. Then there exists a unique almost 2-cluster tilting object $T^{(I,X)}$ with completions $T^B,T^I$ and $T^A$ such that $\interval(T^A) = A$, $\interval(T^I) = I$ and $\interval(T^B) = B$.  In addition the cyclic order on the completions of $T^{(I,X)}$ is given by $T^B\prec T^I\prec T^A \prec T^B$.
    \end{Proposition}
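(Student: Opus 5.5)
Since $\interval$ is a bijection (\cref{clustersAreMutableBijection}), the 2-cluster tilting objects $T^A,T^I,T^B$ are forced to be $\interval^{-1}(A),\interval^{-1}(I),\interval^{-1}(B)$. Uniqueness is therefore automatic. What has to be shown is that these three objects share a common summand $T^{(I,X)}$ with $n-1$ indecomposable summands, and that their remaining summands are ordered cyclically as claimed.

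The plan is to compute all three objects explicitly from \cref{moduleFromInterval}. Write $I=[(\mathcal T,\mathcal F)\le(\mathcal T',\mathcal F')]$ and let $W_2=\mathfrak a(\mathcal F)\cap\mathfrak a(\mathcal T')$, which contains $X$ as a simple object.
\begin{enumerate}
\item \textbf{Lower bounds.} $A$ and $I$ share the lower bound $\mathcal T$, so both have $\mathfrak a(\mathcal T)$ as first $\delta$-term.
\item \textbf{Upper bounds.} $B$ and $I$ share the upper bound $(\mathcal T',\mathcal F')$, so both have $\mathfrak a(\mathcal F')$ as third $\delta$-term.
\item \textbf{New bounds.} For $\max A=\mathcal T'\cap{}^{\perp_0}X$ and $\min B=\mathcal T\vee\Gen X$, I will check via \cref{SimplesAreLabels} and \cref{simpleWideIntervals} that the $\delta$-sequence of $B$ is $(\mathfrak a(\mathcal T\vee \Gen X),\, W_2\cap{}^{\perp_{0,1}}(\ldots),\, \mathfrak a(\mathcal F'))$. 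That is, passing from $I$ to $B$ moves one simple, namely $X$, from the middle term to the first term, in the sense that $\mathfrak a(\mathcal T\vee\Gen X)$ is generated by $\mathfrak a(\mathcal T)$ together with $X$ inside $\mathfrak a(\mathcal T')$. Dually, passing from $I$ to $A$ moves $X$ from the middle term to the last term.
\end{enumerate}

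From the descriptions of $T^J_{tors},T^J_{free}$ as support tilting objects in $\mathfrak a(\mathcal T'_J)$ and $\mathfrak a(\mathcal F_J)$ for $J\in\{A,I,B\}$, I expect the following.
\begin{itemize}
\item $T^A_{tors}=T^I_{tors}$, since the lower bound is the same and $\mathfrak a(\max A)$ differs from $\mathfrak a(\mathcal T')$ only in ways not seen by $\mathcal T$. The module $T^A_{free}$ is the support tilting module of $\mathcal F'\wedge\Cogen X$ in $\mathfrak a(\mathcal F)$. By \cref{mutation} applied inside $\mathfrak a(\mathcal F)$, $T^A_{free}$ differs from $T^I_{free}$ by exactly one summand, either exchanged or added.
\item Dually for $B$: $T^B_{free}=T^I_{free}$, and $T^B_{tors}$ differs from $T^I_{tors}$ by one summand.
\item The supports $T_{supp}$ agree across all three, or differ by the summand compensating an added/removed module.
\end{itemize}
A summand count via \cref{2clusterTiltviaModulecat} (each object has exactly $n$ summands) then shows that the three objects pairwise share $n-1$ summands. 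I would phrase this as follows: let $T^{(I,X)}$ be the common part of $T^I$ and $T^A$, and then check that it is also contained in $T^B$.

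I expect this last check to be the main obstacle. It needs a genuinely three-way compatibility: the summand $Y$ of $T^I_{free}$ removed when passing to $A$ must be the same as the summand removed when passing to $B$, possibly appearing there in shifted form as a summand of $T^I_{tors}[1]$, or as a projective in degree $2$. My first attempt would identify $Y$ as the unique indecomposable summand $U$ of $T^I$ whose "top/socle" in $W_2$ involves $X$. Concretely, $U$ is either the $W^I_{free}$-injective-type summand of $T^I_{free}$ with $\Hom(X,U)\neq 0$ or the analogous summand of $T^I_{tors}$ with $\Hom(U,X)\ne0$. I would show that $U$ is exactly the summand that fails the orthogonality conditions of \cref{2clusterTiltviaModulecat} against the new bounds in both $A$ and $B$.

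If this direct route becomes messy, I would instead argue by counting. Each augmented interval yields an almost 2-cluster tilting object lying in at least two of the three completions. Combining \cref{augmentedCounting} with \cref{almostCounting}, the map from augmented intervals to almost 2-cluster tilting objects is a bijection once it is shown to be injective. Thomas' theorem, that each almost 2-cluster tilting object has exactly 3 completions, then supplies the third completion, which must equal $T^B$ by the bijectivity of $\interval$.

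Finally, for the cyclic order I use \cref{cyclicOrderCompletions}. I need $\Ext^1_{\mathcal C_2}(T^I,T^B)\ne0$, and similarly for the other two pairs. Using \cref{homscoincide} and the table of isomorphisms in the proof of \cref{2clusterTiltviaModulecat}, each of these reduces to a nonzero $\Hom$ or $\Ext^1$ between the exchanged summands. Such a nonzero group is witnessed by the short exact sequences through $X$ in $W_2$, namely the nonsplit extension relating the exchanged summand to $X$.
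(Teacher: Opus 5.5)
There is a genuine gap. The explicit shape you predict for $T^A$ and $T^B$ is wrong in general, and the three-way compatibility you flag as the main obstacle is left without a mechanism.

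Your predictions $T^A_{tors}=T^I_{tors}$ and $T^B_{free}=T^I_{free}$ hold simultaneously only when $X$ lies in neither $W(T^I_{free})$ nor $W(T^I_{tors})$. A counterexample: take $\Lambda$ of type $A_2$ with simple projective $S_1$ and $P_2$ of top $S_2$, and let $I=[0\le\Gen S_2]$, so $X=S_2$. Then $T^I=S_1\oplus P_2$, $T^A=P_2\oplus S_2$ and $T^B=P_2\oplus S_2[1]$. Passing to $B$ deletes the summand $S_1$ of $T^I_{free}$. Here $T^I_{tors}=0$ and $T_{supp}=0$ for all three objects, so keeping $T^B_{free}=T^I_{free}$ while adding a summand to $T^B_{tors}$ would produce three summands; your bookkeeping cannot even be made consistent.

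Dually, for $I=[\Gen P_2\le\modulecat\Lambda]$ and $X=S_1$ one gets $T^I=(P_2\oplus S_2)[1]$ and $T^A=S_1\oplus P_2[1]$, so $T^A_{tors}\neq T^I_{tors}$. Your justification fails because $T^I_{tors}$ is support tilting relative to the ambient category $\mathfrak a(\mathcal T')$, and shrinking that to $\mathfrak a(\mathcal T'\cap{}^{\perp_0}X)$ can drop summands.

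Your candidate for the exchanged summand $U$ is moreover always empty. Since $X\in\mathcal T'\cap\mathcal F$, $T^I_{free}\in\mathcal F'$ and $T^I_{tors}\in\mathcal T$, one has $\Hom_\Lambda(X,T^I_{free})=0=\Hom_\Lambda(T^I_{tors},X)$ for every augmented interval.

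What is missing is the case distinction on which the paper's proof is built.
\begin{enumerate}
\item If $X\in W(T^I_{free})$, then $T_{supp}$ is the same for $A$, $I$ and $B$, and $T^A_{tors}=T^I_{tors}$. The module $T^A_{free}$ arises from $T^I_{free}=T^{(I,X)}_{free}\oplus T_I$ by an exchange in $\tors\mathfrak a(\mathcal F)$. Finally $T^B=T^{(I,X)}_{free}\oplus(T^I_{tors}\oplus T_B)[1]\oplus T^I_{supp}[2]$, where $T_B$ is the projective cover of $X$ in $\mathfrak a(\mathcal T\vee\Gen X)$.
\item The case $X\in W(T^I_{tors})$ is dual.
\item In the remaining case one must first show $\Hom_\Lambda(T^I_{supp},X)\neq 0$. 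The exchanged summand is then the projective in degree $2$ that maps nontrivially to $X$.
\end{enumerate}

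The key computation in the first case applies $\Hom_\Lambda(-,T^I_{free})$ to $0\to\rad T_B\to T_B\to X\to 0$, using $\rad T_B\in\addcat T^I_{tors}$. This gives $\Ext^1_\Lambda(T_B,T^{(I,X)}_{free})\cong\Ext^1_\Lambda(X,T^{(I,X)}_{free})=0$, where the vanishing comes from ext-injectivity in $\Cogen T^A_{free}\ni X$. So the exchanged summand of $T^I_{free}$ is detected by $\Ext^1_\Lambda(X,-)$, not by $\Hom$.

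Your counting fallback does not substitute for this. Bijectivity of $\interval$ says nothing about which interval the third completion of $T^I\cap T^A$ is sent to. The injectivity of $(I,X)\mapsto T^I\cap T^A$ is asserted without proof. In the paper, the counting enters only afterwards, in the Corollary, once the proposition is established.
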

    \begin{proof}
        We write
        \[I = [(\mathcal T, \mathcal F)\leq(\mathcal T',\mathcal F')]\]
        and recall
        \[ A = [(\mathcal{T},\mathcal{F}) \leq (\mathcal{T}',\mathcal{F}')\wedge ({}^{\perp_0}X,\Cogen X)]\]
        and
        \[B = [(\mathcal{T},\mathcal{F})\vee (\Gen X,X^{\perp_0}) \leq (\mathcal{T}',\mathcal{F}')].\]
        For each $Y\in\{B,I,A\}$ we let $T^Y$ be the corresponding 2-cluster tilting object and decompose it as $T^Y = T^Y_{free}\oplus T^Y_{tors}[1] \oplus T^Y_{supp}[2]$. We distinguish three cases which will turn out to correspond to whether the complement of $T^{(I,X)}$ in $T^I$ is a summand of $T^I_{free}$, $T^I_{tors}[1]$ or $T^I_{supp}[2]$.\\
        \textbf{Case 1:} $X\in W(T^I_{free})$\\
        In this case the complement of $T^{(I,X)}$ in $T^I$ will turn out to be a summand of $T^I_{free}$.\\
        \textbf{Step 1.1:} $T^B_{supp} \cong T^I_{supp} \cong T^A_{supp}$\\
        The assumption $X\in W(T^I_{free})$ implies $\Hom_\Lambda(T^I_{supp},X) = 0$ In particular we get \[\Hom_\Lambda(T^I_{supp},\Gen X\vee\Gen T^I_{tors}) = 0 = \Hom_\Lambda(T^I_{supp},\Cogen X\wedge\Cogen T^I_{free}).\] The claim follows.\\
        \textbf{Step 1.2:} $W(T^I_{free}) = W(T^A_{free})$\\
        The objects $T^I_{free}$ and $T^A_{free}$ are the $\mathfrak a(\mathcal F)$-support tilting modules corresponding to the torsion free class $\mathcal F' \cap \mathfrak a(\mathcal F)$, respectively $(\Cogen X\wedge \mathcal F')\cap \mathfrak a(\mathcal F)$. Then the inclusion $\mathcal F' \subseteq (\Cogen X\wedge \mathcal F')$ shows $W(T^I_{free}) \subseteq W(T^A_{free})$. Conversely $X\in  W(T^I_{free})$ shows $W(T^A_{free}) \subseteq W(T^I_{free})$.\\
        \textbf{Step 1.3:} $T^A_{tors} \cong T^I_{tors}$\\
        From $W_{free}^I\cup W_{tors}^I\subset T_{supp}^{I\perp_{0,1}}$ and $|T_{free}^I|+|T_{tors}^I|+|T^I_{supp}| = |\Lambda|$ and the analogous statement for $A$ we get
        \[W(T^A_{tors}) = T_{supp}^{A\perp_{0,1}}\cap{}^{\perp_{0,1}}W(T^A_{free}) = T_{supp}^{I\perp_{0,1}}\cap{}^{\perp_{0,1}}W(T^I_{free}) = W(T^I_{tors}).\]
        In particular $T^A_{tors}$ and $T^I_{tors}$ are  both given by the tilting module of $\mathcal T\cap W(T^I_{tors})$ in $ W(T^I_{tors})$, that is $T^A_{tors} \cong T^I_{tors}$.\\
        \textbf{Step 1.4:} $T^A_{free}$ is a mutation of $T^I_{free}$ as an $\mathfrak a (\mathcal F)$-support tilting module\\
        We consider the torsion pair $(\Tilde{\mathcal T'},\Tilde{\mathcal F'} ):=(\mathcal T'\cap \mathfrak a (\mathcal F),\mathcal F'\cap \mathfrak a (\mathcal F))$ in $\tors \mathfrak a (\mathcal F).$ The wide subcategories of $\mathfrak a (\mathcal F)$ corresponding to $(\Tilde{\mathcal T'},\Tilde{\mathcal F'} )$ are $\mathfrak a(\Tilde{\mathcal T'}) =\mathfrak a (\mathcal F)\cap \mathfrak a(\mathcal{T}')$ and $\mathfrak a(\Tilde{\mathcal F'}) =\mathfrak a (\mathcal F')$. We recall that by definition $X$ is a simple object in $\mathfrak a (\mathcal F)\cap \mathfrak a(\mathcal{T}')$. In particular $X$ is a downlabel of $(\Tilde{\mathcal T'},\Tilde{\mathcal F'} )$ in  $\tors \mathfrak a (\mathcal F)$. The torsion pair corresponding to that downlabel is $(\Tilde{\mathcal T'},\Tilde{\mathcal F'} )\wedge ({}^{\perp_0}X,\Cogen X)$. We conclude that $\Cogen T^A_{free}$ and $\Cogen T^I_{free}$ are adjacent in $\tors \mathfrak a (\mathcal F)$ and hence $T^A_{free}$ is a mutation of $T^I_{free}$ as an $\mathfrak a (\mathcal F)$-support tilting module.\\
        \textbf{Step 1.5:} description of the almost 2-cluster tilting object $T^{(I,X)}$\\
        From Step 1.2 we get that $T^I_{free}$ and $T^A_{free}$ have the same rank. By the previous step we can decompose $T^I_{free} \cong T^{(I,X)}_{free} \oplus T_A$ and $T^I_{free} \cong T^{(I,X)}_{free} \oplus T_I$ with $T_A$ and $T_I$ indecomposable.\\
        We set $T^{(I,X)}_{tors}= T^I_{tors}$ and $T^{(I,X)}_{supp} = T^I_{supp}$. Then the 2-cluster tilting objects $T^I$ and $T^A$ are both completions of the almost 2-cluster tilting object $T^{(I,X)}$.\\
        \textbf{Step 1.6:} $T^I\prec T^A$ among the completions of $T^{(I,X)}$\\
        We have $\Ext_{\Lambda}^1(T_A,T_I)\neq 0$ since $T^{(I,X)}_{free} \oplus T_A  \oplus T_I$ can not be rigid and $T_A$ is ext-injective in $\Cogen T^A_{free}\supset \Cogen T^I_{free}$. In particular $\Ext_{\mathcal C^2(\Lambda)}^1(T^A,T^I)\neq 0$ and hence $T^I\prec T^A$.\\
        \textbf{Step 1.7:} $T^B_{tors}$ is a mutation of $T^I_{tors}$ as an $\mathfrak a (\mathcal T')$-support tilting module\\
        We proceed dual to Step 1.4 by considering the torsion pair $(\Tilde{\mathcal T},\Tilde{\mathcal F} ):=(\mathcal T\cap \mathfrak a (\mathcal T'),\mathcal F\cap \mathfrak a (\mathcal T'))$ in $\mathfrak a (\mathcal T')$. The associated wide subcategories are $\mathfrak a (\Tilde{\mathcal T}) = \mathfrak a (\mathcal T)$ and $\mathfrak a (\Tilde{\mathcal F}) = \mathfrak a (\mathcal F)\cap \mathfrak a (\mathcal T')$. In particular $X$ is an uplabel of $(\Tilde{\mathcal T},\Tilde{\mathcal F} )$ in $\tors \mathfrak a (\mathcal T')$ and the associated torsion pair is $(\Tilde{\mathcal T},\Tilde{\mathcal F} )\vee (\Gen X, X^{\perp_0})$. We conclude that $\Gen T^B_{tors}$ and $\Gen T^B_{tors}$ are adjacent in $\tors \mathfrak a (\mathcal T')$ and hence $T^B_{tors}$ is a mutation of $T^I_{tors}$ as an $\mathfrak a (\mathcal T')$-support tilting module.\\
        \textbf{Step 1.8:} $X$ is simple in $\mathfrak a (\mathcal T \vee \Gen X)$\\
        From the proof of step 1.7 we get that $X$ is a downlabel of the torsion pair $(\Tilde{\mathcal T},\Tilde{\mathcal F} )\vee (\Gen X, X^{\perp_0})$ in $\tors \mathfrak a (\mathcal T')$. This implies $X\in \mathfrak a ((\mathcal T \vee \Gen X) \cap \mathfrak a (\mathcal T'))=\mathfrak a (\mathcal T \vee \Gen X)$.
        \textbf{Step 1.9:} description of $T^B_{tors}$\\
        From Step 1.7 and $X\in W(T^B_{tors})\setminus W(T^I_{tors})$ we obtain $|T^B_{tors}| = |T^I_{tors}|+1$ and hence can write $T^B_{tors} \cong T^I_{tors}\oplus T_B$ with $T_B$ indecomposable. 
        We let $\hat T_B$ be the projective cover of $X$ in $\mathfrak a (\mathcal T \vee \Gen X)$. Note that $\hat T_B$ is indecomposable since it was constructed as the projective cover of a simple object (see Step 1.7). Then $\hat T_B$ is a projective object in $\mathfrak a (\mathcal T \vee \Gen X)$ and  hence a summand of $T^B_{tors}$. Since $X\in W(T^I_{free}) = W(T^I_{tors})^{\perp_{0,1}}$ we have $\Hom_\Lambda(T^I_{tors},X) = 0$. In particular $\hat T_B$ cannot be a summand of $T^I_{tors}$ and hence $\hat T_B \cong T_B$.\\
        \textbf{Step 1.10:} $T^B_{free} \cong T^{(I,X)}_{free}$\\
        We set $\rad(T_B) = \ker(T_B\rightarrow X)$ as the radical of $T_B$ as an object in $\mathfrak a (\mathcal T \vee \Gen X)$. Because $T_B$ is projective in this hereditary subcategory, it's radical is a direct sum of other projective objects. All of those projectives are direct summands of $T^B_{tors}$. In fact they are summands of $T^I_{tors}$, because they are different from $T_B$. That is $\rad(T_B) \in \addcat(T^I_{tors})$.
        We apply $\Hom_\Lambda(-,T^I_{free})$ to the short exact sequence $0\rightarrow \rad(T_B)\rightarrow T_B\rightarrow X\rightarrow 0$ to obtain the following part of a long exact sequence:
        \[\Hom_\Lambda(\rad(T_B),T^I_{free})\rightarrow\Ext^1_\Lambda(X,T^I_{free})\rightarrow \Ext^1_\Lambda(T_B,T^I_{free})\rightarrow \Ext^1_\Lambda(\rad(T_B),T^I_{free}).\]
        The first and last term are trivial, because $\rad(T_B) \in \addcat(T^I_{tors})$. Then the two middle terms must be isomorphic. In particular restricting from $T_{free}^I$ to $T^{(I,X)}_{free}$ yields
        \[0 = \Ext^1_\Lambda(X,T^{(I,X)}_{free})\cong \Ext^1_\Lambda(T_B,T^{(I,X)}_{free}).\]
        This implies that $T^{(I,X)}_{free}\oplus T^I_{tors}[1]\oplus T_B[1]\oplus T^I_{supp}[2]$ is 2-rigid. Counting summands shows that it is the 2-cluster tilting object corresponding to $B$.\\
        \textbf{Case 2:} $X\in W(T^I_{tors})$\\
        This case is entirely dual to Case 1. This time the complement of $T^{(I,X)}$ in $T^I$ is a summand of $T^I_{tors}[1]$, the complement in $T_A$ is a summand of $T^A_{free}$ and the complement in $T^B$ is a summand of $T^B_{tors}[1]$.\\
        \textbf{Case 3:} $X\notin W(T^I_{free})$ and $X\notin W(T^I_{tors})$\\
        In this case the complement of $T^{(I,X)}$ in $T^I$ will turn out to be a summand of $T^I_{supp}[2]$.\\
        \textbf{Step 3.1:} $\Hom_\Lambda(T^I_{supp},X)\neq 0$\\
        Assume $\Hom_\Lambda(T^I_{supp},X)= 0$. Then $X\in {T^I_{supp}}^{\perp_{0,1}}$. Now $(\Gen W(T^I_{tors}),W(T^I_{tors})^{\perp_0})$ is a torsion pair in the interval $I$, hence it is in in either the interval $A$ or the interval $B$.  If the torsion pair is in $B$, then
        $X\in \Gen W(T^I_{tors})$. The category $W(T^I_{tors})$ is spanned by a support tilting module in  $\mathfrak a(\mathcal T')$ and hence closed under $\mathfrak a(\mathcal T')$-submodules. Since in addition $X\in \mathfrak a(\mathcal T')$ we get $X\in W(T^I_{tors})$, a contradiction. If on the other hand the torsion pair is in $A$, then $X\in W(T^I_{tors})^{\perp_0}\cap {T^I_{supp}}^{\perp_{0,1}} = \Cogen W(T^I_{free})$. Arguing dually we get $X\in W(T^I_{free})$, another contradiction.\\
        \textbf{Step 3.2:} description of the almost 2-cluster tilting object $T^{(I,X)}$\\
        We decompose $T^I_{supp} = T^{(I,X)}_{supp}\oplus T_I$ such that $T^{(I,X)}_{supp}$ is maximal with $\Hom_\Lambda(T^{(I,X)}_{supp},X) = 0$. Further we set $T^{(I,X)}_{tors} = T^I_{tors}$ and $T^{(I,X)}_{free} = T^I_{free}$. We will show later that $T_I$ is indecomposable and hence that $T^{(I,X)} = T^{(I,X)}_{free}\oplus T^{(I,X)}_{tors}[1]\oplus T^{(I,X)}_{supp}[2]$ is an almost 2-cluster tilting object with $T^I$ being one of its completions.\\
        \textbf{Step 3.3:} $T^B_{supp} = T^{(I,X)}_{supp}$\\
        Both are isomorphic to the maximal projective module $P$ with $\Hom_\Lambda(P,\mathcal T\cup\mathcal F'\cup\Gen X) = 0$.\\
        \textbf{Step 3.4:} $T^B_{tors}$ is a mutation of $T^I_{tors}$ as an $\mathfrak a (\mathcal T')$-support tilting module\\
        This can be shown exactly as in Step 1.7. We can again write $T^B_{tors} \cong T^I_{tors}\oplus T_B$. As in Step 1.9 we obtain a short exact sequence $0\rightarrow\rad(T_B)\rightarrow T_B\rightarrow X\rightarrow 0$ with $\rad(T_B)\in \addcat T^I_{tors}$.\\
        \textbf{Step 3.5:} $T^A_{free}$ is a mutation of $T^I_{free}$ as an $\mathfrak a (\mathcal F)$-support tilting module\\
        This is dual to the previous step. We can write $T^A_{free} \cong T^I_{free}\oplus T_A$.\\
        \textbf{Step 3.6:} $\Hom_\Lambda(T^B_{tors}, T^I_{free}) = 0 =\Ext^1_\Lambda(T^B_{tors}, T^I_{free})$\\
        We apply $\Hom_\Lambda(-,T^I_{free})$ to the short exact sequence from Step 3.4  to obtain the long exact sequence
        \begin{align*}
            \Hom_\Lambda(X,T^I_{free})&\rightarrow\Hom_\Lambda(T_B,T^I_{free})\rightarrow\Hom_\Lambda(\rad(T_B),T^I_{free})\rightarrow\\
            \Ext^1_\Lambda(X,T^I_{free})&\rightarrow \Ext^1_\Lambda(T_B,T^I_{free})\rightarrow \Ext^1_\Lambda(\rad(T_B),T^I_{free}).
        \end{align*}
        We have $\Hom_\Lambda(X,T^I_{free}) = 0$ because $X\in \mathcal T'$ and $T^I_{free}\in \mathcal F'$. Further we observe $\Hom_\Lambda(\rad(T_B),T^I_{free}) = 0 = \Ext^1_\Lambda(\rad(T_B),T^I_{free})$ since $\rad(T_B)\in \addcat T^I_{tors}$. Finally $\Ext^1_\Lambda(X,T^I_{free}) = 0$ because $X\in \Cogen T^A_{free}$ and $T^I_{free}$ is ext-injective in $\Cogen T^A_{free}$ because its a summand of $T^A_{free}$. The step follows from exactness.\\
        \textbf{Step 3.7:} $T^I_{free} \cong T^B_{free}$\\
        Let $\widetilde {T^I_{free}}$ be the support tilting module of the torsion-free class $\mathcal F'$. The dual of the claim in the proof of \cref{clustersAreMutableBijection} shows that $T^B_{free}$ consists of exactly those summands $T$ of $\widetilde {T^I_{free}}$ with $\Ext^1_\Lambda(T^B_{tors},T) = 0$. With the previous step we obtain that $T^I_{free}$ is a direct summand of $T^B_{free}$. Similarly $T^I_{free}$ consists of those summands $T$ of $\widetilde {T^I_{free}}$ with $\Ext^1_\Lambda(T^I_{tors},T) = 0$. Since this is a weaker condition than $\Ext^1_\Lambda(T^B_{tors},T) = 0$, we conclude $T^I_{free} \cong T^B_{free}$.\\
        \textbf{Step 3.8:} $T_I$ is indecomposable\\
        Because $|T^B_{free}| = |T^I_{free}|$ and $|T^B_{tors}| = |T^I_{tors}| + 1$, we must have $|T^B_{supp}| = |T^I_{supp}|-1 = |T^I_{supp}|-|T_I|$ and hence that $T_I$ is indecomposable. In particular the object $T^{(I,X)}$ described earlier is an almost 2-cluster tilting object with completions $T^B$, $T^I$ and by the dual arguments $T^A$.\\
        \textbf{Step 3.9:} $T^B\prec T^I$ among the completions of $T^{(I,X)}$\\
        We have $\Hom_\Lambda(T_I,T_B)\neq 0$ and hence $\Ext^1_{\mathcal C^2(\Lambda)}(T_I[2],T_B[1])\neq 0$.
    \end{proof}
    \begin{Corollary}
        There is a bijection 
        \[\{\textnormal{almost 2-cluster tilting objects in } \mathcal C_2(\Lambda)\}\rightarrow \{\textnormal{augmented intervals in } \tors(\Lambda)\}\]
    \end{Corollary}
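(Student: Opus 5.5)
The bijection should be the map $(I,X)\mapsto$ the augmented interval attached by \cref{augmentedIsAlmost}, run in the opposite direction. I would define a map $\Phi$ from augmented intervals to almost 2-cluster tilting objects by $\Phi(I,X) = T^{(I,X)}$. By \cref{augmentedIsAlmost} this object exists and is uniquely determined as the almost 2-cluster tilting object whose three completions map under $\interval$ to $A$, $I$ and $B$. The corollary then follows once $\Phi$ is shown to be a bijection, and its inverse is the desired map.

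\textbf{Injectivity.} Suppose $\Phi(I,X)=\Phi(I',X')=U$. By \cref{augmentedIsAlmost} the completions of $U$ are sent by $\interval$ to the interval mutations $(B,I,A)$ and $(B',I',A')$. Since $\interval$ is a bijection (\cref{clustersAreMutableBijection}), both triples are the same set of three mutable intervals.

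The cyclic order of \cref{cyclicOrderCompletions} is intrinsic to $U$. Transported along $\interval$, it is $B\prec I\prec A\prec B$ and also $B'\prec I'\prec A'\prec B'$. A cyclic order on a three element set, together with knowing which element is the middle term, determines the labelling. The middle term is intrinsic to the triple: it is the unique one of the three intervals that is the disjoint union of the other two, equivalently the one of largest cardinality. Hence $I=I'$, $A=A'$ and $B=B'$.

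The module $X$ is recovered from the mutation $(B,I,A)$. It is the edge label of the unique cover relation from $A$ up into $\min B$, as shown in the proof that every interval mutation is induced by an augmented interval. So $X\cong X'$.

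\textbf{Surjectivity.} I would count rather than construct. By \cref{augmentedCounting} the number of augmented intervals is $\frac{n}{3}$ times the number of mutable intervals. By \cref{almostCounting} the number of almost 2-cluster tilting objects is $\frac{n}{3}$ times the number of 2-cluster tilting objects. These two base counts agree by \cref{equiCardinal}. Since both sets are finite, the injective map $\Phi$ is a bijection, and its inverse is the bijection claimed in the corollary.

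\textbf{Main obstacle.} The delicate step is injectivity, namely recovering $(I,X)$ from the unlabelled triple of completions. This needs the middle term to be identifiable and $X$ to be determined by the interval mutation. If the cyclic-order argument felt shaky, I would instead use only the disjoint-union property to single out $I$. I would then use the earlier proposition that every interval mutation comes from an augmented interval with a uniquely determined $X$; a triple with a given middle term splits into $A$ and $B$ in only one way, since $\min A=\min I$ and $\max B=\max I$.
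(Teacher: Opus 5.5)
Your proposal is correct and takes the same approach as the paper: the map $(I,X)\mapsto T^{(I,X)}$ from \cref{augmentedIsAlmost} is injective, and the counts in \cref{almostCounting}, \cref{augmentedCounting} and \cref{equiCardinal} then make it a bijection. You spell out the injectivity that the paper only asserts. One citation is off: recovering $X$ from $(B,I,A)$ does not come from the earlier proposition, which only proves the converse direction. It follows directly from the definition of edge label, since $X$ is an indecomposable object of $\hat{\mathcal T}^{\perp_0}\cap\min B$ for the unique $\hat{\mathcal T}\in A$ covered by $\min B$.
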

    \begin{proof}
        By \cref{almostCounting} and \cref{augmentedCounting} the two sets have the same size. \cref{augmentedIsAlmost} gives an injective map from the set of augmented intervals to the set of almost 2-cluster tilting objects.
    \end{proof}
    Shifting an almost 2-cluster tilting object yields another almost 2-cluster tilting object. Applying the Serre permutation to each element of an interval mutation should therefore yield another interval mutation. The next Proposition makes this precise. It also collects some results about the ranks of the categories $W_{free}^?$ that will be needed in the proof of the fractionally Calabi-Yau property. 
    \begin{Proposition}\label{Rotation}
        Let $(B,I,A)$ be an interval mutation. Then
        $W_{free}^B\subseteq W_{free}^I \subseteq W_{free}^A$ and $\rank(W_{free}^B)+1=\rank(W_{free}^A)$.
        This yields two cases:
        \begin{enumerate}
            \item if $ W_{free}^I = W_{free}^A$, then $(\mathbb S I,\mathbb S A,\mathbb S B)$ is an interval mutation and
            \item if $W_{free}^B = W_{free}^I$, then $(\mathbb S A,\mathbb S B,\mathbb S I)$ is an interval mutation.
        \end{enumerate}
    \end{Proposition}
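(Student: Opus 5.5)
The plan is to route everything through the 2-cluster side, using \cref{augmentedIsAlmost}, \cref{equivariance} and \cref{cyclicOrderCompletions}.

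\textbf{Setup and ranks.} Let $(B,I,A)$ be an interval mutation. By the preceding proposition it is induced by an augmented interval $(I,X)$. By \cref{augmentedIsAlmost} there is an almost 2-cluster tilting object $T^{(I,X)}$ whose completions $T^B,T^I,T^A$ satisfy $\interval(T^Y)=Y$ and are cyclically ordered $T^B\prec T^I\prec T^A\prec T^B$. I will run through the three cases of the proof of \cref{augmentedIsAlmost}, and in each case record how $T^Y_{free}$ varies. Recall that $W_{free}^Y=W(T^Y_{free})$ by \cref{moduleFromInterval}.
\begin{enumerate}
\item In Case 1 we have $W(T^I_{free})=W(T^A_{free})$, and $T^B_{free}=T^{(I,X)}_{free}$ has one summand fewer. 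So $W^B_{free}=W(T^{(I,X)}_{free})\subseteq W^I_{free}=W^A_{free}$, and the ranks differ by one by \cref{WideFromTilting}.
\item In Case 3 we have $T^B_{free}\cong T^I_{free}$ and $T^A_{free}=T^I_{free}\oplus T_A$. Hence $W^B_{free}=W^I_{free}\subseteq W^A_{free}$, with rank jump one.
\item In Case 2, which is dual to Case 1, the complement in $T^I$ lies in $T^I_{tors}[1]$. The completions $T^A$ and $T^B$ replace it by a summand of $T^A_{free}$, respectively of $T^B_{tors}[1]$. So $T^B_{free}=T^I_{free}$ and $T^A_{free}=T^I_{free}\oplus T_A$, again giving $W^B_{free}=W^I_{free}\subsetneq W^A_{free}$.
\end{enumerate}
In all cases the inclusions and the rank identity follow. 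The inclusion $W(T')\subseteq W(T'\oplus T'')$ is immediate.

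\textbf{Shifting.} Shift the configuration. $T^{(I,X)}[1]$ is an almost 2-cluster tilting object with completions $T^B[1],T^I[1],T^A[1]$. Since $[1]$ is an autoequivalence, it preserves the nonvanishing of $\Ext^1$, so the cyclic order is preserved. By \cref{equivariance} these completions correspond to $\bS B,\bS I,\bS A$. By the corollary identifying almost 2-cluster tilting objects with augmented intervals, $T^{(I,X)}[1]=T^{(J,Y)}$ for a unique augmented interval $(J,Y)$. Its induced interval mutation $(B',J,A')$ consists of $\{\bS B,\bS I,\bS A\}$, cyclically ordered as $B'\prec J\prec A'$. This cyclic order matches the given one up to rotation, so the triple is one of
\[(\bS B,\bS I,\bS A),\quad (\bS I,\bS A,\bS B),\quad (\bS A,\bS B,\bS I).\]
It remains to decide which rotation occurs; the middle term is the one that is the disjoint union of the other two.

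\textbf{Deciding the rotation.} The middle term $J$ is the completion whose complement summand is \emph{not} in $\modulecat\Lambda[1]$ in the "free" position of the almost object. I would instead detect it via lower bounds. In an interval mutation, $\min J=\min A'$, and $\bS Y$ has lower bound $\Gen T^Y_{free}$. So $\min \bS Y=\min\bS Z$ iff $T^Y_{free}\cong T^Z_{free}$ (the support tilting modules of the torsion classes match after completion; here I would use that $T_{free}$ is recoverable from $\Gen T_{free}$ inside $W_{free}$).
\begin{enumerate}
\item In case (1), $W^I_{free}=W^A_{free}$, which is Case 1 or Case 2. In Case 1 the free parts differ, so I compare upper bounds instead: $\max \bS Y=\mathcal T_Y\vee\Gen W^Y_{free}$. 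For $Y=I,A$ the lower bounds $\mathcal T_I=\mathcal T_A$ agree and $W^I_{free}=W^A_{free}$, so $\max\bS I=\max\bS A$. This forces $\bS I$ to be the middle term, with $\bS A$ sharing its maximum, so $\bS A$ sits in the $B$-slot. The triple is $(\bS I,\bS A,\bS B)$.
\item In case (2), $W^B_{free}=W^I_{free}$ and $T^B_{free}\cong T^I_{free}$ (Cases 2 and 3). Then $\min\bS B=\min\bS I$, so $\bS B$ is the middle term and $\bS I$ takes the $A$-slot. The triple is $(\bS A,\bS B,\bS I)$.
\end{enumerate}
The delicate point is Case 1 of \cref{augmentedIsAlmost} falling into case (1) without also satisfying case (2), and vice versa. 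The rank jump guarantees that exactly one of the equalities holds, so the two cases are exclusive and exhaustive, and the assignment is consistent with the cyclic order.

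The main obstacle is Case 2. There I must extract the free parts of $T^A$ and $T^B$ carefully by dualizing Steps 1.1–1.10 of \cref{augmentedIsAlmost}.
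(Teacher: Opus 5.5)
Your argument is correct, and up to the final step it is the paper's own. The inclusions and the rank jump are read off from the three cases of \cref{augmentedIsAlmost}. Then the facts that $[1]$ preserves the cyclic order, that $\interval(T^Y[1])=\bS Y$, and that almost 2-cluster tilting objects correspond to augmented intervals show that the interval mutation attached to $T^{(I,X)}[1]$ is one of the three cyclic rotations of $(\bS B,\bS I,\bS A)$.

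The genuine difference is how you pick the rotation. The paper stays on the 2-cluster side. It tracks where the shifted complements land (e.g.\ in Case 1, $T_I[1],T_A[1]\in\modulecat\Lambda[1]$ and $T_B[1]\in\modulecat\Lambda\cup\proj\Lambda[2]$). It notes that this distribution occurs only in Case 2 (resp.\ only in Cases 1 or 3). It then uses the known slot of the complement lying outside (resp.\ inside) $\modulecat\Lambda[1]$ in that case.

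You instead use the endpoint formulas $\min\bS Y=\Gen T^Y_{free}$ and $\max\bS Y=\min Y\vee\Gen W^Y_{free}$. You combine them with the elementary observation that in an interval mutation $(B',J,A')$ the only pair sharing a maximum is $\{B',J\}$ and the only pair sharing a minimum is $\{J,A'\}$. This is more intrinsic to $\tors\Lambda$. It needs no information about how $[1]$ moves the regions $\modulecat\Lambda$, $\modulecat\Lambda[1]$, $\proj\Lambda[2]$, nor the complete complement-location data for each case. In your case (1) you do not even need to know which case of \cref{augmentedIsAlmost} you are in. The paper's route, in return, never has to unpack the definition of $\bS$ beyond \cref{equivariance}.

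Three slips to fix:
\begin{itemize}
\item In case (1), $\max\bS I=\max\bS A$ only tells you $\{B',J\}=\{\bS I,\bS A\}$. It is the cyclic-rotation constraint that then selects $(\bS I,\bS A,\bS B)$. In that triple $\bS A$ is the middle term and $\bS I$ sits in the $B$-slot, the reverse of what you wrote, although your final triple is the right one.
\item $W^I_{free}=W^A_{free}$ occurs only in Case 1, not ``Case 1 or Case 2''; you yourself computed $W^I_{free}\subsetneq W^A_{free}$ in Case 2.
\item The discarded opening sentence of your rotation step is false as stated.
\end{itemize}
None of these affects the argument.
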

    \begin{proof}
        We let $T^{(I,X)}$ be the almost cluster tilting module corresponding to $(B,I,A)$ and let $T_B,T_I$ and $T_A$ be the three complements of $T^{(I,X)}$ labeled such that $\interval(T^{(I,X)}\oplus T_A) =A$ and similar for $I$ and $B$. We consider the three cases from the proof of \cref{augmentedIsAlmost} separately.\\
        \textbf{In Case 1} we have $T_B\in \modulecat\Lambda[1]$ and $T_I,T_A\in \modulecat \Lambda$. In addition we have $W_{free}^B\subset W_{free}^I = W_{free}^A$ and $\rank(W_{free}^B)+1=\rank(W_{free}^I)$. After applying the shift we have $T_B[1]\in \modulecat \Lambda\cup\proj\Lambda[2]$ and $T_I[1],T_A[1]\in \modulecat \Lambda[1]$. This distribution of the complements  of  the almost 2-cluster tilting object $T^{(I,X)}[1]$ is only possible in Case 2. In addition $\interval(T^{(I,X)}[1]\oplus T_B[1])$ must be the third entry in the interval mutation corresponding to $T^{(I,X)}$ because in Case 2 the complement corresponding to that entry is in $\modulecat \Lambda$. Because $[1]$ preserves the cyclic order on complements and the order in the interval mutation matches the order on complements, we get that the interval mutation corresponding to $T^{(I,X)}[1]$ must be
        \[(\interval(T^{(I,X)}[1]\oplus T_I[1]),\interval(T^{(I,X)}[1]\oplus T_A[1]),\interval(T^{(I,X)}[1]\oplus T_B[1])) = (\mathbb S I,\mathbb S A,\mathbb S B).\]
        \textbf{In Case 2} we have $T_B,T_I\in \modulecat\Lambda[1]$ and $T_A\in \modulecat \Lambda$. In addition we have $W_{free}^B= W_{free}^I \subset W_{free}^A$ and $\rank(W_{free}^I)+1=\rank(W_{free}^A)$. After applying the shift we have $T_B[1],T_I[1]\in \modulecat \Lambda\cup\proj\Lambda[2]$ and $T_A[1]\in \modulecat \Lambda[1]$. This is possible in both Case 1 and Case 3. Either way $\interval(T^{(I,X)}[1]\oplus T_A[1])$ must be the first entry in the interval mutation corresponding to $T^{(I,X)}$ and hence we get the interval mutation $(\mathbb S A,\mathbb S B,\mathbb S I)$.\\
        \textbf{In Case 3} we have $T_B\in \modulecat\Lambda[1]$, $T_A\in \modulecat \Lambda$. and $T_I\in \proj \Lambda[2]$. In addition we have $W_{free}^B= W_{free}^I \subset W_{free}^A$ and $\rank(W_{free}^I)+1=\rank(W_{free}^A)$. After applying the shift we have $T_B[1],T_I[1]\in \modulecat \Lambda\cup\proj\Lambda[2]$ and $T_A[1]\in \modulecat \Lambda[1]$. This is the same distribution we obtained by applying the shift to Case 2 and hence we again get the interval mutation $(\mathbb S A,\mathbb S B,\mathbb S I)$.
    \end{proof}
    
\section{The Serre functor and representations of incidence algebras}
    In this section we recall the definition and basic properties of incidence algebras and of the Serre functor. Then we show how to compute the Serre functor for certain modules of incidence algebras called boolean antichain modules.
    \begin{Definition}
        Let $\mathcal{C}$ be a $k$-linear and Hom-finite additive category, where $k$ is a field. A \emph{Serre functor} is an equivalence $\bS\colon \mathcal C\rightarrow \mathcal C$, together with natural isomorphisms
        \[\Hom_\mathcal{C}(X,Y)\cong \Hom_\mathcal{C}(Y,\bS X)^*\]
        for all objects $X,Y \in \mathcal{C}$.
        If $\mathcal C$ admits a Serre functor than it is unique up to natural isomorphism.
    \end{Definition}
    We refer for example to \cite[Chapter 6.3]{Kra} for more information on Serre functors. The bounded derived category of finite dimensional algebras of finite global dimension provides a large class of examples of categories with a Serre functor.
    \begin{Proposition} \cite[Theorem 6.4.13]{Kra}
        Let $R$ be a finite dimensional algebra with finite global dimension. Then $D^b(R)$ admits a Serre functor $\bS$ given by the derived Nakayama functor.
    \end{Proposition}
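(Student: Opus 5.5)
The plan is the standard argument: first show the Nakayama functor satisfies Serre duality against projectives at the module level, then extend termwise to bounded complexes. Finite global dimension lets us replace every object of $\mathcal D^b(R)$ by a bounded complex of projectives or of injectives.

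\textbf{Step 1 (module level).} Let $D=\Hom_k(-,k)$ and let $\nu = D\Hom_R(-,R)\cong DR\otimes_R -$ be the Nakayama functor on $\modulecat R$. It restricts to an equivalence $\proj R\to \operatorname{inj} R$, with quasi-inverse $\Hom_R(DR,-)$. For $P$ finitely generated projective and $M\in\modulecat R$ there is an isomorphism
\[\Hom_R(P,M)\cong D\Hom_R(M,\nu P),\]
natural in both variables. To prove it, check it for $P=R$, where both sides are $M$ up to double duality via $\Hom_R(M,DR)\cong DM$. Then extend to $\addcat R$ by additivity.

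\textbf{Step 2 (pass to complexes).} Since $\operatorname{gl.dim} R<\infty$, the canonical functors $K^b(\proj R)\to \mathcal D^b(R)$ and $K^b(\operatorname{inj} R)\to \mathcal D^b(R)$ are equivalences. Define the derived Nakayama functor $\bS=\mathbb L\nu$ by applying $\nu$ termwise to a bounded projective resolution. Termwise application of the equivalence from Step 1 gives an equivalence $K^b(\proj R)\to K^b(\operatorname{inj} R)$, so $\bS$ is an autoequivalence of $\mathcal D^b(R)$.

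\textbf{Step 3 (duality for complexes).} Let $P$ be a bounded complex of projectives and $Y$ a bounded complex.
\begin{enumerate}
\item Apply Step 1 degreewise. Since everything is bounded, the resulting sums are finite, and we get an isomorphism of Hom complexes $\Hom^\bullet_R(P,Y)\cong D\Hom^\bullet_R(Y,\nu P)$.
\item Take $H^0$. Because $D$ is exact, this gives $\Hom_{K}(P,Y)\cong D\Hom_K(Y,\nu P)$.
\item Since $P$ is a bounded complex of projectives and $\nu P$ a bounded complex of injectives, both homotopy-category Hom spaces agree with the corresponding Hom spaces in $\mathcal D^b(R)$.
\end{enumerate}
This yields $\Hom_{\mathcal D}(X,Y)\cong \Hom_{\mathcal D}(Y,\bS X)^*$. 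Naturality in $X$ and $Y$ follows from naturality in Step 1 together with functoriality of the chosen resolutions up to homotopy.

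\textbf{Main obstacle.} I expect the delicate point to be Step 3(1): verifying that the degreewise isomorphisms assemble into an isomorphism of Hom complexes. This means checking compatibility with differentials, including the sign conventions for the dual complex $D\Hom^\bullet$, and checking naturality. I would handle it by writing the isomorphism of Step 1 as the composite of the evaluation pairing
\[\Hom_R(P,M)\otimes \Hom_R(M,\nu P)\to \Hom_R(P,\nu P)\to k,\]
where the last map is the trace $\Hom_R(P,\nu P)\cong D\Hom_R(P,P)\to k$ obtained by evaluating at $\identity_P$. This pairing is manifestly natural, and compatibility with differentials then becomes the Leibniz rule for composition of graded maps. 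Finite global dimension enters exactly in Step 2, to ensure all objects have bounded projective and injective replacements.
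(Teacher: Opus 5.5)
The paper gives no proof of this proposition; it only quotes it from Krause's book. Your outline is the standard proof of that result. It consists of the relative Serre duality $\Hom_{\mathcal D}(P,Y)\cong D\Hom_{\mathcal D}(Y,\nu P)$ for $P\in K^b(\proj R)$, plus finite global dimension to get $K^b(\proj R)\simeq\mathcal D^b(R)\simeq K^b(\operatorname{inj}R)$, so that $\mathbb L\nu$ is an autoequivalence. Steps 1, 2, 3(2) and 3(3) are fine.

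Step 3(1), however, is false as written, and the repair you sketch needs one more ingredient. Over a field of characteristic $\neq 2$, the degreewise isomorphisms of Step 1, taken without extra signs, are not a chain map $\Hom^\bullet_R(P,Y)\to D\Hom^\bullet_R(Y,\nu P)$ for any sign convention on the dual complex. Equivalently, the naive trace $\sum_i t_{P^i}$ on $\Hom^0_R(P,\nu P)=\prod_i\Hom_R(P^i,\nu P^i)$ does not vanish on boundaries; here $t_P$ is your evaluation-at-$\identity_P$ functional. So the Leibniz rule alone cannot relate $t(\partial g\circ f)$ and $t(g\circ\partial f)$.

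To see this, use the trace identity $t_P(\alpha\circ u)=t_{P'}(\nu(u)\circ\alpha)$ for $u\colon P\to P'$ and $\alpha\colon P'\to\nu P$. For $h$ of degree $-1$ (so $\partial h=d\circ h+h\circ d$) it gives $\sum_i t_{P^i}\bigl((\partial h)^i\bigr)=2\sum_i t_{P^i}(h^{i+1}\circ d^i_P)$.

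Concretely, take $R=k$ and $P=Y$ the contractible complex $k\xrightarrow{1}k$ in degrees $0,1$. The map $h=\identity\colon P^1\to Y^0=\nu P^0$ has $\partial h=\identity_P$, whose naive trace is $2$. In the dual basis of $D\Hom^0_R(Y,\nu P)\cong k^2$, the naive $\Phi$ sends $\partial h$ to $(1,1)$. The differential of the dual complex sends $\Phi(h)$ to $\pm(1,-1)$.

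The underlying reason is that the two halves of the Hom-complex differential carry opposite relative signs on the two sides. These halves are $d_Y\circ-$ versus $-\circ d_P$ on one side, and $-\circ d_Y$ versus $d_{\nu P}\circ-$ on the other. No sign depending only on total degree can fix that.

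The cure is a sign depending on the internal degree. Use the alternating trace $t=\sum_i(-1)^i t_{P^i}$, i.e. insert $(-1)^i$ on the summand $\Hom_R(P^i,Y^{i+n})$. Equivalently, give $\nu P$ the differential $-\nu(d_P)$, which yields an isomorphic complex. Then the trace identity gives $t(\partial h)=0$, and the Leibniz rule gives $t(\partial g\circ f)=-(-1)^{|g|}\,t(g\circ\partial f)$. The pairing $(f,g)\mapsto t(g\circ f)$ is then a perfect pairing of complexes. The rest of your argument goes through unchanged, and $\bS$ is the same up to natural isomorphism.
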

    For the rest of this section let $L$ be a finite lattice and $k$ a field. 
    \begin{Definition}
        The \emph{incidence algebra} $kL$ has as basis the set of pairs $\{(a,b)\in L^2 \mid a\leq b\}$. The multiplication is given by 
        \[(a,b) * (c,d) = \begin{cases}
        (a,d) &\text{if }b=c\\
        0 &\text{else}
        \end{cases}\]
    \end{Definition}
    \begin{Definition}
        A \emph{representation} $M = (M_{a\in L},M_{a< b\in L^2})$ of $L$ consists of a vector space $M_i$ for each $i\in L$ and a linear map $M_{a< b}\colon M_a\rightarrow M_b$ for each comparable pair $a \lneq b\in L^2$ such that for each chain $a< b< c \in L^3$ we have $M_{a< c} = M_{b< c}\circ M_{a< b}$. A \emph{homomorphism} $f = (f_{a\in L})\colon M\rightarrow N$ of representations of $L$ is a collection of linear maps $f_a\colon M_a\rightarrow N_a$ such that $f_b \circ M_{a< b} = N_{a< b}\circ f_a$ for each pair $a< b\in L^2$.
    \end{Definition}
    \begin{Warning}
        There is a different notion of a representation of a lattice which requires the maps $M_{a<b}$ to be injective. We do not require this here.
    \end{Warning}
    The category of representations of $L$ is equivalent to the category of right-modules $\modulecat kL$ and we will identify the two. The most important modules of $L$ for us are the interval modules which we introduce next.
    \begin{Definition}
        Given an interval $I = [a,b]\subset L$ we define the \emph{interval module} $M_I$ by 
        $M_{I,x} = k$ if $x \in I$ and $f_{x< y} = \identity_k$ if $x, y\in I$.
    \end{Definition}
    \begin{Proposition}
        The simple modules in $\modulecat kL$ are exactly the modules of the form $M_{[a,a]}$ for $a\in L$. The projectives are the $M_{[a,\max L]}$ and the injectives are the $M_{[\min L, a]}$ for $a\in L$.
    \end{Proposition}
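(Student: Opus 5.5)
The plan is to use the standard description of modules over an incidence algebra via its primitive idempotents $e_a = (a,a)$, $a\in L$. These are pairwise orthogonal and sum to the unit $\sum_a (a,a)$. Under the identification of $\modulecat kL$ with representations, $M_a = M e_a$, and an element $(a,b)$ with $a<b$ acts as the structure map $M_{a<b}$. I will fix the right-module convention so that the structure maps go upwards in $L$, matching the definition of a representation. The three claims are then treated in the order simples, projectives, injectives; the last follows from the second by duality.

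\textbf{Simples.} First, $M_{[a,a]}$ is one-dimensional, hence simple. Conversely, let $M$ be simple and choose $x$ maximal in the support $\{y \mid M_y \neq 0\}$. Take $0\neq v\in M_x$ and consider the subspace $k v$ placed at $x$, with zero elsewhere. It is a subrepresentation: every map $M_{x<y}$ lands in some $M_y$ with $y>x$, and these vanish by maximality of $x$. Simplicity then forces $M\cong M_{[x,x]}$.

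\textbf{Projectives.} The right ideal $e_a kL$ has basis $\{(a,x) \mid x\geq a\}$. Right multiplication by $(x,y)$ sends $(a,x)$ to $(a,y)$, so as a representation $e_a kL$ has $k$ at each $x\geq a$, zero elsewhere, and identity maps. Since $L$ has a maximum, $\{x \mid x\geq a\} = [a,\max L]$, so $e_a kL \cong M_{[a,\max L]}$. It is projective as a direct summand of $kL=\bigoplus_a e_a kL$. It is indecomposable because $\operatorname{End}(e_akL)\cong e_a kL e_a = k\,(a,a)$ is local. Equivalently, Yoneda gives $\Hom(e_a kL, M)\cong M e_a = M_a$, which is visibly exact in $M$. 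Krull--Schmidt then shows that every indecomposable projective is one of these.

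\textbf{Injectives.} Apply the duality $D=\Hom_k(-,k)$. It sends right $kL$-modules to left $kL$-modules, that is, to representations of the opposite lattice $L^{op}$, and it sends interval modules to interval modules on the same interval. The indecomposable projectives of $L^{op}$ are the intervals $\{x \mid x\leq a\}=[\min L,a]$. Dualising, the indecomposable injective right modules are the $D(kL e_a)\cong M_{[\min L, a]}$.

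I expect no genuine obstacle. The only point needing care is bookkeeping of conventions: whether right modules correspond to representations with upward or downward maps. Choosing it wrongly would swap projectives and injectives, so I will check it explicitly on the action of $(a,b)$ before stating the identification.
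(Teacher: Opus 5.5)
The paper gives no proof of this proposition; it quotes it as a standard fact about incidence algebras, so there is no competing argument to compare with. Your proposal is correct and is the standard verification. With the paper's rule $(a,b)*(c,d)=(a,d)$ for $b=c$, right multiplication by $(a,b)$ maps $Me_a$ to $Me_b$, so the convention you were worried about is forced to point upward. Your identifications $e_a kL\cong M_{[a,\max L]}$ and $D(kLe_a)\cong M_{[\min L,a]}$ therefore match the paper's $P_a$ and $I_a$ exactly.
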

    To simplify notation we write $P_a = M_{[a,\max L]}$, $I_a = M_{[\min L,a]}$ and $S_a = M_{[a,a]}$. Morphisms between interval modules are easy to describe:
    \begin{Proposition}
        Let $M_{[a,b]}$ and $M_{[c,d]}$ be two interval modules then
        \[\dim \Hom_{kL}(M_{[a,b]},M_{[c,d]}) = \begin{cases}
            1 & \textnormal{if } c\leq a\leq d \leq b\\
            0 &\textnormal{else}
        \end{cases}\]
        Let $a,b\in L$ then we get for the projectives and injectives:
        \begin{align*}
            \dim \Hom_{kL}(P_a,P_b) &= \begin{cases}
            1 & \textnormal{if } b\leq a\\
            0 &\textnormal{else}
        \end{cases}& \dim \Hom_{kL}(I_a,I_b) &= \begin{cases}
            1 & \textnormal{if } b\leq a\\
            0 &\textnormal{else}
        \end{cases}
        \end{align*}
    \end{Proposition}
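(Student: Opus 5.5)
The plan is to compute the Hom space directly from the definition of a homomorphism of representations. Then the statements about projectives and injectives follow by specialising the intervals.

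Write $M = M_{[a,b]}$, $N = M_{[c,d]}$ and let $f\colon M\to N$ be a homomorphism. Since $M_x$ and $N_x$ are at most one-dimensional, $f_x$ can be nonzero only for $x$ in the set-theoretic intersection $J=[a,b]\cap[c,d]$. In the lattice $L$ this intersection is empty or the interval $[a\vee c,\, b\wedge d]$. On $J$ each $f_x$ is a scalar $\lambda_x\in k$. For $x<y$ both in $J$, the compatibility $f_y\circ M_{x<y}=N_{x<y}\circ f_x$ reads $\lambda_y=\lambda_x$. Every element of $J$ lies above $a\vee c$, so $J$ is connected under $<$, and $f$ is determined by one scalar $\lambda$. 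Hence $\dim\Hom\le 1$. It remains to determine exactly when $\lambda\neq 0$ is allowed.

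I would run through the compatibility conditions for pairs $x<y$ in which exactly one of the two elements lies in $J$. Only two kinds of pair give a nontrivial constraint.
\begin{enumerate}
\item Take $y\in J$ and $x\in[a,b]\setminus[c,d]$. Then $f_y\circ M_{x<y}=\lambda$, while $N_{x<y}\circ f_x=0$ because $N_x=0$. So $\lambda\neq0$ forces $J$ to be down-closed inside $[a,b]$. Since $a\le y$ for every $y\in J$, this holds iff $a\in J$, i.e.\ $c\le a\le d$ (and then $J=[a,b\wedge d]$). Conversely, if $a\in J$, any $x\in[a,b]$ below some $y\in J$ satisfies $c\le a\le x\le y\le d$, so $x\in J$.
\item Take $x\in J$ and $y\in[c,d]\setminus[a,b]$. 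Then $f_y\circ M_{x<y}=0$ because $M_y=0$, while $N_{x<y}\circ f_x=\lambda$. So $J$ must be up-closed inside $[c,d]$. This holds iff $d\in J$, i.e.\ $d\le b$.
\end{enumerate}
All remaining pairs (e.g.\ $x\in[c,d]\setminus[a,b]$ with $y\in J$, or pairs outside the supports) give $0=0$. Together with $J\neq\emptyset$, these conditions are exactly $c\le a\le d\le b$. In that case the constant family $\lambda$ on $J$ is checked directly to be a homomorphism, giving dimension $1$.

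The only mild obstacle is keeping track of which of the two mixed cases constrains $\lambda$. It is easy to forget the second one, and that case is the one producing the condition $d\le b$. For the last part I would specialise. Taking $P_a=M_{[a,\max L]}$ and $P_b=M_{[b,\max L]}$, the condition becomes $b\le a\le \max L\le\max L$, i.e.\ $b\le a$. Taking $I_a=M_{[\min L,a]}$ and $I_b=M_{[\min L,b]}$, it becomes $\min L\le\min L\le b\le a$, i.e.\ $b\le a$.
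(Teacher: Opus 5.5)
Your proof is correct. The reduction to a single scalar on $J=[a\vee c,\,b\wedge d]$ works, and so does reading off the two genuine constraints from the mixed pairs: $x\in[a,b]\setminus[c,d]$ below $J$ forces $a\in J$, and $y\in[c,d]\setminus[a,b]$ above $J$ forces $d\in J$. Together these give exactly $c\le a\le d\le b$, and your specialisations to $P_a$ and $I_a$ are right.

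The paper states this proposition without proof, as a standard fact, so there is no argument to compare against. Your computation is the routine check it leaves implicit. Conceptually, the image of a nonzero map must be a quotient of $M_{[a,b]}$, so it is supported on a down-closed part of $[a,b]$ containing $a$. It must also be a submodule of $M_{[c,d]}$, so it is supported on an up-closed part of $[c,d]$ containing $d$.
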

    The incidence algebras of lattices always have finite global dimension and hence admit a Serre functor $\bS$. Because $\bS$ is given by the derived Nakayama functor we can assume without loss of generality that $\bS P_a = I_a$ and that for $b\leq a$ the canonical embedding $P_b\hookrightarrow P_a$ gets sent to the canonical projection $I_b\twoheadrightarrow I_a$. To compute the image of more general interval modules under the Serre functor we need to consider two bigger classes of modules: The antichain modules and the dual antichain modules. The intersection of these two classes recovers exactly the class of interval modules. We refer to \cite{Tal} for more information on antichain modules.
    \begin{Definition}
        A subset $C\subset L$ is called an \emph{antichain} if the elements of $C$ are pairwise incomparable. We speak of an antichain $C$ over $\alpha \in l$ if $\alpha$ is a strict lower bound for all elements of $C$. Dually an antichain $D$ is below $\beta$ if $\beta$ is a strict upper bound for all elements in $D$.  Given a, possibly empty, antichain $C$ over $\alpha$ the antichain module $M^C_\alpha$ is given by $M^C_{\alpha,x} = k$ if $x \in \{y\in L \mid \alpha \leq y,\ c \nleq y\ \forall c\in C\}$. Further given an antichain $D$ below $\beta$ the dual antichain module $M^\beta_D$ is given by $M^\beta_{D,x} = k$ if $x \in \{y\in L \mid \beta \geq y,\ d \ngeq y \ \forall d\in D\}$. In both cases the maps are given by $\identity_k$ wherever possible and $0$ else.
    \end{Definition}
    We can write an interval module $M_{[a,b]}$ as an antichain module as follows: Let $C = \min ([a,\max L]\setminus [a,b])$. Then $M_{[a,b]} = M^C_a$. The main advantage of antichain modules is that they admit a canonical projective resolution called the antichain resolution. They were first described in general in \cite{AntichainResolution}.
    \begin{Definition}
        Let $C$ be an antichain over $\alpha$ and $M^C_\alpha$ the corresponding antichain module. The \emph{antichain resolution} of $M^C_\alpha$ is a projective resolution of $M^C_\alpha$ of the form
        \[0\rightarrow P^{|C|}\rightarrow\dots\rightarrow P^i\rightarrow \dots\rightarrow P^1\rightarrow P_\alpha\rightarrow M^C_\alpha\rightarrow0\]
        where
        \[P^i = \bigoplus_{\substack{C'\subseteq C\\ |C'| = i}} P_{\vee C'}\]
        and the maps are given by the Koszul sign rule.
    \end{Definition}
    In particular every antichain module has a finite projective resolution. Since all simple modules are antichain modules we recover $\globaldim kL <\infty$. Dually we get an antichain coresolution for dual antichain modules:
    \begin{Definition}
        Let $D$ be an antichain under $\beta$ and $M^\beta_D$ the corresponding dual antichain module. The \emph{antichain coresolution} of $M^\beta_D$ is an injective resolution of $M^\beta_D$ of the form
        \[0\rightarrow M^\beta_D\rightarrow I_\beta\rightarrow I^1\rightarrow \dots\rightarrow I^i\rightarrow \dots\rightarrow I^{|D|}\rightarrow 0\]
        where
        \[I^i = \bigoplus_{\substack{D'\subseteq D\\ |D'| = i}} I_{\wedge D'}\]
        and the maps are given by the Koszul sign rule.
    \end{Definition}
    Applying the Serre functor to the antichain resolution of an antichain $C$ over $\alpha$ sometimes yields an antichain coresolution for a different antichain $D$ below $\beta$. In this case we obtain $\bS M^C_\alpha = M^\beta_D[|C|]$. The antichains for which this happens are called boolean, respectively dual boolean.\\
    \begin{Definition}
        Let $C$ be an antichain over $\alpha$. We consider the map of join semilattices
        \begin{align*}
            \gamma^C_\alpha\colon\mathcal P(C)&\rightarrow L\\
            \emptyset &\mapsto \alpha\\
            A &\mapsto \bigvee A \text{ for } A \neq \emptyset 
        \end{align*}
    where $\mathcal P(C)$ is the boolean lattice of subsets of $C$ ordered by inclusion. We call the antichain $C$ over $\alpha$ \emph{boolean} if $\gamma^C_\alpha$ is an embedding of lattices.\\
    Dually let $D$ be an antichain under $\beta$. We consider the map of meet semilattices
    \begin{align*}
            \gamma^\beta_D\colon\mathcal P(C)^{op}&\rightarrow L\\
            \emptyset &\mapsto \beta\\
            A &\mapsto \bigwedge A \text{ for } A \neq \emptyset 
        \end{align*}
    where $\mathcal P(C)^{op}$ is the boolean lattice of subsets of $C$ ordered by reverse inclusion. We call the antichain $D$ under $\beta$ \emph{dual boolean} if $\gamma^\beta_D$ is an embedding of lattices.
    \end{Definition}
    There are bijections
    \[\{\text{boolean } C \text{ over }\alpha\}\leftrightarrow \{\text{boolean sublattices of } L\}\leftrightarrow \{\text{dual boolean }D \text{ under } \beta\}\]
    where we associate to a boolean sublattice $B\subset L$ the antichain consisting of its atoms over $\min B$ and the antichain consisting of its coatoms under $\max B$. The following result is folklore. Special cases have appeared in \cite{RognerudTamari}, \cite{Tal} and \cite{AntichainResolution}, but we could not find the general statement in the literature.
    \begin{Proposition}\label{serreOfBoolean}
        Let $C$ be a boolean antichain over $\alpha$. Then $\bS M^C_\alpha =M^\beta_D[|C|]$, where $D = \{\bigvee C'\mid C'\subset C, |C'| = |C|-1\}$ under $\beta = \bigvee C$ is the corresponding dual boolean antichain.
    \end{Proposition}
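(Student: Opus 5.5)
We compute $\bS M^C_\alpha$ by applying the derived Nakayama functor to the antichain resolution and then recognising the resulting complex as a shifted antichain coresolution. Write $n=|C|$. By the Definition of the antichain resolution, $M^C_\alpha$ is quasi-isomorphic to the complex $P^\bullet$ with $P^{-i}=\bigoplus_{|C'|=i}P_{\gamma^C_\alpha(C')}$ for $0\le i\le n$, with the convention $\gamma^C_\alpha(\emptyset)=\alpha$. Since $\bS$ is triangulated and sends $P_a\mapsto I_a$ and the canonical embedding $P_b\hookrightarrow P_a$ ($b\le a$) to the canonical projection $I_b\twoheadrightarrow I_a$, we get
\[\bS M^C_\alpha\cong \Bigl(I_{\gamma(C)}\to\bigoplus_{|C'|=n-1}I_{\gamma(C')}\to\dots\to\bigoplus_{|C'|=1}I_{\gamma(C')}\to I_\alpha\Bigr),\]
with $I_{\gamma(C)}$ in degree $-n$ and the same Koszul signs.

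Next we reindex by complements. We set $\beta=\bigvee C=\gamma(C)$ and, for $c\in C$, $d_c=\gamma(C\setminus\{c\})$, so that $D=\{d_c\}$. The boolean hypothesis (that $\gamma^C_\alpha$ is a lattice embedding) gives three facts. First, the $d_c$ are pairwise distinct and incomparable, with $d_c<\beta$, so $D$ is an antichain under $\beta$. Second, for $\emptyset\ne A\subseteq C$ we have $\bigwedge_{c\in A}d_c=\gamma\bigl(\bigcap_{c\in A}(C\setminus\{c\})\bigr)=\gamma(C\setminus A)$; this uses that $\gamma$ preserves meets, and for $A=C$ it gives $\gamma(\emptyset)=\alpha$. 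Third, it follows that $D$ is dual boolean under $\beta$, with $\gamma^\beta_D(A)=\gamma^C_\alpha(C\setminus A)$. Under the bijection $C'\leftrightarrow A=C\setminus C'$, the term in degree $-n+j$ is therefore $\bigoplus_{|A|=j}I_{\wedge A}=I^j$, with $I^0=I_\beta$. All maps are canonical projections between injectives indexed by adjacent subsets. The complex is thus exactly the antichain coresolution of $M^\beta_D$ placed with $I_\beta$ in degree $-n$, possibly up to a sign normalisation that is absorbed by rescaling summands by $\pm1$.

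The antichain coresolution is exact except at its first term, where it has cohomology $M^\beta_D$, so the complex above is quasi-isomorphic to $M^\beta_D$ placed in degree $-n$. This gives $\bS M^C_\alpha\cong M^\beta_D[n]$.

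The main obstacle is the bookkeeping in the second step. We must check that the lattice-embedding hypothesis really gives $\bigwedge A$-values matching the complements; the join-preservation is automatic, and the meet-preservation is exactly where booleanness is needed. We must also check that the Koszul-signed differentials of the two complexes agree up to an isomorphism of complexes. For the signs, I would use the standard fact that the Koszul complexes for a set and for its complement are isomorphic via the sign $(-1)^{\text{sum of positions}}$ on each summand, after fixing a total order on $C$.
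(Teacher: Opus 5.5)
Your proposal is correct and is essentially the paper's proof: apply the Serre functor termwise to the antichain resolution using $P_a\mapsto I_a$, then reindex the subsets $C'\subseteq C$ by their complements to recognise the antichain coresolution of $M^\beta_D$, which now starts in degree $-|C|$. You are more explicit than the paper in two places, both of which it leaves implicit: you derive $\bigwedge_{c\in A}d_c=\gamma^C_\alpha(C\setminus A)$ from meet-preservation, where the paper only asserts that the images of $\gamma^C_\alpha$ and $\gamma^\beta_D$ coincide, and you handle the Koszul signs.
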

    \begin{proof}
        We note that with this choice for $D$ and $\beta$ the images of the maps $\gamma^C_\alpha$ and $\gamma^\beta_D$ agree. We call the boolean sublattice given by the images of these maps $B$ and for $x\in B$ we use $|x|$ to refer to the rank of $x$ with respect to $B$. In $\mathcal D^b(kL)$ we use the antichain resolution to obtain 
        \[M^C_\alpha\cong (P^{|C|}\rightarrow\dots\rightarrow P^i\rightarrow \dots\rightarrow P^1\rightarrow P_\alpha)\]
        where
        \[P^i = \bigoplus_{\substack{C'\subseteq C\\ |C'| = i}} P_{\vee C'} = \bigoplus_{\substack{x\in B\\ |x| = i}} P_{x}.\]
        Applying the Serre functor to each $P^i$ we get 
        \[\bS P^i = \bigoplus_{\substack{x\in B\\ |x| = i}} \bS P_{x} = \bigoplus_{\substack{x\in B\\ |x| = i}} I_{x} = \bigoplus_{\substack{D'\subseteq D\\ |D'| = |C|-i}} I_{\wedge D'} = I^{|D'|-i}\]
        and hence using the antichain coresolution
        \[\bS M^C_\alpha \cong (I_\beta\rightarrow I^1\rightarrow \dots\rightarrow I^i\rightarrow \dots\rightarrow I^{|D|})\cong M^\beta_D[|C|].\]
    \end{proof}
\section{Cambrian lattices are periodic Serre formal}
    We now combine the results from the previous chapters to prove that the lattice $\tors \Lambda$ is periodic Serre formal and hence fractionally Calabi-Yau. The lattices of the form $\tors \Lambda$ belong to the class of Cambrian lattices and at the end of the section we show how to generalise this result to all Cambrian lattices. We start by introducing periodic Serre formal and fractionally Calabi-Yau algebras and lattices. 
    \begin{Definition}
        Let $R$ be a finite dimensional algebra with finite global dimension. We call $R$ \emph{Serre formal} if the orbits of the indecomposable injectives under the Serre functor consist of stalk modules. That is for each indecomposable injective $I$ and each $n\in \bZ$ there is a module $M\in \modulecat R$ and $k\in \bZ$ with $\bS^n I \cong M[k]$. Further we call $R$ \emph{periodic Serre formal} if there is $n>0,k\in\bZ$ with $\bS^n I \cong I[k]$ for each indecomposable injective $I$. 
        We call a finite lattice $L$ \emph{periodic Serre formal} if $kL$ is for every field $k$.
    \end{Definition}
    \begin{Definition}
        Let $R$ be a finite dimensional algebra with finite global dimension and $m,n\in \bN_{>0}$. The algebra $R$ is \emph{$(m,n)$-fractionally Calabi-Yau} if
        \[\bS^n\cong [m]\]
        in $\mathcal D^b(R)$.
        A finite lattice $L$ is \emph{$(m,n)$-fractionally Calab--Yau} if $kL$ is for every field $k$.
    \end{Definition}
    We do not know whether either property can depend on the field. For lattices the two properties were connected by Rognerud.
    \begin{Theorem}\cite[Theorem 1.2]{RognerudTamari}\label{formalisCY}
        Let $L$ be a periodic Serre formal lattice. We fix $m,n\in \bN_{>0}$ with $\bS^n I \cong I[m]$ for all indecomposable injectives $I$. Then $L$ is $(m,n)$-fractionally Calabi-Yau.
    \end{Theorem}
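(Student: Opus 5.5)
Since $\bS$ and $[m]$ commute, the functor $F = \bS^n\circ[-m]$ is a triangle autoequivalence of $\mathcal D^b(kL)$ with $F(I_a)\cong I_a$ for every $a\in L$. The plan is to upgrade these objectwise isomorphisms to a natural isomorphism $F\cong \identity$. I would proceed in three steps:
\begin{enumerate}
\item normalise the isomorphisms $\varphi_a\colon F(I_a)\to I_a$ so that $F$ acts trivially on the full subcategory $\addcat(\bigoplus_a I_a)$;
\item identify this with an automorphism of $kL$;
\item conclude with Rickard's theory of standard equivalences.
\end{enumerate}

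\textbf{Step 1: normalising on injectives.} Fix arbitrary isomorphisms $\varphi_a\colon F(I_a)\to I_a$. For $b\leq a$ the space $\Hom_{kL}(I_a,I_b)$ is one-dimensional, spanned by the canonical projection $\pi_{a,b}$. Hence $\varphi_b\circ F(\pi_{a,b})\circ\varphi_a^{-1} = \lambda_{a,b}\pi_{a,b}$ for a nonzero scalar $\lambda_{a,b}\in k^\times$. The canonical projections compose, $\pi_{b,c}\pi_{a,b}=\pi_{a,c}$, and this composite is nonzero. Functoriality then gives the cocycle identity $\lambda_{a,c}=\lambda_{b,c}\lambda_{a,b}$ for $c\leq b\leq a$.

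Here the lattice structure enters: $L$ has a minimum $\hat 0$, and every $a$ admits the map $\pi_{a,\hat 0}$. Replace $\varphi_a$ by $\lambda_{a,\hat 0}^{-1}\varphi_a$. Afterwards $\lambda_{a,\hat 0}=1$ for all $a$, and the cocycle identity with $c=\hat 0$ forces $\lambda_{a,b}=1$ for all $b\leq a$. So the rescaled $\varphi_a$ form a natural isomorphism $F|_{\mathrm{inj}}\cong\identity$ on the additive category of injectives. (Only the existence of a bottom element is used here, i.e.\ contractibility of the relevant order complex.)

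\textbf{Step 2: passing to an algebra automorphism.} By the same argument applied to $\bS^{-1}F\bS$, or directly via $\bS P_a=I_a$, $F$ fixes the projectives $P_a$ and acts trivially on morphisms between them. Thus $F(kL)\cong kL$ as a one-sided complex. The induced algebra map $kL=\operatorname{End}(kL)\to\operatorname{End}(F(kL))\cong kL$ is the identity after the normalisation of Step 1.

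\textbf{Step 3: from objectwise to a natural isomorphism.} $\bS$ is the derived Nakayama functor $-\otimes^{\mathbf L}_{kL}D(kL)$, so $F$ is a standard equivalence $-\otimes^{\mathbf L}_{kL}X$ for a two-sided tilting complex $X$. By Rickard's results, if $X\cong kL$ in $\mathcal D^b(kL)$ as a one-sided complex, then $X\cong {}_{\sigma}kL$ as a bimodule for some automorphism $\sigma$. This $\sigma$ is exactly the automorphism computed in Step 2, up to an inner automorphism. Hence $\sigma$ is inner, $X\cong kL$ as bimodules, and $F\cong\identity$, i.e.\ $\bS^n\cong[m]$.

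I expect this last step to be the main obstacle. One must be careful that the automorphism read off from the action of $F$ on the endomorphism ring agrees with the twist $\sigma$ in ${}_{\sigma}kL$, modulo inner automorphisms. One must also check that the one-sided isomorphism $X\cong kL$ really follows from $F$ fixing each $P_a$ with the compatibilities of Step 1. If this becomes delicate, I would instead work in $K^b(\mathrm{inj}\,kL)\simeq\mathcal D^b(kL)$, using finite global dimension, with a dg-enhancement. A dg functor that is isomorphic to the identity on the generating additive subcategory, compatibly with all morphisms, is then isomorphic to the identity on the whole triangulated hull.
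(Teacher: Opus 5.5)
Your proof is correct, and it follows the standard mechanism behind the cited theorem of Rognerud (the paper only quotes that result, without proof). The functor $\bS^n[-m]$ is a standard equivalence fixing each indecomposable projective, so Rickard's theory identifies it with a twist by ${}_\sigma kL$, where $\sigma$ fixes the idempotents up to an inner automorphism; the minimum $\hat 0$ then makes the resulting multiplicative cocycle a coboundary, so $\sigma$ is inner and your dg fallback is not needed.

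There is one small slip in Step 1. With your convention $\varphi_b F(\pi_{a,b})\varphi_a^{-1}=\lambda_{a,b}\pi_{a,b}$, the rescaling that achieves $\lambda_{a,\hat 0}=1$ is $\varphi_a\mapsto\lambda_{a,\hat 0}\varphi_a$, not $\lambda_{a,\hat 0}^{-1}\varphi_a$; the latter would give $\lambda_{a,\hat 0}^2$.
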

    In order to prove that $\tors \Lambda$ is periodic Serre formal, we will show that for each mutable interval $I$ there is a $k\in \bZ$ with $\bS M_I \cong M_{\bS I}[k]$. That is the Serre functor permutes the mutable interval modules up to shift. This implies periodic Serre formal because every indecomposable injective is of the form $M_I$ for a mutable interval $I$.
    \begin{Proposition}\label{TheyAreAllMutable}
        All indecomposable projective, injective and simple modules are of the form $M_I$ for certain mutable intervals $I$.
    \end{Proposition}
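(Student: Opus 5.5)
Each of the three families is a family of intervals of a special shape, so the plan is to match each shape against \cref{mutableExamples}. In the incidence algebra $k\tors\Lambda$ we have $P_a = M_{[a,\max L]}$, $I_a = M_{[\min L,a]}$ and $S_a = M_{[a,a]}$. The lattice $L=\tors\Lambda$ has minimum $(0,\modulecat\Lambda)$ and maximum $(\modulecat\Lambda,0)$. Hence it suffices to show that for every torsion pair $(\mathcal T,\mathcal F)$ the intervals
\[[(\mathcal T,\mathcal F)\leq(\modulecat\Lambda,0)],\qquad [(0,\modulecat\Lambda)\leq(\mathcal T,\mathcal F)],\qquad [(\mathcal T,\mathcal F)\leq(\mathcal T,\mathcal F)]\]
are mutable.

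These are exactly items (2), (3) and (1) of \cref{mutableExamples}. For completeness I would verify each directly from the definition, that is, check $\mathfrak a(\mathcal T_{\min})\subseteq \mathfrak a(\mathcal T_{\max})$ for the lower and upper bounds.
\begin{enumerate}
\item For the trivial interval, the required inclusion is $\mathfrak a(\mathcal T)\subseteq\mathfrak a(\mathcal T)$, which holds trivially.
\item For the interval $[(0,\modulecat\Lambda)\leq(\mathcal T,\mathcal F)]$, the associated wide subcategory of the lower bound is $\mathfrak a(0)=0$. This is contained in every wide subcategory.
\item For the interval $[(\mathcal T,\mathcal F)\leq(\modulecat\Lambda,0)]$, I would use the equivalent formulation $\mathfrak a(\mathcal F')\subseteq\mathfrak a(\mathcal F)$ with $\mathcal F'=0$. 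Then $\mathfrak a(0)=0\subseteq\mathfrak a(\mathcal F)$. Alternatively, $\mathfrak a(\modulecat\Lambda)=\modulecat\Lambda$ contains every wide subcategory.
\end{enumerate}

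The only point needing any care is identifying $\mathfrak a$ of the extreme torsion classes. I would note that $\mathfrak a(0)=0$ is immediate. I would also note that $\mathfrak a(\modulecat\Lambda)=\modulecat\Lambda$, because $\modulecat\Lambda$ is abelian, so every morphism has its kernel and cokernel in $\modulecat\Lambda$. Both identifications also follow from the bijection $W\mapsto \Gen W$, since $\Gen 0=0$ and $\Gen \modulecat\Lambda=\modulecat\Lambda$.

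There is no real obstacle here. The proof is a direct translation of the standard description of indecomposable projectives, injectives and simples of an incidence algebra into the three kinds of intervals already listed in \cref{mutableExamples}.
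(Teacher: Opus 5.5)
Your proposal is correct and follows the paper's proof exactly: the simples, projectives and injectives are the interval modules of families (1), (2) and (3) of \cref{mutableExamples}. Your direct verification of mutability, via $\mathfrak a(0)=0$ and $\mathfrak a(\modulecat\Lambda)=\modulecat\Lambda$, supplies the check that the paper leaves implicit in that example.
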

    \begin{proof}
        The simple objects correspond to the first family of mutable intervals in \cref{mutableExamples}. The projectives correspond to the second family, while the injectives correspond to the third family.
    \end{proof}
    \begin{Theorem}\label{CategoricalSerre}
        Let $I$ be a mutable interval in $\tors \Lambda$. Then we have
        \[\bS M_I \cong M_{\bS I}[k_I]\]
        where $k_I = \rank(W_{free}^I)$.
    \end{Theorem}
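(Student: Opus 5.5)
We argue by induction on the number of elements of the mutable interval $I$, using \cref{mutable_interval_introduction} in the precise form of \cref{Rotation}.

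\textbf{Base case.} Take a trivial interval $I=[\mathcal T\leq\mathcal T]$, so $M_I=S_{\mathcal T}$ is simple and $W_{free}^I=\mathfrak a(\mathcal F)$. Let $S_1,\dots,S_k$ be the simple objects of $\mathfrak a(\mathcal F)$. By \cref{SimplesAreLabels} these are the labels of the covers $\mathcal T\lessdot\mathcal T_i=\mathcal T\vee\Gen S_i$, and $k=\rank W_{free}^I$. The interval $[\mathcal T\leq\mathcal T\vee\Gen\mathfrak a(\mathcal F)]$ is wide by \cref{wideIntfromExcepInt}, so by \cref{wideIntervalsAsTors} it is isomorphic to $\tors\mathfrak a(\mathcal F)$. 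In that lattice the joins of subsets of atoms are the classes $\Gen$ of Serre subcategories generated by subsets of simples, and these are pairwise distinct. Hence $C=\{\mathcal T_1,\dots,\mathcal T_k\}$ is a boolean antichain over $\mathcal T$.

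The simple $S_{\mathcal T}$ equals $M^{C'}_{\mathcal T}$ with $C'$ the set of all covers of $\mathcal T$. The remaining covers are labelled by simples of $\mathfrak a(\mathcal F)$ as well, so $C'=C$. Now \cref{serreOfBoolean} gives $\bS S_{\mathcal T}\cong M^\beta_D[k]$, where $\beta=\mathcal T\vee\Gen\mathfrak a(\mathcal F)$ and $D$ consists of the joins $\mathcal T\vee\bigvee_{j\neq i}\Gen S_j$. The support of $M^\beta_D$ is precisely the set $\hat I$ of \cref{serrePermSimple}, and that proposition says $\hat I=\bS I$. So $\bS M_I\cong M_{\bS I}[k_I]$.

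\textbf{Induction step.} Let $I$ be a proper mutable interval. By \cref{mutable_interval_introduction} there is an interval mutation $(B,I,A)$, and $A,B$ are strictly smaller. This gives a short exact sequence $0\to M_B\to M_I\to M_A\to 0$. It is exact because $B$ is an up-closed subset of $I$ (since $\max B=\max I$ and $A\sqcup B=I$) and $A$ is down-closed, and the maps are the identity where possible. Applying $\bS$ gives a triangle
\[\bS M_B\to\bS M_I\to\bS M_A\to\bS M_B[1],\]
and by induction $\bS M_B\cong M_{\bS B}[k_B]$ and $\bS M_A\cong M_{\bS A}[k_A]$.

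By \cref{Rotation} we have $k_A=k_B+1$ and $k_I\in\{k_A,k_B\}$. We split into two cases.

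\emph{Case $k_I=k_A$.} Then $(\bS I,\bS A,\bS B)$ is an interval mutation, so there is a triangle $M_{\bS I}\to M_{\bS A}\to M_{\bS B}\to M_{\bS I}[1]$. Rotating our triangle gives $\bS M_I\to M_{\bS A}[k_A]\to M_{\bS B}[k_B+1]=M_{\bS B}[k_A]$. The plan is to show that the map $M_{\bS A}\to M_{\bS B}$ appearing here is the canonical surjection up to a nonzero scalar. If so, its cocone is $M_{\bS I}$, and therefore $\bS M_I\cong M_{\bS I}[k_A]$.

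\emph{Case $k_I=k_B$.} Then $(\bS A,\bS B,\bS I)$ is an interval mutation. The connecting morphism $M_{\bS A}[k_A]\to M_{\bS B}[k_B+1]$ is the one coming from the sequence $0\to M_{\bS A}\to M_{\bS B}\to M_{\bS I}\to0$, read in the other direction. The symmetric argument gives $\bS M_I\cong M_{\bS I}[k_B]$.

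\textbf{Main obstacle.} The hard part is identifying the morphism in the triangle. The plan is to show that the relevant $\Hom$ space between the two interval modules, in the appropriate degree, is one-dimensional. For interval modules in degree $0$ this follows from the $\Hom$ formula. For degree $1$ it should come from $\Ext^1(M_{\bS B},M_{\bS A})$ being one-dimensional and spanned by the class of the interval mutation.

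We then need the morphism to be nonzero. This holds because $\bS$ is an equivalence and the original extension $0\to M_B\to M_I\to M_A\to 0$ is non-split: $M_I$ is indecomposable. Any nonzero element of the one-dimensional space yields the same cone up to isomorphism, which completes the induction.
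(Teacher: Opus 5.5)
Your proposal is correct and follows the paper's proof essentially step for step. The base case uses the boolean antichain of covers together with \cref{serreOfBoolean} and \cref{serrePermSimple}. The induction step uses an interval mutation, \cref{Rotation}, and a one-dimensionality argument that identifies the connecting morphism of the Serre-transformed triangle with a nonzero multiple of the canonical inclusion or surjection between $M_{\bS A}$ and $M_{\bS B}$, which is exactly what the paper's comparison of the two triangles amounts to.

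Since $k_A=k_B+1$, in both of your cases the relevant space is $\Hom(M_{\bS A}[k_A],M_{\bS B}[k_B+1])\cong\Hom(M_{\bS A},M_{\bS B})$ in degree $0$, so the Hom formula for interval modules suffices. Your hedged ``degree $1$'' remark is therefore never needed; as written it is also misstated, because the class of the interval mutation does not lie in $\Ext^1(M_{\bS B},M_{\bS A})$.
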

    \begin{proof}
        To simplify notation we write $k_J = \rank(W_{free}^J)$ for any mutable interval $J$.
        We proof the theorem by induction on the size of the interval.\\
        \textbf{Induction start:} Assume $I = [(\mathcal{T},\mathcal{F}) \leq (\mathcal{T},\mathcal{F})]$ is an interval consisting of just one element. Let $C = \{\mathcal T_1\dots \mathcal T_n\}$ be the torsion classes covering $\mathcal T$. Then we can describe the interval module $M_I$ as an antichain module by $M_I = M^C_\mathcal T$. We need to show that the antichain $C$ over $\mathcal T$ is boolean in order to compute $\bS M_I$.
        \begin{Claim}
            The antichain $C$ over $\mathcal T$ is boolean.
        \end{Claim}
        \begin{proof}
            We first note that the interval $[\mathcal T\leq \bigvee C]$ is atomic and hence wide by \cref{atomicIsWide}. Further by \cref{wideIntervalsAsTors} there is a representation finite hereditary algebra $\Lambda'$ with $\tors \Lambda' \cong [\mathcal T\leq \bigvee C]$ as lattices. We have to check that the map $\gamma^C_\mathcal T$ is an embedding of lattices. Because the image of $\gamma^C_\mathcal T$ is contained in $[\mathcal T\leq \bigvee C]$ we can restrict our considerations to this interval. By passing to $\tors \Lambda'$ we can assume without loss of generality that $\mathcal T =  \emptyset$ and $\mathcal T_i = \{S_i\}$ where the $S_i$ range through the simple objects in $\modulecat \Lambda$. Consider a subset $A\subset C$. Then $\gamma^C_\mathcal T(A) = \bigvee A$ is the torsion class consisting of all modules with support in the set $\{S_i\mid \mathcal \{S_i\}\in A\}$. In particular the map $\gamma^C_\mathcal T$ is injective. For two subsets $A,A'\subset C$ we have $\gamma^C_\mathcal T(A\cap A') = \gamma^C_\mathcal T(A) \wedge \gamma^C_\mathcal T(A')$ because the set of modules supported on the intersection of two sets is exactly the set of modules supported on both sets. This shows that $\gamma^C_\mathcal T$ is compatible with the meet and hence an embedding of lattices, that is that the antichain $C$ over $\mathcal T$ is boolean.
        \end{proof}
        Using \cref{SimplesAreLabels} we can write $\mathcal T_i = \mathcal T \vee \Gen S_i$ where the $S_i$ range over the simple objects of $\mathfrak a (\mathcal F)$. Because the antichain $C$ over $\mathcal T$ is boolean we obtain from \cref{serreOfBoolean} that $\bS M_I = \bS M^C_\mathcal T = M^\beta_D[|C|]$ where $\beta = \bigvee C = \mathcal T \bigvee_i \Gen S_i = \mathcal T \vee \Gen \mathfrak a(\mathcal F)$ and $D = \{\mathcal T\vee \bigvee_{j\neq i}\Gen S_j \mid S_i \text{ simple in }\mathfrak a (\mathcal F)\}$. From \cref{serrePermSimple} we get $M^\beta_D = M_{\bS I}$. Finally we have $|C| = \rank \mathfrak a (\mathcal F) = \rank(W^I_{free})$ where the first equality follows from $C$ being parametrized by simple in $\mathfrak a(\mathcal F)=$ and the second from $I$ being a simple interval. Now we can compute $\bS M_I = \bS M^C_\mathcal T = M^\beta_D[|C|] = M_{\bS I}[k_I]$.\\
        \textbf{Induction step:} Now assume $I=[(\mathcal{T},\mathcal{F}) < (\mathcal{T}',\mathcal{F}')]$ is a proper interval. We choose a module $X$ such that $(I,X)$ is an augmented interval with associated interval mutation $(B,I,A)$. The definition of an interval mutation yields a short exact sequence
        \[0\rightarrow M_B\rightarrow M_I\rightarrow M_A\rightarrow 0\]
        in $\modulecat \Lambda$ and hence a triangle
        \[M_B\rightarrow M_I\rightarrow M_A\rightarrow M_B[1]\]
        in $\mathcal D^b(k\tors\Lambda)$.  The intervals $A$ and $B$ are strictly contained in $I$ and hence we know $\bS M_B = M_{\bS B}[k_B]$ and $\bS M_A = M_{\bS A}[k_A]$ by induction. Applying the Serre functor and substituting these identities yields a second triangle
        \begin{equation}\label{SerreTriangle}
            M_{\bS B}[k_B]\rightarrow \bS M_I\rightarrow M_{\bS A}[k_A]\rightarrow M_{\bS B}[k_B+1].
        \end{equation}
        By \cref{Rotation} we know that either $k_I = k_B$ and $(\bS A,\bS B, \bS I)$ is an interval mutation or $k_I = k_A$ and $(\bS I, \bS A, \bS B)$ is an interval mutation. We consider the former case, the latter can be treated similarly.
        We again obtain a triangle
        \[M_{\bS A}\rightarrow M_{\bS B}\rightarrow M_{\bS I}\rightarrow M_{\bS A}[1].\]
        Shifting this triangle by $k_I$ and rotating once yields a triangle
        \begin{equation}\label{ShiftTriangle}
            M_{\bS B}[k_B]\rightarrow M_{\bS I}[k_I]\rightarrow M_{\bS A}[k_A]\rightarrow M_{\bS B}[k_B+1],
        \end{equation}
        where we have already used $k_I = k_B = k_A-1$. Finally since 
        \[\dim \Ext^1(M_{\bS A}[k_A],M_{\bS B}[k_B]) = \dim \Ext^1(\bS M_A,\bS M_B) = \dim \Ext^1(M_A,M_B) = 1\]
        and neither of the two triangles \eqref{SerreTriangle} and \eqref{ShiftTriangle} split, we must have $\bS M_I \cong M_{\bS I}[k]$. 
    \end{proof}
    \begin{Corollary}\label{crystallographicSerreFormal}
        The lattice $\tors \Lambda$ is periodic Serre formal.
    \end{Corollary}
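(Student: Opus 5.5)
The plan is to combine \cref{CategoricalSerre} with the periodicity of the Serre permutation from \cref{serrecount}. By definition we must show that $k\tors\Lambda$ is periodic Serre formal for every field $k$: there should exist $n>0$ and $m\in\bZ$ with $\bS^n I\cong I[m]$ for every indecomposable injective $I$. Nothing in the preceding arguments depended on the field. The antichain resolutions and \cref{serreOfBoolean} are characteristic free, and the mutable intervals and their combinatorics live entirely in $\tors\Lambda$. So it suffices to fix an arbitrary field $k$ and argue uniformly.

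\textbf{Step 1.} By \cref{TheyAreAllMutable}, every indecomposable injective of $k\tors\Lambda$ has the form $M_J$ for a mutable interval $J$, namely $J=[0\leq\mathcal T]$.

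\textbf{Step 2.} Iterate \cref{CategoricalSerre}. Since $\bS$ is triangulated and commutes with shifts, induction on $i$ gives
\[\bS^i M_J\cong M_{\bS^i J}\Bigl[\textstyle\sum_{j=0}^{i-1}k_{\bS^j J}\Bigr].\]
At every stage $\bS^jJ$ is again mutable, by \cref{SerrePermWellDefined}, so the theorem can be reapplied. Each iterate is therefore a stalk module shifted, which already gives Serre formality.

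\textbf{Step 3.} By \cref{serrecount}, the Serre permutation satisfies $\bS^{2h+2}J=J$. Set $n=2h+2$. The accumulated shift is
\[\sum_{j=0}^{2h+1}k_{\bS^jJ}=\sum_{i=1}^{2h+2}\rank(W_{free}^{\bS^iJ}),\]
where the reindexing uses periodicity. This sum equals $2N$ independently of $J$. Hence $\bS^{2h+2}M_J\cong M_J[2N]$ for all indecomposable injectives simultaneously, which proves periodic Serre formality. In the listed Dynkin types the same argument with period $h+1$ gives shift $N$.

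The main point needing care is the uniformity of $m$ across all injectives. This is exactly what \cref{serrecount} supplies, by transferring the count from \cref{[1]count} through the equivariance of \cref{equivariance}. With that in hand the corollary is a formal consequence, and combining it with \cref{formalisCY} yields \cref{fcy_introduction_tors}.
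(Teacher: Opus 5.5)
Your proposal is correct and follows the paper's route. Like the paper, you realize every indecomposable injective as a mutable interval module via \cref{TheyAreAllMutable} and then iterate \cref{CategoricalSerre}. You additionally make the uniform period $2h+2$ and shift $2N$ explicit via \cref{serrecount}, which the paper only does later, in the proof of \cref{torsCalabiYau}.
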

    \begin{proof}
        By \cref{TheyAreAllMutable} all indecomposable injectives are of the form $M_I$ for some mutable interval $I$. Now iterated application of \cref{CategoricalSerre} shows that $k\tors\Lambda$
        is Serre formal.
    \end{proof}
     \begin{Theorem}\label{torsCalabiYau}
        Let $h$ be the Coxeter number of $\Lambda$ and $N$ the number of indecomposable representations of $\Lambda$.
        Then $\tors \Lambda$ is $(2N,2h+2)$-fractionally Calabi-Yau.\\
        Further if $\Lambda$ is of Dynkin type $A_1$, $B_n$, $C_n$, $D_n$ ($n$ even), $E_7$, $E_8$ or $G_2$ then $\tors \Lambda$ is $(N,h+1)$-fractionally Calabi-Yau.
    \end{Theorem}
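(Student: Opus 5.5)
The plan is to apply \cref{formalisCY}: we need, for every indecomposable injective $I_a$ of $k\tors\Lambda$, an isomorphism $\bS^{2h+2} I_a \cong I_a[2N]$ (respectively $\bS^{h+1}I_a\cong I_a[N]$ in the listed types). By \cref{TheyAreAllMutable} each indecomposable injective is $M_J$ with $J = [0\leq a]$ a mutable interval. Iterating \cref{CategoricalSerre} gives
\[\bS^m M_J \cong M_{\bS^m J}\Big[\sum_{i=0}^{m-1} k_{\bS^i J}\Big], \qquad k_{\bS^iJ}=\rank(W_{free}^{\bS^iJ}),\]
where at each step we apply \cref{CategoricalSerre} to the mutable interval $\bS^iJ$ (mutable by \cref{SerrePermWellDefined}) and use that $\bS$ commutes with shifts.

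Next, \cref{serrecount} (transported from \cref{[1]count} via the equivariance \cref{equivariance} and the bijection \cref{clustersAreMutableBijection}) says $\bS^{2h+2}J = J$ for every mutable interval $J$. The shift exponent is then $\sum_{i=0}^{2h+1}\rank(W_{free}^{\bS^iJ})$. Because $\bS$ is $(2h+2)$-periodic, this sum over $i=0,\dots,2h+1$ equals the sum over $i=1,\dots,2h+2$, which is $2N$ by \cref{serrecount}. Hence $\bS^{2h+2}M_J\cong M_J[2N]$ for every injective $M_J$. \cref{formalisCY} then upgrades this objectwise statement to $\bS^{2h+2}\cong[2N]$ on $\mathcal D^b(k\tors\Lambda)$, for every field $k$. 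The theorem is stated for every field, and nothing above depended on $k$.

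In types $A_1,B_n,C_n,D_n$ ($n$ even), $E_7,E_8,G_2$ the same argument runs with the refined part of \cref{serrecount}: $\bS^{h+1}J=J$ and $\sum_{i=1}^{h+1}\rank(W_{free}^{\bS^iJ})=N$, which gives $(N,h+1)$.

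The only substantive check is that \cref{serrecount} really follows from \cref{[1]count}. For this we must identify $\rank(W_{free}^{\interval(T)})$ with $\rank(T\cap\modulecat\Lambda)=|T_{free}|$. This holds because $T_{free}=T^I_{free}$ by \cref{moduleFromInterval}, and $|T_{free}|=\rank W(T_{free})$ by \cref{WideFromTilting}. With this, the sums over the $[1]$-orbit of $T$ and the $\bS$-orbit of $\interval(T)$ coincide termwise by \cref{equivariance}, and the periods agree because $\interval$ is a bijection. I expect this bookkeeping, together with the index shift in the sum, to be the main point needing care; everything else is formal.
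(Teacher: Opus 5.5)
Your proposal is correct and follows the paper's proof: iterate \cref{CategoricalSerre} along the $\bS$-orbit, use \cref{serrecount} for both the period and the total shift, note via \cref{TheyAreAllMutable} that the indecomposable injectives are mutable interval modules, and conclude with \cref{formalisCY}. Your extra bookkeeping, namely the index shift in the sum and the identification $\rank(W_{free}^{\interval(T)}) = |T_{free}|$ via \cref{moduleFromInterval} and \cref{WideFromTilting}, makes explicit what the paper leaves implicit when it states \cref{serrecount} without proof.
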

    \begin{proof}
        Applying \cref{CategoricalSerre} to the results of \cref{serrecount} yields $\bS^{2h+2} M_I\cong M_I[2N]$ (respectively $\bS^{h+1} M_I\cong M_I[N]$) for every mutable interval $I$. By \cref{TheyAreAllMutable} the indecomposable injectives $I$ are among the $M_I$  and hence we can apply \cref{formalisCY} to complete the proof. 
    \end{proof}
    We note that the Calabi-Yau dimension only depends on the Dynkin type of $\Lambda$. In fact Ladkani has shown in \cite{Ladkani} that for $\Lambda$ and $\Lambda'$ two different algebras of the same Dynkin type we have $\mathcal D^b(k\tors \Lambda)\cong \mathcal D^b(k\tors \Lambda')$.\\
    From the perspective of Coxeter theory the lattices of torsion classes we have examined so far belong to the class of Cambrian lattices introduced by Reading in \cite{ReadingCambrian}. We refer to \cite{CambrianSurvey} for more information on Cambrian lattices and a proper definition. We will use a slightly more ad-hoc definition in order to avoid having to introduce a lot of notation.
    \begin{Definition}
        A \emph{Cambrian lattice} is a finite product of elements of the following families:
        \begin{enumerate}[label = (\arabic*)]
            \item lattices of torsion classes of finite dimensional, connected, representation finite, hereditary algebras,
            \item $
            \begin{tikzcd}[ampersand replacement=\&,cramped,sep=tiny]
            	\& \circ \\
            	\circ \&\& \circ \\
            	\& \circ
            	\arrow[from=2-1, to=1-2]
            	\arrow[from=2-3, to=1-2]
            	\arrow[from=3-2, to=2-1]
            	\arrow[from=3-2, to=2-3]
            \end{tikzcd},
            \begin{tikzcd}[ampersand replacement=\&,cramped,sep=tiny]
            	\& \circ \\
            	\circ \&\& \circ \\
            	\&\& \circ \\
            	\& \circ
            	\arrow[from=2-1, to=1-2]
            	\arrow[from=2-3, to=1-2]
            	\arrow[from=4-2, to=2-1]
            	\arrow[from=3-3, to=2-3]
            	\arrow[from=4-2, to=3-3]
            \end{tikzcd},
            \begin{tikzcd}[ampersand replacement=\&,cramped,column sep=tiny, row sep = 0.2em]
            	\& \circ \\
            	\circ \&\& \circ \\
            	\&\& \circ \\
            	\&\& \circ \\
            	\& \circ
            	\arrow[from=2-1, to=1-2]
            	\arrow[from=2-3, to=1-2]
            	\arrow[from=5-2, to=2-1]
            	\arrow[from=3-3, to=2-3]
            	\arrow[from=4-3, to=3-3]
            	\arrow[from=5-2, to=4-3]
            \end{tikzcd},\dots,$
            \item 12 specific lattices.
        \end{enumerate}
        The elements of the first family are associated with the Dynkin types. The second family corresponds to the Coxeter types $I_n$ and the third family corresponds to Coxeter types $H_3$ and $H_4$. 
    \end{Definition}
    In particular most Cambrian lattices appear as lattices of torsion classes and this allows us to extend the previous Theorem to all Cambrian lattices
    \begin{Theorem}
        Cambrian lattices are periodic Serre formal and hence fractionally Calabi-Yau.
    \end{Theorem}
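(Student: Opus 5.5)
The plan is to reduce to irreducible factors, use the earlier results for family (1), and treat families (2) and (3) directly.

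\textbf{Reduction to irreducible factors.} The incidence algebra of a product satisfies $k(L_1\times L_2)\cong kL_1\otimes_k kL_2$. The derived Nakayama functor of a tensor product is the tensor product of the derived Nakayama functors, so $\bS_{L_1\times L_2}\cong \bS_{L_1}\otimes\bS_{L_2}$. The indecomposable injectives of $k(L_1\times L_2)$ are the modules $I_{(a,b)}=I_a\otimes I_b$. Hence if $\bS_{L_1}^{n_1}I\cong I[m_1]$ and $\bS_{L_2}^{n_2}I\cong I[m_2]$ for all indecomposable injectives, then with $n=\operatorname{lcm}(n_1,n_2)$ we get $\bS^{n}(I_a\otimes I_b)\cong (I_a\otimes I_b)[m_1n/n_1+m_2n/n_2]$. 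Serre formality of the product then follows from that of the factors. It therefore suffices to show that each irreducible Cambrian lattice is periodic Serre formal; \cref{formalisCY} then gives fractional Calabi-Yau.

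\textbf{Family (1).} For lattices of torsion classes this is \cref{crystallographicSerreFormal}.

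\textbf{Family (2): the lattices of type $I(m)$.} These are the lattices $\hat 0<$ two chains $<\hat 1$, with chain lengths $1$ and $m-2$. I would compute $\bS$ directly on interval modules. Every element $x\ne\hat 0$ has at most two lower covers, and every antichain over an element is boolean of size at most $2$. By \cref{serreOfBoolean}, the antichain (co)resolutions give $\bS$ explicitly on the interval modules that are simultaneously boolean antichain modules and dual-boolean modules. The natural candidate set is the analogue of mutable intervals: intervals in the lattice of noncrossing partitions of $I(m)$, which can be listed by hand.

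The steps are as follows:
\begin{enumerate}
\item Check that the simple modules $S_x$ are boolean antichain modules, with antichains of size $0$, $1$ or $2$.
\item Compute $\bS S_x$ by \cref{serreOfBoolean}.
\item Iterate, using the triangle argument from the proof of \cref{CategoricalSerre} for larger intervals: short exact sequences $0\to M_B\to M_I\to M_A\to 0$ with $\dim\Ext^1(M_A,M_B)=1$.
\item Show that the indecomposable injectives $I_x=M_{[\hat 0,x]}$ have periodic orbits.
\end{enumerate}
Since everything is explicit and uniform in $m$, this is a finite case analysis parametrized by $m$. The period should be $h+1=m+1$ with shift $N=m$ for $m$ even, and the doubled values for $m$ odd.

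\textbf{Family (3): types $H_3$ and $H_4$.} These are finitely many explicit finite lattices, in number as many as the orientations up to the known derived equivalences. Here I would verify periodic Serre formality by computer. The procedure is:
\begin{enumerate}
\item Compute the incidence algebra over $\mathbb Q$, or over a prime field, with the caveat about field independence.
\item Compute the Coxeter matrix and the iterates of $\bS$ on each indecomposable injective.
\item Check that these iterates stay stalk complexes and return to $I[m]$ after $h+1$ steps.
\end{enumerate}
Field independence can be argued because the resolutions involved are antichain resolutions defined over $\mathbb Z$.

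\textbf{Main obstacle.} The hard step is family (3): showing, rather than only observing numerically, that the orbits consist of stalk modules. A Coxeter-matrix computation only detects classes in the Grothendieck group, so I would first try to exhibit an analogue of mutable intervals there: intervals closed under $\bS$ that are connected by interval mutations. I would then apply \cref{serreOfBoolean} to the one-element intervals together with the triangle argument of \cref{CategoricalSerre}, with the combinatorial input checked by computer.
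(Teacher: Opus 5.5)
Your proposal follows the paper's proof essentially step for step. The paper reduces to irreducible factors by citing \cite[Proposition 5.4]{ProductsofSerreFormal} (you argue the tensor-product reduction directly), uses \cref{crystallographicSerreFormal} for the torsion-class lattices, treats type $I(m)$ by a direct combinatorial computation that it omits, and verifies the twelve $H_3$/$H_4$ lattices by computer. One small slip: the type $I(m)$ Cambrian lattice is an $(m+2)$-gon, so its two chains strictly between $\hat 0$ and $\hat 1$ have $1$ and $m-1$ elements, not $1$ and $m-2$.
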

    \begin{proof}
        By \cite[Proposition 5.4]{ProductsofSerreFormal}
        Serre formal algebras are closed under taking tensor products and hence Serre formal lattices are closed under taking products. Furthermore the Serre functor distributes over tensor products, hence also periodic Serre formal is closed under products. Therefore it suffices to consider irreducible lattices. Let $L$ be an irreducible Cambrian lattices. If $L$ is crystallographic then it is Serre formal by \cref{crystallographicSerreFormal}. If $L$ is of type $I(m)$ then the orbits of the injectives ujnder the Serre functor are easy to describe combinatorially. We omit this here. Otherwise $L$ is one of the 12 Cambrian lattices of type $H_3$ or $H_4$. We have verified each of them by computer.
    \end{proof}
    For the irreducible Cambrian lattices we can give the dimension.
    \begin{Proposition}
        Let $L$ be an irreducible Cambrian lattice of Coxeter type $W$. Let $h$ be the Coxeter number of $W$ and $N$ the number of hyperplanes of the type $W$ reflection group.
        Then $L$ is $(2N,2h+2)$-fractionally Calabi-Yau.\\
        Further if $W$ is of Coxeter type  $A_1$, $B_n$, $C_n$, $D_n$ ($n$ even), $E_7$, $E_8$, $H_3$, $H_4$ or $I(m)$ ($m$ even) then $L$ is $(N,h+1)$-fractionally Calabi-Yau.
    \end{Proposition}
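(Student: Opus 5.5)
We reduce to the three kinds of irreducible Cambrian lattices and compute the Calabi-Yau dimension separately for each. The key tool is \cref{formalisCY}: once we know that $\bS^n I\cong I[m]$ for every indecomposable injective $I$ of $kL$, the lattice $L$ is $(m,n)$-fractionally Calabi-Yau.

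\textbf{Crystallographic types.} If $\Gamma$ is crystallographic, then $L\cong\tors\Lambda$ for a hereditary algebra $\Lambda$ of type $\Gamma$. The number $N$ of indecomposable $\Lambda$-modules equals the number of positive roots, which is the number of reflecting hyperplanes. The claim is then exactly \cref{torsCalabiYau}. The types $B_n$ and $C_n$ both occur in the list, so it does not matter which of the two describes the Coxeter group.

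\textbf{Type $I(m)$.} The Cambrian lattice is the polygon with $2m$ elements from family (2). Here $h=m$ and $N=m$. I would compute the Serre functor on the indecomposable injectives $I_a=M_{[\min L,a]}$ directly, by hand:
\begin{enumerate}
\item Write each relevant interval module as an antichain module. In a polygon every antichain over an element has size at most $2$, and those of size $2$ over $\min L$ are boolean.
\item Apply \cref{serreOfBoolean} repeatedly. This should give an orbit of interval modules, namely of the minimal element, of the single-chain intervals on either side, and of the maximal element.
\item Follow each orbit and check that $\bS^{h+1}=\bS^{m+1}$ returns $I_a$ shifted by $N=m$ when $m$ is even. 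When $m$ is odd, the two sides of the polygon have different lengths, so $\bS^{m+1}$ should swap the two chains. One then needs $2m+2$ steps, with total shift $2m$.
\end{enumerate}
The shift bookkeeping is where care is needed. I would organise it as in \cref{serrecount}, by summing the ranks of the antichains along an orbit.

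\textbf{Types $H_3$ and $H_4$.} Here one uses the computer verification behind the preceding theorem. For each of the 12 lattices one computes $\bS$ on the injectives and reads off that $\bS^{h+1}I\cong I[N]$. For $H_3$ this is $(h,N)=(10,15)$, and for $H_4$ it is $(30,60)$. The statement $(2N,2h+2)$ follows trivially from $(N,h+1)$, since $\bS^{h+1}\cong[N]$ implies $\bS^{2h+2}\cong[2N]$.

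The main obstacle is the $I(m)$ computation, in particular the parity distinction between even and odd $m$.
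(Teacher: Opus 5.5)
\textbf{Where the proposal breaks.} Your crystallographic and $H_3$/$H_4$ parts agree with the paper. The type $I(m)$ part, however, starts from the wrong lattice.

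The Cambrian lattice of type $I(m)$ does not have $2m$ elements; that is the weak order on the dihedral group. It has $m+2$ elements, the $W$-Catalan number, as the pictures of family (2) show (4, 5, 6 elements). One side is $\min L<l<\max L$, and the other is $\min L=r_0<r_1<\dots<r_{m-1}<r_m=\max L$.

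This matters for your method. Step 2 needs more than booleanness of the antichains: every dual antichain module produced by \cref{serreOfBoolean} must again be an interval module, so that the iteration can continue. On a $2m$-gon this fails. Take $m=3$, with sides $\min L<l_1<l_2<\max L$ and $\min L<r_1<r_2<\max L$. Then $\bS^2 I_{\max L}\cong M^{\max L}_{\{l_1,r_1\}}[2]$, which is supported on $\{l_2,r_2,\max L\}$ and has two minimal elements. Applying $\bS$ to its resolution $0\to P_{\max L}\to P_{l_2}\oplus P_{r_2}$ gives a complex with cohomology $S_{\max L}$ and $S_{\min L}$ in two different degrees. So the hexagon is not even Serre formal, and your route through \cref{formalisCY} cannot work there.

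Your parity heuristic is also off. In the actual lattice the two sides have lengths $2$ and $m$ for every $m\geq 3$, whatever the parity.

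\textbf{With the correct lattice, your plan works.} It is then a genuinely different route from the paper's. With $P_a,I_a,S_a$ as in the paper, \cref{serreOfBoolean} gives:
\begin{itemize}
\item $\bS I_{r_j}\cong P_{r_{j+2}}[2]$ for $0\leq j\leq m-2$, using the boolean antichain $\{l,r_{j+1}\}$ over $\min L$;
\item $\bS I_{r_{m-1}}\cong S_l[1]$, $\bS S_l\cong P_{r_1}[1]$ and $\bS I_l\cong S_{r_1}[1]$;
\item $\bS S_{r_j}\cong S_{r_{j+1}}[1]$ for $1\leq j\leq m-2$, and $\bS S_{r_{m-1}}\cong P_l[1]$;
\item $\bS P_a\cong I_a$.
\end{itemize}
From these you can read off the orbits of the injectives:
\begin{itemize}
\item The orbit of $I_l$ returns as $I_l[m]$ after $m+1$ steps for every $m$.
\item The orbit of $I_{\max L}$ climbs the long side two elements per two steps. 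For even $m$ it returns as $I_{\max L}[m]$ after $m+1$ steps. In that case the odd-indexed $I_{r_j}$, together with $S_l$ and $P_{r_1}$, form one more orbit with the same length and shift.
\item For odd $m$, the orbit of $I_{\max L}$ reaches $I_{r_{m-1}}$, detours through $S_l$ and $P_{r_1}$ to the odd-indexed $r_j$, and returns as $I_{\max L}[2m]$ after $2m+2$ steps.
\end{itemize}
These orbits cover all indecomposable injectives, and \cref{formalisCY} finishes the argument.

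\textbf{Comparison with the paper.} The paper instead quotes a derived equivalence, due to Yildirim, between this lattice and the path algebra of type $D_{m+2}$. That algebra has Coxeter number $2m+2$, so the paper reads off $\bS^{2m+2}\cong[2m]$, and $\bS^{m+1}\cong[m]$ for even $m$, from the Dynkin case. The paper's route is shorter but depends on an external equivalence. Your corrected computation is self-contained, and it also proves periodic Serre formality in type $I(m)$, which the paper's preceding theorem leaves out.
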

    \begin{proof}
        For the Dynkin types this is \cref{torsCalabiYau}. The Cambrian lattices of type $I(m)$ are derived equivalent to path algebras of type $D_{m+2}$ by \cite[Remark 5.10]{Yildirim}. This path algebra is fractionally Calabi-Yau with the required dimension. The types $H_3$ and $H_4$ can be checked by computer.
    \end{proof}
    Even in the non-crystallographic types all objects in the orbits of injectives under the Serre functor are shifted interval modules. It might be possible to adapt the theory of mutable intervals and its connection to 2-clusters to this context. We have not attempted this.
\section{Example: \texorpdfstring{$A_3$}{A₃}}\label{ExampleSection}
        In this section we give a detailed example for type $A_3$ with linear orientation. In this case the category $\mathcal C_2(\Lambda)$ looks as follows:
\[\begin{tikzcd}[ampersand replacement=\&, column sep = {3 em,between origins},row sep = {4 em,between origins}]
	\& {} \&\& \begin{array}{c} \begin{smallmatrix}3\\2\\1\end{smallmatrix} \end{array} \&\& {\begin{smallmatrix}1\end{smallmatrix}[1]} \&\& {\begin{smallmatrix}2\end{smallmatrix}[1]} \&\& {\begin{smallmatrix}3\end{smallmatrix}[1]} \&\& \begin{array}{c} \begin{smallmatrix}3\\2\\1\end{smallmatrix}[2] \end{array}\& \\
	{} \&\& \begin{array}{c} \begin{smallmatrix}2\\1\end{smallmatrix} \end{array} \&\& \begin{array}{c} \begin{smallmatrix}3\\2\end{smallmatrix} \end{array} \&\& \begin{array}{c} \begin{smallmatrix}2\\1\end{smallmatrix}[1] \end{array} \&\& \begin{array}{c} \begin{smallmatrix}3\\2\end{smallmatrix}[1] \end{array} \&\& \begin{array}{c} \begin{smallmatrix}2\\1\end{smallmatrix}[2] \end{array} \&\& {}\& \\
	\& {\begin{smallmatrix}1\end{smallmatrix}} \&\& {\begin{smallmatrix}2\end{smallmatrix}} \&\& {\begin{smallmatrix}3\end{smallmatrix}} \&\& \begin{array}{c} \begin{smallmatrix}3\\2\\1\end{smallmatrix}[1] \end{array} \&\& {\begin{smallmatrix}1\end{smallmatrix}[2]} \&\& {}\&
	\arrow[shorten <=10pt, dotted, from=1-2, to=2-3]
	\arrow[from=1-4, to=2-5]
	\arrow[from=1-6, to=2-7]
	\arrow[from=1-8, to=2-9]
	\arrow[dotted, from=1-10, to=2-11]
	\arrow[shorten >=10pt, dotted, from=1-12, to=2-13]
	\arrow[shorten <=10pt, dotted, from=2-1, to=3-2]
	\arrow[from=2-3, to=1-4]
	\arrow[from=2-3, to=3-4]
	\arrow[dotted, from=2-5, to=1-6]
	\arrow[from=2-5, to=3-6]
	\arrow[from=2-7, to=1-8]
	\arrow[from=2-7, to=3-8]
	\arrow[from=2-9, to=1-10]
	\arrow[dotted, from=2-9, to=3-10]
	\arrow[from=2-11, to=1-12]
	\arrow[shorten >=10pt, dotted, from=2-11, to=3-12]
	\arrow[from=3-2, to=2-3]
	\arrow[from=3-4, to=2-5]
	\arrow[dotted, from=3-6, to=2-7]
	\arrow[from=3-8, to=2-9]
	\arrow[from=3-10, to=2-11]
\end{tikzcd}\]
Here the outgoing arrows on the right are identified with the incoming arrows on the left. We use dashed arrows to visually separate the three subcategories $\modulecat \Lambda$, $\modulecat \Lambda[1]$ and $\proj \Lambda [2]$ from each other. Otherwise dashed and continuous arrows behave the same. In the next two diagrams we have marked a 2-cluster tilting object and its shift using double boxes. The cogenerated torsion-free class in $\modulecat \Lambda$ and the generated torsion-class in $\modulecat \Lambda [1]$ have been marked with single boxes.
\[\begin{tikzcd}[ampersand replacement=\&, column sep = {3 em,between origins},row sep = {4 em,between origins}]
	\& {} \&\& \begin{array}{c} \begin{smallmatrix}3\\2\\1\end{smallmatrix} \end{array} \&\& {\begin{smallmatrix}1\end{smallmatrix}[1]} \&\& \fbox{\fbox{${\begin{smallmatrix}2\end{smallmatrix}[1]}$}} \&\& \fbox{${\begin{smallmatrix}3\end{smallmatrix}[1]}$} \&\& \begin{array}{c} \begin{smallmatrix}3\\2\\1\end{smallmatrix}[2] \end{array}\& \\
	{} \&\& \begin{array}{c} \fbox{\fbox{$\begin{smallmatrix}2\\1\end{smallmatrix}$}} \end{array} \&\& \begin{array}{c} \begin{smallmatrix}3\\2\end{smallmatrix} \end{array} \&\& \begin{array}{c} \begin{smallmatrix}2\\1\end{smallmatrix}[1] \end{array} \&\& \begin{array}{c} \fbox{$\begin{smallmatrix}3\\2\end{smallmatrix}[1]$} \end{array} \&\& \begin{array}{c} \begin{smallmatrix}2\\1\end{smallmatrix}[2] \end{array} \&\& {}\& \\
	\& \fbox{${\begin{smallmatrix}1\end{smallmatrix}}$} \&\& {\begin{smallmatrix}2\end{smallmatrix}} \&\& {\begin{smallmatrix}3\end{smallmatrix}} \&\& \begin{array}{c} \fbox{\fbox{$\begin{smallmatrix}3\\2\\1\end{smallmatrix}[1]$}} \end{array} \&\& {\begin{smallmatrix}1\end{smallmatrix}[2]} \&\& {}\&
	\arrow[shorten <=10pt, dotted, from=1-2, to=2-3]
	\arrow[from=1-4, to=2-5]
	\arrow[from=1-6, to=2-7]
	\arrow[from=1-8, to=2-9]
	\arrow[dotted, from=1-10, to=2-11]
	\arrow[shorten >=10pt, dotted, from=1-12, to=2-13]
	\arrow[shorten <=10pt, dotted, from=2-1, to=3-2]
	\arrow[from=2-3, to=1-4]
	\arrow[from=2-3, to=3-4]
	\arrow[dotted, from=2-5, to=1-6]
	\arrow[from=2-5, to=3-6]
	\arrow[from=2-7, to=1-8]
	\arrow[from=2-7, to=3-8]
	\arrow[from=2-9, to=1-10]
	\arrow[dotted, from=2-9, to=3-10]
	\arrow[from=2-11, to=1-12]
	\arrow[shorten >=10pt, dotted, from=2-11, to=3-12]
	\arrow[from=3-2, to=2-3]
	\arrow[from=3-4, to=2-5]
	\arrow[dotted, from=3-6, to=2-7]
	\arrow[from=3-8, to=2-9]
	\arrow[from=3-10, to=2-11]
\end{tikzcd}\]
\[\begin{tikzcd}[ampersand replacement=\&, column sep = {3 em,between origins},row sep = {4 em,between origins}]
	\& {} \&\& \begin{array}{c} \begin{smallmatrix}3\\2\\1\end{smallmatrix} \end{array} \&\& {\begin{smallmatrix}1\end{smallmatrix}[1]} \&\& \fbox{${\begin{smallmatrix}2\end{smallmatrix}[1]}$} \&\& {\begin{smallmatrix}3\end{smallmatrix}[1]} \&\& \begin{array}{c} \fbox{\fbox{$\begin{smallmatrix}3\\2\\1\end{smallmatrix}[2]$}} \end{array}\& \\
	{} \&\& \begin{array}{c} \begin{smallmatrix}2\\1\end{smallmatrix} \end{array} \&\& \begin{array}{c} \begin{smallmatrix}3\\2\end{smallmatrix} \end{array} \&\& \begin{array}{c} \fbox{\fbox{$\begin{smallmatrix}2\\1\end{smallmatrix}[1]$}} \end{array} \&\& \begin{array}{c} \begin{smallmatrix}3\\2\end{smallmatrix}[1] \end{array} \&\& \begin{array}{c} \begin{smallmatrix}2\\1\end{smallmatrix}[2] \end{array} \&\& {}\& \\
	\& \fbox{\fbox{${\begin{smallmatrix}1\end{smallmatrix}}$}} \&\& {\begin{smallmatrix}2\end{smallmatrix}} \&\& {\begin{smallmatrix}3\end{smallmatrix}} \&\& \begin{array}{c} \begin{smallmatrix}3\\2\\1\end{smallmatrix}[1] \end{array} \&\& {\begin{smallmatrix}1\end{smallmatrix}[2]} \&\& {}\&
	\arrow[shorten <=10pt, dotted, from=1-2, to=2-3]
	\arrow[from=1-4, to=2-5]
	\arrow[from=1-6, to=2-7]
	\arrow[from=1-8, to=2-9]
	\arrow[dotted, from=1-10, to=2-11]
	\arrow[shorten >=10pt, dotted, from=1-12, to=2-13]
	\arrow[shorten <=10pt, dotted, from=2-1, to=3-2]
	\arrow[from=2-3, to=1-4]
	\arrow[from=2-3, to=3-4]
	\arrow[dotted, from=2-5, to=1-6]
	\arrow[from=2-5, to=3-6]
	\arrow[from=2-7, to=1-8]
	\arrow[from=2-7, to=3-8]
	\arrow[from=2-9, to=1-10]
	\arrow[dotted, from=2-9, to=3-10]
	\arrow[from=2-11, to=1-12]
	\arrow[shorten >=10pt, dotted, from=2-11, to=3-12]
	\arrow[from=3-2, to=2-3]
	\arrow[from=3-4, to=2-5]
	\arrow[dotted, from=3-6, to=2-7]
	\arrow[from=3-8, to=2-9]
	\arrow[from=3-10, to=2-11]
\end{tikzcd}\]
In the next diagram we have marked an almost 2-cluster tilting object with double boxes and its three complements using single boxes.
\[\begin{tikzcd}[ampersand replacement=\&, column sep = {3 em,between origins},row sep = {4 em,between origins}]
	\& {} \&\& \begin{array}{c} \begin{smallmatrix}3\\2\\1\end{smallmatrix} \end{array} \&\& {\begin{smallmatrix}1\end{smallmatrix}[1]} \&\& {\begin{smallmatrix}2\end{smallmatrix}[1]} \&\& \fbox{$\begin{smallmatrix}3\end{smallmatrix}[1]$} \&\& \begin{array}{c} \fbox{$\begin{smallmatrix}3\\2\\1\end{smallmatrix}[2]$} \end{array}\& \\
	{} \&\& \begin{array}{c} \begin{smallmatrix}2\\1\end{smallmatrix} \end{array} \&\& \begin{array}{c} \begin{smallmatrix}3\\2\end{smallmatrix} \end{array} \&\& \begin{array}{c} \begin{smallmatrix}2\\1\end{smallmatrix}[1] \end{array} \&\& \begin{array}{c} \begin{smallmatrix}3\\2\end{smallmatrix}[1] \end{array} \&\& \begin{array}{c} \fbox{\fbox{$\begin{smallmatrix}2\\1\end{smallmatrix}[2]$}} \end{array} \&\& {}\& \\
	\& \fbox{\fbox{$\begin{smallmatrix}1\end{smallmatrix}$}} \&\& \begin{smallmatrix}2\end{smallmatrix} \&\& \fbox{$\begin{smallmatrix}3\end{smallmatrix}$} \&\& \begin{array}{c} \begin{smallmatrix}3\\2\\1\end{smallmatrix}[1] \end{array} \&\& {\begin{smallmatrix}1\end{smallmatrix}[2]} \&\& {}\&
	\arrow[shorten <=10pt, dotted, from=1-2, to=2-3]
	\arrow[from=1-4, to=2-5]
	\arrow[from=1-6, to=2-7]
	\arrow[from=1-8, to=2-9]
	\arrow[dotted, from=1-10, to=2-11]
	\arrow[shorten >=10pt, dotted, from=1-12, to=2-13]
	\arrow[shorten <=10pt, dotted, from=2-1, to=3-2]
	\arrow[from=2-3, to=1-4]
	\arrow[from=2-3, to=3-4]
	\arrow[dotted, from=2-5, to=1-6]
	\arrow[from=2-5, to=3-6]
	\arrow[from=2-7, to=1-8]
	\arrow[from=2-7, to=3-8]
	\arrow[from=2-9, to=1-10]
	\arrow[dotted, from=2-9, to=3-10]
	\arrow[from=2-11, to=1-12]
	\arrow[shorten >=10pt, dotted, from=2-11, to=3-12]
	\arrow[from=3-2, to=2-3]
	\arrow[from=3-4, to=2-5]
	\arrow[dotted, from=3-6, to=2-7]
	\arrow[from=3-8, to=2-9]
	\arrow[from=3-10, to=2-11]
\end{tikzcd}\]

The lattice of torsion pairs looks as follows in this case:\\

\adjustbox{max width=\textwidth}{    
$\begin{tikzcd}[ampersand replacement=\&,cramped,column sep=tiny,row sep=small]
	\&\& \begin{array}{c} (\underline{\begin{smallmatrix}\mathbf1\end{smallmatrix}},\underline{\begin{smallmatrix}\mathbf2\\\mathbf1\end{smallmatrix}},\underline{\begin{smallmatrix}\mathbf3\\\mathbf2\\\mathbf1\end{smallmatrix}},\begin{smallmatrix}\mathbf2\end{smallmatrix},\begin{smallmatrix}\mathbf3\\\mathbf2\end{smallmatrix},\begin{smallmatrix}\mathbf3\end{smallmatrix}\mid) \end{array} \\
	\&\& \begin{array}{c} (\underline{\begin{smallmatrix}\mathbf2\\\mathbf1\end{smallmatrix}},\underline{\begin{smallmatrix}\mathbf3\\\mathbf2\\\mathbf1\end{smallmatrix}},\underline{\begin{smallmatrix}2\end{smallmatrix}},\begin{smallmatrix}3\\2\end{smallmatrix},\begin{smallmatrix}\mathbf3\end{smallmatrix}\mid \underline{\begin{smallmatrix}\mathbf1\end{smallmatrix}}) \end{array} \&\& \begin{array}{c} (\underline{\begin{smallmatrix}\mathbf1\end{smallmatrix}},\underline{\begin{smallmatrix}\mathbf2\\\mathbf1\end{smallmatrix}},\begin{smallmatrix}\mathbf2\end{smallmatrix}\mid \underline{\begin{smallmatrix}\mathbf3\end{smallmatrix}}) \end{array} \\
	\begin{array}{c} (\underline{\begin{smallmatrix}\mathbf1\end{smallmatrix}},\underline{\begin{smallmatrix}\mathbf3\\\mathbf2\\\mathbf1\end{smallmatrix}},\begin{smallmatrix}\mathbf3\\\mathbf2\end{smallmatrix},\underline{\begin{smallmatrix}3\end{smallmatrix}}\mid \underline{\begin{smallmatrix}\mathbf2\end{smallmatrix}}) \end{array} \&\& \begin{array}{c} (\underline{\begin{smallmatrix}\mathbf3\\\mathbf2\\\mathbf1\end{smallmatrix}},\underline{\begin{smallmatrix}\mathbf2\end{smallmatrix}},\underline{\begin{smallmatrix}3\\2\end{smallmatrix}},\begin{smallmatrix}3\end{smallmatrix}\mid \begin{smallmatrix}1\end{smallmatrix},\underline{\begin{smallmatrix}\mathbf2\\\mathbf1\end{smallmatrix}}) \end{array} \\
	\& \begin{array}{c} (\underline{\begin{smallmatrix}\mathbf3\\\mathbf2\\\mathbf1\end{smallmatrix}},\underline{\begin{smallmatrix}3\\2\end{smallmatrix}},\underline{\begin{smallmatrix}3\end{smallmatrix}}\mid \begin{smallmatrix}\mathbf1\end{smallmatrix},\underline{\begin{smallmatrix}\mathbf2\\\mathbf1\end{smallmatrix}},\underline{\begin{smallmatrix}\mathbf2\end{smallmatrix}}) \end{array} \&\&\& \begin{array}{c} (\underline{\begin{smallmatrix}\mathbf2\\\mathbf1\end{smallmatrix}},\underline{\begin{smallmatrix}2\end{smallmatrix}}\mid \underline{\begin{smallmatrix}\mathbf1\end{smallmatrix}},\underline{\begin{smallmatrix}\mathbf3\end{smallmatrix}}) \end{array} \\
	\begin{array}{c} (\underline{\begin{smallmatrix}\mathbf1\end{smallmatrix}},\underline{\begin{smallmatrix}\mathbf3\end{smallmatrix}}\mid\underline{\begin{smallmatrix}2\end{smallmatrix}},\underline{\begin{smallmatrix}\mathbf3\\\mathbf2\end{smallmatrix}}) \end{array} \&\&\& \begin{array}{c} (\underline{\begin{smallmatrix}\mathbf2\end{smallmatrix}},\underline{\begin{smallmatrix}\mathbf3\\\mathbf2\end{smallmatrix}},\begin{smallmatrix}\mathbf3\end{smallmatrix}\mid\underline{\begin{smallmatrix}1\end{smallmatrix}},\underline{\begin{smallmatrix}2\\1\end{smallmatrix}},\underline{\begin{smallmatrix}\mathbf3\\\mathbf2\\\mathbf1\end{smallmatrix}}) \end{array} \\
	\&\& \begin{array}{c} (\underline{\begin{smallmatrix}\mathbf3\\\mathbf2\end{smallmatrix}},\underline{\begin{smallmatrix}3\end{smallmatrix}}\mid \begin{smallmatrix}1\end{smallmatrix},\underline{\begin{smallmatrix}2\\1\end{smallmatrix}},\underline{\begin{smallmatrix}\mathbf3\\\mathbf2\\\mathbf1\end{smallmatrix}},\underline{\begin{smallmatrix}\mathbf2\end{smallmatrix}}) \end{array} \&\& \begin{array}{c} (\underline{\begin{smallmatrix}\mathbf2\end{smallmatrix}}\mid\underline{\begin{smallmatrix}1\end{smallmatrix}},\begin{smallmatrix}\mathbf2\\\mathbf1\end{smallmatrix},\underline{\begin{smallmatrix}\mathbf3\\\mathbf2\\\mathbf1\end{smallmatrix}},\underline{\begin{smallmatrix}\mathbf3\end{smallmatrix}}) \end{array} \\
	\begin{array}{c} (\underline{\begin{smallmatrix}\mathbf1\end{smallmatrix}}\mid\begin{smallmatrix}\mathbf2\end{smallmatrix},\underline{\begin{smallmatrix}\mathbf3\\\mathbf2\end{smallmatrix}},\underline{\begin{smallmatrix}\mathbf3\end{smallmatrix}}) \end{array} \&\& \begin{array}{c} (\underline{\begin{smallmatrix}\mathbf3\end{smallmatrix}}\mid\begin{smallmatrix}\mathbf1\end{smallmatrix},\begin{smallmatrix}2\\1\end{smallmatrix},\underline{\begin{smallmatrix}\mathbf3\\\mathbf2\\\mathbf1\end{smallmatrix}},\underline{\begin{smallmatrix}2\end{smallmatrix}},\underline{\begin{smallmatrix}\mathbf3\\\mathbf2\end{smallmatrix}}) \end{array} \\
	\&\& \begin{array}{c} (\mid\begin{smallmatrix}\mathbf1\end{smallmatrix},\begin{smallmatrix}\mathbf2\\\mathbf1\end{smallmatrix},\underline{\begin{smallmatrix}\mathbf3\\\mathbf2\\\mathbf1\end{smallmatrix}},\begin{smallmatrix}\mathbf2\end{smallmatrix},\underline{\begin{smallmatrix}\mathbf3\\\mathbf2\end{smallmatrix}},\underline{\begin{smallmatrix}\mathbf3\end{smallmatrix}}) \end{array}
	\arrow["1", from=2-3, to=1-3]
	\arrow["3"{description}, from=2-5, to=1-3]
	\arrow["2"{description}, from=3-1, to=1-3]
	\arrow["\begin{array}{c} \begin{smallmatrix}2\\1\end{smallmatrix} \end{array}", from=3-3, to=2-3]
	\arrow["1"{description}, from=4-2, to=3-1]
	\arrow["2"{description}, from=4-2, to=3-3]
	\arrow["3"{description}, from=4-5, to=2-3]
	\arrow["1"{description}, from=4-5, to=2-5]
	\arrow["\begin{array}{c} \begin{smallmatrix}3\\2\end{smallmatrix} \end{array}"{description}, from=5-1, to=3-1]
	\arrow["\begin{array}{c} \begin{smallmatrix}3\\2\\1\end{smallmatrix} \end{array}"{description}, from=5-4, to=3-3]
	\arrow["\begin{array}{c} \begin{smallmatrix}3\\2\\1\end{smallmatrix} \end{array}"{description}, from=6-3, to=4-2]
	\arrow["2"{description}, from=6-3, to=5-4]
	\arrow["\begin{array}{c} \begin{smallmatrix}2\\1\end{smallmatrix} \end{array}"{description}, from=6-5, to=4-5]
	\arrow["3"{description}, from=6-5, to=5-4]
	\arrow["2"{description}, dashed, from=7-1, to=2-5]
	\arrow["3"{description}, from=7-1, to=5-1]
	\arrow["1"{description}, from=7-3, to=5-1]
	\arrow["\begin{array}{c} \begin{smallmatrix}3\\2\end{smallmatrix} \end{array}", from=7-3, to=6-3]
	\arrow["2"{description}, from=8-3, to=6-5]
	\arrow["1"{description}, from=8-3, to=7-1]
	\arrow["3", from=8-3, to=7-3]
\end{tikzcd}$}
Here every vertex is marked with the torsion pair $(\mathcal T\mid \mathcal F)$. The elements of the corresponding wide subcategories have been bolded and the summands of the support tilting modules have been underlined. The mutable interval corresponding to the first 2-cluster tilting object above is the simple interval at the vertex $(\begin{smallmatrix}3\\2\\1\end{smallmatrix},\begin{smallmatrix}2\end{smallmatrix},\begin{smallmatrix}3\\2\end{smallmatrix},\begin{smallmatrix}3\end{smallmatrix}\mid \begin{smallmatrix}1\end{smallmatrix},\begin{smallmatrix}2\\1\end{smallmatrix})$. 
Note that the torsion class and the torsion free class of this pair match the one we constructed from the 2-cluster tilting object. The mutable interval corresponding to the second 2-cluster tilting object is $[(\begin{smallmatrix}2\\1\end{smallmatrix},\begin{smallmatrix}2\end{smallmatrix}\mid \mathcal F)\leq(\mathcal T'\mid \begin{smallmatrix}1\end{smallmatrix})]$. The interval mutation corresponding to the almost 2-cluster tilting object above is given by $(B,I,A)$ where
\begin{itemize}
    \item $I = [(\mid \modulecat \Lambda)\leq(\mathcal T_I'\mid \begin{smallmatrix}1\end{smallmatrix})]$
    \item $B = [(\begin{smallmatrix}3\end{smallmatrix}\mid \mathcal F_B)\leq(\mathcal T_B'\mid \begin{smallmatrix}1\end{smallmatrix})]$
    \item $A = [(\mid \modulecat \Lambda)\leq(\mathcal T_A'\mid \begin{smallmatrix}1\end{smallmatrix},\begin{smallmatrix}3\end{smallmatrix})]$.
\end{itemize}
The corresponding augmented interval is $(I,\begin{smallmatrix}3\end{smallmatrix})$.

\section{Tamari lattices and geometric models}
    Tamari lattices are the lattices of torsion classes of linear type $A$ path algebras. They were proven to be fractionally Calabi-Yau by Rognerud in \cite{RognerudTamari}. Our proof is partially based on his approach.  We want to use this section to compare the two arguments and to relate the geometric model used by Rognerud to the geometric model Baur and Marsh introduced for the 2-cluster category in type $A$ in \cite{GeometricClusterCategory}.\\
    We start by giving an overview of Rognerud's proof. For more details see \cite{RognerudTamari} and \cite{RognerudNCP}. Rognerud introduced a family of intervals called \emph{exceptional intervals} and showed that the Serre functor permutes them up to shift. The exceptional intervals are exactly the mutable intervals in this case since both are given by intervals in the lattice of noncrossing partitions. There are two main differences between our proof and Rognerud's: the way the Serre functor is computed and the way the exceptional intervals are parametrised. The interval modules of all exceptional intervals have boolean antichains and this allowed Rognerud to compute the Serre functor applied to each of them using a version of \cref{serreOfBoolean}. Every Cambrian lattice that is not Tamari or type I admits a mutable interval with a non-boolean antichain and hence this part of Rognerud's argument can not be generalised directly. To parametrise the exceptional intervals Rognerud used noncrossing trees.
    \begin{Definition}
        A \emph{noncrossing tree} of type $A_n$ is a subset of the diagonals of an $n+2$ gon such that
        \begin{enumerate}
            \item no two diagonals cross,
            \item every vertex is incident to at least one diagonal and
            \item the diagonals do not include a cycle.
        \end{enumerate}
        Note that boundary edges are considered diagonals for this definition.
    \end{Definition}
    Noncrossing trees admit a canonical permutation called planar duality. Rognerud showed that it corresponds to the Serre functor on exceptional interval. Note that the planar dual of the planar dual of a noncrossing tree is not the original tree, but a rotation of it by $2\pi/(n+2)$
    \begin{Definition}
        The \emph{planar dual} of a noncrossing tree $T$ is constructed as follows: We add a new vertex on each boundary segment of the $n+2$-gon. Then $T$ cuts the $n+2$-gon into regions with each regions containing exactly one of the new vertices. Each edge of $T$ separates two regions. For each edge in $T$ we consider a new edge connecting the vertices in the regions it separates. The planar dual of $T$ consists of all of these new edges after rotation by $2\pi/2(n+2)$. The rotation moves the new vertices onto the old ones.
        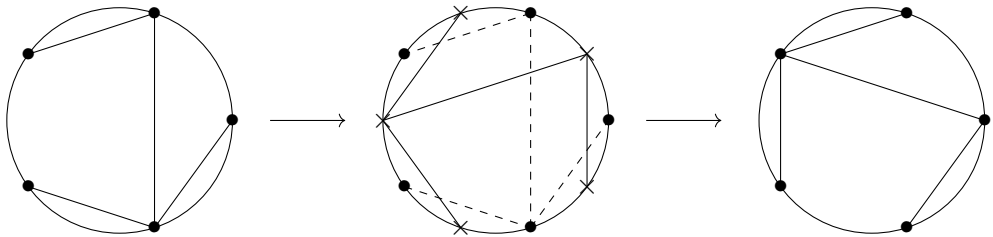
\begin{figure}[h]
        \centering
        \caption{Construction of the planar dual of a noncrossing tree.}
        \begin{tikzpicture}
			\draw (0,0) circle[radius=1.5] ;    
			\coordinate [label=center:$\bullet$] (A) at (72 : 1.5) ;
			\coordinate [label=center:$\bullet$] (B) at (144 : 1.5) ;
			\coordinate [label=center:$\bullet$] (C) at (216 : 1.5) ;
			\coordinate [label=center:$\bullet$] (D) at (288 : 1.5) ;
			\coordinate [label=center:$\bullet$] (E) at (0 : 1.5) ;
			\draw (A) -- (B) ;
			\draw (A) -- (D) ;
			\draw (C) -- (D) ;
			\draw (D) -- (E) ;

            \draw (5,0) circle[radius=1.5] ;    
			\coordinate [label=center:$\bullet$] (A2) at ($(72 : 1.5)+(5,0)$) ;
			\coordinate [label=center:$\bullet$] (B2) at ($(144 : 1.5)+(5,0)$) ;
			\coordinate [label=center:$\bullet$] (C2) at ($(216 : 1.5)+(5,0)$) ;
			\coordinate [label=center:$\bullet$] (D2) at ($(288 : 1.5)+(5,0)$) ;
			\coordinate [label=center:$\bullet$] (E2) at ($(0 : 1.5)+(5,0)$) ;
            \coordinate [label=center:$\times$] (F) at ($(108 : 1.5)+(5,0)$) ;
			\coordinate [label=center:$\times$] (G) at ($(180 : 1.5)+(5,0)$) ;
			\coordinate [label=center:$\times$] (H) at ($(252 : 1.5)+(5,0)$) ;
			\coordinate [label=center:$\times$] (I) at ($(324 : 1.5)+(5,0)$) ;
			\coordinate [label=center:$\times$] (J) at ($(36 : 1.5)+(5,0)$) ;
			\draw[dashed] (A2) -- (B2) ;
			\draw[dashed] (A2) -- (D2) ;
			\draw[dashed] (C2) -- (D2) ;
			\draw[dashed] (D2) -- (E2) ;
            \draw (J) -- (I) ;
			\draw (J) -- (G) ;
			\draw (F) -- (G) ;
			\draw (G) -- (H) ;

            \draw (10,0) circle[radius=1.5] ;    
			\coordinate [label=center:$\bullet$] (F2) at ($(72 : 1.5)+(10,0)$) ;
			\coordinate [label=center:$\bullet$] (G2) at ($(144 : 1.5)+(10,0)$) ;
			\coordinate [label=center:$\bullet$] (H2) at ($(216 : 1.5)+(10,0)$) ;
			\coordinate [label=center:$\bullet$] (I2) at ($(288 : 1.5)+(10,0)$) ;
			\coordinate [label=center:$\bullet$] (J2) at ($(0 : 1.5)+(10,0)$) ;
            \draw (J2) -- (I2) ;
			\draw (J2) -- (G2) ;
			\draw (F2) -- (G2) ;
			\draw (G2) -- (H2) ;

            \draw[->] (2,0) -- (3,0);
            \draw[->] (7,0) -- (8,0);
			\end{tikzpicture}
            \end{figure}
    \end{Definition}
    A geometric model for the 2-cluster category in type $A$ was given by Baur and Marsh in \cite{GeometricClusterCategory}. They realise the indecomposable objects in the 2-cluster category of type $A_n$ as non-boundary diagonals in a $2(n+2)$-gon that connect vertices of opposite parity. A sum of indecomposable objects is 2-rigid iff the corresponding diagonals do not intersect in their interior. In particular the 2-cluster tilting objects correspond to the quadrangulations of the $2(n+2)$-gon. The shift in the 2-cluster category corresponds to a rotation by $2\pi/2(n+2)$. One of the main results in this article identifies the shift in the 2-cluster category with the Serre permutation on augmented intervals and hence one would expect a bijection between quadrangulations and noncrossing trees that identifies this rotation with planar duality. Such a bijection is given in \cite{Stokes} as follows:
    \begin{Proposition}
        Let $Q$ be a quadrangulation of a $2(n+2)$-gon. We mark the vertices alternating with $\bullet$ and $\times$. Then every quadrangle contains a diagonal connecting two $\bullet$ vertices. The set of these diagonals forms a noncrossing tree in the $n+2$-gon given by the $\bullet$ vertices. This defines a bijection between quadrangulations and noncrossing trees which identifies rotation with planar duality.
    \end{Proposition}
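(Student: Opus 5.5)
The plan is to build the map by hand, check that it produces noncrossing trees, and then get both bijectivity and the rotation statement from one observation: the $\times$-diagonals of $Q$ form the planar dual tree. Label the vertices of the $2(n+2)$-gon alternately $\bullet,\times$. Each diagonal of a quadrangulation joins vertices of opposite parity (the Baur--Marsh model), and so does each boundary edge. Hence every side of every quadrangle joins a $\bullet$ to a $\times$, so the four corners of a quadrangle alternate in colour. Its two $\bullet$ corners are therefore opposite, and the diagonal $e_q$ joining them lies inside $q$.

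Two $\bullet$ vertices at distance $2$ in the $2(n+2)$-gon are adjacent in the $n+2$-gon $P_\bullet$ spanned by the $\bullet$ vertices. So $e_q$ may be a boundary edge of $P_\bullet$, which is allowed in the definition of a noncrossing tree. A quadrangulation of a $2(n+2)$-gon has $n+1$ quadrangles, so we obtain $n+1$ segments $e_q$ on $n+2$ vertices. I will check the tree conditions in this order.
\begin{enumerate}
\item \emph{Noncrossing.} Each $e_q$ lies in the interior of $q$, apart from its endpoints, and distinct quadrangles have disjoint interiors.
\item \emph{Connected and spanning.} Every $\bullet$ vertex is a corner of some quadrangle, hence an endpoint of some $e_q$. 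If two quadrangles share a side, that side has exactly one $\bullet$ endpoint, and this vertex is an endpoint of both bullet diagonals. The dual graph of a quadrangulation is a tree, in particular connected, so the union of the $e_q$ is connected.
\item \emph{Acyclic.} A connected graph with $n+2$ vertices and $n+1$ edges is a tree.
\end{enumerate}

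For bijectivity and the rotation statement, the key claim is the following. Let $T$ be the noncrossing tree obtained from $Q$. The $\times$-diagonals of the quadrangles of $Q$ form exactly the planar dual of $T$, before the final rotation in the definition. To see this, cut $P_\bullet$ along $T$. Including boundary edges, $T$ has $n+1$ edges and $n+2$ vertices, so Euler's formula gives $n+2$ regions. Each region contains exactly one boundary segment of $P_\bullet$, and therefore exactly one $\times$ vertex of the big polygon. For an edge $e=e_q$, the quadrangle $q$ is the union of the two triangles into which $e$ cuts it, one on each side of $e$. So the two $\times$ corners of $q$ lie in the two regions of $P_\bullet\setminus T$ separated by $e$. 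Consequently the $\times$-diagonal of $q$ is precisely the dual edge attached to $e$ in the planar dual construction. I expect this identification to be the main obstacle. The delicate point is justifying that each face contains exactly one $\times$ vertex and that the $\times$ corners of $q$ sit in the faces adjacent to $e$ rather than merely near them. I would first argue with the triangles $q\cap(\text{side of }e)$, whose third vertex is a $\times$ vertex lying in a face bounded by $e$.

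Given the claim, both remaining statements follow. For injectivity, $Q$ is recovered from $T$: each tree edge $e$ together with the two $\times$ vertices of the regions adjacent to $e$ is a quadrangle of $Q$. Surjectivity then follows by counting, since both sets are counted by the same Fuss--Catalan number $\frac{1}{2n+3}\binom{3n+3}{n+1}$. Alternatively, one can check directly that for an arbitrary noncrossing tree these quadrangles tile the $2(n+2)$-gon. For the rotation, rotating $Q$ by $2\pi/2(n+2)$ swaps the two colours. Hence the $\bullet$-tree of the rotated quadrangulation is the rotated $\times$-tree of $Q$, which by the claim is the planar dual of $T$. This identifies rotation with planar duality.
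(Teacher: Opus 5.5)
Your proposal is correct, and it takes a genuinely different route from the paper, because the paper gives no proof of this proposition at all: it simply quotes the bijection from \cite{Stokes}. Your argument is self-contained. Its organising observation is that $Q$ carries $T$ as its $\bullet$-diagonals and the unrotated planar dual of $T$ as its $\times$-diagonals, while a one-step rotation swaps the two colours. This explains \emph{why} rotation corresponds to planar duality. It also accounts for the paper's remark that dualising twice gives the rotation by $2\pi/(n+2)$. The citation buys brevity; your route makes the proposition checkable inside the paper.

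Several points should be tightened when you write it up.
\begin{enumerate}
\item The regions must be taken in the big $2(n+2)$-gon (equivalently the disc), not in $P_\bullet$. An edge of $T$ lying on the boundary of $P_\bullet$ cuts nothing off $P_\bullet$, so your count of $n+2$ regions fails there.
\item In the big polygon, each of the $n+1$ noncrossing chords adds exactly one region. A region containing no $\times$ vertex would be bounded by a cycle of $T$. Hence each of the $n+2$ regions contains exactly one of the $n+2$ vertices $\times$; this fact is also built into the paper's definition of the planar dual.
\item The triangle argument you flagged as delicate does go through. For $q'\neq q$ the open segment $e_{q'}$ lies in the interior of $q'$, which is disjoint from $q$. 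So each of the two triangles of $q$ cut off by $e_q$, including its $\times$ corner, lies in a single region on that side of $e_q$.
\item The rotation of $Q$ must be taken in the same direction as the rotation in the definition of the planar dual.
\item The opposite-parity fact needs no citation. A diagonal of a quadrangulation cuts the polygon into two quadrangulated polygons, and each of these has an even number of vertices.
\end{enumerate}

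Surjectivity need not rely on the Fuss--Catalan enumeration; you can build the inverse directly. Each region $R$ is convex and has a unique $\times$ vertex $x_R$. Coning from $x_R$ splits $R$ into triangles over the tree edges on its boundary. Gluing the two triangles on either side of each tree edge gives $n+1$ quadrangles that tile the polygon, and this is the inverse map.

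Finally, your connectivity step is essential rather than a formality. Read literally, the paper's conditions (1)--(3) also admit disconnected spanning forests, such as two opposite boundary edges of a square. Such a forest has only $n$ edges and is not in the image. So both your counting argument and the bijection itself require the standard definition of a noncrossing tree as a spanning tree.
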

    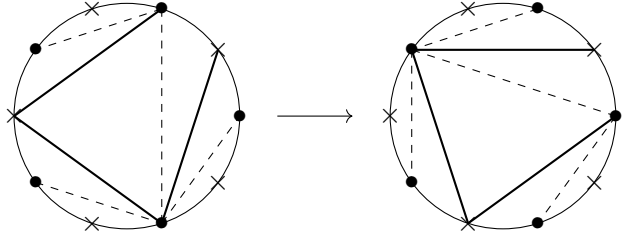
\begin{figure}[h]
        \centering
        \caption{A quadrangulation of a 10-gon and its rotation with the corresponding noncrossing trees drawn in dashed lines.}
    \begin{tikzpicture}
        \draw (0,0) circle[radius=1.5] ;    
			\coordinate [label=center:$\bullet$] (C) at (72 : 1.5) ;
			\coordinate [label=center:$\bullet$] (E) at (144 : 1.5) ;
			\coordinate [label=center:$\bullet$] (G) at (216 : 1.5) ;
			\coordinate [label=center:$\bullet$] (I) at (288 : 1.5) ;
			\coordinate [label=center:$\bullet$] (A) at (0 : 1.5) ;
            \coordinate [label=center:$\times$] (D) at (108 : 1.5) ;
			\coordinate [label=center:$\times$] (F) at (180 : 1.5) ;
			\coordinate [label=center:$\times$] (H) at (252 : 1.5) ;
			\coordinate [label=center:$\times$] (J) at (324 : 1.5) ;
			\coordinate [label=center:$\times$] (B) at (36 : 1.5) ;
			\draw[thick] (B) -- (I) ;
			\draw[thick] (C) -- (F) ;
			\draw[thick] (F) -- (I) ;
            \draw[dashed](I) -- (A);
            \draw[dashed](C) -- (I);
            \draw[dashed](C) -- (E);
            \draw[dashed](G) -- (I);

            \draw[->] (2,0) -- (3,0);

            \draw (5,0) circle[radius=1.5] ;    
			\coordinate [label=center:$\bullet$] (D2) at ($(72 : 1.5)+(5,0)$) ;
			\coordinate [label=center:$\bullet$] (F2) at ($(144 : 1.5)+(5,0)$) ;
			\coordinate [label=center:$\bullet$] (H2) at ($(216 : 1.5)+(5,0)$) ;
			\coordinate [label=center:$\bullet$] (J2) at ($(288 : 1.5)+(5,0)$) ;
			\coordinate [label=center:$\bullet$] (B2) at ($(0 : 1.5)+(5,0)$) ;
            \coordinate [label=center:$\times$] (E2) at ($(108 : 1.5)+(5,0)$) ;
			\coordinate [label=center:$\times$] (G2) at ($(180 : 1.5)+(5,0)$) ;
			\coordinate [label=center:$\times$] (I2) at ($(252 : 1.5)+(5,0)$) ;
			\coordinate [label=center:$\times$] (A2) at ($(324 : 1.5)+(5,0)$) ;
			\coordinate [label=center:$\times$] (C2) at ($(36 : 1.5)+(5,0)$) ;
			\draw[thick] (B2) -- (I2) ;
			\draw[thick] (C2) -- (F2) ;
			\draw[thick] (F2) -- (I2) ;
            \draw[dashed](B2) -- (F2);
            \draw[dashed](D2) -- (F2);
            \draw[dashed](F2) -- (H2);
            \draw[dashed](B2) -- (J2);
    \end{tikzpicture}
    \end{figure}

\section{Appendix by Rene Marczinzik: Periodic Serre formal algebras for incidence algebras of lattices}
In this appendix we want to motivate the classification of periodic Serre formal incidence algebras of lattices, which was one of the motivating factors for this article. It seems the classification of periodic Serre formal incidence algebras is a much more tractable problem compared to the classification of fractionally Calabi-Yau incidence algebras of lattices as computer experiments suggest.
We will introduce a combinatorial version of periodic Serre formal algebras and pose the conjecture that for incidence algebras of lattices those two notions coincide.
Let $A=KQ/I$ be a quiver algebra of finite global dimension with $n$ simple modules. Let $\omega$ be the Cartan matrix of $A$ given as the $n \times n$-matrix with entries $\omega_{i,j}=\dim e_j A e_i$.
The Coxeter matrix $C$ of $A$ is given by $-\omega^{T} \omega^{-1}$ and we have that $C$ sends the dimension vector of indecomposable projective modules $P_i$ to minus the dimension vector of the indecomposable injective module $I_i$. We refer for example to \cite[Chapter III.3]{ASS} for more information and properties of the Coxeter matrix.
We denote by $[M]$ elements in the Grothendieck group $K_0(A)$ given by a module $M$.  
Recall that an $A$-module $M$ is called \emph{perfect} if it has finite projective dimension $n$ and $\Ext_A^i(M,A) \neq 0$ iff $i=n$.
\begin{Proposition} \label{Coxeterpropo}
Let $A$ be an algebra of finite global dimension and $M$ a perfect $A$-module of projective dimension $n$. Then $C ([M])= (-1)^{n+1} [\tau_n(M)]$, where we set here $\tau_0:=\nu$ the Nakayama functor.
\end{Proposition}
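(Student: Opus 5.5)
The plan is to compute $C([M])$ directly from a minimal projective resolution of $M$. Perfectness of $M$ makes the dual of that resolution exact except in its last degree, and applying $D$ to the dual resolution yields an injective coresolution of $\tau_n(M)$ built from the modules $\nu P_i$. Since $C$ turns projectives into negative injectives, comparing alternating sums in $K_0(A)$ gives the formula. Throughout I use the convention $\tau_n = D\Ext^n_A(-,A)$, i.e. $\tau_n = \tau\Omega^{n-1}$ for $n\ge 1$, and $\tau_0=\nu$.

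\textbf{Step 1 (resolution).} Let
\[0\rightarrow P_n\rightarrow\dots\rightarrow P_1\rightarrow P_0\rightarrow M\rightarrow 0\]
be a minimal projective resolution. In $K_0(A)$ we have $[M]=\sum_{i=0}^n(-1)^i[P_i]$. Since $C[P_i]=-[\nu P_i]$ (the defining property of the Coxeter matrix recalled above, extended additively), we get
\[C([M]) = -\sum_{i=0}^n (-1)^i[\nu P_i].\]

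\textbf{Step 2 (dual complex).} Apply $\Hom_A(-,A)$ to the deleted resolution to obtain the complex of projective right modules
\[0\rightarrow P_0^*\rightarrow P_1^*\rightarrow\dots\rightarrow P_n^*\rightarrow 0.\]
Its cohomology in degree $i$ is $\Ext^i_A(M,A)$. By perfectness this vanishes for $i<n$, and in degree $n$ the cokernel is $\Ext^n_A(M,A)$. Hence the complex is exact except at the end, i.e. it is a projective resolution of $\Ext^n_A(M,A)$, the right module $\mathrm{Tr}_n M$. For $n=0$ the statement is simply $\tau_0 M=\nu M$ with $M$ projective, and the formula reduces directly to $C[M]=-[\nu M]$.

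\textbf{Step 3 (apply $D$).} Apply the exact duality $D$ and use $\nu = D\Hom_A(-,A)$ on projectives. This gives an exact sequence
\[0\rightarrow \tau_n M=D\Ext^n_A(M,A)\rightarrow \nu P_n\rightarrow \dots\rightarrow \nu P_0\rightarrow 0.\]
Reading off the alternating sum of this exact sequence,
\[[\tau_n M]=\sum_{i=0}^n (-1)^{n-i}[\nu P_i] = (-1)^n\sum_{i=0}^n(-1)^i[\nu P_i].\]

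\textbf{Step 4 (compare).} Combining Steps 1 and 3 gives
\[C([M]) = -(-1)^n[\tau_nM] = (-1)^{n+1}[\tau_n M].\]

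The only step needing care is the exactness claim in Step 2, which is exactly where perfectness (vanishing of $\Ext^i_A(M,A)$ for $i\neq n$) enters. I also have to fix the identification $\tau_n = D\Ext^n_A(-,A)$ consistently with the paper's convention, which I take as the definition.
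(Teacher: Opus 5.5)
Your proposal is correct and follows the paper's proof essentially step for step. You take a minimal projective resolution and use perfectness to make $D\Hom_A(P_\bullet,A)$ an exact coresolution of $D\Ext^n_A(M,A)\cong\tau_n(M)$ by the modules $\nu P_i$, then compare alternating sums in $K_0(A)$ via $C[P]=-[\nu P]$. Your explicit bookkeeping $[\tau_n M]=\sum_i(-1)^{n-i}[\nu P_i]$ is in fact stated more carefully than the corresponding intermediate display in the paper, which writes $(-1)^i$ where $(-1)^{n-i}$ is meant but then uses the correct identity in its final computation.
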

\begin{proof}
For $n=0$, by definition of the Coxeter matrix we have $C[P]=-[\nu(P)]$ for a projective module $P$ and the Nakayama functor $\nu$. Thus assume $n>0$ and let 
\begin{align} \label{projresM}
0 \rightarrow P_n \rightarrow P_{n-1} \rightarrow \cdots \rightarrow P_0 \rightarrow M \rightarrow 0
\end{align}
be a minimal projective resolution of $M$.
Applying first the functor $\Hom_A(-,A)$ and then the duality $D$ and using that $M$ is perfect gives the exact sequence:
\begin{align}\label{secondexactseq}
0 \rightarrow D\Ext_A^n(M,A) \rightarrow D \Hom_A(P_n,A) \rightarrow \cdots \rightarrow \Hom_A(P_0,A) \rightarrow 0.
\end{align}
Since this also exact sequence coincides with the exact sequence coming from the definition of $\tau_n(M)=\tau \Omega^{n-1}(M)$, we have $D \Ext_A^n(M,A) \cong \tau_n(M)$.
From the exact sequence \ref{projresM} we obtain the following equality in the Grothendieck group:
$$[M]=\sum\limits_{i=0}^{n}{(-1)^i [P_i]}$$
and from the exact sequence \ref{secondexactseq} we obtain the equality:
$$[D\Ext_A^n(M,A)]= \sum\limits_{i=0}^{n}{(-1)^i [D \Hom_A(P_i,A)]}.$$
Now using the definition of the Coxeter matrix and combining those identities gives: $$C([M])=C(\sum\limits_{i=0}^{n}{(-1)^i [P_i]})=-\sum\limits_{i=0}^{n}{(-1)^i [D\Hom_A(P_i,A)]}$$
$$= -(-1)^n \sum\limits_{r=0}^{n}{(-1)^r [D \Hom_A(P_{n-r},A)]}=(-1)^{n+1} [D \Ext_A^n(M,A)].$$ \end{proof}
Motivated by the previous proposition, we give the following definition:
\begin{Definition}
Let $A$ be an algebra of finite global dimension with Coxeter matrix $C$ and $n$ points labeled by $1,...,n$.
We call an indecomposable injective $A$-module $I_i$ \emph{combinatorially Serre formal} if 
there is a minimal $n_i \geq 0$ such that $C^{n_i}([I_i])=[P_{\pi(i)}]$, that is the $n_i$-th power of $C$ applied to $[I_i]$ gives the dimension vector of an indecomposable projective module $P_{\pi(i)},$ and furthermore $C^k [I_i]$ is always a weakly positive dimension vector or a weakly negative dimension vector for all $0 \leq k \leq n_i$. We call $A$ \emph{combinatorially Serre formal} if every indecomposable injective $A$-modules are combinatorially Serre formal and the map $\pi: \{1,..,n\} \rightarrow \{1,...,n\}$ is a bijection, which we all the \emph{Serre permutation} in that case.
\end{Definition}

The next corollary is a direct consequence of the definition of periodic Serre formal algebras, Proposition \ref{Coxeterpropo} and the fact that the Coxeter transformation is up to a sign the linear algebraic version of the derived Nakayama functor:
\begin{Corollary}
A periodic Serre formal algebra of finite global dimension is combinatorially Serre formal. 
\end{Corollary}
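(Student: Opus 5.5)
The argument should be direct and linear-algebraic. Let $A$ be periodic Serre formal, and fix an indecomposable injective $I_i$. By definition there are $n>0$ and $k\in\bZ$ with $\bS^n I_i\cong I_i[k]$. Moreover, every object $\bS^m I_i$ with $0\le m\le n$ is isomorphic to a stalk complex $M_m[k_m]$ with $M_m\in\modulecat A$.

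The plan passes to the Grothendieck group. I will use that the Coxeter matrix represents the derived Nakayama functor on $K_0(\mathcal D^b(A))\cong K_0(A)$ up to sign. Since $C[P]=-[\nu P]$ for projectives and $\bS=\nu$ on $K^b(\proj A)\simeq\mathcal D^b(A)$, we get $[\bS X]=-C[X]$ for every $X$. Hence
\[C^m[I_i]=(-1)^m[\bS^m I_i]=(-1)^{m+k_m}[M_m],\]
which is weakly positive or weakly negative for every $m$, because $[M_m]$ is the dimension vector of a module. This settles the sign condition.

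Next I have to produce the projective $P_{\pi(i)}$. The key observation is that $\bS^{-1}I_j\cong P_j$, so $\bS^{n-1}I_i\cong\bS^{-1}(I_i[k])=P_i[k]$. Therefore $C^{n-1}[I_i]=\pm[P_i]$.

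The main obstacle is the mismatch between this sign and the exact equality $C^{n_i}[I_i]=[P_{\pi(i)}]$ required in the definition. I would handle it as follows. Let $n_i$ be the minimal $m\ge 0$ such that $C^m[I_i]=\pm[P_j]$ for some $j$. I expect the sign to come out positive, since this is the intended reading of the definition. If the sign is negative, I would look at the next power instead: $C^{n_i+1}[I_i]=\mp C[P_j]=\pm[I_j]$, and I would use the periodicity along the orbit to land on a positive projective vector. I would also take the definition's "dimension vector" up to sign, as the Corollary's phrase "up to a sign" suggests.

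Finally I would show that $\pi$ is a bijection. Since $\bS$ is an autoequivalence, the orbits of the $\bS$-action on isoclasses of shifted stalk objects are disjoint cycles. Each such cycle containing an $I_i$ passes through exactly the projectives $P_j$ with $I_j$ in that cycle. Because $\bS^{-1}I_j\cong P_j$, the first projective reached after $I_i$ is $P_j$, where $I_j$ is the next injective in the cycle. So $\pi$ maps $i$ to the successor of $I_i$ among the injectives in its $\bS$-orbit, which is a permutation.

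To make minimality in the Grothendieck group match minimality in the orbit, I would use that indecomposable stalk objects are determined up to shift by their classes among projectives. Distinct $P_j$ have distinct dimension vectors because the Cartan matrix is invertible.
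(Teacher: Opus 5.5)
Your route is the one the paper has in mind: its proof is the single sentence that $C$ is, up to sign, the action of the derived Nakayama functor on $K_0$. Your computations $C^m[I_i]=(-1)^{m+k_m}[M_m]$ and $\bS^{n-1}I_i\cong P_i[k]$ are correct.

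\textbf{The sign.} The sign cannot be left open. Your expectation that $C^{n_i}[I_i]$ comes out as $+[P_j]$ is false, and your fallback of passing to a later power until a positive projective vector appears destroys the bijectivity of $\pi$. Take the four-element Boolean lattice $L=\{0<a,b<1\}$. It is a Cambrian lattice and is periodic Serre formal. By \cref{serreOfBoolean}, $\bS I_0=\bS S_0\cong S_1[2]=P_1[2]$. Hence $C[I_0]=-[P_1]$ and $C^2[I_0]=-C[P_1]=[I_1]=[P_0]$. Since $I_1=P_0$ already gives $C^0[I_1]=[P_0]$, the rule ``first positive projective vector'' yields $\pi(0)=\pi(1)=0$. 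So with the definition read literally the Corollary fails. It is true only if one asks for $C^{n_i}[I_i]=\pm[P_{\pi(i)}]$, which is what the paper's ``up to a sign'' tacitly does. You should adopt that reading outright and say why it is forced, rather than listing it as one option among several.

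\textbf{Matching the two minimalities.} Invertibility of the Cartan matrix only shows that the vectors $[P_j]$ are pairwise distinct. It does not show that a module $M_m$ in the orbit with $[M_m]=\pm[P_j]$ is isomorphic to $P_j$. So your identification of the $K_0$-minimal $n_i$ with the first projective in the $\bS$-orbit is unjustified.

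You do not need it; argue entirely in $K_0$.
\begin{enumerate}
\item Since $C^{-1}[I_j]=-[P_j]$, we have $C^m[I_i]=\pm[P_j]$ if and only if $C^{m+1}[I_i]=\mp[I_j]$.
\item Since $C$ is invertible and $C^n[I_i]=\pm[I_i]$ with $n\geq 1$, the classes $C^m[I_i]$ taken up to sign form a finite cycle permuted by $C$.
\item Hence $n_i+1$ is the first $m\geq 1$ with $C^m[I_i]\in\{\pm[I_j]\}$, and $\pi(i)=j$ is the next injective class after $[I_i]$ on its cycle.
\item Conversely, $[I_i]$ is the first injective class met when walking backwards along the cycle from $[I_j]$. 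So $\pi(i)=\pi(i')$ forces $[I_i]=\pm[I_{i'}]$.
\item The $[I_j]$ are pairwise distinct nonzero nonnegative vectors, so $i=i'$. Thus $\pi$ is injective, hence a permutation.
\end{enumerate}
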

We pose the following conjecture:
\begin{Conjecture}
Let $A=kL$ be the incidence algebra of a lattice $L$.
Then $A$ is combinatorially Serre formal if and only if $A$ is periodic Serre formal.
\end{Conjecture}
We verified this conjecture for all lattices with at most 12 elements. 
The main result in this article showed that Cambrian lattices are periodic Serre formal. We pose it as a problem to give a general classification:
\begin{Problem}
Classify the incidence algebras $KL$ with $L$ a lattice such that $KL$ is periodic Serre formal.
\end{Problem}
The next result gives the classification in the distributive case using the main results of \cite{GKKM}:
\begin{Proposition} \label{divisorlatticecase}
Let $KL$ be the incidence algebra of a distributive lattice $L$. Then $KL$ is periodic Serre formal if and only if $L$ is a divisor lattice.
\end{Proposition}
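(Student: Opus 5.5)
The plan is to prove the two implications separately. The converse direction should reduce to the classification in \cite{GKKM}, with \cref{formalisCY} serving as the bridge between periodic Serre formality and the fractionally Calabi-Yau property.

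\emph{Divisor lattices are periodic Serre formal.} A divisor lattice is a finite product of chains $[1]\times\dots\times[r]$. For a single chain $L=\{0<1<\dots<m\}$ the incidence algebra $kL$ is the path algebra of a linearly oriented quiver of type $A_{m+1}$ modulo no relations; it is hereditary of Dynkin type. For such an algebra the Serre functor is $\tau[1]$, and $\tau$ acts on the indecomposables of $\mathcal D^b(kA_{m+1})$, all of which are shifted stalk modules. Hence the orbits of the indecomposable injectives are stalk modules up to shift. Since $\bS^{m+2}\cong[m]$ in this case, some power of $\bS$ returns every injective to itself up to a uniform shift, so a chain is periodic Serre formal. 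For products I will argue as in the proof that Cambrian lattices are periodic Serre formal:
\begin{itemize}
\item $k(L_1\times L_2)\cong kL_1\otimes_k kL_2$;
\item Serre formal algebras are closed under tensor products by \cite[Proposition 5.4]{ProductsofSerreFormal};
\item the Serre functor of a tensor product is the tensor product of the Serre functors, and indecomposable injectives of the product are $I\otimes I'$.
\end{itemize}
Taking the least common multiple of the periods, and adding the shifts accordingly, gives periodicity for the product. None of this depends on the field.

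\emph{Periodic Serre formal distributive lattices are divisor lattices.} Suppose $KL$ is periodic Serre formal with $L$ distributive. Choose $n>0$ and a shift $m$ with $\bS^n I\cong I[m]$ for every indecomposable injective $I$. If the shifts differ between injectives, I will first raise $n$ to a common multiple of the individual periods. Then \cref{formalisCY} shows that $L$ is $(m,n)$-fractionally Calabi-Yau. Even before this, passing to the Grothendieck group via \cref{Coxeterpropo} shows that the Coxeter matrix of $KL$ is periodic. The main result of \cite{GKKM} classifies the distributive lattices with this property. I expect it says that the incidence algebra of a distributive lattice is fractionally Calabi-Yau, equivalently has periodic Coxeter matrix, exactly when the lattice is a divisor lattice. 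Invoking it finishes the proof.

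\emph{Main obstacle.} The obstacle is in the second step: making the hypothesis of \cite{GKKM} match exactly what we can produce. If that result is phrased in terms of fractional Calabi-Yau-ness, the reduction via \cref{formalisCY} is immediate. If it is phrased via periodicity of the Coxeter transformation, I would instead use \cref{Coxeterpropo} and the fact that $\bS$ induces $\pm C$ on $K_0$. Then $\bS^n I\cong I[m]$ for all injectives gives $C^n=\pm\mathrm{id}$, because the classes of the injectives form a basis of $K_0(KL)$. A smaller point is checking that ``for every field $k$'' is compatible with the field-independence of the combinatorial criterion in \cite{GKKM}. This should be automatic, since the divisor-lattice direction holds over any field.
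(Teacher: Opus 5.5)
Your first implication is fine and matches the paper: chains give linearly oriented type $A$ path algebras, and periodic Serre formality is stable under tensor products. The second implication has a genuine gap.

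You replace periodic Serre formality by something weaker, namely fractional Calabi--Yau-ness via \cref{formalisCY}, or periodicity of the Coxeter matrix. You then expect \cite{GKKM} to classify distributive lattices by that weaker property, but no such classification can hold. The lattices of order ideals of a product of two chains are distributive, and Gottesman's theorem \cite{Tal}, cited in the introduction, shows they are fractionally Calabi--Yau; in particular their Coxeter matrices are periodic. The smallest example is the lattice of order ideals of a product of two $2$-element chains: six elements, a Boolean square with a new minimum and a new maximum adjoined. These lattices are not products of chains, since their join-irreducibles form the product of the two chains rather than a disjoint union of chains. So by keeping only $\bS^n I\cong I[m]$ and forgetting that every $\bS^j I$ is a stalk complex, you discard exactly the information that singles out divisor lattices. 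Separately, periodicity of the Coxeter matrix is not equivalent to fractional Calabi--Yau-ness in general, so your ``equivalently'' is also unjustified.

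The missing idea is that Serre formality by itself makes the injectives perfect. Perfectness is precisely the hypothesis of \cite[Theorem 5.4]{GKKM}, which is what the paper uses: a distributive lattice all of whose indecomposable injective left and right modules are perfect is a divisor lattice. To see perfectness, take $A=KL$ and a right module $X$. Then $\bS X=X\otimes^{\mathbf L}_A DA\cong D\,\mathbf R\Hom_A(X,A)$, so $H^{-j}(\bS I)\cong D\Ext^j_A(I,A)$. If $\bS I$ is a shifted module, then $\Ext^j_A(I,A)\neq 0$ for exactly one $j$, necessarily $j=\pdim I$; that is, $I$ is perfect. For a left injective $D(e_xA)$, duality gives $\mathbf R\Hom_{A^{op}}(D(e_xA),A)\cong\mathbf R\Hom_A(DA,e_xA)=\bS^{-1}P_x\cong\bS^{-2}I_x$. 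This is again a shifted module by Serre formality, whose definition allows all powers $n\in\bZ$. Replacing your detour through \cref{formalisCY} and $K_0$ by this observation closes the argument.
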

\begin{proof}
A divisor lattice is periodic Serre formal as periodic Serre formal is invariant under taking tensor products and divisor lattices are by definition tensor products of hereditary linear Nakayama algebras.
Conversely, note that in a periodic Serre formal algebra $A$ the indecomposable injective (left and right) modules $I$ are all perfect, that is $Ext_A^i(I,A)$ iff $i=\pdim I$. Now by \cite[Theorem 5.4]{GKKM} a distributive lattice with all indecomposable injective (left and right) modules being perfect is a divisor lattice. 
\end{proof}

While Cambrian lattices are always semidistributive in the sense of \cite{semidistRST}, it is in general not true that periodic Serre formal lattices are semidistributive as the next example shows:
\begin{Example}
In this example we will use that the derived Nakayama functor applied iteratively to an indecomposable injective $A$-module in a periodic Serre formal algebra $A$ can be calculated by computing the higher Auslander-Reiten translate $\tau_n$ where $n$ denotes the projective dimension of the relevant module.
Let $L$ be the lattice with Hasse diagram 
\[\begin{tikzcd}
	&& 9 \\
	& 8 && 6 \\
	& 7 && 5 \\
	2 && 4 && 3 \\
	&& 1
	\arrow[from=2-2, to=1-3]
	\arrow[from=2-4, to=1-3]
	\arrow[from=3-2, to=2-2]
	\arrow[from=3-4, to=2-4]
	\arrow[from=4-1, to=3-2]
	\arrow[from=4-3, to=3-2]
	\arrow[from=4-3, to=3-4]
	\arrow[from=4-5, to=3-4]
	\arrow[from=5-3, to=4-1]
	\arrow[from=5-3, to=4-3]
	\arrow[from=5-3, to=4-5]
\end{tikzcd}\]
Let $A$ denote the incidence algebra of $L$.
We give the applications of the higher Auslander-Reiten on the indecomposable injective $A$-modules and leave the verifications to the reader such as that all involved modules are perfect.
By symmetry, it is enough to consider the indecomposable injective modules 
$I(1), I(3), I(4),$ 
$I(5), I(6)$ and $I(9)$. Let $N$ denote the unique indecomposable $A$-module with dimension vector $[ 0, 0, 0, 1, 1, 0, 1, 0, 0 ]$.
\begin{enumerate}
    \item $I(1) \xrightarrow{\tau_2} N \xrightarrow{\tau_2} P(9)$.
    \item $I(3) \xrightarrow{\tau_2} S(7) \xrightarrow{\tau} S(8) \xrightarrow{\tau} P(3)$.
    \item $I(4) \xrightarrow{\tau_2} P(4)$.
    \item $I(5) \xrightarrow{\tau_2} P(7)$.
    \item $I(6) \xrightarrow{\tau} S(2) \xrightarrow{\tau} M_{[4,7]} \xrightarrow{\tau_2} P(6)$.
    \item $I(9) \cong P(1)$.
\end{enumerate}
We see that the Serre permutation on $L$ is given as the permutation:
\[
\left(
\begin{array}{ccccccccc}
1&2&3&4&5&6&7&8&9\\
9&2&3&4&7&6&5&8&1
\end{array}
\right)
\]
We remark that the module $N$ is not an interval module, so this example also shows that for periodic Serre formal lattices the higher Auslander-Reiten translates of indecomposable injective modules do not have to be interval modules.
\end{Example}
It might be interesting to compare the Serre permutation to the classical rowmotion map for semidistributive lattices (see \cite{B}) or the more general echelonmotion bijection for more general classes of lattices (see for example \cite{DJMSSTW}). Are there interactions between those two permutations?

We give the final remark that in forthcoming work we will develop fast algorithms to decide whether a given algebra $A$ is twisted fractionally Calabi-Yau using replicated algebras and trivial extension algebras, see \cite{CDIM,CIM2}, and also to test whether $A$ is periodic Serre formal using Proposition \ref{Coxeterpropo} as a first quick test.
In our experience it seems the behaviour of the fractional Calabi-Yau property for incidence algebras of lattices and more general algebras is very chaotic compared to the  periodic Serre formal property. For example already for Nakayama algebras, the classification of twisted fractional Calabi-Yau algebras is wide open with not even an existing conjecture, while the classification of periodic Serre formal linear Nakayama algebras is easy, see \cite{ProductsofSerreFormal}.

\bibliographystyle{alpha}
\bibliography{references}

\end{document}